\documentclass[table,x11names]{article} 

\providecommand{\ip}[2]{\langle #1, #2 \rangle}

  \usepackage{authblk}
  \usepackage[margin=1in]{geometry}
\usepackage{silence}
\usepackage[T1]{fontenc}
\usepackage[utf8]{inputenc}
\usepackage{csquotes}
\usepackage[UKenglish]{babel}
\usepackage{paralist}
\usepackage[shortlabels]{enumitem}
\usepackage{xcolor}
\usepackage{graphicx}
\usepackage{bbm}
\usepackage{amsmath}
\usepackage{amssymb}
\usepackage{amsfonts}
\usepackage{stmaryrd}
\usepackage{mathrsfs}
\usepackage{mathtools}
\usepackage{epstopdf}
\usepackage{comment}
\usepackage{verbatim}
\usepackage[color=yellow]{todonotes}
\usepackage{booktabs}
\usepackage{pifont}
\usepackage{makecell}
\usepackage{upgreek}
\usepackage[dvipsnames]{xcolor}

\usepackage{tabularx}
\usepackage{array}
\usepackage{booktabs}
\usepackage{makecell}
\usepackage{placeins}

\usepackage{xspace}
\usepackage{float}
\usepackage{stackengine}
  \usepackage{amsthm}
  
  \usepackage{hyperref}
  \hypersetup{%
    linktocpage=true,
    colorlinks=true,
    citecolor=darkgreen,
    linkcolor=blue,
    urlcolor=blue,
  }
  \usepackage[nameinlink,capitalise, noabbrev]{cleveref}
  \usepackage{autonum}
  \newcommand{\email}[1]{\href{mailto:#1}{#1}}
  \newenvironment{keywords}{\small \noindent\begin{quote}\textbf{Keywords.}}{\end{quote}}
  
\input{widebar.tex}

\theoremstyle{plain}
  
  \newtheorem{theorem}{Theorem}
 \newtheorem{lemma}[theorem]{Lemma}
  \newtheorem{corollary}[theorem]{Corollary}
  \newtheorem{proposition}[theorem]{Proposition}
  
  \newtheorem{remark}{Remark}

  \newtheorem{assumption}{Assumption}
  
  \numberwithin{equation}{section}
  \numberwithin{theorem}{section}
  
  \crefname{lemma}{Lemma}{Lemmas}
  \crefname{remark}{Remark}{Remarks}
  \crefname{proposition}{Proposition}{Propositions}
  \crefname{section}{Section}{Sections}
  \crefname{subsection}{Subsection}{Subsections}
  \crefname{equation}{}{}
  \Crefname{equation}{Equation}{Equations}
  \Crefname{figure}{Figure}{Figures}
  \usepackage{setspace}
\crefname{remark}{Remark}{Remarks}
\crefname{assumption}{Assumption}{Assumptions}

\crefname{enumi}{}{}

\newcommand{\proofstep}[1]{\par\medskip\noindent\textbf{#1}\quad}

\DeclarePairedDelimiter{\norm}{\lVert}{\rVert}

\DeclarePairedDelimiterXPP\set[1]{}\{\}{}{

#1}
\DeclarePairedDelimiter{\scp}{\langle}{\rangle}

\DeclarePairedDelimiterXPP\condexpect[1]{\mathbb{E}}[]{}{
  
  #1}
\DeclarePairedDelimiterXPP\condexpectwithinitial[2]{\mathbb{E}_{#1}}[]{}{
  
  #2}

\DeclareMathOperator{\diag}{diag}

\DeclareMathOperator{\Law}{Law}
\DeclareMathOperator{\e}{e}

\renewcommand{\d}{\mathrm d}

\newcommand{\R}{\mathbf R}
\newcommand{\real}{\R}

\renewcommand{\leq}{\leqslant}
\renewcommand{\geq}{\geqslant}
\renewcommand{\le}{\leqslant}
\renewcommand{\ge}{\geqslant}

\newcommand{\wasserstein}{\mathcal W}

\newcommand{\allx}[1]{\mathcal{X}^{#1}}

\renewcommand{\d}{\mathrm d}

\newcommand{\emp}[1]{\mu_{\mathcal X^J_{#1}}}
\newcommand{\empbar}[1]{\mu_{\overline{\mathcal X}^J_{#1}}}
\newcommand{\empv}[1]{\mu_{\mathcal V^J_{#1}}}
\newcommand{\empbarv}[1]{\mu_{\overline{\mathcal V}^J_{#1}}}

\newcommand{\proba}{\mathbf P}
\newcommand{\expect}{\mathbf{E}}
\newcommand{\E}{\mathbf{E}}

\newcommand{\meanx}[1]{\mathcal M\bigl(\mu_{\allx{J}_{#1}}\bigr)}

\newcommand{\momentp}[2]{\mathfrak M_{#1}(#2)}

\newcommand{\edecay}[1]{\lambda_{#1}}

\newcommand{\mfldis}{\widebar{\rho}}

\usepackage{tikz}
\usetikzlibrary{shapes.misc}
\usetikzlibrary{positioning}

\cslet{blx@noerroretextools}\empty 
\usepackage[
  backend=biber,
  style=numeric,
  sorting=none,
  maxbibnames=99
]{biblatex}
\DeclareFieldFormat{volume}{volume \textbf{#1}}
\DeclareFieldFormat[article]{volume}{\textbf{#1}}

\newcommand{\eps}{\varepsilon}

\definecolor{darkred}{rgb}{.7,0,0}
\definecolor{darkgreen}{rgb}{.1,.7,0}
\definecolor{SkyBlue}{HTML}{46C5DD}
\newcommand{\dk}[1]{{\color{NavyBlue}{#1}}}

\newcommand{\cmz}[1]{C_{{\rm MZ},#1}}
\newcommand{\cwm}[1]{C_{{\rm WM},#1}}

\AddToHook{env/lemma/begin}{\crefalias{theorem}{lemma}}
\AddToHook{env/proposition/begin}{\crefalias{theorem}{proposition}}
\AddToHook{env/assumption/begin}{\crefalias{theorem}{Assumption}}
\AddToHook{env/remark/begin}{\crefalias{theorem}{remark}}
\AddToHook{env/corollary/begin}{\crefalias{theorem}{corollary}}
\AddToHook{env/result/begin}{\crefalias{theorem}{result}}
\AddToHook{env/definition/begin}{\crefalias{theorem}{definition}}
\AddToHook{env/example/begin}{\crefalias{theorem}{example}}

\makeatletter
\AddToHook{cmd/appendix/before}{\def\cref@section@alias{appendix}}
\makeatother

\title{Uniform-in-time propagation of chaos for \\ Second-Order Consensus-Based Optimization}

  \author[1]{Seung-Yeal Ha$^{a,}$}
  \author[2]{Franca Hoffmann$^{b,}$}
  \author[2]{Dohyeon Kim$^{c, }$}
  \affil[ ]{\footnotesize%
    $^a$\email{syha@snu.ac.kr},
    $^b$\email{franca.hoffmann@caltech.edu},
    $^c$\email{dohyeon@caltech.edu}.
  }
  \affil[1]{\footnotesize Department of Mathematical Sciences, Seoul National University, Republic of Korea}
  \affil[2]{\footnotesize Department of Computing and Mathematical Sciences, Caltech, USA}
  \date{}

\begin{document}
\maketitle

\begin{abstract}
We study second-order Consensus-Based Optimization (CBO), a derivative-free global optimization
algorithm in which the consensus force and the multiplicative exploratory noise act on particle
velocities. We prove quantitative uniform-in-time propagation of chaos for the unmodified
second-order CBO dynamics, together with an almost uniform-in-time stability estimate for the
microscopic particle system.

The proof is not a direct adaptation of the first-order CBO argument. Although both first- and
second-order CBO have multiplicative noise that degenerates near consensus and a shift-invariant
weighted interaction, the kinetic model has an additional structural obstruction: the consensus
mechanism and the stochastic forcing act only on the velocity variable, while the position variable
evolves by transport. Thus spatial concentration has to be recovered indirectly through velocity
dissipation. Moreover, the shift-invariant interaction leaves a translation mode that is not
directly damped by the consensus force, so a standard synchronous coupling in the Euclidean
phase-space distance does not close uniformly in time.

The main idea of the paper is to introduce shifted internal variables that separate the contracting
fluctuation modes from the undamped translation mode. In these variables we build a Lyapunov functional with a position--velocity cross term and prove exponential decay of centered
moments. This decay is the mechanism that makes the time-dependent coupling coefficient
integrable. Combining it with uniform-in-time raw moment bounds, concentration inequalities,
stability estimates for the weighted mean, and a Monte Carlo estimate, we obtain the classical
Monte Carlo rate for propagation of chaos uniformly in time. The system-to-system stability
estimate avoids the mean-field sampling error and yields the faster rate \(O(J^{-q})\).
\end{abstract}

\begin{keywords}
    Uniform-in-time propagation of chaos, Mean-field limits, Second-order Consensus-Based
    Optimization, Hypocoercivity, Interacting particle systems
\end{keywords}

\tableofcontents
\newpage


\section{Introduction}

\subsection{Global Optimization via Interacting Particle Systems}

We consider the global optimization problem
\begin{align}\label{eq:global-opt}
    x_\ast \in \arg\min_{x \in \R^d} f(x),
\end{align}
where \(f:\R^d\to\R\) may be non-convex. Such problems arise in machine learning,
scientific computing, uncertainty quantification, and inverse problems. For instance, in a Bayesian
inverse problem with forward map \(\mathcal G\), observation \(y=\mathcal G(x)+\eta\), noise
covariance \(\Gamma\), and regularizer \(\mathcal R\), the MAP estimator minimizes
\[
    f(x)=\frac12 |y-\mathcal G(x)|_\Gamma^2+\mathcal R(x).
\]
This variational viewpoint is standard in Bayesian inverse problems and uncertainty quantification; see, for instance, \cite{stuart2010inverse,kaipio2005statistical,MR3041539}. In many applications, however, the objective function is available only through pointwise evaluations, or its derivatives are too expensive, noisy, or unreliable to compute. This motivates the use of derivative-free and ensemble-based optimization methods; see \cite{conn2009introduction,rios2013derivative} for general references on derivative-free optimization. Among derivative-free methods, interacting particle algorithms have been widely used in moderate to high dimensions. A swarm of particles explores the objective landscape through stochastic dynamics combining noise-driven exploration and collective exploitation through consensus, alignment, or ensemble-based updates. This viewpoint includes classical metaheuristics such as Particle Swarm Optimization (PSO)~\cite{488968,Huang_2023}, as well as more recent mean-field-oriented methods such as Consensus-Based Optimization (CBO)~\cite{CBO,carrillo2018analytical}, Consensus-Based Sampling~\cite{CBS-Carrillo2021}, and Ensemble Kalman methods~\cite{MR3041539,MR3988266,MR4199469,MR4234152}. As the number of particles tends to infinity, the empirical measure is expected to converge to a nonlinear mean-field law. This mean-field description connects the algorithmic dynamics to tools from stochastic analysis, nonlocal PDEs, and propagation of chaos, and provides a framework for studying both finite-particle error estimates and long-time behavior.

\subsection{First-Order CBO and Known Results}\label{sec:intro-first-order}

The standard first-order, or overdamped, CBO model evolves \(J\) particles according to
\begin{align}\label{eq:first-order-cbo}
    \d X_t^j
    =
    -\bigl(X_t^j-\mathcal M_\alpha(\mu_{X_t^J})\bigr)\,\d t
    +
    \sigma S\bigl(X_t^j-\mathcal M_\alpha(\mu_{X_t^J})\bigr)\,\d W_t^j,
    \qquad j\in[J],
\end{align}
where
\[
    \mu_{X_t^J}:=\frac1J\sum_{j=1}^J\delta_{X_t^j},
    \qquad
    \mathcal M_\alpha(\mu)
    :=
    \frac{\int x\,e^{-\alpha f(x)}\,\mu(\d x)}
         {\int e^{-\alpha f(x)}\,\mu(\d x)}.
\]
Here \(\alpha>0\) is an inverse-temperature parameter. The noise operator is either isotropic,
\[
    S(x)=|x|I_d,
\]
or anisotropic,
\[
    S(x)=\diag(x).
\]
The associated mean-field process when $J \rightarrow \infty$ is
\begin{align}\label{eq:first-order-cbo-mfl}
    \d\overline X_t
    =
    -\bigl(\overline X_t-\mathcal M_\alpha(\overline\rho_t)\bigr)\,\d t
    +
    \sigma S\bigl(\overline X_t-\mathcal M_\alpha(\overline\rho_t)\bigr)\,\d\overline W_t,
    \qquad
    \overline\rho_t=\Law(\overline X_t).
\end{align}
Equivalently, \(\overline\rho_t\) solves the nonlinear nonlocal Fokker--Planck equation
\begin{align}\label{eq:first-order-cbo-fp}
    \partial_t \overline\rho_t
    =
    \nabla_x\cdot
    \Bigl(
        \bigl(x-\mathcal M_\alpha(\overline\rho_t)\bigr)\overline\rho_t
    \Bigr)
    +
    \frac{\sigma^2}{2}
    \nabla_x\cdot\nabla_x\cdot
    \Bigl(
        S\bigl(x-\mathcal M_\alpha(\rho)\bigr)
    S\bigl(x-\mathcal M_\alpha(\rho)\bigr)^\top \overline\rho_t
    \Bigr) \,.
\end{align}
The theory of first-order CBO is now relatively well developed. Long-time convergence of the
mean-field equation toward a consensus point near the global minimizer was studied in
\cite{carrillo2018analytical,fornasier2024consensus}. Particle-level convergence was analyzed in
\cite{MR4179193}, with time-discrete and random-batch variants treated in
\cite{MR4215338,jin2018random}. Quantitative finite-time propagation of chaos was obtained in
\cite{gerber2023meanfield}. Most relevant for the present paper is~\cite{uit_cbo}, which proves
uniform-in-time propagation of chaos for the unmodified first-order system using synchronous
coupling, concentration estimates, and stability bounds for the weighted mean. Before this result,
uniform-in-time estimates were obtained for modified CBO dynamics. The work
\cite{huang2024uniformintimemeanfieldlimitestimate} considers a rescaled/convexified drift, while
\cite{bayraktar2025uniformintimeweakpropagationchaos} proves a uniform-in-time weak
propagation-of-chaos estimate on a bounded search domain by introducing a smooth cutoff in the
volatility.

\subsection{Second-Order CBO from PSO}\label{sec:intro-second-order}

CBO is closely related to Particle Swarm Optimization (PSO), introduced by Kennedy and
Eberhart~\cite{488968}. In its classical discrete form, PSO evolves positions and velocities
\((X_n^j,V_n^j)\) through updates of the schematic form
\begin{align}
    V_{n+1}^j
    &=
    \omega V_n^j
    +
    c_1 r_{1,n}^j\bigl(P_n^j-X_n^j\bigr)
    +
    c_2 r_{2,n}^j\bigl(G_n-X_n^j\bigr),\\
    X_{n+1}^j
    &=
    X_n^j+V_{n+1}^j.
\end{align}
Here \(P_n^j\) denotes the personal best position of particle \(j\), \(G_n\) denotes a global or
neighborhood best position, and \(r_{1,n}^j,r_{2,n}^j\) are random exploration coefficients. Thus,
unlike first-order CBO, PSO is intrinsically inertial: particles move through velocities, and the
optimization mechanism acts through acceleration.

A second-order, or kinetic, CBO model is obtained by replacing the best-particle information in PSO
by the weighted consensus point \(\mathcal M_\alpha\), and then passing to a continuous-time limit.
In this direction, Grassi and Pareschi~\cite{sara_pso} derived the second-order CBO system
\begin{align}\label{eq:intro-k-cbo}
    \d X_t^j &= V_t^j\,\d t, \notag\\
    m\,\d V_t^j
    &=
    -\gamma V_t^j\,\d t
    -
    \bigl(X_t^j-\mathcal M_\alpha(\mu_{X_t^J})\bigr)\,\d t
    +
    \sigma S\bigl(X_t^j-\mathcal M_\alpha(\mu_{X_t^J})\bigr)\,\d W_t^j,
    \qquad j\in[J].
\end{align}
Here \(m>0\) is the mass, or inertia, and \(\gamma>0\) is the friction. Both the consensus force
and the multiplicative noise act on the velocity, while the position evolves by transport. The
zero-inertia connection between PSO-type dynamics and first-order CBO has been rigorously studied
in~\cite{CBO-PSO-zero-inertia-2}. The corresponding synchronously coupled mean-field system for $J \rightarrow \infty$ is
\begin{align}\label{eq:intro-k-cbo-mfl}
    \d\overline X_t^j &= \overline V_t^j\,\d t, \notag\\
    m\,\d\overline V_t^j
    &=
    -\gamma\overline V_t^j\,\d t
    -
    \bigl(\overline X_t^j-\mathcal M_\alpha(\overline\rho_t^X)\bigr)\,\d t
    +
    \sigma S\bigl(\overline X_t^j-\mathcal M_\alpha(\overline\rho_t^X)\bigr)\,\d W_t^j,
\end{align}
where \(\overline\rho_t=\Law(\overline X_t^1,\overline V_t^1)\) and
\(\overline\rho_t^X\) denotes its spatial marginal. The law \(\overline\rho_t\) solves the kinetic
nonlinear Fokker--Planck equation
\begin{align}\label{eq:intro-k-cbo-fp}
    \partial_t\overline\rho_t
    +
    v\cdot\nabla_x\overline\rho_t
    &=
    \frac{\gamma}{m}\nabla_v\cdot\bigl(v\overline\rho_t\bigr)
    +
    \frac1m\nabla_v\cdot
    \Bigl(
        \bigl(x-\mathcal M_\alpha(\overline\rho_t^X)\bigr)\overline\rho_t
    \Bigr) \notag\\
    &\quad+
    \frac{\sigma^2}{2m^2}
    \nabla_v\cdot\nabla_v\cdot
    \Bigl(
        S\bigl(x-\mathcal M_\alpha(\rho^X)\bigr)
    S\bigl(x-\mathcal M_\alpha(\rho^X)\bigr)^\top\overline\rho_t
    \Bigr) \,.
\end{align}
The inertial formulation is closer to the velocity-based update rule of PSO used in practice and is often used to enhance exploration through momentum effects. From the analytical viewpoint,
however, this same kinetic structure makes the mean-field analysis more challenging compared to the overdamped case: the consensus force and the stochastic forcing act on velocity, while spatial contraction must be recovered indirectly through a hypocoercive mechanism.

\subsection{Why the First-Order Argument Does Not Directly Extend}
\label{sec:intro-challenge}

The second-order model inherits several difficulties already present in first-order CBO, including
multiplicative noise that vanishes near consensus and the shift-invariant structure of the weighted
interaction. The new difficulty is that these features now appear in a kinetic system: the
consensus force and the stochastic forcing act only on the velocity variable, while the position
variable evolves only by transport.

This creates two coupled obstructions.

\emph{Kinetic hypocoercivity.}
The position equation has no direct drift or diffusion toward the consensus point. A coercive
estimate in the standard phase-space distance therefore misses the mechanism by which the
dissipative structure in the velocity equation controls spatial concentration. The relevant decay
must instead be proved through a nonlinear functional containing position--velocity cross terms.
In particular, one has to show that the internal spatial fluctuations decay even though the
dissipative terms first appear in the velocity equation.

\emph{Shift invariance and the center mode.}
The CBO interaction depends only on displacements from the weighted mean. This shift-invariant
structure is already present in first-order CBO and is one of the reasons why uniform-in-time
estimates require centered moment decay rather than a direct globally contractive coupling. In the
second-order model, the same obstruction interacts with the kinetic transport structure: the
spatial center-of-mass component of the coupling error has no direct restoring force, and the
velocity fluctuation has to be coupled to the spatial fluctuation in the correct tilted variables.
Thus the first-order centered-moment argument is no longer sufficient by itself. One must identify
a shifted velocity variable so that the velocity equation can transfer dissipation to the spatial
fluctuations and the synchronous coupling can be closed uniformly in time.

A qualitative difference from the first-order theory is that the contracting quantity is no longer purely spatial. In the first-order model, after removing the empirical mean, the centered spatial fluctuations already carry the relevant dissipative structure: the consensus drift acts directly on $X_t^j - \mathcal{M}_\alpha(\mu_{X_t^J})\,,$ so the decay of centered spatial moments holds without any cross-variable mechanism and can be inserted directly into the coupling argument. In the second-order model, the consensus force acts first on the velocity and reaches the position only through transport. The spatial decay is therefore not captured by a purely spatial centered moment; it must instead be extracted from a nonlinear energy that couples position and velocity, so that the dissipation generated in the velocity equation is transferred to the spatial fluctuations. The shifted velocity variable introduced in \cref{sec:setting-transformation} is designed to make this transfer explicit, once the undamped translation mode has been removed.

This shifted formulation is the main structural point of the proof. The shift is not a cosmetic
rewriting of the dynamics; it separates the contracting internal modes from the undamped
translation mode and reveals the dissipative structure of the kinetic system. Without this shift,
the natural unshifted centered functional leaves non-integrable or wrong-sign terms that prevent a
uniform-in-time closure.

\subsection{Literature Context}

\paragraph{Interacting particle systems and propagation of chaos.}
The mean-field formalism and propagation of chaos go back to Kac~\cite{kac1956foundations} and
McKean--Sznitman~\cite{MR1108185,McKean1967PropagationChaos}; see
\cite{ReviewChaintronI,ReviewChaintronII} for modern reviews. Classical finite-time estimates
typically grow exponentially in time. Uniform-in-time estimates are needed when one wants to
combine mean-field limits with long-time asymptotics. Recent results cover McKean--Vlasov
equations~\cite{10.1214/18-ECP150,MR4163850}, singular fluid models~\cite{guillin2024uniform},
unconfined systems~\cite{sticky_coupling}, and entropy-based sharp approaches~\cite{MR4634344}.
General criteria for upgrading finite-time bounds to uniform-in-time bounds are discussed
in~\cite{schuh2024conditionsuniformtimeconvergence}. In second-order settings, such questions are
typically more delicate because spatial coercivity has to be recovered through hypocoercive
mechanisms; see, for instance,
\cite{villani2009hypocoercivity,Jabin2017MeanFL,JABIN20163588,
EberleGuillinZimmer2019,MR2731396,MR3646428,monmarche:hal-01716329,MR4333408}.

\paragraph{Flocking and second-order multi-agent systems.}
Second-order CBO is structurally related to alignment models such as Cucker--Smale
\cite{cucker2007emergent}, where agents evolve through position--velocity dynamics and interact
through collective alignment. Mean-field limits and long-time behavior for such models have been
studied in \cite{MR2425606,MR2536440,MR2596552,MR4349805,MR4405302}. A recent uniform-in-time
propagation-of-chaos result for a first-order Cucker--Smale model appears in
\cite{gerber2025uniformintimepropagationchaoscuckersmale}. Unlike deterministic alignment models,
second-order CBO contains multiplicative stochastic forcing acting only on velocity.

\paragraph{CBO and related particle methods.}
For CBO itself, first-order results were recalled above. Uniform-in-time results are also available
for modified CBO dynamics~\cite{choi2025modifiedconsensusbasedoptimizationmodel}, convexifying
rescalings~\cite{huang2024uniformintimemeanfieldlimitestimate}, and weak convergence
approaches~\cite{bayraktar2025uniformintimeweakpropagationchaos}. Related high-dimensional
particle methods include PSO~\cite{Huang_2023}, Consensus-Based Sampling~\cite{CBS-Carrillo2021},
and Ensemble Kalman methods~\cite{MR3041539,MR3988266,MR4199469,MR4234152}; for the Ensemble
Kalman Sampler, uniform-in-time mean-field estimates were obtained
in~\cite{vaes2024sharppropagationchaosensemble}.

\subsection{Our Contributions}

We prove uniform-in-time mean-field limits for the unmodified second-order CBO system
\(\cref{eq: k-cbo}\), without adding convexifying drift terms, confinement, or cutoffs. The result
extends the uniform-in-time theory of first-order CBO to a genuinely kinetic model derived from
PSO, where the consensus force and multiplicative noise act on velocity rather than directly on
position.

Our main results, stated in \cref{sec: main-results}, are:
\begin{enumerate}[leftmargin=*]
    \item A quantitative \emph{uniform-in-time propagation of chaos} estimate between the
    interacting particle system \(\cref{eq: k-cbo}\) and the McKean--Vlasov system
    \(\cref{eq: k-cbo-mfl}\):
    \[
        \sup_{t\ge0}
        \E\left[
            |X_t^j-\overline X_t^j|^2
            +
            |V_t^j-\overline V_t^j|^2
        \right]
        \le
        \frac{C}{J}.
    \]
    This recovers the classical Monte Carlo rate uniformly over infinite time horizons.

    \item An \emph{almost uniform-in-time stability estimate} between two copies of the interacting
    particle system driven by the same Brownian motions. Since this estimate does not involve the
    empirical sampling error of the mean-field law, it yields the faster rate \(O(J^{-q})\), for any
    admissible \(q>\frac12\), under the corresponding moment assumptions.
\end{enumerate}

The main technical contribution is a shifted hypocoercive framework for the internal fluctuation
dynamics. The first-order proof relies on exponential decay of centered spatial moments. In the
second-order model, the corresponding decay is not visible in the naive centered variables because
position is controlled only through velocity. We therefore introduce a tilted velocity
fluctuation, obtained by subtracting a multiple of the centered position. In these shifted
variables, a Lyapunov functional yields exponential decay of centered moments. This
decay is the ingredient that makes the uniform-in-time synchronous coupling close.

Once this decay is established, the rest of the uniform-in-time argument can be organized around
the first-order strategy of~\cite{uit_cbo}: the decay of internal fluctuations makes the
time-dependent coupling coefficient integrable, while uniform raw moment bounds, concentration
inequalities, stability estimates for the weighted mean, and a Monte Carlo estimate control the
remaining nonlinear terms. The final coupling energy contains both a position--velocity cross term
and a correction removing the undamped center-of-mass spatial mode.

The assumptions used in this work are sufficient conditions under which the shifted hypocoercive mechanism can be isolated and quantified without
additional localization or truncation of the algorithm. In particular, the restrictions on the
parameters ensure that the dissipation in the centered--shifted variables absorbs the contributions
from the multiplicative noise and the weighted-mean perturbation. We do not expect these
conditions to be optimal. Rather, they reflect the present proof strategy, whose goal is to
establish a first uniform-in-time propagation-of-chaos estimate for the unmodified second-order CBO
dynamics while keeping the main algebraic mechanism transparent. Weakening these assumptions, improving the dependence on the inverse-temperature parameter \(\alpha\), and treating broader
classes of objective functions are left as future directions.

\paragraph{Future directions.}
In light of the above discussion, several directions remain open. First, the present analysis is carried out under structural
conditions on the objective and on the parameters that allow the shifted moment
estimates to close. Relaxing these assumptions, in particular for unbounded objectives used in
applications, is an important next step. One possible route is to combine the present argument with
localization or Lyapunov estimates adapted to the objective landscape, rather than relying on
global boundedness assumptions.

Second, the constants in the current estimates still have unfavorable dependence on the
inverse-temperature parameter \(\alpha\). Since large \(\alpha\) is the regime in which the
weighted mean better approximates the global minimizer, improving this dependence would sharpen
the connection between the mean-field limit and the optimization performance of the algorithm.

Third, recent sharp uniform-in-time propagation-of-chaos frameworks based on entropy or BBGKY
hierarchies have been developed for more regular McKean--Vlasov diffusions
\cite{MR4634344,grass2024sharppropagationchaosmckeanvlasov}. Extending such sharp methods to CBO
remains nontrivial because the diffusion is multiplicative and degenerate, the weighted mean is
not globally Lipschitz in the usual Wasserstein distance, and the interaction is shift-invariant.
Understanding whether the present shifted hypocoercive framework can be combined with these
sharper approaches is a natural problem.

Finally, the model studied here is continuous in time. Since PSO and CBO are implemented as
time-discrete algorithms, a uniform-in-time discretization theory for second-order CBO would be a
substantial step toward a complete finite-particle, finite-step convergence theory.

\paragraph{Plan of the paper.}
In \cref{sec:math-setting} we introduce the particle system, the synchronously coupled
mean-field dynamics, and the notation used throughout the paper. We then define the centered and
shifted internal variables that isolate the contracting fluctuation modes from the translation
mode, and we introduce the nonlinear energies used in the proof. The main uniform-in-time
propagation-of-chaos and stability results are stated in \cref{sec: main-results}. Their proofs are
given in \cref{sec: main-results-proof}: the argument first derives a differential inequality for
the modified coupling energy and then uses the exponential decay of the shifted internal moments
to obtain an integrable coupling coefficient. The auxiliary estimates needed in this argument,
including centered moment decay, raw moment bounds, concentration inequalities, stability of the
weighted mean, and the Monte Carlo estimate, are collected in \cref{sec: auxiliary-results}.
Appendix~A explains the role of the shifted hypocoercive formulation, while Appendix~B records why
the unshifted centered functional does not provide a closed uniform-in-time estimate.


\section{Mathematical Setting}\label{sec:math-setting}

The second-order CBO particle system is
\begin{align}
    \d X_t^j &= V_t^j\,\d t, \notag\\
    m\,\d V_t^j
    &=
    -\gamma V_t^j\,\d t
    -
    \bigl(X_t^j-\mathcal M_\alpha(\mu_{X_t^J})\bigr)\,\d t
    +
    \sigma S\bigl(X_t^j-\mathcal M_\alpha(\mu_{X_t^J})\bigr)\,\d W_t^j,
    \qquad j\in[J], \label{eq: k-cbo}
\end{align}
where
\[
    \mu_{X_t^J}:=\frac1J\sum_{j=1}^J\delta_{X_t^j},
    \qquad
    \mathcal M_\alpha(\mu)
    :=
    \frac{\int x\,e^{-\alpha f(x)}\,\mu(\d x)}
         {\int e^{-\alpha f(x)}\,\mu(\d x)}.
\]
The noise operator is either \(S(x)=|x|I_d\) or \(S(x)=\diag(x)\). The synchronously coupled mean-field system for $J \rightarrow \infty$ is
\begin{align}
    \d\overline X_t^j &= \overline V_t^j\,\d t, \notag\\
    m\,\d\overline V_t^j
    &=
    -\gamma \overline V_t^j\,\d t
    -
    \bigl(\overline X_t^j-\mathcal M_\alpha(\overline\rho_t^X)\bigr)\,\d t
    +
    \sigma S\bigl(\overline X_t^j-\mathcal M_\alpha(\overline\rho_t^X)\bigr)\,\d W_t^j,
    \label{eq: k-cbo-mfl}
\end{align}
where \(\overline\rho_t=\Law(\overline X_t^1,\overline V_t^1)\) and
\(\overline\rho_t^X\) denotes its spatial marginal. The particle and mean-field systems are
initialized at the same positions and velocities and driven by the same Brownian motions.

The law \(\overline\rho_t\) solves the kinetic nonlinear Fokker--Planck equation
\begin{align}\label{eq:k-cbo-fp}
    \partial_t\overline\rho_t
    +
    v\cdot\nabla_x\overline\rho_t
    &=
    \frac{\gamma}{m}\nabla_v\cdot\bigl(v\overline\rho_t\bigr)
    +
    \frac1m\nabla_v\cdot
    \Bigl(
        \bigl(x-\mathcal M_\alpha(\overline\rho_t^X)\bigr)\overline\rho_t
    \Bigr) \notag\\
    &\quad+
    \frac{\sigma^2}{2m^2}
    \nabla_v\cdot\nabla_v\cdot
    \Bigl(
        S\bigl(x-\mathcal M_\alpha(\rho^X)\bigr)
    S\bigl(x-\mathcal M_\alpha(\rho^X)\bigr)^\top \overline\rho_t
    \Bigr)\,.
\end{align}

\subsection{Notation}\label{sec:notations}

We write \([J]:=\{1,\dots,J\}\), use \(|\cdot|\) for the Euclidean norm, and
\(\|\cdot\|_{\rm F}\) for the Frobenius norm. For a probability measure \( \rho \) on \(\R^{2d}\),
with generic point \((x,v)\), set
\[
    \mathcal P_p(\R^{2d})
    :=
    \left\{
        \rho \in \mathcal{P}(\R^{2d}) :\int_{\R^{2d}}(|x|^p+|v|^p)\,\rho(\d x,\d v)<\infty
    \right\}.
\]
The Wasserstein-\(p\) distance is denoted by \(\mathcal W_p\). 
For $\mu \in \mathcal P (\R^{d}) \,,$ We define position and velocity means
\[
    m_X(\mu)
    :=
    \int_{\R^d} x\, \mu (\d x),
    \qquad
    m_V(\mu )
    :=
    \int_{\R^d} v\, \mu (\d v).
\]
For the particle system,
\[
    \emp{t}:=\frac1J\sum_{j=1}^J\delta_{X_t^j},
    \qquad
    \empv{t}:=\frac1J\sum_{j=1}^J\delta_{V_t^j},
\]
with means $m_X(\emp{t})\,, m_V(\empv{t})\,.$
For the mean-field samples,
\[
    \empbar{t}:=\frac1J\sum_{j=1}^J\delta_{\overline X_t^j},
    \qquad
    \empbarv{t}:=\frac1J\sum_{j=1}^J\delta_{\overline V_t^j}.
\]
The mean-field law is \(\overline\rho_t\), with spatial and velocity marginals
\(\overline\rho_t^X,\overline\rho_t^V\), and means \(m_X(\overline\rho_t^X)\),
\(m_V(\overline\rho_t^V)\).
For a spatial probability measure \(\mu \in \mathcal{P}(\R^d) \), define
\[
    \Delta_\alpha(\mu):=m_X(\mu)-\mathcal M_\alpha(\mu),
    \qquad
    \momentp{p}{\mu}:=\int |x-m_X(\mu)|^p\,\mu(\d x).
\]
The same notation is used for empirical measures.
We also introduce following notation for estimating coupling error in synchronous coupling argument,
\[
    \delta X_t^j:=X_t^j-\overline X_t^j,
    \qquad
    \delta V_t^j:=V_t^j-\overline V_t^j,
\]
with averages
\[
    \overline{\delta X}_t:=\frac1J\sum_j\delta X_t^j,
    \qquad
    \overline{\delta V}_t:=\frac1J\sum_j\delta V_t^j,
\]
and centered fluctuations
\[
    \widetilde{\delta X}_t^j:=\delta X_t^j-\overline{\delta X}_t,
    \qquad
    \widetilde{\delta V}_t^j:=\delta V_t^j-\overline{\delta V}_t.
\]
Finally,
\[
    \tau(S):=
    \begin{cases}
        d, & S(x)=|x|I_d,\\
        1, & S(x)=\diag(x),
    \end{cases}
    \qquad
    \chi_S:=\tau(S)+p-2.
\]

\subsection{Assumptions}\label{sec:assumptions}
We assume following two \cref{assumption:bounded}, and \cref{assumption:lip} on the objective function $f: \R^d \rightarrow \R\,.$

\begin{assumption}\label{assumption:bounded}
The objective \(f:\R^d\to\R\) is bounded:
\[
    \underline f\le f(x)\le \overline f.
\]
\end{assumption}

\begin{assumption}\label{assumption:lip}
The objective \(f:\R^d\to\R\) is globally Lipschitz:
\[
    |f(x)-f(y)|\le L_f|x-y|.
\]
\end{assumption}
We assume following \cref{assump:admissibility} on the parameters $m, \gamma, \sigma\,.$
\begin{assumption}[Parameter admissibility]\label{assump:admissibility}
For each moment order \(p\ge2\) used below, the parameters \(m,\gamma,\sigma\)
satisfy the following conditions.

\begin{enumerate}[label=(\roman*)]
    \item \textbf{The quadratic case \(p=2\).}
    Define
    \begin{equation}\label{eq:Ksigma-def}
        K_\sigma
        :=
        \frac{2\sigma^2\tau(S)(1+C_{\mathrm{lem}})}{m^2}.
    \end{equation}
    We assume
    \begin{equation}\label{eq:p2-friction-condition}
        \gamma^2>\frac{3m}{2},
        \qquad
        K_\sigma<\frac1{m\gamma}.
    \end{equation}
    Equivalently,
    \begin{equation}\label{eq:p2-small-noise}
        \sigma^2\gamma
        <
        \frac{m}{2\tau(S)(1+C_{\mathrm{lem}})}.
    \end{equation}

    \item \textbf{The high-moment case \(p>2\).}
    Define
    \begin{equation}\label{eq:mu-gap-p-def}
        \mu_{\mathrm{gap},p}
        :=
        2-(p-1)^{-\frac{p-2}{p}}
        \exp\!\left(
            \frac{p-1}{p}\alpha(\overline f-\underline f)
        \right),
    \end{equation}
    and assume
    \begin{equation}\label{eq:mu-gap-condition}
        \mu_{\mathrm{gap},p}>0.
    \end{equation}
    We also assume
    \begin{equation}\label{eq:high-p-mass-bound}
        \gamma^2>m(p-2),
        \qquad
        m<
        \frac{p(16-7\mu_{\mathrm{gap},p})}
        {16(p-1)(p-2)}.
    \end{equation}

    Finally, for the high-moment Lyapunov weight
    \[
        a_p:=\frac1p\left(1-\frac{m(p-2)}{\gamma^2}\right),
    \]
    we assume the strict coercivity condition
    \begin{equation}\label{eq:high-p-coercivity-gamma}
        a_p
        >
        2(p-1)
        \left(\frac{m}{p\gamma}\right)^{\frac{p}{p-1}}.
    \end{equation}
    Equivalently, \(c_1(a_p,p)>0\) in the norm-equivalence estimate of
    \cref{lem:hypercoercivity}. A sufficient lower bound for
    \eqref{eq:high-p-coercivity-gamma} is
    \begin{equation}\label{eq:gamma-coercivity}
        \gamma
        \ge
        \max\left\{
            \sqrt{2m(p-2)},
            \frac{m}{p}\bigl(8p(p-1)\bigr)^{\frac{p-1}{p}}
        \right\}.
    \end{equation}
\end{enumerate}
\end{assumption}
\begin{remark}[Inverse temperature, moment order, and admissibility]
\label{rem:alpha-moment-admissibility}
For a fixed high-moment order \(p>2\), the positivity condition
\(\mu_{\mathrm{gap},p}>0\) is equivalent to
\[
    \alpha(\overline f-\underline f)
    <
    \frac{p\log 2+(p-2)\log(p-1)}{p-1}.
\]
The right-hand side grows like \(\log(2p)\) as \(p\to\infty\). Thus, for a fixed
value of \(\alpha(\overline f-\underline f)\), the positivity condition
\(\mu_{\mathrm{gap},p}>0\) can be enforced by taking the moment order \(p\) large
enough.

This is the basic inverse-temperature/moment-order tradeoff in the present
hypocoercive argument. A larger inverse temperature requires control of higher
centered moments. This should be distinguished from the first-order uniform-in-time
CBO argument, where high moments also enter through the good--bad event decomposition
and the desired stability rate, but not through the same high-moment admissibility
gap. Here the high-moment Lyapunov estimate itself requires
\(\mu_{\mathrm{gap},p}>0\).

There is a cost to increasing \(p\). The remaining admissibility conditions become
stronger:
\[
    \gamma^2>m(p-2),
    \qquad
    m<
    \frac{p(16-7\mu_{\mathrm{gap},p})}
    {16(p-1)(p-2)}
    =
    O(p^{-1})
    \quad\text{as }p\to\infty,
\]
and the coercivity condition \eqref{eq:high-p-coercivity-gamma} imposes an additional
lower bound on \(\gamma\). Thus, using higher moment orders pushes the sufficient
parameter regime toward a higher-friction/lower-mass regime. This is natural for the
second-order dynamics: the consensus force and the multiplicative noise act in the
velocity equation, while spatial contraction is recovered through the 
coupling between position and velocity.

In the main propagation-of-chaos and stability results below, the eighth moment plays
a special role because the bad-set estimate uses the decay rate
\(\kappa=\edecay{8}/8\). Hence the present statements assume admissibility at \(p=8\).
If \(\alpha(\overline f-\underline f)\) is too large for \(\mu_{\mathrm{gap},8}>0\),
one could instead rerun the good--bad event argument with a higher base moment order,
replacing the eighth-moment rate by the corresponding higher-moment rate. This would
require higher finite moments of the initial data and would change the constants and
decay rates in the concentration estimates.

The additional comparisons between different high-moment gaps, such as
\[
    \mu_{\mathrm{gap},2r}>\frac12\mu_{\mathrm{gap},8},
    \qquad
    \mu_{\mathrm{gap},p_\ast}>\frac12\mu_{\mathrm{gap},8},
\]
come from the concentration estimates used in the propagation-of-chaos and stability
proofs; see \cref{rem:concentration-gap-compatibility}. They are separate from the
small-noise restrictions. In particular, decreasing \(\sigma\) helps with the
quadratic and noise-absorption conditions, but it does not by itself change these
high-moment gap comparisons. Finally, note that the conditions in \cref{assump:admissibility} should be read as sufficient conditions
for closing the present high-moment estimates. They are not the necessary conditions for the second-order CBO dynamics.
\end{remark}

\subsection{Centered--Shifted Variables and Decoupling}
\label{sec:setting-transformation}

We now introduce the internal variables used in the estimates. The purpose of this
change of variables is twofold. First, since the CBO interaction is invariant under translations
of the swarm only through relative displacements from the consensus point, the relevant
contractive quantities are not the raw positions and velocities, but their fluctuations around the
empirical mean. Second, in the kinetic system, the naive centered velocity
\(V_t^j-m_V(\empv{t})\) is not the correct variable for closing the moment estimates. The spatial
fluctuation \(X_t^j-m_X(\emp{t})\) and the velocity fluctuation must be tilted against each other
so that the dissipative structure in the velocity equation can be transferred to the spatial
variable.

This is the role of the shifted velocity below. After subtracting the empirical mean, the
deterministic drift in the internal variables becomes a damped linear kinetic system. In
particular, the empirical mean velocity and the weighted-mean displacement cancel from the
deterministic internal drift. The weighted-mean displacement remains only in the martingale
coefficient, where it can later be controlled by centered moment estimates.

Define
\[
    Y_t^j:=X_t^j-m_X(\emp{t}),
    \qquad
    Z_t^j:=V_t^j-\frac1\gamma Y_t^j,
    \qquad
    \hat Z_t^j:=Z_t^j-\frac1J\sum_{k=1}^J Z_t^k .
\]
Since \(\sum_jY_t^j=0\), we have
\[
    \frac1J\sum_{k=1}^J Z_t^k
    =
    \frac1J\sum_{k=1}^J V_t^k
    =
    m_V(\empv{t}),
\]
and therefore
\[
    \hat Z_t^j
    =
    \bigl(V_t^j-m_V(\empv{t})\bigr)
    -
    \frac1\gamma Y_t^j .
\]
In particular,
\[
    \sum_{j=1}^J Y_t^j=0,
    \qquad
    \sum_{j=1}^J \hat Z_t^j=0.
\]
We also write
\[
    \Delta_\alpha(\emp{t})
    :=
    m_X(\emp{t})-\mathcal M_\alpha(\emp{t}).
\]
Then
\[
    X_t^j-\mathcal M_\alpha(\emp{t})
    =
    Y_t^j+\Delta_\alpha(\emp{t}).
\]
Since \(\d m_X(\emp{t})=m_V(\empv{t})\,\d t\), we obtain
\[
    \d Y_t^j
    =
    \bigl(V_t^j-m_V(\empv{t})\bigr)\,\d t
    =
    \left(\hat Z_t^j+\frac1\gamma Y_t^j\right)\d t .
\]

We next compute the dynamics of \(Z_t^j\). From the particle system,
\[
    \d V_t^j
    =
    -\frac\gamma m V_t^j\,\d t
    -
    \frac1m\bigl(Y_t^j+\Delta_\alpha(\emp{t})\bigr)\,\d t
    +
    \frac\sigma m
    S\bigl(Y_t^j+\Delta_\alpha(\emp{t})\bigr)\,\d W_t^j .
\]
Using \(Z_t^j=V_t^j-\gamma^{-1}Y_t^j\) and
\[
    V_t^j
    =
    \hat Z_t^j
    +
    m_V(\empv{t})
    +
    \frac1\gamma Y_t^j,
\]
we find
\begin{align}
    \d Z_t^j
    &=
    \left(
        -K_Z\hat Z_t^j
        -K_Y Y_t^j
        -\frac1m\Delta_\alpha(\emp{t})
        -\frac\gamma m m_V(\empv{t})
    \right)\d t
    \notag\\
    &\qquad
    +
    \frac\sigma m
    S\bigl(Y_t^j+\Delta_\alpha(\emp{t})\bigr)\,\d W_t^j,
    \label{eq:sde-Z-uncentered}
\end{align}
where
\[
    K_Z:=\frac\gamma m+\frac1\gamma,
    \qquad
    K_Y:=\frac2m+\frac1{\gamma^2}.
\]

Averaging \(\cref{eq:sde-Z-uncentered}\) over \(j\), and using
\(\sum_jY_t^j=\sum_j\hat Z_t^j=0\), gives
\[
    \d m_Z(t)
    =
    \left(
        -\frac1m\Delta_\alpha(\emp{t})
        -\frac\gamma m m_V(\empv{t})
    \right)\d t
    +
    \d\overline M_t,
\]
where
\[
    m_Z(t):=\frac1J\sum_{k=1}^J Z_t^k=m_V(\empv{t}),
    \qquad
    \d\overline M_t
    :=
    \frac1J\sum_{k=1}^J
    \frac\sigma m
    S\bigl(Y_t^k+\Delta_\alpha(\emp{t})\bigr)\,\d W_t^k .
\]
Subtracting this mean equation from \(\cref{eq:sde-Z-uncentered}\) yields
\begin{equation}\label{eq:sde-Z-hat}
    \d\hat Z_t^j
    =
    (-K_Z\hat Z_t^j-K_Y Y_t^j)\d t
    +
    \d M_t^{j,\rm center},
\end{equation}
where
\[
    \d M_t^{j,\rm center}
    :=
    \frac\sigma m
    S\bigl(Y_t^j+\Delta_\alpha(\emp{t})\bigr)\,\d W_t^j
    -
    \frac1J\sum_{k=1}^J
    \frac\sigma m
    S\bigl(Y_t^k+\Delta_\alpha(\emp{t})\bigr)\,\d W_t^k.
\]

Thus the centered--shifted variables satisfy the internal system
\[
    \begin{cases}
    \d Y_t^j
    =
    \left(\hat Z_t^j+\dfrac1\gamma Y_t^j\right)\d t,\\[0.8em]
    \d\hat Z_t^j
    =
    (-K_Z\hat Z_t^j-K_Y Y_t^j)\d t
    +
    \d M_t^{j,\rm center}.
    \end{cases}
\]
The deterministic part is now a closed damped linear system in
\((Y_t^j,\hat Z_t^j)\): the empirical mean velocity and the weighted-mean displacement have
canceled from the drift after centering. The nonlinear weighted-mean shift
\(\Delta_\alpha(\emp{t})\) remains only inside the centered martingale coefficient, where it is
controlled later by moment estimates for the internal fluctuations. This algebraic decoupling is
the reason for using the shifted velocity \(\hat Z_t^j\), rather than the naive centered velocity
\(V_t^j-m_V(\empv{t})\).

\subsection{Lyapunov Functionals and Norm Equivalence}\label{sec:lyapunov}

We use modified energy functionals to transfer velocity dissipation into spatial control.
This is needed in two places: first, to control the centered moments of the particle cloud
in \cref{thm:exp-decay-centered-moments}; second, to control the synchronous coupling error
in \cref{thm:uit-poc-kinetic}. The two constructions use the same hypocoercive idea, but
they are not identical. In particular, the \(p=2\) centered-moment estimate uses a dedicated
quadratic form \(\psi_2\), while the coupling estimate uses the quadratic specialization of
a universal polynomial form \(\phi_{a,p}\).

\paragraph{Algebraic forms}
For \(p\ge2\) and a spatial weight \(a>0\), define the polynomial form
\begin{equation}\label{eq:phi-ap}
    \phi_{a,p}(x,v)
    :=
    a|x|^p+|v|^p+\frac m\gamma |x|^{p-2}\langle x,v\rangle.
\end{equation}
The quadratic specialization \(\phi_{a,2}\) is used for the coupling energy. The cross
coefficient \(m/\gamma\) is chosen to match the high-friction scaling in the dynamics.

For the \(p=2\) centered-moment estimate, we use a dedicated quadratic form
\begin{equation}\label{eq:psi2-centered}
    \psi_2(x,v)
    :=
    a_2|x|^2+|v|^2+\frac5\gamma\langle x,v\rangle,
    \qquad
    a_2:=\frac9{2m}+\frac1{\gamma^2}.
\end{equation}

\begin{lemma}[Norm equivalence of the algebraic forms]\label{lem:hypercoercivity}
For \(\phi_{a,p}\), define the constants \(c_1(a,p)\) and \(c_2(a,p)\) by
\begin{equation}\label{eq:c-phi}
    c_1(a,p) :=
    \begin{cases}
        \frac{a+1-\sqrt{(a-1)^2+(m/\gamma)^2}}{2}, & p=2,\\[1.2em]
        \min\!\left\{ a-2(p-1)\left(\frac{m}{p\gamma}\right)^{\frac p{p-1}}, \, 1-\frac1{2^{p-1}} \right\}, & p>2,
    \end{cases}
\end{equation}
and
\begin{equation}
    c_2(a,p) :=
    \begin{cases}
        \frac{a+1+\sqrt{(a-1)^2+(m/\gamma)^2}}{2}, & p=2,\\[1.2em]
        \max\!\left\{ a+2(p-1)\left(\frac{m}{p\gamma}\right)^{\frac p{p-1}}, \, 1+\frac1{2^{p-1}} \right\}, & p>2.
    \end{cases}
\end{equation}
If \(c_1(a,p)>0\), then
\begin{equation}\label{eq:phi-norm-equivalence}
    c_1(a,p)(|x|^p+|v|^p) \le \phi_{a,p}(x,v) \le c_2(a,p)(|x|^p+|v|^p).
\end{equation}

For \(\psi_2\), define
\begin{equation}\label{eq:c-psi}
    c_1^{(2)} := \frac{a_2+1-\sqrt{(a_2-1)^2+(5/\gamma)^2}}{2},
    \qquad
    c_2^{(2)} := \frac{a_2+1+\sqrt{(a_2-1)^2+(5/\gamma)^2}}{2}.
\end{equation}
If \(c_1^{(2)}>0\), then
\begin{equation}\label{eq:psi2-norm-equivalence}
    c_1^{(2)}(|x|^2+|v|^2) \le \psi_2(x,v) \le c_2^{(2)}(|x|^2+|v|^2).
\end{equation}
In particular, the quadratic admissibility condition \(\gamma^2>3m/2\) implies \(\gamma^2>7m/6\), which guarantees \(c_1^{(2)}>0\).
\end{lemma}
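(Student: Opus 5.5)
The plan is to bound the cross term in each form by the diagonal terms. For \(p=2\) this is an exact eigenvalue computation of a \(2\times2\) symmetric matrix. For \(p>2\) it is a Young inequality with a weight tuned to produce exactly the constants in \eqref{eq:c-phi}. The \(\psi_2\) statement is the \(p=2\) computation with the cross coefficient \(m/\gamma\) replaced by \(5/\gamma\), followed by an elementary check of when \(c_1^{(2)}>0\).

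\textbf{Quadratic case.} We write
\[
    \phi_{a,2}(x,v)=\bigl\langle (x,v),(B\otimes I_d)(x,v)\bigr\rangle,
    \qquad
    B=\begin{pmatrix} a & \frac{m}{2\gamma}\\ \frac{m}{2\gamma} & 1\end{pmatrix}.
\]
The eigenvalues of \(B\otimes I_d\) are those of \(B\), each with multiplicity \(d\). They equal
\[
    \tfrac12\Bigl(a+1\pm\sqrt{(a-1)^2+(m/\gamma)^2}\Bigr),
\]
since the trace of \(B\) is \(a+1\) and its determinant is \(a-m^2/(4\gamma^2)\). The Rayleigh quotient then gives \eqref{eq:phi-norm-equivalence} with \(c_1(a,2),c_2(a,2)\) as stated. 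The same computation for \(\psi_2\), with off-diagonal entry \(5/(2\gamma)\) and diagonal entries \(a_2\) and \(1\), gives \eqref{eq:psi2-norm-equivalence}.

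For the positivity claim, \(c_1^{(2)}>0\) holds if and only if \(\det>0\), that is \(a_2>25/(4\gamma^2)\). Substituting \(a_2=\frac9{2m}+\frac1{\gamma^2}\), this becomes \(\frac{9}{2m}>\frac{21}{4\gamma^2}\), which is equivalent to \(\gamma^2>\frac{7m}{6}\). This is implied by \(\gamma^2>3m/2\).

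\textbf{High-moment case \(p>2\).} By Cauchy--Schwarz,
\[
    \Bigl|\tfrac m\gamma|x|^{p-2}\langle x,v\rangle\Bigr|
    \le p\cdot\tfrac{m}{p\gamma}\,|x|^{p-1}|v|.
\]
We apply Young's inequality in the form \(p\,A^{p-1}B\le (p-1)A^p+B^p\). We choose
\[
    B=2^{-(p-1)/p}|v|,
    \qquad
    A^{p-1}=2^{(p-1)/p}\tfrac{m}{p\gamma}|x|^{p-1},
\]
so that \(A^{p-1}B=\tfrac{m}{p\gamma}|x|^{p-1}|v|\). This choice gives \(B^p=2^{1-p}|v|^p\) and
\[
    (p-1)A^p=2(p-1)\Bigl(\tfrac{m}{p\gamma}\Bigr)^{p/(p-1)}|x|^p.
\]
Hence
\[
    \Bigl|\tfrac m\gamma|x|^{p-2}\langle x,v\rangle\Bigr|
    \le 2(p-1)\Bigl(\tfrac{m}{p\gamma}\Bigr)^{\frac p{p-1}}|x|^p+2^{1-p}|v|^p .
\]
Subtracting this bound from \(a|x|^p+|v|^p\) gives
\[
    \phi_{a,p}\ge\Bigl(a-2(p-1)\bigl(\tfrac{m}{p\gamma}\bigr)^{\frac p{p-1}}\Bigr)|x|^p+(1-2^{1-p})|v|^p .
\]
Bounding each coefficient below by the minimum of the two yields the lower bound with \(c_1(a,p)\). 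Adding the estimate instead, and bounding each coefficient above by the maximum, yields the upper bound with \(c_2(a,p)\).

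The only delicate point is the choice of the Young weight so that the constants come out exactly as in \eqref{eq:c-phi}, namely the factor \(2\) on the \(|x|^p\) side and \(2^{1-p}\) on the \(|v|^p\) side. The split above is designed for this. Everything else is the standard \(2\times 2\) spectral bound.
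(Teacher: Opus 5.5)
Your proposal is correct and matches the paper's proof. The paper likewise uses the exact eigenvalues of the $2\times2$ coefficient matrices for $\phi_{a,2}$ and $\psi_2$, including the check $a_2>25/(4\gamma^2)\iff\gamma^2>7m/6$. For $p>2$ it also bounds the cross term by Cauchy--Schwarz and a weighted Young inequality; your choice of $A,B$ is the paper's weight $\delta=2\bigl(\tfrac{m}{p\gamma}\bigr)^{1/(p-1)}$ written in a different parametrization, and it yields the same constants.
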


\begin{proof}
For \(p=2\), the constants for \(\phi_{a,2}\) are the eigenvalues of the symmetric matrix
\(\begin{pmatrix} a & m/(2\gamma) \\ m/(2\gamma) & 1 \end{pmatrix}\), which directly yields the \(p=2\) cases in \eqref{eq:c-phi}.

For \(p>2\), applying Young's inequality with parameter \(\delta>0\) yields
\[
    \frac m\gamma |x|^{p-2}|\langle x,v\rangle|
    \le
    \frac m\gamma
    \left(
        \frac{p-1}{p}\delta |x|^p + \frac1p\delta^{-(p-1)}|v|^p
    \right).
\]
Choosing \(\delta = 2\left(\frac{m}{p\gamma}\right)^{\frac1{p-1}}\) yields the \(p>2\) cases in \eqref{eq:c-phi} and establishes \eqref{eq:phi-norm-equivalence}.

For \(\psi_2\), the eigenvalues of the associated matrix \(\begin{pmatrix} a_2 & 5/(2\gamma) \\ 5/(2\gamma) & 1 \end{pmatrix}\) are \(c_1^{(2)}\) and \(c_2^{(2)}\). The positivity condition \(c_1^{(2)}>0\) is equivalent to \(a_2 > \frac{25}{4\gamma^2}\). Substituting \(a_2 = \frac{9}{2m} + \frac{1}{\gamma^2}\) gives \(\gamma^2 > 7m/6\).
\end{proof}

\paragraph{Centered Lyapunov functionals}
To analyze the internal variance, we evaluate the algebraic forms on the centered--shifted variables \((Y_t^j,\hat Z_t^j)\) introduced in \cref{sec:setting-transformation}. Define the spatial weight for \(p>2\) as
\begin{equation}\label{eq:a-p-centered}
    a_p := \frac1p\left(1-\frac{m(p-2)}{\gamma^2}\right).
\end{equation}
We define the pathwise centered Lyapunov functional by
\begin{equation}\label{eq:pathwise-Lp}
    L_p(t)
    :=
    \begin{cases}
        \displaystyle \frac1J\sum_{j=1}^J \psi_2(Y_t^j,\hat Z_t^j), & p=2,\\[1.5em]
        \displaystyle \frac1J\sum_{j=1}^J \phi_{a_p,p}(Y_t^j,\hat Z_t^j), & p>2.
    \end{cases}
\end{equation}
The ensemble-averaged Lyapunov functional \(\mathcal L_p(t)\) is defined as
\begin{equation}\label{eq:ensemble-Lp}
    \mathcal L_p(t) := \E[L_p(t)].
\end{equation}
The pathwise form \(L_p(t)\) is used in the concentration estimates, while \(\mathcal L_p(t)\) is used in the centered-moment decay theorem.

\begin{corollary}[Equivalence between centered Lyapunov functionals and moments]
\label{cor:centered-lyapunov-moment-equivalence}
Let \(p\ge2\). Under the parameter conditions of \cref{assump:admissibility}, the strict positivity of the equivalence constants \(c_1^{(2)}>0\) and \(c_1(a_p,p)>0\) is guaranteed. Define the constant \(K_{p,\gamma} := \max\{1+2^{p-1}\gamma^{-p}, \, 2^{p-1}\}\), and set
\begin{equation}
    C_{p,1} := \frac{1}{K_{p,\gamma}} \begin{cases} c_1^{(2)}, & p=2,\\ c_1(a_p,p), & p>2 \end{cases}, \qquad
    C_{p,2} := K_{p,\gamma} \begin{cases} c_2^{(2)}, & p=2,\\ c_2(a_p,p), & p>2 \end{cases}.
\end{equation}
Then the pathwise functional satisfies
\begin{equation}\label{eq:pathwise-Lp-centered-moment-equiv}
    C_{p,1} \left( \momentp{p}{\emp{t}} + \momentp{p}{\empv{t}} \right)
    \le L_p(t) \le
    C_{p,2} \left( \momentp{p}{\emp{t}} + \momentp{p}{\empv{t}} \right).
\end{equation}
Consequently, taking expectations yields
\begin{equation}\label{eq:expected-Lp-centered-moment-equiv}
    C_{p,1} \E\left[ \momentp{p}{\emp{t}} + \momentp{p}{\empv{t}} \right]
    \le \mathcal L_p(t) \le
    C_{p,2} \E\left[ \momentp{p}{\emp{t}} + \momentp{p}{\empv{t}} \right].
\end{equation}
\end{corollary}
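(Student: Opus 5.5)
The plan is to combine the pointwise norm equivalence of \cref{lem:hypercoercivity} with an elementary comparison between the centered--shifted pair \((Y_t^j,\hat Z_t^j)\) and the centered pair \((Y_t^j, V_t^j-m_V(\empv{t}))\). The latter pair is what enters \(\momentp{p}{\emp{t}}+\momentp{p}{\empv{t}}\). Writing \(U_t^j:=V_t^j-m_V(\empv{t})\), we have \(\hat Z_t^j=U_t^j-\gamma^{-1}Y_t^j\) and \(U_t^j=\hat Z_t^j+\gamma^{-1}Y_t^j\). Also \(\momentp{p}{\emp{t}}=\frac1J\sum_j|Y_t^j|^p\) and \(\momentp{p}{\empv{t}}=\frac1J\sum_j|U_t^j|^p\), directly from the definitions of \(m_X,m_V\).

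First, positivity of the constants. For \(p=2\), \cref{lem:hypercoercivity} already gives \(c_1^{(2)}>0\) from \(\gamma^2>3m/2\). For \(p>2\), \(c_1(a_p,p)\) is the minimum of \(a_p-2(p-1)(m/(p\gamma))^{p/(p-1)}\) and \(1-2^{1-p}\). The first term is positive by \eqref{eq:high-p-coercivity-gamma}, and the second is trivially positive. Hence \eqref{eq:phi-norm-equivalence} and \eqref{eq:psi2-norm-equivalence} apply pointwise to each \((Y_t^j,\hat Z_t^j)\). Averaging over \(j\) then gives
\[
    c_1\,\frac1J\sum_j\bigl(|Y_t^j|^p+|\hat Z_t^j|^p\bigr)\le L_p(t)\le c_2\,\frac1J\sum_j\bigl(|Y_t^j|^p+|\hat Z_t^j|^p\bigr),
\]
with the appropriate \(c_1,c_2\) for each case.

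Next, the linear change of variables. Using \(|a+b|^p\le 2^{p-1}(|a|^p+|b|^p)\), we get \(|\hat Z|^p\le 2^{p-1}|U|^p+2^{p-1}\gamma^{-p}|Y|^p\). Hence
\[
    |Y|^p+|\hat Z|^p\le (1+2^{p-1}\gamma^{-p})|Y|^p+2^{p-1}|U|^p\le K_{p,\gamma}\bigl(|Y|^p+|U|^p\bigr).
\]
The same computation, applied to \(U=\hat Z+\gamma^{-1}Y\), gives \(|Y|^p+|U|^p\le K_{p,\gamma}(|Y|^p+|\hat Z|^p)\). We then average over \(j\) and combine with the previous display. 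The upper bound carries the factor \(c_2K_{p,\gamma}=C_{p,2}\), and the lower bound carries \(c_1/K_{p,\gamma}=C_{p,1}\), which is \eqref{eq:pathwise-Lp-centered-moment-equiv}. Taking expectations, using linearity and monotonicity together with \eqref{eq:ensemble-Lp}, gives \eqref{eq:expected-Lp-centered-moment-equiv}.

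There is no genuine obstacle here. The only point requiring care is to check that the single constant \(K_{p,\gamma}\) works in both directions of the triangle-type inequality, which holds since the two directions have the same form with coefficient \(\gamma^{-1}\). One also has to confirm that the identification of the empirical centered moments uses the velocity mean \(m_V(\empv{t})\), not the \(Z\)-mean; the two coincide because \(\sum_j Y_t^j=0\).
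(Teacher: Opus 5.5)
Your proof is correct and takes the same approach as the paper. You apply the pointwise norm equivalence of \cref{lem:hypercoercivity} to each pair $(Y_t^j,\hat Z_t^j)$, then move between the $(Y,\hat Z)$-moments and the $(Y,V-m_V)$-moments via $|u+v|^p\le 2^{p-1}(|u|^p+|v|^p)$ applied to $V_t^j-m_V(\empv{t})=\hat Z_t^j+\gamma^{-1}Y_t^j$, and absorb the factor $K_{p,\gamma}$ into the constants. Your checks that the same $K_{p,\gamma}$ works in both directions, and that $c_1(a_p,p)>0$ follows from \eqref{eq:high-p-coercivity-gamma}, simply make explicit what the paper leaves implicit.
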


\begin{proof}
By \cref{lem:hypercoercivity}, \(L_p(t)\) is bounded above and below by the sum of \(\frac1J\sum |Y_t^j|^p\) and \(\frac1J\sum |\hat Z_t^j|^p\) using the respective equivalence constants.
Since \(V_t^j-m_V(\empv{t}) = \hat Z_t^j+\gamma^{-1} Y_t^j\), the elementary inequalities \(|u+v|^p\le 2^{p-1}(|u|^p+|v|^p)\) and \(|u|^p\le 2^{p-1}(|u+v|^p+|v|^p)\) yield
\[
    \momentp{p}{\empv{t}} \le 2^{p-1} \left( \frac1J\sum_{j=1}^J|\hat Z_t^j|^p + \gamma^{-p}\momentp{p}{\emp{t}} \right) \quad \text{and} \quad \frac1J\sum_{j=1}^J|\hat Z_t^j|^p \le 2^{p-1} \left( \momentp{p}{\empv{t}} + \gamma^{-p}\momentp{p}{\emp{t}} \right).
\]
Combining these inequalities bounds the sum of \(Y\) and \(\hat Z\) moments by the physical centered moments with factor \(K_{p,\gamma}\). Absorbing this into the equivalence constants gives \eqref{eq:pathwise-Lp-centered-moment-equiv}.
\end{proof}

\paragraph{Coupling energy}
Recap that for the synchronous coupling differences \((\delta X_t^j,\delta V_t^j)\), we defined their empirical means \(\overline{\delta X}_t := \frac1J\sum_{j=1}^J\delta X_t^j\) and \(\overline{\delta V}_t := \frac1J\sum_{j=1}^J\delta V_t^j\), and the centered fluctuations \(\widetilde{\delta X}_t^j := \delta X_t^j-\overline{\delta X}_t\) and \(\widetilde{\delta V}_t^j := \delta V_t^j-\overline{\delta V}_t\).

The modified coupling energy \(\mathcal E_t\) is defined using \(\phi_{a,2}\):
\begin{equation}\label{eq:coupling-energy-defn}
    \mathcal E_t := \frac1J\sum_{j=1}^J \phi_{a,2}(\delta X_t^j,\delta V_t^j) - \frac1m|\overline{\delta X}_t|^2, \qquad a:=\frac12+\frac1m.
\end{equation}
The corresponding Euclidean coupling error is
\begin{equation}\label{eq:hat-E-def}
    \widehat{\mathcal E}_t := \frac1J\sum_{j=1}^J \left( |\delta X_t^j|^2 + |\delta V_t^j|^2 \right).
\end{equation}

\begin{corollary}[Equivalence of coupling energies]\label{cor:coupling-energy-equivalence}
Let \(a=1/2+1/m\). Define the global coupling constants
\begin{equation}
    c_{\mathcal{E},1} := \min\left\{ c_1(a,2), \, c_1\left(a-\frac1m,2\right) \right\}, \qquad c_{\mathcal{E},2} := \max\left\{ c_2(a,2), \, c_2\left(a-\frac1m,2\right) \right\}.
\end{equation}
If \(\gamma>m/\sqrt2\), then \(c_{\mathcal{E},1} > 0\) and
\begin{equation}\label{eq:coupling-energy-equivalence}
    c_{\mathcal{E},1} \widehat{\mathcal E}_t \le \mathcal E_t \le c_{\mathcal{E},2} \widehat{\mathcal E}_t.
\end{equation}
The same estimate holds after taking expectations.
\end{corollary}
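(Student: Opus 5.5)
The plan is to split each coupling difference into its empirical mean and its centered fluctuation, and then apply \cref{lem:hypercoercivity} to the two pieces separately. Write \(\delta X_t^j=\widetilde{\delta X}_t^j+\overline{\delta X}_t\) and \(\delta V_t^j=\widetilde{\delta V}_t^j+\overline{\delta V}_t\). Since \(\sum_j\widetilde{\delta X}_t^j=\sum_j\widetilde{\delta V}_t^j=0\), the cross terms vanish in every quadratic average. Because \(\phi_{a,2}\) is a quadratic form, this gives
\[
    \frac1J\sum_{j=1}^J \phi_{a,2}(\delta X_t^j,\delta V_t^j)
    =
    \frac1J\sum_{j=1}^J \phi_{a,2}(\widetilde{\delta X}_t^j,\widetilde{\delta V}_t^j)
    +
    \phi_{a,2}(\overline{\delta X}_t,\overline{\delta V}_t).
\]
Subtracting \(\frac1m|\overline{\delta X}_t|^2\) only changes the spatial weight of the mean part. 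Since the cross coefficient \(m/\gamma\) does not depend on \(a\), we have \(\phi_{a,2}(x,v)-\frac1m|x|^2=\phi_{a-1/m,2}(x,v)\). Hence
\[
    \mathcal E_t
    =
    \frac1J\sum_{j=1}^J \phi_{a,2}(\widetilde{\delta X}_t^j,\widetilde{\delta V}_t^j)
    +
    \phi_{a-1/m,2}(\overline{\delta X}_t,\overline{\delta V}_t).
\]
The same orthogonal splitting applied to \(\widehat{\mathcal E}_t\) gives
\[
    \widehat{\mathcal E}_t=\frac1J\sum_j\bigl(|\widetilde{\delta X}_t^j|^2+|\widetilde{\delta V}_t^j|^2\bigr)+|\overline{\delta X}_t|^2+|\overline{\delta V}_t|^2 .
\]

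Next I apply \cref{lem:hypercoercivity} with \(p=2\) twice. With weight \(a\) it bounds each fluctuation term above and below by \(c_1(a,2)\) and \(c_2(a,2)\) times \(|\widetilde{\delta X}_t^j|^2+|\widetilde{\delta V}_t^j|^2\). With weight \(a-1/m\) it bounds the mean term by \(c_1(a-1/m,2)\) and \(c_2(a-1/m,2)\) times \(|\overline{\delta X}_t|^2+|\overline{\delta V}_t|^2\). Taking the minimum of the lower constants and the maximum of the upper constants, and summing with the decomposition of \(\widehat{\mathcal E}_t\), yields \eqref{eq:coupling-energy-equivalence}. Since the inequality holds pathwise with deterministic constants, taking expectations preserves it.

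The only substantive point is positivity of \(c_{\mathcal E,1}\), and the binding case is the reduced weight \(a-1/m=1/2\). The smallest eigenvalue of \(\begin{pmatrix} b & m/(2\gamma)\\ m/(2\gamma) & 1\end{pmatrix}\) is positive if and only if \(b>m^2/(4\gamma^2)\). For \(b=1/2\) this is exactly \(\gamma^2>m^2/2\), that is, \(\gamma>m/\sqrt2\). For \(b=a=1/2+1/m>1/2\) the same condition then suffices a fortiori. So both lower constants are positive, and \(c_{\mathcal E,1}>0\). I expect no real obstacle here. The one step to check with care is that the cross terms vanish in the splitting: the bilinear part \(\langle x,v\rangle\) contributes \(\frac1J\sum_j\langle\widetilde{\delta X}_t^j,\overline{\delta V}_t\rangle=0\), and symmetrically for the other mixed pair.
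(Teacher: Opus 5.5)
Your proposal is correct and follows essentially the same route as the paper's proof. Both split the differences into fluctuation and center-of-mass blocks, so that the correction turns the mean block into \(\phi_{a-1/m,2}\), and both then apply \cref{lem:hypercoercivity} to each block, with positivity decided by the weight \(a-1/m=1/2\), which gives exactly \(\gamma>m/\sqrt2\).
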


\begin{proof}
Since \(\frac1J\sum \widetilde{\delta X}_t^j = 0\) and \(\frac1J\sum \widetilde{\delta V}_t^j = 0\), the cross terms cancel, yielding the orthogonal decompositions:
\[
    \widehat{\mathcal E}_t = \frac1J\sum_{j=1}^J \left( |\widetilde{\delta X}_t^j|^2 + |\widetilde{\delta V}_t^j|^2 \right) + |\overline{\delta X}_t|^2 + |\overline{\delta V}_t|^2,
\]
and
\[
    \mathcal E_t = \frac1J\sum_{j=1}^J \phi_{a,2}(\widetilde{\delta X}_t^j,\widetilde{\delta V}_t^j) + \phi_{a-\frac1m,2}(\overline{\delta X}_t,\overline{\delta V}_t).
\]
Since \(a-1/m=1/2\), the center-of-mass block is positive definite if \(1/2 > m^2/(4\gamma^2)\), which is equivalent to \(\gamma>m/\sqrt2\). Under this condition, the fluctuation block is also positive definite. Applying \eqref{eq:phi-norm-equivalence} to both blocks gives \eqref{eq:coupling-energy-equivalence}.
\end{proof}

\begin{remark}[Terminology]
The quantities \(L_p(t)\), \(\mathcal L_p(t)\), and \(\mathcal E_t\) are called Lyapunov functionals in the stochastic sense: their It\^o differentials contain local martingale terms, and the decay estimates hold after taking expectations. The pathwise form \(L_p(t)\) is retained because it is necessary for the concentration estimates.
\end{remark}


\subsection{Proof Strategy}
\label{sec:proof-overview}

We prove
\[
    \sup_{t\ge0}\E[\widehat{\mathcal E}_t]\lesssim J^{-1},
    \qquad
    \widehat{\mathcal E}_t
    :=
    \frac1J\sum_{j=1}^J
    \bigl(|\delta X_t^j|^2+|\delta V_t^j|^2\bigr),
\]
by combining five ingredients: centered-moment decay, raw moment bounds,
concentration estimates, weighted-mean stability, and a synchronous coupling
estimate.

\paragraph{Step 1: Centered moment decay.}
The first ingredient is the decay of internal fluctuations. Using the
centered--shifted variables \((Y,\hat Z)\) from
\cref{sec:setting-transformation}, we use the pathwise Lyapunov functional
\(L_p(t)\), built from \(\phi_{a_p,p}\) for \(p>2\), and the dedicated quadratic
functional \(L_2(t)\), built from \(\psi_2\). Their expected counterparts are denoted
by \(\mathcal L_p(t):=\E[L_p(t)]\). The shifted variable \(\hat Z\) is chosen so that
the deterministic drift of \((Y,\hat Z)\) is closed and linear, while the nonlinear
weighted-mean shift enters only through the noise. Under the admissibility and
small-noise conditions, \cref{thm:exp-decay-centered-moments} gives
\[
    \E\left[
        \momentp{p}{\emp{t}}
        +
        \momentp{p}{\empv{t}}
    \right]
    \le
    C e^{-\edecay{p} t}.
\]
The corresponding estimate for the synchronously coupled mean-field samples is
proved in the same subsection. These decay estimates provide the integrable
coefficients used later in the coupling argument.

\paragraph{Step 2: Raw moment bounds.}
The centered estimates do not by themselves prevent the whole swarm from
drifting. We therefore prove uniform-in-time raw moment bounds. First,
\cref{prop:raw-velocity-decay} controls the raw velocity by using
\[
    X_t^j-\mathcal M_\alpha(\emp{t})
    =
    Y_t^j+\Delta_\alpha(\emp{t}),
\]
together with the centered-moment decay from Step 1. Then
\cref{lem:uit-raw-moments-particle,lem:uit-raw-moments-mf} control the raw
positions by integrating the velocity equation:
\[
    X_t^j=X_0^j+\int_0^t V_s^j\,\d s.
\]
For the propagation-of-chaos proof, the important point is the eighth-order
spatial bound. It is needed on the bad set after applying Cauchy--Schwarz to
terms of the form
\[
    \bigl(\momentp{2}{\emp{t}}+\momentp{2}{\empbar{t}}\bigr)
    \mathcal W_2^2(\emp{t},\empbar{t}).
\]

\paragraph{Step 3: Concentration of centered moments.}
The weighted-mean stability estimate depends on the centered spatial moments,
so we need a high-probability version of Step 1. For \(q>2\) and
\[
    \kappa<\min\{\edecay{2},\edecay{2q}/q\},
\]
\cref{lem:concentration-ineq} gives
\[
    \mathbb P\left[
        \sup_{t\ge0} e^{\kappa t}L_2(t)
        \ge
        \mathcal L_2(0)+A
    \right]
    \le
    \frac{C_{\rm Bad}}{A^q}J^{-q/2}\mathcal L_{2q}(0).
\]
By norm equivalence, this yields the same type of bound for
\(\momentp{2}{\emp{t}}\). The analogous estimate for the empirical measure of
the independent mean-field samples is \cref{lem:concentration-mf}. In the proof
of \cref{thm:uit-poc-kinetic}, we apply this with \(q=r\). After the good--bad
decomposition and one Cauchy--Schwarz step, the bad-set contribution scales as
\(J^{-r/4}\), hence the assumption \(2r\ge8\) ensures an \(O(J^{-1})\) remainder.

\paragraph{Step 4: Weighted-mean stability and Monte Carlo sampling.}
The nonlinear part of the weighted mean is controlled by the Monte-Carlo estimate
\(\cref{lem:stability_est}\):
\[
    |\Delta_\alpha(\mu)-\Delta_\alpha(\nu)|
    \le
    C_M
    \bigl(
        \sqrt{\momentp{2}{\mu}}
        +
        \sqrt{\momentp{2}{\nu}}
    \bigr)
    \mathcal W_2(\mu,\nu),
    \qquad
    \Delta_\alpha(\mu):=m_X(\mu)-\mathcal M_\alpha(\mu).
\]
Applied to \(\mu=\emp{t}\) and \(\nu=\empbar{t}\), this produces the stability
error
\[
    \E|\Delta_\alpha(\emp{t})-\Delta_\alpha(\empbar{t})|^2.
\]
The good event from Step 3 gives an integrable coefficient times the coupling
energy, while the bad event is controlled using the raw eighth moment from
Step 2.

There is also a sampling error between the empirical measure of the mean-field
particles and the true mean-field law:
\[
    D_t
    :=
    \mathcal M_\alpha(\empbar{t})
    -
    \mathcal M_\alpha(\overline\rho_t^X).
\]
By \cref{lem:stability_est},
\[
    \E|D_t|^2
    \le
    \frac{C_{{\rm WM},2}}{J}
    \E\bigl|\overline X_t^1-m_X(\overline\rho_t^X)\bigr|^2
    \le
    \frac{C}{J}e^{-\edecay{2}t}.
\]
This is the term that fixes the propagation-of-chaos rate at the classical
Monte Carlo order \(J^{-1}\). It is absent in the particle-to-particle
stability estimate, which is why \cref{thm:stab_particle} can yield the faster
\(J^{-q}\) remainder.

\paragraph{Step 5: Synchronous coupling and Gr\"onwall closure.}
The final step is to differentiate the modified coupling energy
\[
    \mathcal E_t
    =
    \frac1J\sum_{j=1}^J
    \phi_{a,2}(\delta X_t^j,\delta V_t^j)
    -
    \frac1m|\overline{\delta X}_t|^2.
\]
The subtraction of the center-of-mass spatial mode is used to align the
cross-term coefficients in the fluctuation and center-of-mass blocks. With
\(a=1/2+1/m\), these cross terms cancel, and the remaining linear part gives
dissipation on the spatial fluctuations and velocity errors. The stochastic
trace is absorbed by the small-noise condition.

Combining the bounds from Steps 3--4 yields
\[
    \frac{\d}{\d t}\E[\mathcal E_t]
    \le
    \beta(t)\E[\mathcal E_t]
    +
    \frac1J\alpha(t),
    \qquad
    \alpha,\beta\in L^1(0,\infty).
\]
Since the synchronous coupling starts from identical initial data,
\(\mathcal E_0=0\). Gr\"onwall's inequality gives
\[
    \sup_{t\ge0}\E[\mathcal E_t]\lesssim J^{-1}.
\]
The norm equivalence between \(\mathcal E_t\) and
\(\widehat{\mathcal E}_t\) then gives the desired phase-space estimate, and the
spatial Wasserstein estimate follows from the synchronous empirical coupling.

\subsection{Summary of Structural Constants}
\label{sec:summary-constants}
We collect the constants needed in the main statements and estimates. Longer bookkeeping constants from concentration and stability estimates are recorded where they are introduced.

Set
\[
    a_2:=\frac9{2m}+\frac1{\gamma^2},
    \qquad
    c_\pm^{(2)}
    :=
    \frac{a_2+1\pm\sqrt{(a_2-1)^2+(5/\gamma)^2}}{2},
\]
and write \(c_1^{(2)}:=c_-^{(2)}\), \(c_2^{(2)}:=c_+^{(2)}\). These constants belong to
\(\psi_2\), not to \(\phi_{a,2}\). 

\begin{table}[!htbp]
    \footnotesize
    \centering
    \setlength{\tabcolsep}{3pt}
    \renewcommand{\arraystretch}{1.18}
    \begin{tabularx}{\textwidth}{
        >{\raggedright\arraybackslash}p{2.1cm}
        >{\raggedright\arraybackslash}p{2.7cm}
        >{\raggedright\arraybackslash}p{2.8cm}
        X
    }
        \toprule
        \textbf{Constant}
        & \textbf{Related result}
        & \textbf{Depends on}
        & \textbf{Mathematical expression} \\
        \midrule

        \(K_Z,K_Y\)
        & Variable decoupling
        & \(m,\gamma\)
        & \(\displaystyle
            K_Z:=\frac{\gamma}{m}+\frac1\gamma,\qquad
            K_Y:=\frac2m+\frac1{\gamma^2}.
        \)
        \\

        \(c_1(a,p),c_2(a,p)\)
        & \(\cref{lem:hypercoercivity}\)
        & \(a,p,m,\gamma\)
        & Norm-equivalence constants for \(\phi_{a,p}\); see
        \(\cref{lem:hypercoercivity}\).
        \\

        \(c_1^{(2)},c_2^{(2)}\)
        & \(\cref{thm:exp-decay-centered-moments}\)
        & \(m,\gamma\)
        & Dedicated quadratic norm-equivalence constants for \(\psi_2\).
        \\

        \(\mu_{\mathrm{gap},p}\)
        & \(\cref{thm:exp-decay-centered-moments}\)
        & \(p,\alpha,\overline f,\underline f\)
        & \(\displaystyle
            2
            -
            (p-1)^{-\frac{p-2}{p}}
            \exp\!\left(
                \frac{p-1}{p}\alpha(\overline f-\underline f)
            \right).
        \)
        \\

        \(\Lambda_p\)
        & \(\cref{thm:exp-decay-centered-moments}\)
        & \makecell{\(p,m,\chi_S,\)\\ \(\mu_{\mathrm{gap},p},C_{\rm lem}\)}
        & \(\displaystyle
            \frac{2(p-2)\chi_S}{m^2}
            \left(
                \frac{\mu_{\mathrm{gap},p}m^2}{64\chi_S}
            \right)^{-\frac{2}{p-2}}
            \left[
                1+\left(\frac{C_{\rm lem}}2\right)^{\frac{2}{p-2}}
            \right].
        \)
        \\

        \(K_\sigma\)
        & \(\cref{thm:exp-decay-centered-moments}\)
        & \(\sigma,m,\tau(S),C_{\rm lem}\)
        & \(\displaystyle
            K_\sigma
            :=
            \frac{2\sigma^2\tau(S)(1+C_{\rm lem})}{m^2}.
        \)
        \\

        \(v_1,v_3\)
        & \(\cref{thm:exp-decay-centered-moments}\)
        & \(m,\gamma,K_\sigma\)
        & \(\displaystyle
            v_1:=\frac1{m\gamma}+\frac3{\gamma^3}-K_\sigma,
            \qquad
            v_3:=\frac{2\gamma}{m}-\frac3\gamma.
        \)
        \\

        \(\edecay{p}\)
        & \(\cref{thm:exp-decay-centered-moments}\)
        & {\tiny\makecell{
            \(p,m,\gamma,\sigma,\alpha,\overline f,\underline f,\)\\
            \(\tau(S), C_{\mathrm{lem}}, c_2^{(2)}\)
        }}
        & \(\displaystyle
            \edecay{p}:=\frac{p\mu_{\mathrm{gap},p}}{4\gamma}
            \quad (p>2),
            \qquad
            \edecay{2}:=\frac{\min\{v_1,v_3\}}{c_2^{(2)}}.
        \)
        \\

        \(C_{\mathcal M}\)
        & \(\cref{lem:stability_est}\)
        & \(\alpha,L_f,\overline f,\underline f\)
        & \(\displaystyle
            C_{\mathcal M}
            :=
            2\alpha L_f\exp\!\left(2\alpha(\overline f-\underline f)\right).
        \)
        \\

        \(\Lambda_{\rm noise}\)
        & \makecell{\(\cref{thm:uit-poc-kinetic}\)\\
        \(\cref{thm:stab_particle}\)}
        & \(\sigma,m,\tau(S)\)
        & \(\displaystyle
            \Lambda_{\rm noise}:=\frac{\sigma^2\tau(S)}{m^2}.
        \)
        \\

        \(C_{\rm norm}\)
        & \makecell{\(\cref{thm:uit-poc-kinetic}\)\\
        \(\cref{thm:stab_particle}\)}
        & \(c_{\mathcal E,1}\), \(m,\gamma\)
        & \(\displaystyle
            C_{\rm norm}
            :=
            \sqrt{
                \frac{2}{c_{\mathcal E,1}}
                \max\!\left(\frac4{m^2},\frac1{\gamma^2}\right)
            }.
        \)
        \\
        \bottomrule
    \end{tabularx}
    \caption{Structural constants, decay rates, and coupling coefficients}
    \label{tab:structural-constants}
\end{table}

\FloatBarrier
\subsection{High-Friction Scaling and Choice of Weights}
\label{sec:high-friction-scaling}

The admissible parameter regime is a high-friction regime. In this regime,
the centered-moment decay rates appearing in the proof scale like
\[
    \edecay{p} = O(\gamma^{-1})
\]
in unrescaled time. To be more specific, for \(p>2\),
\[
    \edecay{p}=\frac{p\mu_{\mathrm{gap},p}}{4\gamma}.
\]
The same scaling applies to the rate \(\edecay{2}\), and to the
auxiliary concentration rate \(\kappa\). This scaling enters the final uniform-in-time constant through the Gr\"onwall
multiplier
\[
    \mathcal I_\beta
    :=
    \int_0^\infty \beta(t)\,\d t .
\]
The coefficients collected in \(\beta(t)\) are built from exponentially
decaying centered-moment bounds, hence contain terms of the form
\[
    C e^{-c t/\gamma}.
\]
Consequently,
\[
    \int_0^\infty C e^{-c t/\gamma}\,\d t
    =
    C\frac{\gamma}{c}.
\]
Thus the final constant may grow exponentially in \(\gamma\) through the factor
\(e^{\mathcal I_\beta}\). This growth comes from working in the original
unrescaled time variable and from the slow \(O(\gamma^{-1})\) decay of the
spatial consensus estimates coming from $\edecay{p}$.

The choice of weights in the Lyapunov and coupling energies is designed not to
introduce an additional loss through norm equivalence. To see the issue,
consider a quadratic energy of the form
\[
    a|x|^2+|v|^2+c\langle x,v\rangle .
\]
The choice of the cross coefficient \(c = m/\gamma\) is made to keep the quadratic
energy well-scaled in the high-friction regime. In the energy expansion, the
friction term acting on the mixed part contributes
\[
    -\frac{\gamma}{m}\cdot \frac m\gamma \langle x,v\rangle
    =
    -\langle x,v\rangle .
\]
Thus the remaining position--velocity cross coefficient is \(O(1)\), and in
the coupling calculation it takes the form
\[
    2a-1-\frac2m.
\]
This is cancelled by the \(\gamma\)-independent choice
\[
    a=\frac12+\frac1m.
\]

Had we used a mixed coefficient of order one, the friction contribution would
be of order \(\gamma\), forcing \(a=O(\gamma)\) in order to cancel the cross
term. The upper norm-equivalence constant would then also grow with
\(\gamma\). The scaling \(m/\gamma\) avoids this loss and keeps the quadratic
weights \(O(1)\).

The center-of-mass correction
\[
    -\frac1m|\overline{\delta X}_t|^2
\]
has the same purpose at the level of the shifted coupling energy. With
\[
    \mathcal E_t
    =
    \frac1J\sum_{j=1}^J
    \phi_{a,2}(\delta X_t^j,\delta V_t^j)
    -
    \frac1m|\overline{\delta X}_t|^2,
    \qquad
    a=\frac12+\frac1m,
\]
the fluctuation and center-of-mass cross terms have the same coefficient, and
both vanish. Equivalently,
\[
    \mathcal E_t
    =
    \frac1J\sum_{j=1}^J
    \phi_{a,2}(\widetilde{\delta X}_t^j,\widetilde{\delta V}_t^j)
    +
    \phi_{a-1/m,2}(\overline{\delta X}_t,\overline{\delta V}_t),
\]
so the center-of-mass block has spatial weight \(a-1/m=1/2\), while the
fluctuation block has spatial weight \(a\). Both remain independent of
\(\gamma\).
\newpage


\section{Main Results}\label{sec: main-results}

\subsection{Uniform-in-Time Propagation of Chaos}
\begin{assumption}[Propagation-of-chaos conditions]
\label{assump:poc}
Let \(2r\ge 8\). Assume that \cref{assump:admissibility,assump:centered-decay}
hold for \(p=2\), \(p=8\), and \(p=2r\), and denote the corresponding decay rates by
\(\edecay{2}\), \(\edecay{8}\), and \(\edecay{2r}\). Set $\kappa:=\frac{\edecay{8}}8\,.$ In addition, for these choice of $m, \gamma, \sigma$ above, assume:

\begin{enumerate}[label=(\roman*)]
    \item \textbf{Coupling-energy coercivity.}
    \[
        \gamma>\frac{m}{\sqrt2}.
    \]

    \item \textbf{Concentration gaps.}
    We assume
    \begin{equation}\label{eq:poc-concentration-gap}
        \min\left\{
            \frac1{m\gamma}
            +
            \frac3{\gamma^3}
            -
            \frac{2\sigma^2\tau(S)(1+C_{\mathrm{lem}})}{m^2},
            \frac{2\gamma}{m}
            -
            \frac3\gamma
        \right\}
        >
        c_2^{(2)}
        \frac{\mu_{\mathrm{gap},8}}{4\gamma},
    \end{equation}
    and
    \begin{equation}\label{eq:poc-high-moment-gap}
        \mu_{\mathrm{gap},2r}
        >
        \frac12\mu_{\mathrm{gap},8}
    \end{equation}
    so that the concentration gaps satisfy
    \[
        \edecay{2}>\kappa,
        \qquad
        \edecay{2r}>r\kappa.
    \]
        
    \item \textbf{Synchronous-coupling noise absorption.}
    \begin{equation}\label{eq:stab_small_noise}
        \sigma^2
        \le
        \frac{m^2}{\gamma\tau(S)}\,.
    \end{equation}

    \item \textbf{Noise absorption in the concentration estimate.}
    \begin{equation}\label{eq:main-thm-concentration-small-noise}
        \sigma^2
        \le
        \frac{m^2 c_1^{(2)}(\edecay{2}-\kappa)}
        {2\tau(S)(1+C_{\mathrm{lem}})}.
    \end{equation}
\end{enumerate}
\end{assumption}

\begin{remark}[Compatibility of the concentration gaps]
\label{rem:concentration-gap-compatibility}
The admissibility tradeoff between inverse temperature and moment order was described
in \cref{rem:alpha-moment-admissibility}. Here we record the additional gap
comparisons in \cref{assump:poc} and
\cref{assump:stab}. The concentration gap assumptions in \cref{assump:poc} and \cref{assump:stab} are
written in terms of the system parameters rather than only in terms of the decay
rates. This is useful because the two gaps have different origins. First, since
\[
    \kappa=\frac{\edecay{8}}8
    =
    \frac{\mu_{\mathrm{gap},8}}{4\gamma},
\]
the condition \(\edecay{2}>\kappa\) is
\[
    \min\left\{
        \frac1{m\gamma}
        +
        \frac3{\gamma^3}
        -
        \frac{2\sigma^2\tau(S)(1+C_{\mathrm{lem}})}{m^2},
        \frac{2\gamma}{m}
        -
        \frac3\gamma
    \right\}
    >
    c_2^{(2)}
    \frac{\mu_{\mathrm{gap},8}}{4\gamma}.
\]
This is the quadratic concentration gap. It can be ensured by choosing the quadratic
noise level small enough, provided the corresponding deterministic gap is positive. Second, for \(p>2\),
\[
    \edecay{p}
    =
    \frac{p\mu_{\mathrm{gap},p}}{4\gamma}.
\]
Therefore, in \cref{assump:poc},
\[
    \edecay{2r}>r\kappa
    \qquad\Longleftrightarrow\qquad
    \mu_{\mathrm{gap},2r}>\frac12\mu_{\mathrm{gap},8},
\]
and in \cref{assump:stab}, since \(p_\ast=8\max\{1,q\}\),
\[
    \edecay{p_\ast}>4\max\{1,q\}\kappa
    \qquad\Longleftrightarrow\qquad
    \mu_{\mathrm{gap},p_\ast}>\frac12\mu_{\mathrm{gap},8}.
\]
These high-moment gaps are comparisons between admissibility gaps at
different moment orders; they are not consequences of taking \(\sigma\) small.

The regime is nonempty under the usual high-moment choice. Write
\[
    \Delta f:=\overline f-\underline f,
    \qquad
    \mu_{\mathrm{gap},p}
    =
    2-(p-1)^{-\frac{p-2}{p}}
    \exp\!\left(\frac{p-1}{p}\alpha\Delta f\right).
\]
For fixed \(\alpha\Delta f\), we have
\[
    \mu_{\mathrm{gap},p}\longrightarrow 2
    \qquad\text{as }p\to\infty.
\]
Thus, once the base admissibility condition \(\mu_{\mathrm{gap},8}>0\) holds, one can
choose the higher moment order large enough so that
\[
    \mu_{\mathrm{gap},p}>\frac12\mu_{\mathrm{gap},8}.
\]
In particular, the comparisons
\[
    \mu_{\mathrm{gap},2r}>\frac12\mu_{\mathrm{gap},8},
    \qquad
    \mu_{\mathrm{gap},p_\ast}>\frac12\mu_{\mathrm{gap},8}
\]
hold whenever the corresponding high moment orders are chosen sufficiently large,
subject to the remaining admissibility conditions. After fixing such moment orders
and deterministic parameters, the remaining restrictions involving \(\sigma\) are
upper bounds on the noise strength and can be satisfied by taking \(\sigma>0\) small
enough.
\end{remark}

\begin{theorem}[Uniform-in-time propagation of chaos]
\label{thm:uit-poc-kinetic}
Let the objective function \(f:\mathbb R^d\to\mathbb R\) satisfy
\cref{assumption:bounded} and \cref{assumption:lip}. Let \(2r\ge 8\), and let the
initial configurations
\[
    (\mathcal X_0^J,\mathcal V_0^J)
    =
    \{(X_0^j,V_0^j)\}_{j=1}^J
\]
be \(J\) i.i.d. samples from a probability measure
\[
    \overline\rho_0\in\mathcal P_{2r}(\mathbb R^{2d}).
\]
Consider the second-order CBO particle system \eqref{eq: k-cbo} and the associated
mean-field system \eqref{eq: k-cbo-mfl}, initialized with
\[
    \overline X_0^j=X_0^j,
    \qquad
    \overline V_0^j=V_0^j,
\]
and driven by the same Brownian motions. Assume \cref{assump:poc}.

Then there exists a finite constant \(C_{\mathrm{MFL}}>0\), independent of \(t\) and
\(J\), such that for every \(j\in[J]\),
\begin{equation}\label{eq:uit-poc-particle-conclusion}
    \sup_{t\ge0}
    \E\left[
        |X_t^j-\overline X_t^j|^2
        +
        |V_t^j-\overline V_t^j|^2
    \right]
    \le
    \frac{C_{\mathrm{MFL}}}{J}.
\end{equation}
Consequently,
\begin{equation}\label{eq:uit-poc-wasserstein-conclusion}
    \sup_{t\ge0}
    \E\left[
        \mathcal W_2^2(\mu_{X_t^J},\mu_{\overline X_t^J})
    \right]
    \le
    C_{\mathrm{MFL}}J^{-1}.
\end{equation}
\end{theorem}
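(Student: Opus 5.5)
The plan is to follow the five-step strategy of \cref{sec:proof-overview} and close a Gr\"onwall inequality for the modified coupling energy \(\mathcal E_t\) of \eqref{eq:coupling-energy-defn}. By exchangeability of the synchronously coupled pairs, \(\E[|\delta X_t^j|^2+|\delta V_t^j|^2]=\E[\widehat{\mathcal E}_t]\) for every \(j\). Hence, by \cref{cor:coupling-energy-equivalence} (valid since \(\gamma>m/\sqrt2\)), it suffices to prove \(\sup_{t}\E[\mathcal E_t]\le C/J\). The Wasserstein bound \eqref{eq:uit-poc-wasserstein-conclusion} then follows from the coupling \(\mathcal W_2^2(\emp{t},\empbar{t})\le\frac1J\sum_j|\delta X_t^j|^2\).

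First I will apply It\^o's formula to \(\mathcal E_t\). The drift of \(\delta V^j\) is
\[
-\tfrac{\gamma}{m}\delta V^j-\tfrac1m\delta X^j+\tfrac1m\bigl(\mathcal M_\alpha(\emp{t})-\mathcal M_\alpha(\overline\rho^X_t)\bigr).
\]
I split the weighted-mean difference as
\[
\mathcal M_\alpha(\emp{t})-\mathcal M_\alpha(\overline\rho^X_t)=\overline{\delta X}_t-\bigl(\Delta_\alpha(\emp{t})-\Delta_\alpha(\empbar{t})\bigr)+D_t .
\]
The term \(\overline{\delta X}_t\) acts only on the center-of-mass block and cancels the restoring force there, which is the undamped translation mode. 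Using the orthogonal decomposition from \cref{cor:coupling-energy-equivalence} and the choice \(a=\frac12+\frac1m\) together with the \(-\frac1m|\overline{\delta X}|^2\) correction, the \(\langle x,v\rangle\) cross terms cancel in both blocks. This leaves a negative-definite linear part on \((\widetilde{\delta X},\delta V)\), plus a remaining \(O(|\overline{\delta X}|^2)\)-free structure in which \(\overline{\delta X}\) is controlled through \(\mathcal E_t\) itself (its coefficient is nonnegative, so it is harmless for Gr\"onwall).

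For the noise, I bound \(\|S(x)-S(y)\|_F^2\le\tau(S)|x-y|^2\). The difference of the diffusion arguments is \(\delta X^j-(\mathcal M_\alpha(\emp{t})-\mathcal M_\alpha(\overline\rho^X_t))\), and the \(|\delta X^j|^2\)-contribution is absorbed using \eqref{eq:stab_small_noise}. The drift and noise contributions of the weighted-mean terms are handled by Young's inequality. This gives
\[
\tfrac{\d}{\d t}\E\mathcal E_t\le C\,\E\mathcal E_t\cdot 0+C\,\E|\Delta_\alpha(\emp{t})-\Delta_\alpha(\empbar{t})|^2+C\,\E|D_t|^2 ,
\]
where the Young remainders are absorbed into the dissipation or, if not, into an \(\eps\E\mathcal E_t\) term whose coefficient is made integrable below. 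The sampling term satisfies \(\E|D_t|^2\le \frac CJe^{-\edecay2 t}\) by \cref{lem:stability_est} and centered-moment decay (\cref{thm:exp-decay-centered-moments}), so it is integrable.

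The main obstacle is the stability term. By \cref{lem:stability_est} it is bounded by
\[
C_M^2\bigl(\momentp2{\emp t}+\momentp2{\empbar t}\bigr)\mathcal W_2^2\le C\bigl(\momentp2{\emp t}+\momentp2{\empbar t}\bigr)\widehat{\mathcal E}_t .
\]
The prefactor is random, so I use a good/bad decomposition. On the good event \(G=\{\sup_t e^{\kappa t}L_2(t)\le \mathcal L_2(0)+A\}\) and its mean-field analogue (\cref{lem:concentration-ineq,lem:concentration-mf}, available since \(\edecay2>\kappa\), \(\edecay{2r}>r\kappa\) and \eqref{eq:main-thm-concentration-small-noise}), norm equivalence gives the deterministic bound \(\le Ce^{-\kappa t}\widehat{\mathcal E}_t\). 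This yields \(\beta(t)=Ce^{-\kappa t}\in L^1\). On the bad event, Cauchy--Schwarz bounds the contribution by
\[
\mathbb P(G^c)^{1/2}\,\E\bigl[(\momentp2{\emp t}+\momentp2{\empbar t})^2\widehat{\mathcal E}_t^2\bigr]^{1/2}.
\]
Here \(\mathbb P(G^c)\lesssim J^{-r/2}\). The second factor is bounded uniformly in time by the raw eighth moments from \cref{lem:uit-raw-moments-particle,lem:uit-raw-moments-mf}; these require \(2r\ge8\), and the centered factor may also decay at rate \(\edecay8\). If I write the centered moment factor with its \(e^{-\kappa t}\)-type decay, the bad-set term is \(\lesssim J^{-r/4}e^{-\kappa t}\le J^{-1}e^{-\kappa t}\). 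The delicate point is making the time integrability and the \(J^{-1}\) rate hold simultaneously. I expect to need \(\kappa=\edecay8/8\) exactly so that the eighth-moment Cauchy--Schwarz product retains an integrable exponential.

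Collecting terms gives \(\frac{\d}{\d t}\E\mathcal E_t\le\beta(t)\E\mathcal E_t+\frac1J\alpha(t)\) with \(\alpha,\beta\in L^1(0,\infty)\). Since \(\mathcal E_0=0\), Gr\"onwall's inequality yields \(\sup_t\E\mathcal E_t\le J^{-1}\|\alpha\|_{L^1}e^{\|\beta\|_{L^1}}\), which concludes the proof.
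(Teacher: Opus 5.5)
Your outline is the paper's proof step for step. It uses the same energy $\mathcal E_t$, the same pivot $\mathcal M_\alpha(\emp{t})-\mathcal M_\alpha(\overline\rho_t^X)=\overline{\delta X}_t+\mathcal R_t$ with $\mathcal R_t=\Delta_\alpha^{\rm stab}(t)+D_t$, the same good/bad split with $q=r$, the same eighth-moment Cauchy--Schwarz, and Gr\"onwall from $\mathcal E_0=0$. However, the noise step as you wrote it does not close uniformly in time.

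You bound the trace by $\tau(S)|\delta X_t^j-(\mathcal M_\alpha(\emp{t})-\mathcal M_\alpha(\overline\rho_t^X))|^2$ and say the $|\delta X_t^j|^2$-part is absorbed by \eqref{eq:stab_small_noise}. After the cross-term cancellation, though, the only spatial dissipation is $\frac1\gamma\frac1J\sum_j|\widetilde{\delta X}_t^j|^2$. Meanwhile $\frac1J\sum_j|\delta X_t^j|^2$ also contains the undamped mode $|\overline{\delta X}_t|^2$, and the leftover weighted-mean difference contains $\overline{\delta X}_t$ again. Treated by Young, these leave a constant multiple of $\Lambda_{\rm noise}|\overline{\delta X}_t|^2\lesssim\mathcal E_t$. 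That makes $\beta$ non-integrable and gives exponential growth.

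The fix is to apply your pivot inside the diffusion as well. The argument difference is exactly $\widetilde{\delta X}_t^j-\mathcal R_t$. Since $\sum_j\widetilde{\delta X}_t^j=0$ and $\mathcal R_t$ does not depend on $j$,
\[
\frac1J\sum_{j=1}^J\bigl|\widetilde{\delta X}_t^j-\mathcal R_t\bigr|^2=\frac1J\sum_{j=1}^J|\widetilde{\delta X}_t^j|^2+|\mathcal R_t|^2 .
\]
So $\Lambda_{\rm noise}\le1/\gamma$ absorbs the first term exactly, with no factor-$2$ loss, and only $\Lambda_{\rm noise}|\mathcal R_t|^2$ remains.

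Similarly, your display with coefficient $0$ on $\E\mathcal E_t$ is not attainable. In $\E\langle\frac1\gamma\overline{\delta X}_t+\frac2m\overline{\delta V}_t,\mathcal R_t\rangle$ the $\overline{\delta X}_t$ component has no dissipation, so a constant-weight Young split leaves a non-integrable $\varepsilon\E\mathcal E_t$. There are two ways to repair this.
\begin{itemize}
\item Use a decaying weight $e^{-\zeta t}$ with $\zeta<\kappa$. This multiplies $\E|\mathcal R_t|^2$ by $e^{\zeta t}$, which stays integrable against $C_Qe^{-\kappa t}\E\mathcal E_t$, $J^{-1}e^{-\edecay{8}t/4}$ and $J^{-1}e^{-\edecay{2}t}$.
\item Do what the paper does: keep $C_{\rm norm}\sqrt{\E\mathcal E_t}\sqrt{\E|\mathcal R_t|^2}$, substitute $\E|\Delta_\alpha^{\rm stab}|^2\le C_Qe^{-\kappa t}\E\mathcal E_t+\frac{C_\Delta}{J}e^{-\edecay{8}t/4}$, and split with $\sqrt{A+B}\le\sqrt A+\sqrt B$.
\end{itemize}
Either way $\beta$ picks up terms such as $e^{-\kappa t/2}$, $e^{-\edecay{8}t/8}$ and $e^{-\zeta t}$, not just $e^{-\kappa t}$, but it remains integrable.

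Two minor points. First, the bad-set decay $e^{-\edecay{8}t/4}$ comes from $\E[\mathfrak M_8]$ and does not depend on $\kappa$. The value $\kappa=\edecay{8}/8$ matters only because \cref{assump:poc} states the gaps $\edecay{2}>\kappa$ and $\edecay{2r}>r\kappa$ for that value. Second, $\E|D_t|^2\le\frac CJe^{-\edecay{2}t}$ follows from \cref{lem:mc_est} with \cref{thm:mfl-decay-centered-moment}, not from \cref{lem:stability_est} and \cref{thm:exp-decay-centered-moments}.
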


\subsection{Almost Uniform-in-Time Stability}
\label{sub:uit_stability}

We next state an almost uniform-in-time stability estimate for the interacting particle system \eqref{eq: k-cbo}. 
The proof follows the same synchronous coupling and hypocoercivity argument as the propagation of chaos result, but is simpler because no empirical-to-mean-field sampling error appears.

\begin{assumption}[Stability conditions]
\label{assump:stab}
Let \(q>1/2\), and set \(p_\ast:=\max\{8,8q\}\). Assume that
\cref{assump:admissibility,assump:centered-decay} hold for \(p=2\), \(p=8\), and
\(p=p_\ast\), and denote the corresponding decay rates by
\(\edecay{2}\), \(\edecay{8}\), and \(\edecay{p_\ast}\). Set
\(\kappa:=\frac{\edecay{8}}8\,.\) In addition, for these choice of $m, \gamma, \sigma$ above, assume:

\begin{enumerate}[label=(\roman*)]
    \item \textbf{Coupling-energy coercivity.}
    \[
        \gamma>\frac{m}{\sqrt2}.
    \]

    \item \textbf{Concentration gaps.}
    We assume
    \begin{equation}\label{eq:stab-concentration-gap}
        \min\left\{
            \frac1{m\gamma}
            +
            \frac3{\gamma^3}
            -
            \frac{2\sigma^2\tau(S)(1+C_{\mathrm{lem}})}{m^2},
            \frac{2\gamma}{m}
            -
            \frac3\gamma
        \right\}
        >
        c_2^{(2)}
        \frac{\mu_{\mathrm{gap},8}}{4\gamma},
    \end{equation}
    and
    \begin{equation}\label{eq:stab-high-moment-gap}
        \mu_{\mathrm{gap},p_\ast}
        >
        \frac12\mu_{\mathrm{gap},8},
    \end{equation}
    so that the concentration gaps satisfy
    \[
        \edecay{2}>\kappa,
        \qquad
        \edecay{p_\ast}>4\max\{1,q\}\kappa.
    \]

    \item \textbf{Synchronous-coupling noise absorption.}
    \begin{equation}\label{eq:stab-small-noise-stability}
        \sigma^2
        \le
        \frac{m^2}{\gamma\tau(S)}\,.
    \end{equation}

    \item \textbf{Noise absorption in the concentration estimate.}
    \begin{equation}\label{eq:stab-concentration-small-noise}
        \sigma^2
        \le
        \frac{m^2 c_1^{(2)}(\edecay{2}-\kappa)}
        {2\tau(S)(1+C_{\mathrm{lem}})}.
    \end{equation}
\end{enumerate}
\end{assumption}

\begin{theorem}[Almost uniform-in-time stability]
\label{thm:stab_particle}
Let \(f\) satisfy \cref{assumption:bounded,assumption:lip}. 
Let \(q>1/2\), set \(p_\ast:=\max\{8,8q\}\), and let
\(\rho_0,\tilde{\rho}_0\in\mathcal P_{p_\ast}(\R^{2d})\).
Consider two copies \((X_t^j,V_t^j)_{j=1}^J\) and
\((\tilde X_t^j,\tilde V_t^j)_{j=1}^J\) of the particle system
\eqref{eq: k-cbo}, driven by the same Brownian motions \((W_t^j)_{j=1}^J\), with
i.i.d.\ initial data sampled respectively from \(\rho_0\) and \(\tilde\rho_0\).
Assume \cref{assump:stab}.

Then there exist constants \(C_{\mathrm{Stab},1},C_{\mathrm{Stab},2}>0\),
independent of \(J\) and \(t\), such that for all \(t\ge0\),
\begin{equation}\label{eq:stab-particle-conclusion}
    \E\!\left[
        \frac1J\sum_{j=1}^J
        \Bigl(
            |X_t^j-\tilde X_t^j|^2
            +
            |V_t^j-\tilde V_t^j|^2
        \Bigr)
    \right]
    \le
    C_{\mathrm{Stab},1}
    \E\!\left[
        \frac1J\sum_{j=1}^J
        \Bigl(
            |X_0^j-\tilde X_0^j|^2
            +
            |V_0^j-\tilde V_0^j|^2
        \Bigr)
    \right]
    +
    \frac{C_{\mathrm{Stab},2}}{J^q}.
\end{equation}
\end{theorem}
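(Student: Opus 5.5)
We follow the synchronous-coupling scheme of \cref{thm:uit-poc-kinetic}, now comparing two particle systems rather than a particle system with its mean-field copies. Set $\delta X_t^j:=X_t^j-\tilde X_t^j$ and $\delta V_t^j:=V_t^j-\tilde V_t^j$, and define the modified energy $\mathcal E_t$ exactly as in \eqref{eq:coupling-energy-defn}, with $a=\frac12+\frac1m$. By \cref{cor:coupling-energy-equivalence}, which applies since $\gamma>m/\sqrt2$, it suffices to bound $\E[\mathcal E_t]$.

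\textbf{Step 1: It\^o expansion of the coupling energy.} The drift difference in the velocity equation is
\[
    -\frac\gamma m\,\delta V_t^j-\frac1m\,\delta X_t^j+\frac1m\bigl(\mathcal M_\alpha(\emp{t})-\mathcal M_\alpha(\mu_{\tilde{\mathcal X}^J_t})\bigr).
\]
We rewrite the consensus difference as
\[
    \mathcal M_\alpha(\emp{t})-\mathcal M_\alpha(\mu_{\tilde{\mathcal X}^J_t})
    =\overline{\delta X}_t-\bigl(\Delta_\alpha(\emp{t})-\Delta_\alpha(\mu_{\tilde{\mathcal X}^J_t})\bigr).
\]
The term $\overline{\delta X}_t$ is the undamped translation mode. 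It is exactly what the correction $-\frac1m|\overline{\delta X}_t|^2$ together with the choice of $a$ are designed to handle: the cross terms in both the fluctuation block and the center-of-mass block cancel, leaving dissipation of order $\frac1\gamma$ on $\frac1J\sum|\widetilde{\delta X}^j_t|^2$ and of order $\frac\gamma m$ on the velocity errors. The It\^o trace is bounded using the Lipschitz property of $S$:
\[
    \|S(x)-S(y)\|_{\rm F}^2\le\tau(S)|x-y|^2 .
\]
This gives a contribution of order $\frac{\sigma^2\tau(S)}{m^2}\bigl(|\delta X_t^j|^2+|\Delta_\alpha\text{-difference}|^2+|\overline{\delta X}_t|^2\bigr)$. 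This is absorbed by the velocity dissipation via \eqref{eq:stab-small-noise-stability}, up to terms that reappear in the stability error. The martingale terms vanish in expectation. The undamped $|\overline{\delta X}_t|^2$ contributions, and any remaining positive coefficients, are kept as $\beta(t)\E[\mathcal E_t]$ with $\beta$ integrable, or bounded by Young's inequality with weights that make the leftover integrable.

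\textbf{Step 2: Weighted-mean stability via a good--bad decomposition.} By \cref{lem:stability_est},
\[
    |\Delta_\alpha(\emp{t})-\Delta_\alpha(\mu_{\tilde{\mathcal X}^J_t})|^2
    \le C_{\mathcal M}^2\bigl(\momentp{2}{\emp{t}}+\momentp{2}{\mu_{\tilde{\mathcal X}^J_t}}\bigr)\,\widehat{\mathcal E}_t ,
\]
using the synchronous bound $\mathcal W_2^2\le\widehat{\mathcal E}_t$. We define the good event $G$ on which
\[
    \sup_t e^{\kappa t}L_2(t)\le \mathcal L_2(0)+A
\]
holds for both systems. On $G$, the coefficient is at most $Ce^{-\kappa t}$, hence integrable. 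On $G^c$, Cauchy--Schwarz gives
\[
    \E\bigl[\mathbf 1_{G^c}\,\momentp{2}{}\,\widehat{\mathcal E}_t\bigr]\le \proba(G^c)^{1/2}\,\E\bigl[\momentp{2}{}^2\,\widehat{\mathcal E}_t^2\bigr]^{1/2}.
\]
The second factor is bounded uniformly in time by the raw moment bounds \cref{lem:uit-raw-moments-particle} at order eight, applied to both systems with initial data in $\mathcal P_{p_\ast}$, together with centered decay from \cref{thm:exp-decay-centered-moments}. Keeping an extra decaying factor $e^{-ct}$ from the centered moment makes the contribution time-integrable. By \cref{lem:concentration-ineq} with exponent $2q'$ chosen so that $p_\ast=\max\{8,8q\}$ moments suffice,
\[
    \proba(G^c)\lesssim J^{-2q},
\]
so the bad-set term is $O(J^{-q})$ times an integrable function of $t$. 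The gap $\edecay{p_\ast}>4\max\{1,q\}\kappa$ and \eqref{eq:stab-concentration-small-noise} are exactly what this concentration lemma requires.

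\textbf{Step 3: Gr\"onwall closure.} Combining the above yields
\[
    \frac{\d}{\d t}\E[\mathcal E_t]\le\beta(t)\,\E[\mathcal E_t]+J^{-q}\,\alpha(t),
    \qquad \alpha,\beta\in L^1(0,\infty).
\]
Unlike \cref{thm:uit-poc-kinetic}, there is no Monte Carlo term $D_t$, so the remainder has order $J^{-q}$. Gr\"onwall's inequality gives
\[
    \E[\mathcal E_t]\le e^{\|\beta\|_1}\bigl(\E[\mathcal E_0]+J^{-q}\|\alpha\|_1\bigr).
\]
Norm equivalence then yields \eqref{eq:stab-particle-conclusion}, with $C_{\mathrm{Stab},1}=e^{\|\beta\|_1}c_{\mathcal E,2}/c_{\mathcal E,1}$.

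\textbf{Main obstacle.} The hardest part is Step 2 on the bad set: obtaining a time-integrable bound rather than merely a bounded one. Raw positions are only uniformly bounded, not decaying, so I would first try to place the decaying centered moment, rather than $\widehat{\mathcal E}_t$, in the factor that carries the time dependence. Concretely, this means using Hölder's inequality with the eighth centered moment decaying at rate $\edecay{8}$, and the raw eighth moment bound on $\widehat{\mathcal E}_t^2$. This would give a factor of order $e^{-\edecay{8}t/4}$, which is integrable. This step is where the choice of $p_\ast$ and the gap conditions must be matched precisely.
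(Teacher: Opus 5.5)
There is a genuine gap in your Step~1, in how you treat the It\^o trace. Everything else follows the paper's architecture: the same energy with $a=\frac12+\frac1m$ and the correction $-\frac1m|\overline{\delta X}_t|^2$, the pivot of the weighted-mean difference around $\overline{\delta X}_t$, the good--bad split with the decaying eighth centered moment carrying the time decay on the bad set, and Gr\"onwall without a Monte Carlo term.

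\textbf{What goes wrong.} You bound the trace by $\frac{\sigma^2\tau(S)}{m^2}\bigl(|\delta X_t^j|^2+|\Delta_\alpha\text{-difference}|^2+|\overline{\delta X}_t|^2\bigr)$. You then say it is absorbed by the velocity dissipation, and that the $|\overline{\delta X}_t|^2$ pieces go into an integrable $\beta(t)$. Neither step is possible.
\begin{itemize}
\item The velocity dissipation $(\frac{2\gamma}{m}-\frac m\gamma)|\delta V_t^j|^2$ cannot control position errors.
\item The coefficient $\sigma^2\tau(S)/m^2$ in front of $|\overline{\delta X}_t|^2$ is constant in time. This includes the copy hidden in $\frac1J\sum_j|\delta X_t^j|^2=\frac1J\sum_j|\widetilde{\delta X}_t^j|^2+|\overline{\delta X}_t|^2$.
\item The translation mode $\overline{\delta X}_t$ receives no dissipation at all in $\frac{d}{dt}\E[\mathcal E_t]$. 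The only linear dissipation acts on $\widetilde{\delta X}_t^j$ and $\delta V_t^j$.
\end{itemize}
So your bound puts a positive constant into $\beta$, which gives exponential growth and destroys uniformity in time. No choice of Young weights repairs this, because the offending term is a nonnegative quadratic in an undamped variable.

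\textbf{The missing observation.} Your own decomposition makes the translation mode cancel exactly inside the noise. The argument of the $S$-difference is
\[
\delta X_t^j-\bigl(\mathcal M_\alpha(\mu_{X_t^J})-\mathcal M_\alpha(\mu_{\tilde X_t^J})\bigr)=\widetilde{\delta X}_t^j+\bigl(\Delta_\alpha(\mu_{X_t^J})-\Delta_\alpha(\mu_{\tilde X_t^J})\bigr).
\]
Write $\Delta$ for the $j$-independent difference on the right. Since $\sum_j\widetilde{\delta X}_t^j=0$, the average splits orthogonally with no triangle-inequality factor:
\[
\frac1J\sum_j|\widetilde{\delta X}_t^j+\Delta|^2=\frac1J\sum_j|\widetilde{\delta X}_t^j|^2+|\Delta|^2 .
\]
\begin{itemize}
\item The first term is absorbed by the spatial-fluctuation dissipation $\frac1\gamma\frac1J\sum_j|\widetilde{\delta X}_t^j|^2$. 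This works exactly under \eqref{eq:stab-small-noise-stability}, i.e.\ $\sigma^2\tau(S)/m^2\le 1/\gamma$. That condition leaves no room for the factor $2$ or $3$ a triangle inequality would introduce.
\item The second term is a stability error, which your Step~2 bounds by $e^{-\kappa t}\E[\mathcal E_t]$ plus an $O(J^{-q})$ remainder.
\end{itemize}
With this correction, handle the remaining linear term $\langle\frac2m\overline{\delta V}_t+\frac1\gamma\overline{\delta X}_t,\Delta\rangle$ by Cauchy--Schwarz, $\sqrt{A+B}\le\sqrt A+\sqrt B$ and Young. Your Steps~2--3 then close as in the paper.
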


\begin{remark}\label{rmk:stability_better_rate}
    Contrary to the propagation of chaos estimate, the stability estimate compares two empirical swarms of the same size $J$.
    Therefore no empirical-to-mean-field sampling term appears, and the remainder comes only from the bad-set contribution in the weighted-mean stability estimate.
    By increasing the initial moment assumption from order $8$ to order $8q$, one obtains the arbitrarily improved rate $J^{-q}$.
\end{remark}

\section{Proofs of the Main Results}\label{sec: main-results-proof}

This section proves the two main results stated in \cref{sec: main-results}. The
proofs use four auxiliary ingredients: exponential decay of centered moments,
uniform-in-time raw moment bounds, concentration estimates for the empirical
Lyapunov functional, and stability/Monte Carlo estimates for the weighted mean.
These ingredients are stated and proved in \cref{sec: auxiliary-results}. We first
prove the uniform-in-time propagation-of-chaos estimate, where all four
ingredients are needed, and then prove the particle-to-particle stability
estimate, where the empirical-to-mean-field sampling error is absent.

\subsection{Proof of \cref{thm:uit-poc-kinetic}}

\begin{proof}
    Let the raw differences be $\delta X_t^j := X_t^j - \overline{X}_t^j$ and $\delta V_t^j := V_t^j - \overline{V}^j_t$. We define the empirical mean differences (the Center of Mass error) as $\overline{\delta X}_t := \frac{1}{J}\sum_{j=1}^J \delta X_t^j$ and $\overline{\delta V}_t := \frac{1}{J}\sum_{j=1}^J \delta V_t^j$. We also define the fluctuations around the CM error as $\widetilde{\delta X}_t^j := \delta X_t^j - \overline{\delta X}_t$ and $\widetilde{\delta V}_t^j := \delta V_t^j - \overline{\delta V}_t$. Note that $\sum_j \widetilde{\delta X}_t^j = \sum_j \widetilde{\delta V}_t^j = 0$.

    We define the standard Euclidean energy as $\widehat{\mathcal{E}}_t := \frac{1}{J} \sum_{j=1}^J ( |\delta X_t^j|^2 + |\delta V_t^j|^2 )$. 
    To exploit hypocoercivity while accounting for the shift-invariant nature of the model, we define the modified energy $\mathcal{E}_t$ using our universal functional $\phi_{a,2}$. Recall that for $p=2$, the polynomial form is
\[
\phi_{a,2}(x,v):=a|x|^2+\frac{m}{\gamma}\langle x,v\rangle+|v|^2,
\]
and that
\begin{equation}\label{eq:energy-functional-proof-recall}
    \mathcal{E}_t
    =
    \frac1J\sum_{j=1}^J \phi_{a,2}(\delta X_t^j,\delta V_t^j)
    -\frac1m |\overline{\delta X}_t|^2.
\end{equation}
Using the decomposition
\[
\delta X_t^j=\widetilde{\delta X}_t^j+\overline{\delta X}_t,
\qquad
\delta V_t^j=\widetilde{\delta V}_t^j+\overline{\delta V}_t,
\]
together with
\[
\frac1J\sum_{j=1}^J \widetilde{\delta X}_t^j=0,
\qquad
\frac1J\sum_{j=1}^J \widetilde{\delta V}_t^j=0,
\]
we indeed have
\[
\mathcal{E}_t
=
\frac1J\sum_{j=1}^J \phi_{a,2}(\widetilde{\delta X}_t^j,\widetilde{\delta V}_t^j)
+\phi_{a-\frac1m,2}(\overline{\delta X}_t,\overline{\delta V}_t).
\]
    
    This functional consists of two distinct quadratic forms: $\phi_{a,2}$ for the fluctuations and $\phi_{a - 1/m, 2}$ for the Center of Mass. To guarantee that $\mathcal{E}_t$ is  positive definite, both must be positive definite. By \cref{lem:hypercoercivity}, it is necessary and sufficient to enforce the stronger condition on the CM component: $a - \frac{1}{m} > \frac{m^2}{4\gamma^2}$. 
    
    By \cref{cor:coupling-energy-equivalence}, the energy satisfies
    \[
        c_{\mathcal E,1}\widehat{\mathcal E}_t
        \le
        \mathcal E_t
        \le
        c_{\mathcal E,2}\widehat{\mathcal E}_t.
    \]
    
    The dynamics of the difference process are:
    \begin{align}
        \d (\delta X_t^j) &= \delta V_t^j \d t \,, \\
        m \d (\delta V_t^j) &= -\gamma \delta V_t^j \d t - [ (X_t^j - \mathcal{M}_\alpha(\mu_{X_t^J})) - (\overline{X}_t^j - \mathcal{M}_\alpha(\overline{\rho}_t)) ] \d t + \sigma \Delta S^j \d W_t^j,
    \end{align}
    where $\Delta S^j := S(X_t^j - \mathcal{M}_\alpha(\mu_{X_t^J})) - S(\overline{X}_t^j - \mathcal{M}_\alpha(\overline{\rho}_t))$.

    \proofstep{Drift Decomposition and Pivoting}
    We decompose the deterministic force difference $F_{diff}^j$ by pivoting around the empirical measure of the mean-field particles, $\mu_{\overline{X}_t^J}$:
    \begin{align} 
        F_{diff}^j &= - \delta X_t^j + \underbrace{\left( \mathcal{M}_\alpha(\mu_{X_t^J}) - \mathcal{M}_\alpha(\mu_{\overline{X}_t^J}) \right)}_{:= \Delta_\alpha^{coup}} + \underbrace{\left( \mathcal{M}_\alpha(\mu_{\overline{X}_t^J}) - \mathcal{M}_\alpha(\overline{\rho}_t) \right)}_{:= \mathfrak{D}_t}.
    \end{align}
    To isolate the linear feedback acting on the CM, we pivot the coupling shift $\Delta_\alpha^{coup}$ around $\overline{\delta X}_t$:
    \begin{equation} \label{eq:pivot_rho}
        \Delta_\alpha^{coup} = \overline{\delta X}_t + \Delta_\alpha^{stab}(t),
    \end{equation}
    where $\Delta_\alpha^{stab}(t) = \mathcal{M}_\alpha(\mu_{X_t^J}) - \mathcal{M}_\alpha(\mu_{\overline{X}_t^J}) - \overline{\delta X}_t$ is the purely nonlinear stability error. Defining the total remainder $\mathcal{R}_t := \Delta_\alpha^{stab}(t) + \mathfrak{D}_t$, the deterministic driving force on $\delta V_t^j$ simplifies to:
    \begin{equation}
        F_{diff}^j = -\widetilde{\delta X}_t^j + \mathcal{R}_t.
    \end{equation}

    \proofstep{Energy Expansion and Cross-Term Cancellation}
    We now expand the derivative of the quadratic energy in full detail. We differentiate each term in $\phi_{a,2}$.

\medskip
\noindent
\textbf{Step 1: the $a|\delta X_t^j|^2$ term.}
Since $d(\delta X_t^j)=\delta V_t^j\,dt$,
\[
d\bigl(a|\delta X_t^j|^2\bigr)
=
2a\langle \delta X_t^j,\delta V_t^j\rangle\,dt.
\]

\medskip
\noindent
\textbf{Step 2: the mixed term $\frac{m}{\gamma}\langle \delta X_t^j,\delta V_t^j\rangle$.}
Because $d(\delta X_t^j)$ has bounded variation, there is no quadratic covariation term, and hence
\[
d\Bigl(\frac{m}{\gamma}\langle \delta X_t^j,\delta V_t^j\rangle\Bigr)
=
\frac{m}{\gamma}\langle d(\delta X_t^j),\delta V_t^j\rangle
+
\frac{m}{\gamma}\langle \delta X_t^j,d(\delta V_t^j)\rangle.
\]
Substituting the dynamics gives
\begin{align}
d\Bigl(\frac{m}{\gamma}\langle \delta X_t^j,\delta V_t^j\rangle\Bigr)
&=
\frac{m}{\gamma}|\delta V_t^j|^2\,dt
-\langle \delta X_t^j,\delta V_t^j\rangle\,dt
+\frac1\gamma \langle \delta X_t^j,F_{\mathrm{diff}}^j(t)\rangle\,dt
+\frac{\sigma}{\gamma}\langle \delta X_t^j,\Delta S_t^j\,dW_t^j\rangle.
\end{align}

\medskip
\noindent
\textbf{Step 3: the $|\delta V_t^j|^2$ term.}
By It\^o's formula,
\begin{align}
d|\delta V_t^j|^2
&=
2\langle \delta V_t^j,d(\delta V_t^j)\rangle
+d\langle \delta V^j\rangle_t \\
&=
-\frac{2\gamma}{m}|\delta V_t^j|^2\,dt
+\frac{2}{m}\langle \delta V_t^j,F_{\mathrm{diff}}^j(t)\rangle\,dt
+\frac{\sigma^2}{m^2}\|\Delta S_t^j\|_{HS}^2\,dt  +\frac{2\sigma}{m}\langle \delta V_t^j,\Delta S_t^j\,dW_t^j\rangle.
\end{align}

\medskip
Summing the three contributions, we obtain
\begin{align}
d\phi_{a,2}(\delta X_t^j,\delta V_t^j)
&=
\Bigl[
(2a-1)\langle \delta X_t^j,\delta V_t^j\rangle
-\Bigl(\frac{2\gamma}{m}-\frac{m}{\gamma}\Bigr)|\delta V_t^j|^2
\notag\\
&\qquad
+\frac1\gamma \langle \delta X_t^j,-\widetilde{\delta X}_t^j+\mathcal{R}_t\rangle
+\frac2m \langle \delta V_t^j,-\widetilde{\delta X}_t^j+\mathcal{R}_t\rangle
+\frac{\sigma^2}{m^2}\|\Delta S_t^j\|_{HS}^2
\Bigr]dt
+ dM_t^j,
\label{eq:phi-expansion-full}
\end{align}
where
\[
dM_t^j
=
\frac{\sigma}{\gamma}\langle \delta X_t^j,\Delta S_t^j\,dW_t^j\rangle
+
\frac{2\sigma}{m}\langle \delta V_t^j,\Delta S_t^j\,dW_t^j\rangle
\]
is a local martingale term.

Averaging \eqref{eq:phi-expansion-full} over $j=1,\dots,J$ and taking expectations removes the martingale part. Using that $\mathcal{R}_t$ is independent of $j$, together with
\[
\frac1J\sum_{j=1}^J \delta X_t^j=\overline{\delta X}_t,
\qquad
\frac1J\sum_{j=1}^J \delta V_t^j=\overline{\delta V}_t,
\]
and the identities
\[
\frac1J\sum_{j=1}^J \langle \delta X_t^j,\widetilde{\delta X}_t^j\rangle = \frac1J\sum_{j=1}^J |\widetilde{\delta X}_t^j|^2,
\qquad
\frac1J\sum_{j=1}^J \langle \delta V_t^j,\widetilde{\delta X}_t^j\rangle = \frac1J\sum_{j=1}^J \langle \widetilde{\delta V}_t^j,\widetilde{\delta X}_t^j\rangle,
\]
we find that the restoring force separates the cross-term coefficients:
\begin{align}
\frac{d}{dt}\E\Bigl[\frac1J\sum_{j=1}^J \phi_{a,2}(\delta X_t^j,\delta V_t^j)\Bigr]
&=
\Bigl(2a-1-\frac2m\Bigr) \E\Bigl[ \frac1J\sum_{j=1}^J \langle \widetilde{\delta X}_t^j,\widetilde{\delta V}_t^j\rangle \Bigr]
+ (2a-1) \E \langle \overline{\delta X}_t,\overline{\delta V}_t\rangle
\notag\\
&\quad
-\frac1\gamma \E\Bigl[\frac1J\sum_{j=1}^J |\widetilde{\delta X}_t^j|^2\Bigr]
-\Bigl(\frac{2\gamma}{m}-\frac{m}{\gamma}\Bigr) \E\Bigl[\frac1J\sum_{j=1}^J |\delta V_t^j|^2\Bigr]
\notag\\
&\quad
+\E\Bigl\langle \frac1\gamma \overline{\delta X}_t+\frac2m\overline{\delta V}_t,\,\mathcal{R}_t\Bigr\rangle
+\frac{\sigma^2}{m^2J}\E\sum_{j=1}^J \|\Delta S_t^j\|_{HS}^2.
\label{eq:pre-cancel-energy}
\end{align}

It is now evident why the center-of-mass correction term in \eqref{eq:energy-functional-proof-recall} is necessary. Since $\frac{d}{dt}\overline{\delta X}_t=\overline{\delta V}_t$, differentiating the correction yields:
\begin{equation}
\frac{d}{dt}\E\Bigl[-\frac1m|\overline{\delta X}_t|^2\Bigr]
=
-\frac2m \E\langle \overline{\delta X}_t,\overline{\delta V}_t\rangle.
\label{eq:cm-correction-derivative}
\end{equation}
Combining \eqref{eq:pre-cancel-energy} and \eqref{eq:cm-correction-derivative}, the correction term perfectly aligns the CM cross-term coefficient with the fluctuation cross-term coefficient:
\begin{align}
\frac{d}{dt}\E[\mathcal E_t]
&=
\Bigl(2a-1-\frac2m\Bigr)
\E\Bigl[
\frac1J\sum_{j=1}^J \langle \widetilde{\delta X}_t^j,\widetilde{\delta V}_t^j\rangle
+
\langle \overline{\delta X}_t,\overline{\delta V}_t\rangle
\Bigr]
\notag\\
&\quad
-\frac1\gamma
\E\Bigl[\frac1J\sum_{j=1}^J |\widetilde{\delta X}_t^j|^2\Bigr]
-\Bigl(\frac{2\gamma}{m}-\frac{m}{\gamma}\Bigr)
\E\Bigl[\frac1J\sum_{j=1}^J |\delta V_t^j|^2\Bigr]
\notag\\
&\quad
+\E\Bigl\langle \frac1\gamma \overline{\delta X}_t+\frac2m\overline{\delta V}_t,\,\mathcal{R}_t\Bigr\rangle
+\frac{\sigma^2}{m^2J}\E\sum_{j=1}^J \|\Delta S_t^j\|_{HS}^2.
\label{eq:energy-before-choice-of-a}
\end{align}

We now choose
\[
a:=\frac12+\frac1m.
\]
Then both cross-term coefficients vanish simultaneously:
\[
2a-1-\frac2m=0.
\]
Therefore, using the decomposition $\mathcal{R}_t=\Delta_{\alpha}^{\mathrm{stab}}(t)+\mathfrak{D}_t$, we arrive at
\[
\frac{d}{dt}\E[\mathcal E_t]
= 
-\E[\mathcal D_{\mathrm{lin}}]
+\mathcal A_1(t)+\mathcal A_2(t)+\mathcal A_3(t),
\]
where
\[
\mathcal D_{\mathrm{lin}}
:=
\frac1J\sum_{j=1}^J
\left[
\frac1\gamma |\widetilde{\delta X}_t^j|^2
+
\left(\frac{2\gamma}{m}-\frac{m}{\gamma}\right)|\delta V_t^j|^2
\right],
\]
and the terms corresponding to Coupling Stability ($\mathcal{A}_1$), Sampling Error ($\mathcal{A}_2$), and Noise Trace ($\mathcal{A}_3$) simplify to:
    \begin{align}
        \mathcal{A}_1(t) &= \E \left\langle \frac{2}{m}\overline{\delta V}_t + \frac{1}{\gamma}\overline{\delta X}_t, \Delta_\alpha^{stab}(t) \right\rangle \,,  \\
        \mathcal{A}_2(t) &= \E \left\langle \frac{2}{m}\overline{\delta V}_t + \frac{1}{\gamma}\overline{\delta X}_t, \mathfrak{D}_t \right\rangle \,, \\
        \mathcal{A}_3(t) &= \frac{\sigma^2 }{m^2 J}\E \sum_{j=1}^J \left|S(X_t^j - \mathcal{M}_\alpha(\mu_{X_t^J})) - S(\overline{X}_t^j - \mathcal{M}_{\alpha}(\overline{\rho}_t)) \right|_{\text{HS}}^2 \,.
    \end{align}

    As noted previously, our choices yield $\frac{2\gamma}{m} - \frac{m}{\gamma} > 0$ under the same condition $\gamma > m/\sqrt{2}$ used to guarantee positive definiteness.

    \proofstep{Bounding the Terms}
    To bound the terms $\mathcal{A}_1, \mathcal{A}_2,$ and $\mathcal{A}_3$, we first define the norm equivalence constant
    \[
        C_{norm}
        :=
        \sqrt{
            \frac{2}{c_{\mathcal E,1}}
            \max\left(\frac4{m^2},\frac1{\gamma^2}\right)
        }.
    \]
    By the norm equivalence \(c_{\mathcal E,1}\widehat{\mathcal E}_t\le \mathcal E_t\), the linear combination of the Center of Mass errors satisfies:
    \begin{equation}
        \left| \frac{2}{m}\overline{\delta V}_t + \frac{1}{\gamma}\overline{\delta X}_t \right| \le C_{norm} \sqrt{\mathcal{E}_t}.
    \end{equation}
    
    \textbf{1. Coupling Stability ($\mathcal{A}_1$):}
    Because the modified energy completely eliminated the Center of Mass cross-term $\langle \overline{\delta X}_t, \overline{\delta V}_t \rangle$ during the energy expansion, there is no longer any cross-term to estimate. The coupling stability term depends solely on the nonlinear shift error.
    Applying the Cauchy-Schwarz inequality and the norm equivalence bound:
    \begin{align}
        \mathcal{A}_1(t) &= \E \left\langle \frac{2}{m}\overline{\delta V}_t + \frac{1}{\gamma}\overline{\delta X}_t, \Delta_\alpha^{stab}(t) \right\rangle \nonumber \\
        &\le \E \left[ \left| \frac{2}{m}\overline{\delta V}_t + \frac{1}{\gamma}\overline{\delta X}_t \right| |\Delta_\alpha^{stab}(t)| \right] \nonumber \\
        &\le C_{norm} \E \left[ \sqrt{\mathcal{E}_t} |\Delta_\alpha^{stab}(t)| \right] \le C_{norm} \sqrt{\E[\mathcal{E}_t]} \sqrt{\E[|\Delta_\alpha^{stab}(t)|^2]}.
    \end{align}
    
    \textbf{2. Sampling Error ($\mathcal{A}_2$):}
    Using the Cauchy-Schwarz inequality, \cref{lem:mc_est}, Young's inequality with weight $\zeta \in (0, \edecay{2})$, and \cref{thm:mfl-decay-centered-moment}:
    \begin{align}
        \mathcal{A}_2(t) \le C_{norm} \sqrt{\E[\mathcal{E}_t]} \sqrt{\E[|\mathfrak{D}_t|^2]} \le \frac{C_{norm}}{2} \e^{-\zeta t} \E[\mathcal{E}_t] + \frac{C_{norm} \cwm{2}}{2J} \e^{-(\edecay{2} - \zeta)t} \momentp{2}{\mfldis_0} \,.
    \end{align}

    \textbf{3. Noise Trace ($\mathcal{A}_3$):}
    Using Lipschitz continuity of $S$ and substituting the pivot $F_{diff}^j = -\widetilde{\delta X}_t^j + \Delta_\alpha^{stab} + \mathfrak{D}_t$:
    \begin{align}
        \mathcal{A}_3(t) \le \Lambda_{noise} \E \left[ \frac{1}{J}\sum_{j=1}^J |\widetilde{\delta X}_t^j|^2 + |\Delta_\alpha^{stab} + \mathfrak{D}_t|^2 \right], \quad \text{where } \Lambda_{noise} := \frac{\sigma^2 \chi_S}{m^2}.
    \end{align}
    To close the energy estimate, we require the noise trace generated by the spatial fluctuations to be  absorbed by the linear fluctuation dissipation $-\frac{1}{\gamma}\E[\frac{1}{J}\sum |\widetilde{\delta X}_t^j|^2]$ from $\mathcal{D}_{lin}$. This requires $\Lambda_{noise} \le \frac{1}{\gamma}$, which holds by the assumption $\sigma^2 \le \frac{m^2}{\gamma \chi_S}$ from \eqref{eq:stab_small_noise}. Provided this holds, the sum $-\E[\mathcal{D}_{lin}^{fluc}] + \mathcal{A}_3^{fluc} \le 0$ can be dropped. Applying the elementary inequality $|x+y|^2 \le 2|x|^2 + 2|y|^2$ to the macroscopic shifts, the remaining noise feedback is bounded by:
    \begin{equation}
        \mathcal{A}_3^{rem}(t) \le 2 \Lambda_{noise} \left( \E[|\Delta_\alpha^{stab}|^2] + \E[|\mathfrak{D}_t|^2] \right).
    \end{equation}

    \proofstep{Summary of the Differential Inequality}
    We drop the  negative linear velocity dissipation from $\mathcal{D}_{lin}$ (which is guaranteed since $\frac{2\gamma}{m} - \frac{m}{\gamma} > 0$). Substituting the bounds for $\mathcal{A}_1$, $\mathcal{A}_2$, and $\mathcal{A}_3^{rem}$ into the drift equation, and substituting the Monte Carlo bound for $\E[|\mathfrak{D}_t|^2]$, we obtain:
    \begin{align} 
        \frac{\d}{\d t} \E[\mathcal{E}_t] &\le C_{norm} \sqrt{\E[\mathcal{E}_t]} \sqrt{\E[|\Delta_\alpha^{stab}|^2]} + 2 \Lambda_{noise} \E[|\Delta_\alpha^{stab}|^2] \nonumber \\
        &\quad + \frac{C_{norm}}{2} \e^{-\zeta t} \E[\mathcal{E}_t] \nonumber \\
        &\quad + \frac{1}{J} \left( \frac{C_{norm} \cwm{2}}{2} \momentp{2}{\mfldis_0} \e^{-(\edecay{2} - \zeta)t} + 2 \Lambda_{noise} \cwm{2} \momentp{2}{\mfldis_0} \e^{-\edecay{2}t} \right). \label{eq:diff_ineq_final}
    \end{align}

    \proofstep{Substituting the Stability Error}
    As derived in \eqref{eq:technical-inequality-kinetic} below, the nonlinear stability error satisfies:
    \begin{equation}
        \E[|\Delta_\alpha^{stab}(t)|^2] \le C_Q \e^{-\kappa t} \E[\mathcal{E}_t] + \frac{C_{\Delta}}{J} \e^{-\frac{\edecay{8}}{4} t},
    \end{equation}
    where \(C_Q:=4C_{\mathcal M}^2K_0c_{\mathcal E,1}^{-1}\) and \(C_{\Delta}\) is defined in \eqref{eq:C_Q_kinetic}.
    Using $\sqrt{A+B} \le \sqrt{A} + \sqrt{B}$ and applying Young's inequality ($xy \le \frac{1}{2}x^2 + \frac{1}{2}y^2$) to the mixed product in \eqref{eq:diff_ineq_final}:
    \begin{align}
        C_{norm} \sqrt{\E[\mathcal{E}_t]} \sqrt{\E[|\Delta_\alpha^{stab}|^2]} 
        &\le \left( C_{norm} \sqrt{C_Q} \e^{-\frac{\kappa}{2} t} + \frac{C_{norm} \sqrt{C_{\Delta} } }{2} \e^{-\frac{\edecay{8}}{8} t} \right) \E[\mathcal{E}_t] + \frac{ C_{norm} \sqrt{C_{\Delta} }}{2 J} \e^{-\frac{\edecay{8}}{8} t}.
    \end{align}
    Similarly, the noise feedback is bounded by:
    \begin{equation}
        2 \Lambda_{noise} \E[|\Delta_\alpha^{stab}|^2] \le 2 \Lambda_{noise} C_Q \e^{-\kappa t} \E[\mathcal{E}_t] + \frac{2 \Lambda_{noise} C_{\Delta} }{J} \e^{-\frac{\edecay{8}}{4} t}.
    \end{equation}

    \proofstep{Gr\"onwall Estimate and Final Constant $C_{\text{MFL}}$}
    Collecting all terms, the system resolves to the Gr\"onwall form:
    \begin{equation}
        \frac{\d}{\d t} \E[\mathcal{E}_t] \le \beta(t) \E[\mathcal{E}_t] + \frac{1}{J} \alpha(t).
    \end{equation}
    Here, the integrating multiplier $\beta(t)$ groups the coefficients of $\E[\mathcal{E}_t]$:
    \begin{align}
        \beta(t) &= C_{norm} \sqrt{C_Q} \e^{-\frac{\kappa}{2} t} + \frac{C_{norm} \sqrt{C_{\Delta} }} {2} \e^{-\frac{\edecay{8}}{8} t} + \frac{C_{norm}}{2} \e^{-\zeta t} + 2 \Lambda_{noise} C_Q \e^{-\kappa t},
    \end{align}
    and the term $\alpha(t)$ groups the $\mathcal{O}(J^{-1})$ contributions:
    \begin{align}
        \alpha(t) &= \frac{C_{norm} \sqrt{C_{\Delta }}}{2} \e^{-\frac{\edecay{8}}{8} t} + \frac{C_{norm} \cwm{2} \momentp{2}{\mfldis_0}}{2} \e^{-(\edecay{2} - \zeta)t}  + 2 \Lambda_{noise} C_{\Delta } \e^{-\frac{\edecay{8}}{4} t} + 2 \Lambda_{noise} \cwm{2} \momentp{2}{\mfldis_0} \e^{-\edecay{2}t}.
    \end{align}
    
    Since all exponent decay rates ($\kappa, \edecay{8}, \edecay{2}, \zeta$) are  positive (by choosing $\zeta < \edecay{2}$), both functions are integrable on $[0, \infty)$. We define the bounded integrating factor exponent $\mathcal{I}_\beta := \int_0^\infty \beta(t) \d t$. 
    By direct integration, the total integral $C_\alpha := \int_0^\infty \alpha(s) \e^{\int_s^\infty \beta(\tau)\d \tau} \d s$ is bounded  by:
    \begin{align} 
        C_\alpha \le \e^{\mathcal{I}_\beta} \Bigg[ &\frac{4 C_{norm} \sqrt{C_{\Delta}}}{\edecay{8}} 
        + \frac{C_{norm} \cwm{2} \momentp{2}{\mfldis_0}}{2(\edecay{2} - \zeta)} + \frac{8 \Lambda_{noise} C_{\Delta}}{\edecay{8}} 
        + \frac{2 \Lambda_{noise} \cwm{2} \momentp{2}{\mfldis_0}}{\edecay{2}} \Bigg] < \infty. \label{eq:C_alpha}
    \end{align}
    
    Applying the integral form of Gr\"onwall's inequality yields:
    \begin{equation}
        \E[\mathcal{E}_t] \le \E[\mathcal{E}_0] \e^{\int_0^t \beta(\tau) \d\tau} + \frac{1}{J} \int_0^t \alpha(s) \e^{\int_s^t \beta(\tau) \d\tau} \d s.
    \end{equation}
    By the definition of the synchronous coupling, the mean-field particles are initialized identically to the particle system ($X_0^j = \overline{X}_0^j$ and $V_0^j = \overline{V}_0^j$), which guarantees initial matching: $\E[\mathcal{E}_0] = 0$. Thus, we conclude:
    \begin{align}
        \E[\mathcal{E}_t] \le \frac{1}{J} \int_0^\infty \alpha(s) \e^{\int_s^\infty \beta(\tau)\d \tau} \d s \le \frac{C_\alpha}{J}.
    \end{align}
    Using the norm equivalence $\E[|\delta X_t^j|^2 + |\delta V_t^j|^2] \le c_{\mathcal{E},1}^{-1} \E[\mathcal{E}_t]$, we obtain the claimed phase-space bound:
    \begin{equation}
        \sup_{t \ge 0} \E \left[ |X_t^j - \overline{X}_t^j|^2 + |V_t^j - \overline{V}_t^j|^2 \right] \le \frac{C_\alpha}{c_{\mathcal{E},1}J}.
    \end{equation}
    Finally, by the definition of the Wasserstein distance as an infimum over all valid couplings, this synchronous empirical coupling automatically bounds the spatial Wasserstein distance between the two empirical measures:
\begin{equation}
    \sup_{t \ge 0} \E \left[\mathcal{W}_2^2(\mu_{X_t^J}, \mu_{\overline{X}_t^J})\right]
    \le
    \sup_{t \ge 0} \E \left[ \frac{1}{J}\sum_{j=1}^J |X_t^j - \overline{X}_t^j|^2 \right]
    \le \frac{C_\alpha}{c_{\mathcal{E},1}J},
\end{equation}
    completing the proof with $C_{\text{MFL}} = c_{\mathcal{E},1}^{-1} C_\alpha$.
\end{proof}

\paragraph{Control of Stability Error $\E[|\Delta_\alpha^{stab}(t)|^2]$}
    We bound the nonlinear stability error using the stability estimate (\cref{lem:stability_est}) and the concentration inequalities derived in \cref{sec:concentration}.
    First, by \cref{lem:stability_est}, we have
    \begin{align}
        |\Delta_\alpha^{stab}(t)|^2 &\le 
        C_{\mathcal M}^2 \left( \sqrt{\mathfrak{M}_2(\mu_{X^J_t})} + \sqrt{\mathfrak{M}_2(\mu_{\overline{X}^J_t})} \right)^2 \mathcal{W}_2^2 (\mu_{X^J_t}, \mu_{\overline{X}^J_t}) \nonumber \\
        &\le 2 C_{\mathcal M}^2 \left( \mathfrak{M}_2(\mu_{X^J_t}) + \mathfrak{M}_2(\mu_{\overline{X}^J_t}) \right)\mathcal{W}_2^2 (\mu_{X^J_t}, \mu_{\overline{X}^J_t}).
    \end{align}
    The centered moments decay exponentially with high probability, but may be large on rare events. We use
    \[
        \kappa:=\frac{\edecay{8}}8,
    \]
    which satisfies \(\kappa<\edecay{2}\) and \(r\kappa<\edecay{2r}\) by
    \cref{assump:poc}. Set the threshold
    \[
        K_0
        :=
        \frac{
            \max\{\mathcal L_2(0),\,\E[\overline L_2^J(0)]\}+1
        }{c_1^{(2)}},
    \]
    and define the "Bad Sets" for the particle system and the mean-field system respectively:
    \begin{align}
        \Omega_{bad, X} &:= \left\{ \sup_{t \ge 0} \e^{\kappa t} \mathfrak{M}_2(\mu_{X^J_t}) \ge K_0 \right\}, \\
        \Omega_{bad, \overline{X}} &:= \left\{ \sup_{t \ge 0} \e^{\kappa t} \mathfrak{M}_2(\mu_{\overline{X}^J_t}) \ge K_0 \right\}.
    \end{align}
    Let $\Omega^* := \Omega_{bad, X} \cup \Omega_{bad, \overline{X}}$. By the concentration inequalities (\cref{lem:concentration-ineq} and \cref{lem:concentration-mf}), applied with deviation threshold \(A=1\) and using \(\kappa<\edecay{2}\), \(r\kappa<\edecay{2r}\), the probability of this set is bounded. Using \(\cref{cor:centered-lyapunov-moment-equivalence}\) at order \(2r\), we have:
    \begin{align}\label{eq: prob-omega-bound}
        \proba(\Omega^*) \le \proba(\Omega_{bad, X}) + \proba(\Omega_{bad, \overline{X}})
        \le
        \left(
            C_{Bad}\mathcal L_{2r}(0)
            +
            C_{Bad}^{MF}\overline{\mathcal L}_{2r}(0)
        \right)
        J^{-r/2}
        \le
        (C_{Bad}+C_{Bad}^{MF})C_{2r,2}\,
        J^{-r/2}\mathfrak M_{2r}(\mfldis_0).
    \end{align}
    
    We decompose the expectation of the error term:
    \begin{equation}
        \E[|\Delta_\alpha^{stab}(t)|^2] \le 2 C_{\mathcal M}^2 \underbrace{\E \left[ \mathbf{1}_{(\Omega^*)^c} \left(\mathfrak{M}_2(\mu_{X^J_t}) + \mathfrak{M}_2(\mu_{\overline{X}^J_t})\right) \mathcal{W}_2^2 \right]}_{\text{Good Set}} 
        + 2 C_{\mathcal M}^2 \underbrace{\E \left[ \mathbf{1}_{\Omega^*} \left(\mathfrak{M}_2(\mu_{X^J_t}) + \mathfrak{M}_2(\mu_{\overline{X}^J_t})\right) \mathcal{W}_2^2 \right]}_{\text{Bad Set}}.
    \end{equation}
    
    \textbf{1. On the Good Set $(\Omega^*)^c$:}
    By definition, on the complement of the bad sets, the moments satisfy $\mathfrak{M}_2(\mu_{X^J_t}) \le K_0 \e^{-\kappa t}$ and $\mathfrak{M}_2(\mu_{\overline{X}^J_t}) \le K_0 \e^{-\kappa t}$ for all $t \ge 0$. Furthermore, the empirical Wasserstein distance is bounded by the Euclidean energy: $\mathcal{W}_2^2(\mu_{X^J_t}, \mu_{\overline{X}^J_t}) \le \frac{1}{J}\sum_{j=1}^J |X_t^j - \overline{X}_t^j|^2 \le c_{\mathcal E,1}^{-1} \mathcal{E}_t$. Thus:
    \begin{align}
        \E \left[ \mathbf{1}_{(\Omega^*)^c} \left(\mathfrak{M}_2(\mu_{X^J_t}) + \mathfrak{M}_2(\mu_{\overline{X}^J_t})\right) \mathcal{W}_2^2 (\mu_{X^J_t}, \mu_{\overline{X}^J_t})  \right] 
        & \le 2 K_0 \e^{-\kappa t} \E[\mathcal{W}_2^2 (\mu_{X^J_t}, \mu_{\overline{X}^J_t}) ] \nonumber \\
        & \leq 2 K_0 c_{\mathcal E,1}^{-1} \e^{-\kappa t} \E[\mathcal{E}_t ].
    \end{align}

    \textbf{2. On the Bad Set $\Omega^*$:}
    We use the Cauchy-Schwarz inequality to separate the probability of the bad set from the moments of the error: 
    \begin{align}
        \E \left[ \mathbf{1}_{\Omega^*} \left(\mathfrak{M}_2(\mu_{X^J_t}) + \mathfrak{M}_2(\mu_{\overline{X}^J_t})\right) \mathcal{W}_2^2 \right] 
        &\le \sqrt{\proba(\Omega^*)} \sqrt{\E \left[ \left(\mathfrak{M}_2(\mu_{X^J_t}) + \mathfrak{M}_2(\mu_{\overline{X}^J_t})\right)^2 \mathcal{W}_2^4 \right]}.
    \end{align}
    To bound the term under the square root, we apply the Cauchy-Schwarz a second time to decouple the centered moments from the Wasserstein distance. By Jensen's inequality, $(x+y)^4 \le 8(x^4+y^4)$ and $(\mathfrak{M}_2)^4 \le \mathfrak{M}_8$, meaning the centered moments provide exponential decay: $\E[\mathfrak{M}_8(t)] \le \e^{-\edecay{8}t} \E[\mathfrak{M}_8(0)]$. Simultaneously, the empirical Wasserstein distance is bounded by the raw moments up to order 8 ($\E[\mathcal{W}_2^8] \le 2^7 \E[|X_t|^8 + |\overline{X}_t|^8]$), which are bounded uniformly in time by \cref{lem:uit-raw-moments-particle}. Combining this exponentially decaying term with the $\mathcal{O}(1)$ raw moment bound yields:
    \begin{equation} \label{eq: bad-exp-bound}
        \sqrt{\E \left[ \left(\mathfrak{M}_2(\mu_{X^J_t}) + \mathfrak{M}_2(\mu_{\overline{X}^J_t})\right)^2 \mathcal{W}_2^4 \right]} \le 2^5 K_{\rm{raw}, 8}^2 \e^{-\frac{\edecay{8}} {4} t} \sqrt{\momentp{8}{\mfldis_0}}  < \infty.
    \end{equation}
    Therefore, multiplying by the probability bound $\sqrt{\proba(\Omega^*)}$, which follows from \eqref{eq: prob-omega-bound}:
    \begin{align}
        \E \left[ \mathbf{1}_{\Omega^*} \left(\mathfrak{M}_2(\mu_{X^J_t}) + \mathfrak{M}_2(\mu_{\overline{X}^J_t})\right) \mathcal{W}_2^2 \right] 
        &\le \sqrt{(C_{Bad}+C_{Bad}^{MF})C_{2r,2}}\,
        J^{-r/4}
        \sqrt{\mathfrak M_{2r}(\mfldis_0)}
        2^5 K_{\rm{raw},8}^2
        \e^{-\frac{\edecay{8}}{4}t}
        \sqrt{\momentp{8}{\mfldis_0}} \,.
    \end{align}

    \textbf{Summary:}
    Combining both parts, the total nonlinear stability error satisfies:
    \begin{align}
        \E[|\Delta_\alpha^{stab}(t)|^2]  &\le 4 C_{\mathcal M}^2 K_0 c_{\mathcal E,1}^{-1} \e^{-\kappa t} \E[\mathcal{E}_t] \nonumber \\
        &\quad + 2 C_{\mathcal M}^2
        \sqrt{(C_{Bad}+C_{Bad}^{MF})C_{2r,2}}\,
        J^{-r/4}
        \sqrt{\mathfrak M_{2r}(\mfldis_0)}
        2^5K_{\rm{raw},8}^2
        \e^{-\frac{\edecay{8}}{4}t}
        \sqrt{\momentp{8}{\mfldis_0}}  \notag \\
        &\le C_Q \e^{-\kappa t} \E[\mathcal{E}_t] + \frac{C_{\Delta}}{J^{r/4}} \e^{-\frac{\edecay{8}}{4}t} \,, \label{eq:rho_bound_final}
    \end{align}
    where the constants are:
    \begin{align}\label{eq:C_Q_kinetic}
        C_Q &:= 4 C_{\mathcal M}^2 K_0 c_{\mathcal E,1}^{-1}\,, \qquad
        C_{\Delta}
        :=
        2^6 C_{\mathcal M}^2
        \sqrt{(C_{Bad}+C_{Bad}^{MF})C_{2r,2}}\,
        K_{\rm{raw},8}^2
        \sqrt{\mathfrak M_{2r}(\mfldis_0)\momentp{8}{\mfldis_0}}\,.
    \end{align}
   Since the initial moments up to order $2r \ge 8$ are finite, we have $r \ge 4$. Hence $J^{-r/4} \le J^{-1}$ for $J \ge 1$, ensuring the spatial error scales as $\mathcal{O}(J^{-1})$:
    \begin{align}\label{eq:technical-inequality-kinetic}
         \E[|\Delta_\alpha^{stab}(t)|^2] \le C_Q \e^{-\kappa t} \E[\mathcal{E}_t] + \frac{C_{\Delta}}{J} \e^{-\frac{\edecay{8}}{4}t } \,,
    \end{align}
    and substituting this back into the main energy inequality yields a closed Gr\"onwall estimate.

\subsection{Proof of \cref{thm:stab_particle}}

\begin{proof}
    Define the raw differences
    \[
        \delta X_t^j := X_t^j-\tilde X_t^j,
        \qquad
        \delta V_t^j := V_t^j-\tilde V_t^j,
    \]
    their empirical means
    \[
        \overline{\delta X}_t := \frac1J\sum_{j=1}^J \delta X_t^j,
        \qquad
        \overline{\delta V}_t := \frac1J\sum_{j=1}^J \delta V_t^j,
    \]
    and the centered fluctuations
    \[
        \widetilde{\delta X}_t^j := \delta X_t^j-\overline{\delta X}_t,
        \qquad
        \widetilde{\delta V}_t^j := \delta V_t^j-\overline{\delta V}_t.
    \]

    We also introduce the Euclidean coupling error
    \[
        \widehat{\mathcal G}_t := \frac1J\sum_{j=1}^J \Bigl(|\delta X_t^j|^2+|\delta V_t^j|^2\Bigr).
    \]
    As in the proof of \cref{thm:uit-poc-kinetic}, we define the modified energy
    \begin{equation}
        \label{eq:defGt_stab}
        \mathcal G_t
        :=
        \frac1J\sum_{j=1}^J \phi_{a,2}(\delta X_t^j,\delta V_t^j)
        - \frac1m |\overline{\delta X}_t|^2,
        \qquad
        a:=\frac12+\frac1m.
    \end{equation}
    Decomposing into fluctuations and center of mass gives
    \[
        \mathcal G_t
        =
        \frac1J\sum_{j=1}^J \phi_{a,2}(\widetilde{\delta X}_t^j,\widetilde{\delta V}_t^j)
        + \phi_{a-\frac1m,2}(\overline{\delta X}_t,\overline{\delta V}_t).
    \]
    Since $a-\frac1m=\frac12$ and $\gamma>m/\sqrt2$, both quadratic forms are positive definite. By \cref{cor:coupling-energy-equivalence},
    \begin{equation}
        \label{eq:stab_energy_equiv}
        c_{\mathcal E,1}\widehat{\mathcal G}_t
        \le
        \mathcal G_t
        \le
        c_{\mathcal E,2}\widehat{\mathcal G}_t.
    \end{equation}

    \proofstep{Step 1: Drift decomposition}
    The coupled dynamics satisfy $d(\delta X_t^j)=\delta V_t^j\,dt$ and
    \begin{align}
        m\,d(\delta V_t^j)
        &=
        -\gamma \delta V_t^j\,dt
        -
        \Bigl[
            (X_t^j-\mathcal M_\alpha(\mu_{X_t^J}))
            -
            (\tilde X_t^j-\mathcal M_\alpha(\mu_{\tilde X_t^J}))
        \Bigr]dt \\
        & 
         \qquad + \sigma \left( S(X_t^j-\mathcal M_\alpha(\mu_{X_t^J})) - S(\tilde X_t^j-\mathcal M_\alpha(\mu_{\tilde X_t^J})) \right)\,dW_t^j \,.
    \end{align}
    We decompose the nonlinear shift around the empirical mean:
    \begin{equation}
        \label{eq:stab_pivot}
        \mathcal M_\alpha(\mu_{X_t^J})-\mathcal M_\alpha(\mu_{\tilde X_t^J})
        =
        \overline{\delta X}_t + \Delta_\alpha^{\rm stab}(t),
    \end{equation}
    where $\Delta_\alpha^{\rm stab}(t) := \mathcal M_\alpha(\mu_{X_t^J}) - \mathcal M_\alpha(\mu_{\tilde X_t^J}) - \overline{\delta X}_t$.
    Because both systems are purely empirical, there is no mean-field sampling error ($\mathfrak{D}_t \equiv 0$). The deterministic forcing elegantly simplifies to $-\widetilde{\delta X}_t^j + \Delta_\alpha^{\rm stab}(t)$.

    \proofstep{Step 2: Differential inequality for the energy}
    Applying It\^o's formula to \eqref{eq:defGt_stab} and using the choice $a=\frac12+\frac1m$, all quadratic cross terms cancel. We obtain
    \begin{equation}
        \label{eq:stab_diffeq_pre}
        \frac{d}{dt}\E[\mathcal G_t]
        \le
        -\E[\mathcal D_{\rm lin}(t)]
        + \mathcal A_1(t)
        + \mathcal A_3(t),
    \end{equation}
    where the linear dissipation $\mathcal D_{\rm lin}(t)$ acts on the spatial fluctuations and the raw velocities:
    \[
        \mathcal D_{\rm lin}(t)
        :=
        \frac1J\sum_{j=1}^J
        \left[
            \frac1\gamma |\widetilde{\delta X}_t^j|^2
            +
            \left(\frac{2\gamma}{m}-\frac{m}{\gamma}\right)|\delta V_t^j|^2
        \right],
    \]
    the coupling stability term is:
    \[
        \mathcal A_1(t)
        :=
        \E\left\langle
            \frac2m \overline{\delta V}_t + \frac1\gamma \overline{\delta X}_t,
            \Delta_\alpha^{\rm stab}(t)
        \right\rangle,
    \]
    and the trace of the stochastic noise difference is:
    \[
        \mathcal A_3(t)
        :=
        \frac{\sigma^2}{m^2J}
        \E\sum_{j=1}^J
        \left\|
            S(X_t^j-\mathcal M_\alpha(\mu_{X_t^J}))
            -
            S(\tilde X_t^j-\mathcal M_\alpha(\mu_{\tilde X_t^J}))
        \right\|_{\rm HS}^2.
    \]

    By \eqref{eq:stab_energy_equiv}, $\left| \frac2m \overline{\delta V}_t + \frac1\gamma \overline{\delta X}_t \right| \le C_{\rm norm}\,\sqrt{\mathcal G_t}$, where
    \[
        C_{\rm norm}
        :=
        \sqrt{
            \frac{2}{c_{\mathcal E,1}}
            \max\left(\frac4{m^2},\frac1{\gamma^2}\right)
        }.
    \]
    Thus, by the Cauchy-Schwarz:
    \begin{equation}
        \label{eq:A1stab}
        \mathcal A_1(t)
        \le
        C_{\rm norm}\,
        \sqrt{\E[\mathcal G_t]}\,
        \sqrt{\E|\Delta_\alpha^{\rm stab}(t)|^2}.
    \end{equation}

    For the noise term $\mathcal A_3$, applying the Lipschitz trace bound for $S$ with constant $\chi_S$ yields:
    \[
        \mathcal A_3(t)
        \le
        \frac{\sigma^2\chi_S}{m^2}
        \E\left[
            \frac1J\sum_{j=1}^J
            \bigl|\widetilde{\delta X}_t^j+\Delta_\alpha^{\rm stab}(t)\bigr|^2
        \right].
    \]
    Because $\widetilde{\delta X}_t^j$ is centered by definition ($\sum_{j=1}^J \widetilde{\delta X}_t^j = 0$), and $\Delta_\alpha^{\rm stab}(t)$ is independent of the particle index $j$, their inner product vanishes when averaged over the swarm. Thus, the square expands orthogonally without requiring Young's inequality:
    \[
        \frac1J\sum_{j=1}^J \bigl|\widetilde{\delta X}_t^j+\Delta_\alpha^{\rm stab}(t)\bigr|^2 = \frac1J\sum_{j=1}^J |\widetilde{\delta X}_t^j|^2 + |\Delta_\alpha^{\rm stab}(t)|^2.
    \]
    Defining $\Lambda_{\rm noise}:=\frac{\sigma^2\chi_S}{m^2}$, we substitute this expansion back into the energy derivative \eqref{eq:stab_diffeq_pre}:
    \begin{align}
        \frac{d}{dt}\E[\mathcal G_t] &\le \E \left[ \frac1J\sum_{j=1}^J \left( \Lambda_{\rm noise} - \frac1\gamma \right) |\widetilde{\delta X}_t^j|^2 \right] - \left(\frac{2\gamma}{m}-\frac{m}{\gamma}\right) \E\left[\frac1J\sum_{j=1}^J |\delta V_t^j|^2\right] \nonumber \\
        &\quad + \mathcal A_1(t) + \Lambda_{\rm noise}\E|\Delta_\alpha^{\rm stab}(t)|^2.
    \end{align}
    By the condition \eqref{eq:stab-small-noise-stability}, $\sigma^2 \le \frac{m^2}{\gamma \chi_S} \implies \Lambda_{\rm noise} \le \frac1\gamma$. Thus, the trace noise is  and completely absorbed by the linear spatial dissipation. Dropping the remaining negative velocity dissipation gives the closed differential inequality:
    \begin{equation}
        \label{eq:stab_diffeq}
        \frac{d}{dt}\E[\mathcal G_t]
        \le
        C_{\rm norm}\,
        \sqrt{\E[\mathcal G_t]}\,
        \sqrt{\E|\Delta_\alpha^{\rm stab}(t)|^2}
        +
        \Lambda_{\rm noise}\,\E|\Delta_\alpha^{\rm stab}(t)|^2.
    \end{equation}

    \proofstep{Step 3: Estimate on the nonlinear stability remainder}
    By \cref{lem:stability_est},
    \[
        |\Delta_\alpha^{\rm stab}(t)|^2
        \le
        2C_{\mathcal M}^2
        \Bigl(
            \mathfrak M_2(\mu_{X_t^J})
            +
            \mathfrak M_2(\mu_{\tilde X_t^J})
        \Bigr)
        \mathcal{W}_2^2(\mu_{X_t^J},\mu_{\tilde X_t^J}).
    \]
    Let
    \[
        \kappa:=\frac{\edecay{8}}8.
    \]
    By \cref{assump:stab}, we have \(\kappa<\edecay{2}\). We define the corresponding bad sets:
    \[
        \Omega_{\kappa,X}
        :=
        \left\{
            \sup_{t\ge0} e^{\kappa t}\mathfrak M_2(\mu_{X_t^J})
            \ge \widetilde{K}_0
        \right\},
        \qquad
        \Omega_{\kappa,\tilde X}
        :=
        \left\{
            \sup_{t\ge0} e^{\kappa t}\mathfrak M_2(\mu_{\tilde X_t^J})
            \ge \widetilde{K}_0
        \right\},
    \]
    and $\widetilde{\Omega}_\kappa^\ast := \Omega_{\kappa,X}\cup \Omega_{\kappa,\tilde X}$. To maintain symmetry, the uniform threshold is defined via both initial distributions:
    \[
        \widetilde{K}_0
        :=
        \frac{
            \E\!\left[L_2(\mathcal X_0^J,\mathcal V_0^J)\right]
            +
            \E\!\left[L_2(\widetilde{\mathcal X}_0^J,\widetilde{\mathcal V}_0^J)\right]
        }{c_1^{(2)}}
        +1.
    \]
    To attain a convergence rate of $J^{-q}$, we define the concentration parameter
    \[
        \tilde q:=4\max\{1,q\}.
    \]
    Then \(2\tilde q=p_\ast\). By applying the concentration inequality
    (\cref{lem:concentration-ineq}) with moment order \(p_\ast\), using the gap condition
    \[
        \edecay{p_\ast}>\tilde q\,\kappa
    \]
    from \cref{assump:stab}, we bound the probability of the combined bad set:
    \[
        \proba(\widetilde{\Omega}_\kappa^\ast)
        \le
        \widetilde C_{\rm Bad}\,J^{-\tilde q/2}
        =
        \widetilde C_{\rm Bad}\,J^{-2\max\{1,q\}},
    \]
    where the \(J\)-independent constant is defined via the finite \(p_\ast\)-th initial moments:
    \[
        \widetilde C_{\rm Bad}
        :=
        C_{Bad,\tilde q,\kappa}
        C_{p_\ast,2}
        \left(
            \mathfrak M_{p_\ast}(\rho_0)
            +
            \mathfrak M_{p_\ast}(\tilde\rho_0)
        \right).
    \]

    On the Good Set $(\widetilde{\Omega}_\kappa^\ast)^c$, both spatial moments are  bounded by $\widetilde{K}_0 e^{-\kappa t}$, and the empirical Wasserstein distance is bounded by the Euclidean energy ($\mathcal{W}_2^2 \le c_{\mathcal E,1}^{-1}\mathcal G_t$). Hence:
    \[
        \E\!\left[
            \mathbf 1_{(\widetilde{\Omega}_\kappa^\ast)^c}
            \Bigl(
                \mathfrak M_2(\mu_{X_t^J})+\mathfrak M_2(\mu_{\tilde X_t^J})
            \Bigr)
            \mathcal{W}_2^2
        \right]
        \le
        2\widetilde{K}_0 c_{\mathcal E,1}^{-1} e^{-\kappa t}\E[\mathcal G_t].
    \]

    On the Bad Set $\widetilde{\Omega}_\kappa^\ast$, applying the Cauchy--Schwarz inequality:
    \begin{align}
        &\E\!\left[
            \mathbf 1_{\widetilde{\Omega}_\kappa^\ast}
            \Bigl(
                \mathfrak M_2(\mu_{X_t^J})+\mathfrak M_2(\mu_{\tilde X_t^J})
            \Bigr)
            \mathcal{W}_2^2
        \right]
        \le
        \proba(\widetilde{\Omega}_\kappa^\ast)^{1/2}
        \left(
            \E\!\left[
                \Bigl(
                    \mathfrak M_2(\mu_{X_t^J})+\mathfrak M_2(\mu_{\tilde X_t^J})
                \Bigr)^2
                \mathcal{W}_2^4(\mu_{X_t^J},\mu_{\tilde X_t^J})
            \right]
        \right)^{1/2}.
    \end{align}
    By taking the square root, the probability bound gives
    \[
        \proba(\widetilde{\Omega}_\kappa^\ast)^{1/2}
        \le
        \widetilde C_{\rm Bad}^{1/2}J^{-\max\{1,q\}}
        \le
        \widetilde C_{\rm Bad}^{1/2}J^{-q}.
    \]
    For the remaining expectation, we apply the Cauchy-Schwarz a second time to decouple the centered moments from the Wasserstein distance. By Jensen's inequality ($\E[\mathfrak{M}_2^4] \le \E[\mathfrak{M}_8]$), the moments provide exponential decay $\e^{-\edecay{8} t}$. By the uniform-in-time raw moment bounds (\cref{lem:uit-raw-moments-particle}), $\E[\mathcal{W}_2^8] \le 2^7 K_{\text{raw}, 8}^8 \le \mathcal{O}(1)$. The term under the square root is therefore  bounded uniformly in time by:
    \[
        \widetilde K_{\rm raw}\,e^{-\edecay{8} t/4}, \quad \text{where } \widetilde K_{\rm raw} := 2^5 K_{\rm{raw},8}^2 \sqrt{ \E[\mathfrak{M}_8(\mu_{X_0^J})] + \E[\mathfrak{M}_8(\mu_{\tilde X_0^J})] }.
    \]
    Combining these yields the scaling:
    \[
        \E\!\left[
            \mathbf 1_{\widetilde{\Omega}_\kappa^\ast}
            \Bigl(
                \mathfrak M_2(\mu_{X_t^J})+\mathfrak M_2(\mu_{\tilde X_t^J})
            \Bigr)
            \mathcal{W}_2^2
        \right]
        \le
        \widetilde C_{\rm Bad}^{1/2} \widetilde K_{\rm raw}\,J^{-q} e^{-\edecay{8} t/4}.
    \]

    Summing the good-set and bad-set bounds, the nonlinear stability error is bounded by:
    \begin{equation}
        \label{eq:stab_delta_bound}
        \E|\Delta_\alpha^{\rm stab}(t)|^2
        \le
        \widetilde{C}_Q e^{-\kappa t}\E[\mathcal G_t]
        +
        \widetilde C_\Delta J^{-q} e^{-\edecay{8} t/4},
    \end{equation}
    where we define the finite, completely $J$-independent constants:
    \[
        \widetilde{C}_Q := 4C_{\mathcal M}^2 \widetilde{K}_0 c_{\mathcal E,1}^{-1}, \qquad \widetilde C_\Delta := 2C_{\mathcal M}^2 \widetilde C_{\rm Bad}^{1/2} \widetilde K_{\rm raw}.
    \]

    \proofstep{Step 4: Gr\"onwall closure}
    Substituting \eqref{eq:stab_delta_bound} into \eqref{eq:stab_diffeq}, and using $\sqrt{A+B}\le \sqrt A + \sqrt B$ and $xy\le \frac12x^2+\frac12y^2$, we unpack the nested integrals. Since all exponential terms ($e^{-\kappa/2 t}, e^{-\edecay{8}/8 t}$) are  bounded from above by the slowest rate $e^{-\edecay{8} t/16}$, we can cleanly bound the entire derivative as:
    \[
        \frac{d}{dt}\E[\mathcal G_t]
        \le
        \tilde c_1 e^{-\edecay{8} t/16}\E[\mathcal G_t]
        +
        \tilde c_2 e^{-\edecay{8} t/16}J^{-q},
    \]
    for suitable finite constants $\tilde c_1,\tilde c_2>0$ constructed from $C_{\rm norm}, \widetilde{C}_Q, \Lambda_{\rm noise},$ and $\widetilde C_\Delta$.
    Integrating this inequality from $0$ to $t$ yields:
    \[
        \E[\mathcal G_t]
        \le
        \E[\mathcal G_0]
        + \frac{16\tilde c_2}{\edecay{8}}J^{-q}
        + \int_0^t \tilde c_1 e^{-\edecay{8} s/16}\E[\mathcal G_s]\,ds.
    \]
    Applying Gr\"onwall's inequality yields:
    \[
        \E[\mathcal G_t]
        \le
        \left(
            \E[\mathcal G_0] + \frac{16\tilde c_2}{\edecay{8}}J^{-q}
        \right)
        \exp\!\left(\frac{16\tilde c_1}{\edecay{8}}\right).
    \]
    Finally, using \eqref{eq:stab_energy_equiv} to un-shift the energy, we conclude:
    \[
        \E[\widehat{\mathcal G}_t]
        \le
        \frac{1}{c_{\mathcal E,1}}\E[\mathcal G_t]
        \le
        \frac{c_{\mathcal E,2}}{c_{\mathcal E,1}}
        e^{16\tilde c_1/\edecay{8}}\E[\widehat{\mathcal G}_0]
        +
        \frac{16\tilde c_2}{c_{\mathcal E,1}\edecay{8}}
        e^{16\tilde c_1/\edecay{8}}
        J^{-q},
    \]
    which completes the proof with
    \[
        C_{\rm Stab,1}
        :=
        \frac{c_{\mathcal E,2}}{c_{\mathcal E,1}}
        e^{16\tilde c_1/\edecay{8}},
        \qquad
        C_{\rm Stab,2}
        :=
        \frac{16\tilde c_2}{c_{\mathcal E,1}\edecay{8}}
        e^{16\tilde c_1/\edecay{8}}.
    \]
\end{proof}
\section{Auxiliary Results}\label{sec: auxiliary-results}
This section collects the auxiliary estimates used in the proof of the main
theorems. The results are organized according to the proof strategy in
\cref{sec:proof-overview}. We first prove exponential decay of centered moments
for both the interacting particle system and the mean-field process. We then use
this decay to derive uniform-in-time bounds on raw moments. Next, we establish
concentration inequalities for the empirical Lyapunov functional, both for the
interacting system and for the synchronously coupled independent mean-field
particles. Finally, we recall the weighted-mean stability and Monte Carlo
estimates imported from the first-order CBO analysis.

\subsection{Decay of Centered Moments}
\label{sec: centered-moments-decay}
\subsubsection{Setup and Auxiliary Lemmas for Centered Moments}

We prove exponential decay of the centered moments for the interacting particle
system and for the mean-field process. For \(p>2\), the proof uses the polynomial
Lyapunov functional built from \(\phi_{a_p,p}\) and the shifted variables
\((Y,\hat Z)\) introduced in \cref{sec:setting-transformation}, where
\[
    a_p:=\frac1p\left(1-\frac{m(p-2)}{\gamma^2}\right).
\]
We define the pathwise centered Lyapunov functional
\begin{equation}\label{eq:random-Lp-def}
    L_p(t)
    :=
    \frac1J\sum_{j=1}^J \phi_{a_p,p}(Y_t^j,\hat Z_t^j)
    =
    \frac1J\sum_{j=1}^J
    \left(
        a_p|Y_t^j|^p
        +
        |\hat Z_t^j|^p
        +
        \frac{m}{\gamma}|Y_t^j|^{p-2}
        \ip{Y_t^j}{\hat Z_t^j}
    \right),
    \qquad p>2.
\end{equation}
Its expected counterpart is denoted by
\begin{equation}\label{eq:expected-Lp-def}
    \mathcal L_p(t):=\E[L_p(t)].
\end{equation}
By \cref{lem:hypercoercivity}, \(L_p(t)\) is equivalent to the standard
\(p\)-th moment norm in the shifted variables. Since
\[
    V_t^j-m_V(\mu_{V_t^J})
    =
    \hat Z_t^j+\frac1\gamma Y_t^j,
\]
this equivalence also controls the centered position and velocity moments.

The quadratic case \(p=2\) is treated separately. In this case we do not use
the specialization \(\phi_{a,2}\) from \cref{eq:phi-ap}. Instead, we use the quadratic form defined as in \eqref{eq:psi2-centered}:
\begin{equation}
    \psi_2(y,z)
    :=
    a_2|y|^2+|z|^2+\frac5\gamma\ip{y}{z},
    \qquad
    a_2:=\frac9{2m}+\frac1{\gamma^2}.
\end{equation}
The corresponding pathwise quadratic Lyapunov functional is
\begin{equation}\label{eq:random-L2-def}
    L_2(t)
    :=
    \frac1J\sum_{j=1}^J \psi_2(Y_t^j,\hat Z_t^j)
    =
    \frac1J\sum_{j=1}^J
    \left(
        a_2|Y_t^j|^2
        +
        |\hat Z_t^j|^2
        +
        \frac5\gamma\ip{Y_t^j}{\hat Z_t^j}
    \right).
\end{equation}
We write
\begin{equation}\label{eq:expected-L2-def}
    \mathcal L_2(t):=\E[L_2(t)].
\end{equation}
This functional is used only for the quadratic centered-moment estimate. The
quadratic form \(\phi_{a,2}\) remains reserved for the coupling energy in the
proofs of \cref{thm:uit-poc-kinetic,thm:stab_particle}.

For the mean-field samples, we use the same notation after replacing
\((Y_t^j,\hat Z_t^j)\) by the corresponding centered-shifted mean-field variables
\((\overline Y_t^j,\widehat{\overline Z}_t^j)\). Since each mean-field sample has law
\(\overline\rho_t\), the same norm-equivalence estimates apply to the mean-field
centered moments.

Before analyzing the decay of the expected Lyapunov functionals
\(\mathcal L_p(t)=\E[L_p(t)]\), we first derive the corresponding pathwise
It\^o differential inequalities for \(L_p(t)\).

\begin{lemma}[Quadratic variation of centered noise] \label{lem:qvar-centered-noise}
    For all $j \in \{1, \dots, J\}$, the quadratic variation of the centered noise process $M^{j,\mathrm{center}}$ satisfies, in the sense of symmetric semi-definite matrices:
    \begin{equation} \label{eq:qvar_bound}
        \d \langle M^{j,\mathrm{center}} \rangle_t \preceq \frac{\sigma^2}{m^2} \left[ S(Y_t^j + \Delta_\alpha) S(Y_t^j + \Delta_\alpha)^\top + \frac{1}{J}\sum_{k=1}^J S(Y_t^k + \Delta_\alpha) S(Y_t^k + \Delta_\alpha)^\top \right] \d t\,.
    \end{equation}
\end{lemma}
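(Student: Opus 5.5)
The plan is to compute the quadratic variation matrix of \(M^{j,\mathrm{center}}\) exactly, using the independence of the Brownian motions \((W^k)_{k\in[J]}\), and then compare it with the right-hand side of \eqref{eq:qvar_bound} term by term in the Loewner order. Abbreviate \(S_k:=S(Y_t^k+\Delta_\alpha(\emp{t}))\), a \(d\times d\) matrix adapted to the filtration. By definition,
\[
    \d M_t^{j,\mathrm{center}}
    =
    \frac\sigma m\sum_{k=1}^J c_{jk}\,S_k\,\d W_t^k,
    \qquad
    c_{jk}:=\mathbf 1_{k=j}-\frac1J .
\]

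\emph{Step 1 (exact quadratic variation).} Since \(\d\langle W^k,W^l\rangle_t=\delta_{kl}I_d\,\d t\), all cross terms with \(k\neq l\) vanish. Therefore
\[
    \d\langle M^{j,\mathrm{center}}\rangle_t
    =
    \frac{\sigma^2}{m^2}\sum_{k=1}^J c_{jk}^2\,S_kS_k^\top\,\d t
    =
    \frac{\sigma^2}{m^2}\Bigl[\Bigl(1-\frac2J\Bigr)S_jS_j^\top+\frac1{J^2}\sum_{k=1}^J S_kS_k^\top\Bigr]\d t .
\]
The second expression uses \(c_{jj}^2=(1-1/J)^2=1-2/J+1/J^2\) and \(c_{jk}^2=1/J^2\) for \(k\ne j\).

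\emph{Step 2 (Loewner comparison).} Each \(S_kS_k^\top\) is symmetric positive semidefinite. Hence \((1-2/J)S_jS_j^\top\preceq S_jS_j^\top\), since \(1-2/J\le 1\) and the coefficient may even be negative when \(J=1\), which is harmless. Likewise \(\frac1{J^2}\sum_kS_kS_k^\top\preceq\frac1J\sum_kS_kS_k^\top\), since \(1/J^2\le 1/J\). Summing the two comparisons gives \eqref{eq:qvar_bound}.

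There is no real obstacle. The only points requiring care are using that the \(W^k\) are independent \(d\)-dimensional Brownian motions, so that cross-variations vanish, and that the coefficient \(S_k\) is the same matrix in both the isotropic case \(S(x)=|x|I_d\) and the anisotropic case \(S(x)=\diag(x)\). The argument uses only positive semidefiniteness of \(S_kS_k^\top\), so both cases are covered simultaneously.
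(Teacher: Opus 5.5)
Your proof is correct and is the paper's argument: independence of the $W^k$ gives the exact coefficients $(1-\frac2J)$ on $S_jS_j^\top$ and $\frac1{J^2}$ on $\sum_k S_kS_k^\top$, and each is then enlarged using positive semidefiniteness. The only cosmetic difference is that the paper tests against a fixed vector $u\in\R^d$ (working with $|G_k^\top u|^2$), whereas you compare the matrices directly in the Loewner order.
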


\begin{proof}
    Let $G_j(t) := \frac{\sigma}{m} S(Y_t^j + \Delta_\alpha)$. The particle noise increment is $\d M_t^j = G_j(t) \d W_t^j$, and the averaged noise increment is $\d \overline{M}_t = \frac{1}{J} \sum_{k=1}^J G_k(t) \d W_t^k$. The centered noise is defined as $\d M_t^{j, \mathrm{center}} = \d M_t^j - \d \overline{M}_t$.
    
    For any fixed vector $u \in \R^d$, using the independence of the Brownian motions $W^k$, the quadratic variation is:
    \begin{align}
        u^\top \d \langle M^{j, \mathrm{center}} \rangle_t u &= \left( \left(1 - \frac{2}{J}\right) |G_j^\top u|^2 + \frac{1}{J^2} \sum_{k=1}^J |G_k^\top u|^2 \right) \d t \nonumber \\
        &\le \left( |G_j^\top u|^2 + \frac{1}{J} \sum_{k=1}^J |G_k^\top u|^2 \right) \d t.
    \end{align}
    Removing the test vector $u$ yields the matrix inequality.
\end{proof}

\begin{lemma}[It\^o expansion of the shifted high-moment Lyapunov functional]
\label{lem:ito-expansion-Lp}
Let \(p>2\), and recall that 
\[
    L_p(t)
    :=
    \frac1J\sum_{j=1}^J
    \phi_{a,p}(Y_t^j,\hat Z_t^j),
    \qquad
    \phi_{a,p}(y,z)
    :=
    a|y|^p+|z|^p+\frac m\gamma |y|^{p-2}\langle y,z\rangle \,,
\] and $\Delta_\alpha(t):=m_X(\mu_{X_t^J})-\mathcal M_\alpha(t)$. Hence, $ X_t^j-\mathcal M_\alpha(t)=Y_t^j+\Delta_\alpha(t)\,,$
and
\[
    \d\hat Z_t^j
    =
    (-K_Z\hat Z_t^j-K_YY_t^j)\d t+\d M_t^{j,\mathrm{center}},
\]
where
\begin{equation}\label{eq:centered-noise-martingale-diff}
    \d M_t^{j,\mathrm{center}}
    :=
    \frac{\sigma}{m}
    \left[
        \left(1-\frac1J\right)
        S(Y_t^j+\Delta_\alpha(t))\,\d W_t^j
        -
        \frac1J\sum_{k\ne j}
        S(Y_t^k+\Delta_\alpha(t))\,\d W_t^k
    \right].
\end{equation}
The stochastic differential of \(L_p(t)\) is
\begin{align}
    \d L_p(t)
    &=
    \frac1J\sum_{j=1}^J
    \Bigg\{
    \left[
        \left(\frac{ap}{\gamma}-\frac m\gamma K_Y\right)|Y_t^j|^p
        -
        pK_Z|\hat Z_t^j|^p
    \right]\d t
    \notag\\
    &\quad
    +
    \left[
        \left(
            ap-\frac m\gamma K_Z+\frac{m(p-1)}{\gamma^2}
        \right)
        |Y_t^j|^{p-2}\langle Y_t^j,\hat Z_t^j\rangle
        -
        pK_Y|\hat Z_t^j|^{p-2}\langle \hat Z_t^j,Y_t^j\rangle
    \right]\d t
    \notag\\
    &\quad
    +
    \left[
        \frac m\gamma |Y_t^j|^{p-2}|\hat Z_t^j|^2
        +
        \frac{m(p-2)}{\gamma}
        |Y_t^j|^{p-4}\langle Y_t^j,\hat Z_t^j\rangle^2
    \right]\d t
    \notag\\
    &\quad
    +
    \mathcal T_{\mathrm{noise}}^j(t)\,\d t
    +
    \d M_{p,t}^j
    \Bigg\},
    \label{eq:ito-expansion-Lp-full}
\end{align}
where
\begin{equation}\label{eq:Mpjt-definition}
    \d M_{p,t}^j
    :=
    p|\hat Z_t^j|^{p-2}
    \left\langle \hat Z_t^j,\d M_t^{j,\mathrm{center}}\right\rangle
    +
    \frac m\gamma |Y_t^j|^{p-2}
    \left\langle Y_t^j,\d M_t^{j,\mathrm{center}}\right\rangle .
\end{equation}
We denote $ M_{p,t}:=\frac1J\sum_{j=1}^J M_{p,t}^j$ as the local martingale part of \(L_p(t)\). The noise trace satisfies
\begin{align}
    \mathcal T_{\mathrm{noise}}^j(t)
    &=
    \frac12
    \operatorname{Tr}
    \left(
        \nabla^2(|\hat Z_t^j|^p)\,
        \d\langle M^{j,\mathrm{center}}\rangle_t
    \right)
    \notag\\
    &=
        \frac{p\sigma^2}{2m^2}\left(1-\frac1J\right)^2 |\hat Z_t^j|^{p-4}
        \left[
            (p-2)\left|S(X_t^j-\mathcal M_\alpha)^\top\hat Z_t^j\right|^2
            +|\hat Z_t^j|^2\norm{S(X_t^j-\mathcal M_\alpha)}_F^2
        \right] \notag \\
        &\quad
        +\frac{p\sigma^2}{2m^2J^2}\sum_{k\ne j}|\hat Z_t^j|^{p-4}
        \left[
            (p-2)\left|S(X_t^k-\mathcal M_\alpha)^\top\hat Z_t^j\right|^2
            +|\hat Z_t^j|^2\norm{S(X_t^k-\mathcal M_\alpha)}_F^2
        \right]\\
        & \le
    \frac{p\sigma^2\chi_S}{2m^2}
    |\hat Z_t^j|^{p-2}
    \left(
        |Y_t^j+\Delta_\alpha(t)|^2
        +
        \frac1J\sum_{k=1}^J |Y_t^k+\Delta_\alpha(t)|^2
    \right).
    \label{eq:trace_bound}
\end{align}
\end{lemma}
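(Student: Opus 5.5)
The statement is a direct It\^o computation, so I would apply It\^o's formula to each summand $\phi_{a,p}(Y_t^j,\hat Z_t^j)$ and average over $j$. Two structural facts drive it. First, $Y^j$ has bounded variation with $\d Y_t^j=(\hat Z_t^j+\gamma^{-1}Y_t^j)\,\d t$, so only $|\hat Z^j|^p$ produces a second-order (trace) term. Second, the cross term $|y|^{p-2}\langle y,z\rangle$ is linear in $z$, so it contributes no It\^o correction either.

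\textbf{The three summands.}
\begin{itemize}
\item For $a|Y|^p$ I would use $\nabla|y|^p=p|y|^{p-2}y$ to get
\[
ap|Y|^{p-2}\langle Y,\hat Z\rangle+\frac{ap}{\gamma}|Y|^p .
\]
\item For $|\hat Z|^p$ the drift is $p|\hat Z|^{p-2}\langle\hat Z,-K_Z\hat Z-K_YY\rangle$. This gives
\[
-pK_Z|\hat Z|^p-pK_Y|\hat Z|^{p-2}\langle\hat Z,Y\rangle ,
\]
plus the martingale $p|\hat Z|^{p-2}\langle\hat Z,\d M^{j,\rm center}\rangle$ and the trace $\tfrac12\operatorname{Tr}(\nabla^2|\hat Z|^p\,\d\langle M^{j,\rm center}\rangle)$.
\item For $\frac m\gamma|Y|^{p-2}\langle Y,\hat Z\rangle$ I would use the product rule. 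The derivative of $|y|^{p-2}y$ in direction $w$ is $|y|^{p-2}w+(p-2)|y|^{p-4}\langle y,w\rangle y$. With $w=\hat Z+\gamma^{-1}Y$, pairing with $\hat Z$ gives
\[
\frac m\gamma\Big[|Y|^{p-2}|\hat Z|^2+(p-2)|Y|^{p-4}\langle Y,\hat Z\rangle^2+\frac{p-1}{\gamma}|Y|^{p-2}\langle Y,\hat Z\rangle\Big].
\]
The $\d\hat Z$ part gives
\[
\frac m\gamma|Y|^{p-2}\langle Y,-K_Z\hat Z-K_YY\rangle=-\frac m\gamma K_Z|Y|^{p-2}\langle Y,\hat Z\rangle-\frac m\gamma K_Y|Y|^p ,
\]
plus the martingale $\frac m\gamma|Y|^{p-2}\langle Y,\d M^{j,\rm center}\rangle$.
\end{itemize}
Collecting coefficients reproduces \eqref{eq:ito-expansion-Lp-full}: $\big(\tfrac{ap}{\gamma}-\tfrac m\gamma K_Y\big)$ on $|Y|^p$, and $\big(ap-\tfrac m\gamma K_Z+\tfrac{m(p-1)}{\gamma^2}\big)$ on $|Y|^{p-2}\langle Y,\hat Z\rangle$.

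\textbf{The noise trace.} I would write $\d M^{j,\rm center}$ as in \eqref{eq:centered-noise-martingale-diff}, with coefficient $(1-\frac1J)G_j$ on $\d W^j$ and $-\frac1J G_k$ on $\d W^k$ for $k\ne j$. Independence of the Brownian motions gives
\[
\d\langle M^{j,\rm center}\rangle=\Big[(1-\tfrac1J)^2G_jG_j^\top+\tfrac1{J^2}\sum_{k\ne j}G_kG_k^\top\Big]\d t .
\]
The Hessian is $\nabla^2|z|^p=p|z|^{p-2}I+p(p-2)|z|^{p-4}zz^\top$, and $\operatorname{Tr}(zz^\top GG^\top)=|G^\top z|^2$. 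Substituting yields the displayed equality.

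For the inequality I would bound the bracket using $|S(x)^\top z|^2\le\|S(x)\|_{\rm op}^2|z|^2$. In both the isotropic and anisotropic cases this gives $(p-2)|S^\top z|^2+|z|^2\|S\|_F^2\le(\tau(S)+p-2)|x|^2|z|^2=\chi_S|x|^2|z|^2$, since $\|S(x)\|_F^2=\tau(S)|x|^2$ and $\|S(x)\|_{\rm op}\le|x|$. Finally I would use $(1-\frac1J)^2\le1$ and $\frac1{J^2}\sum_{k\ne j}\le\frac1J\sum_k$, recalling $X^k-\mathcal M_\alpha=Y^k+\Delta_\alpha$.

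\textbf{Main obstacle.} I expect the only delicate step to be regularity: $|y|^{p-2}y$ and $|z|^p$ are $C^2$ only for $p\ge 4$, or away from the origin. For $2<p<4$ I would handle this by a standard smoothing, replacing $|\cdot|^2$ by $|\cdot|^2+\varepsilon$ and letting $\varepsilon\to0$, noting that the $|\cdot|^{p-4}$ factors always appear multiplied by quadratic quantities, so every term stays bounded.
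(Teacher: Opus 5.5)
Your proposal is correct and follows the paper's proof essentially step for step. You use the same split into $a|Y|^p$, $|\hat Z|^p$ and the cross term, which the paper also handles via $U(y)=\frac m\gamma|y|^{p-2}y$ and the product rule. You treat the trace through the quadratic variation of $M^{j,\mathrm{center}}$ with the bounds $|S(x)^\top z|^2\le|x|^2|z|^2$ and $\|S(x)\|_F^2\le\tau(S)|x|^2$, as the paper does. Your explicit formula for $\d\langle M^{j,\mathrm{center}}\rangle_t$ is what actually justifies the displayed equality; the paper only cites the upper bound of \cref{lem:qvar-centered-noise}.

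The smoothing you anticipate as the main obstacle is not needed. For every $p>2$, $z\mapsto|z|^p$ is already $C^2$, since its Hessian is $O(|z|^{p-2})$, and $y\mapsto|y|^{p-2}y$ is $C^1$. That suffices because $Y^j$ has finite variation and the cross term is linear in $z$.
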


\begin{proof}
We use the centered-shifted dynamics
\[
    \d Y_t^j
    =
    \left(\hat Z_t^j+\frac1\gamma Y_t^j\right)\d t,
    \qquad
    \d\hat Z_t^j
    =
    (-K_Z\hat Z_t^j-K_YY_t^j)\d t+\d M_t^{j,\mathrm{center}}.
\]

\textbf{Term A: \(a|Y|^p\).}
Since \(Y_t^j\) has finite variation,
\begin{align}
    \d(a|Y_t^j|^p)
    &=
    ap|Y_t^j|^{p-2}
    \langle Y_t^j,\d Y_t^j\rangle =
    \left[
        \frac{ap}{\gamma}|Y_t^j|^p
        +
        ap|Y_t^j|^{p-2}\langle Y_t^j,\hat Z_t^j\rangle
    \right]\d t .
    \label{eq:ito-term-A}
\end{align}

\textbf{Term B: \(|\hat Z|^p\).}
It\^o's formula gives
\begin{align}
    \d|\hat Z_t^j|^p
    &=
    p|\hat Z_t^j|^{p-2}
    \langle \hat Z_t^j,\d\hat Z_t^j\rangle
    +
    \frac12
    \operatorname{Tr}
    \left(
        \nabla^2(|\hat Z_t^j|^p)
        \d\langle M^{j,\mathrm{center}}\rangle_t
    \right)
    \notag\\
    &=
    \left[
        -pK_Z|\hat Z_t^j|^p
        -
        pK_Y|\hat Z_t^j|^{p-2}
        \langle \hat Z_t^j,Y_t^j\rangle
    \right]\d t
    +
    \mathcal T_{\mathrm{noise}}^j(t)\,\d t
    +
    p|\hat Z_t^j|^{p-2}
    \left\langle \hat Z_t^j,\d M_t^{j,\mathrm{center}}\right\rangle .
    \label{eq:ito-term-B}
\end{align}
The trace bound \eqref{eq:trace_bound} follows from
\(\cref{lem:qvar-centered-noise}\), the definitions of \(\tau(S)\) and
\(\chi_S=\tau(S)+p-2\), and the inequalities
\[
    |S(x)^\top z|^2\le |x|^2|z|^2,
    \qquad
    \|S(x)\|_F^2\le \tau(S)|x|^2.
\]

\textbf{Term C: the cross term.}
Let
\[
    U(y):=\frac m\gamma |y|^{p-2}y.
\]
Since \(Y_t^j\) has finite variation,
\[
    \d\left(
        \frac m\gamma |Y_t^j|^{p-2}
        \langle Y_t^j,\hat Z_t^j\rangle
    \right)
    =
    \langle \d U(Y_t^j),\hat Z_t^j\rangle
    +
    \langle U(Y_t^j),\d\hat Z_t^j\rangle .
\]
First,
\[
    \d U(Y_t^j)
    =
    \frac m\gamma
    \left[
        |Y_t^j|^{p-2}\d Y_t^j
        +
        (p-2)|Y_t^j|^{p-4}
        \langle Y_t^j,\d Y_t^j\rangle Y_t^j
    \right].
\]
Substituting
\[
    \d Y_t^j
    =
    \left(\hat Z_t^j+\frac1\gamma Y_t^j\right)\d t
\]
gives
\begin{align}
    \langle \d U(Y_t^j),\hat Z_t^j\rangle
    &=
    \frac m\gamma
    \Big[
        |Y_t^j|^{p-2}|\hat Z_t^j|^2
        +
        \frac1\gamma
        |Y_t^j|^{p-2}
        \langle Y_t^j,\hat Z_t^j\rangle
    \notag\\
    &\qquad
        +
        (p-2)|Y_t^j|^{p-4}
        \langle Y_t^j,\hat Z_t^j\rangle^2
        +
        \frac{p-2}{\gamma}
        |Y_t^j|^{p-2}
        \langle Y_t^j,\hat Z_t^j\rangle
    \Big]\d t .
    \label{eq:ito-cross-dU}
\end{align}
Second,
\begin{align}
    \langle U(Y_t^j),\d\hat Z_t^j\rangle
    &=
    \frac m\gamma |Y_t^j|^{p-2}
    \langle Y_t^j,-K_Z\hat Z_t^j-K_YY_t^j\rangle\d t
    +
    \frac m\gamma |Y_t^j|^{p-2}
    \langle Y_t^j,\d M_t^{j,\mathrm{center}}\rangle
    \notag\\
    &=
    \left[
        -\frac m\gamma K_Z
        |Y_t^j|^{p-2}
        \langle Y_t^j,\hat Z_t^j\rangle
        -
        \frac m\gamma K_Y|Y_t^j|^p
    \right]\d t
    +
    \frac m\gamma |Y_t^j|^{p-2}
    \langle Y_t^j,\d M_t^{j,\mathrm{center}}\rangle .
    \label{eq:ito-cross-dZ}
\end{align}
Combining \eqref{eq:ito-cross-dU} and \eqref{eq:ito-cross-dZ}, the coefficient of
\(|Y_t^j|^{p-2}\langle Y_t^j,\hat Z_t^j\rangle\) from the cross term is
\[
    \frac{m(p-1)}{\gamma^2}-\frac m\gamma K_Z.
\]
Adding \eqref{eq:ito-term-A}, \eqref{eq:ito-term-B}, and the cross-term contribution gives
\eqref{eq:ito-expansion-Lp-full}.
\end{proof}

\subsubsection{Decay of Centered Moments: Interacting Particle Systems}

For $p>2$, the stability of $L_p(t)$ depends on the variation of the objective function being sufficiently small relative to the polynomial order. We quantify this by defining
\begin{equation}\label{eq:mu_gap}
    \mu_{\mathrm{gap},p}
    :=2-(p-1)^{-\frac{p-2}{p}}
    \exp\left(\frac{p-1}{p}\alpha(\overline f-\underline f)\right).
\end{equation}
We treat $\mu_{\mathrm{gap},p}>0$ as a strict prerequisite. Equivalently,
\[
    \alpha(\overline f-\underline f)
    <\frac{p\log 2}{p-1}+\frac{p-2}{p-1}\log(p-1).
\]
We will make use of the following lemma frequently:
\begin{lemma}\label{lem:m-malpha}
    Let $q \ge 2$ and let \cref{assumption:bounded} hold. Then for any vector norm $|\cdot|$ it holds that
    \begin{align}
        \forall \mu \in \mathcal{P}_q,
        \qquad
        \bigl|\mathcal{M}_\alpha(\mu) - m_X(\mu)\bigr|^q
        \le \e^{\alpha(\overline f - \underline f)} \int |x - m_X(\mu)|^q\,\mu(\d x).
    \end{align}
\end{lemma}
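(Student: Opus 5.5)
We rewrite the difference as a weighted average of centered displacements and then apply Jensen's inequality. Write \(\omega_\alpha(x):=e^{-\alpha f(x)}\) and \(\|\omega_\alpha\|_{L^1(\mu)}:=\int\omega_\alpha\,\d\mu\), which is positive and finite by \cref{assumption:bounded}. Since \(\int \omega_\alpha(x)\,m_X(\mu)\,\mu(\d x)/\|\omega_\alpha\|_{L^1(\mu)}=m_X(\mu)\), we obtain
\[
    \mathcal M_\alpha(\mu)-m_X(\mu)
    =
    \int \bigl(x-m_X(\mu)\bigr)\,\frac{\omega_\alpha(x)}{\|\omega_\alpha\|_{L^1(\mu)}}\,\mu(\d x).
\]
Thus the difference is the mean of the centered displacement \(x-m_X(\mu)\) under the reweighted probability measure \(\nu(\d x):=\omega_\alpha(x)\mu(\d x)/\|\omega_\alpha\|_{L^1(\mu)}\). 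The integral is finite because \(\mu\in\mathcal P_q\) and \(\omega_\alpha\) is bounded.

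The first step applies Jensen's inequality. Any norm \(|\cdot|\) is convex and \(t\mapsto t^q\) is convex and nondecreasing on \([0,\infty)\) for \(q\ge1\), so \(x\mapsto|x|^q\) is convex. Hence
\[
    \bigl|\mathcal M_\alpha(\mu)-m_X(\mu)\bigr|^q\le\int |x-m_X(\mu)|^q\,\nu(\d x).
\]
This step holds for an arbitrary norm, which is why the statement is norm-agnostic. Vector-valued Jensen for a convex continuous function on \(\R^d\) is standard; alternatively one can use the triangle inequality for the Bochner integral followed by scalar Jensen.

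The second step bounds the density of \(\nu\) with respect to \(\mu\). Pointwise, \(\omega_\alpha(x)\le e^{-\alpha\underline f}\), and \(\|\omega_\alpha\|_{L^1(\mu)}\ge e^{-\alpha\overline f}\) because \(\mu\) is a probability measure. Therefore \(\d\nu/\d\mu\le e^{\alpha(\overline f-\underline f)}\). Substituting this into the Jensen bound gives the claim with constant \(e^{\alpha(\overline f-\underline f)}\), uniformly in \(\mu\in\mathcal P_q\).

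There is no real obstacle here. The only point needing care is confirming that Jensen applies to a general (possibly non-Euclidean) norm, which follows from convexity as noted above. I would also remark that the hypothesis \(q\ge2\) is not needed; any \(q\ge1\) works. The estimate is exactly what feeds \(\Delta_\alpha\) into the noise trace bound \eqref{eq:trace_bound}, via \(|\Delta_\alpha(\mu)|^q\le e^{\alpha(\overline f-\underline f)}\momentp{q}{\mu}\).
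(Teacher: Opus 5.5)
Your proof is correct. You write $\mathcal M_\alpha(\mu)-m_X(\mu)$ as the mean of $x-m_X(\mu)$ under the reweighted probability measure, apply Jensen to the convex map $x\mapsto|x|^q$, and bound the density $\d\nu/\d\mu$ by $e^{\alpha(\overline f-\underline f)}$. This is the standard proof of the estimate; the paper itself states the lemma without proof, taking it from the first-order CBO literature (cf.\ \cite{uit_cbo}). Your side remarks also hold: the argument works for any norm and any $q\ge1$, and integrability follows because the density is bounded and $\mu$ has a finite first moment.
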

We denote $C_{\rm lem} := \e^{\alpha(\overline f - \underline f)}$ for simplicity from now on.

\begin{assumption}[Centered-moment decay conditions]
\label{assump:centered-decay}
Let \(p\ge2\). We assume \cref{assump:admissibility} for this value of \(p\).

For \(p>2\), define
\begin{equation}\label{eq:Cpos-def}
    C_{\mathrm{pos}}
    :=
    1-\frac{3\mu_{\mathrm{gap},p}}{16}
    -
    \frac{m(p-1)(p-2)}p,
    \qquad
    \eta_p
    :=
    C_{\mathrm{pos}}-\frac{\mu_{\mathrm{gap},p}}{4}.
\end{equation}
We also define
\begin{equation}\label{eq:Lambda-p-def}
    \Lambda_p
    :=
    \frac{2(p-2)\chi_S}{m^2}
    \left(
        \frac{\mu_{\mathrm{gap},p}m^2}{64\chi_S}
    \right)^{-\frac2{p-2}}
    \left[
        1+\left(\frac{C_{\mathrm{lem}}}{2}\right)^{\frac2{p-2}}
    \right].
\end{equation}
We assume \(\eta_p>0\), and impose
\begin{equation}\label{eq:gamma-Y-condition}
    \gamma
    \ge
    \Gamma_{Y,p}
    :=
    \max\left\{
        1,\,
        \left(
            \frac{2\cdot 4^{p-1}(2/m+1)}{\eta_p}
        \right)^{\frac1{p-2}},
        \frac{\mu_{\mathrm{gap},p}m(p-1)}{2\eta_p}
    \right\}.
\end{equation}
We also assume the velocity-channel closure condition
\begin{equation}\label{eq:Z-closure}
    \Lambda_p\sigma^{\frac{2p}{p-2}}\gamma^{\frac2{p-2}}
    +
    \frac{p-1}{4\gamma}
    +
    \frac{2m(p-1)}{p\gamma}
    +
    \frac{p\mu_{\mathrm{gap},p}}{4\gamma}
    \left(
        1+\frac{m}{p\gamma}
    \right)
    \le
    \frac{p+1}{2m}\gamma.
\end{equation}
When the quadratic estimate for $p =2 $ and a high-moment estimate are used simultaneously, we
assume that the above lower-bound requirements on \(\gamma\) are compatible with the
quadratic small-noise condition
\begin{equation}\label{eq:p2-high-compatibility}
    \gamma
    <
    \frac{m}{2\tau(S)(1+C_{\mathrm{lem}})\sigma^2}.
\end{equation}
Equivalently, the chosen \(\gamma\) must satisfy
\[
    \gamma
    \in
    \left[
        \max\left\{
            \sqrt{\frac{3m}{2}},
            \sqrt{m(p-2)},
            \Gamma_{Y,p}
        \right\},
        \frac{m}{2\tau(S)(1+C_{\mathrm{lem}})\sigma^2}
    \right)
\]
and also satisfy \eqref{eq:Z-closure}.
\end{assumption}
\begin{remark}[Role and compatibility of the centered-decay assumptions]
\label{rem:role-centered-decay-assumptions}
The role of \cref{assump:centered-decay} is to record the parameter conditions needed
to close the centered Lyapunov estimate. The condition
\eqref{eq:gamma-Y-condition} controls the remainders in the spatial channel,
while \eqref{eq:Z-closure} controls the remainders in the shifted-velocity
channel. These two conditions are used in
\cref{prop:ito-centered-lyapunov-differential} to pass from the componentwise
dissipation estimate to the scalar Lyapunov inequality
\[
    \d L_p(t)\le -\lambda_p L_p(t)\,\d t+\d M_{p,t}.
\]

Although \cref{thm:exp-decay-centered-moments} is stated for a fixed moment order
\(p\ge2\), the proof of uniform-in-time propagation of chaos \cref{thm:uit-poc-kinetic} later uses several moment orders at the same
time. The quadratic estimate \(p=2\) is needed for the decay of second centered
moments, which enters the concentration and collapse estimates. Higher moments, such
as \(p=8\) and \(p=2r\ge8\), are needed to control bad events and higher-order
H\"older estimates. Thus it is not enough to know that the conditions are admissible
for each \(p\) separately; we must also make sure that the \(p=2\) conditions and the
high-moment conditions can hold for the same choice of parameters.

The qualitative meaning of the velocity-channel condition
\eqref{eq:Z-closure} depends on \(p\). The noise contribution in this
condition is
\[
    \Lambda_p\sigma^{\frac{2p}{p-2}}\gamma^{\frac2{p-2}},
\]
whereas the leading friction contribution grows like \(\gamma\). Hence:
\begin{itemize}
    \item if \(2<p<4\), the noise term grows faster than the leading friction term as
    \(\gamma\to\infty\), so increasing \(\gamma\) does not by itself improve the
    velocity-channel balance;

    \item if \(p=4\), the noise term and the leading friction term have the same
    scaling in \(\gamma\), so the condition reduces to a genuine small-noise
    restriction;

    \item if \(p>4\), the noise term grows slower than the leading friction term, so
    the velocity-channel noise can be absorbed for fixed \(\sigma\) by taking
    \(\gamma\) large enough.
\end{itemize}
In the applications below we use high moments such as \(p=8\) and \(p=2r\ge8\), so the
relevant high-moment regime is \(p>4\). There is, however, a second constraint coming from the quadratic estimate. For \(p=2\),
the small-noise condition is
\[
    K_\sigma<\frac1{m\gamma},
    \qquad\text{equivalently}\qquad
    \sigma^2\gamma
    <
    \frac{m}{2\tau(S)(1+C_{\mathrm{lem}})}.
\]
This gives an upper bound on \(\gamma\). Thus the high-moment estimates favor taking
\(\gamma\) large, while the quadratic estimate prevents taking \(\gamma\) arbitrarily
large unless \(\sigma\) is chosen small enough.

For \(p>4\), the two requirements are compatible as follows. Choose \(\gamma\) so that
\[
    \gamma
    \ge
    \max\left\{
        \sqrt{\frac{3m}{2}},
        \sqrt{m(p-2)},
        \Gamma_{Y,p}
    \right\},
\]
and so that the lower-order velocity-channel terms satisfy
\begin{equation}\label{eq:Z-lower-order}
    \frac{p-1}{4\gamma}
    +
    \frac{2m(p-1)}{p\gamma}
    +
    \frac{p\mu_{\mathrm{gap},p}}{4\gamma}
    \left(
        1+\frac{m}{p\gamma}
    \right)
    \le
    \frac{p+1}{4m}\gamma.
\end{equation}
Then the high-moment velocity condition and the quadratic small-noise condition both
hold provided
\begin{equation}\label{eq:sigma-smallness}
    0<\sigma
    <
    \min\left\{
        \left(
            \frac{p+1}{4m\Lambda_p}
        \right)^{\frac{p-2}{2p}}
        \gamma^{\frac{p-4}{2p}},
        \left(
            \frac{m}{2\tau(S)(1+C_{\mathrm{lem}})\gamma}
        \right)^{1/2}
    \right\}.
\end{equation}
Indeed, the first bound in \eqref{eq:sigma-smallness} yields
\[
    \Lambda_p\sigma^{\frac{2p}{p-2}}\gamma^{\frac2{p-2}}
    \le
    \frac{p+1}{4m}\gamma,
\]
and together with \eqref{eq:Z-lower-order} this gives
\eqref{eq:Z-closure}. The second bound gives
\[
    \sigma^2\gamma
    <
    \frac{m}{2\tau(S)(1+C_{\mathrm{lem}})},
\]
which is the quadratic small-noise condition.

Therefore, after fixing a \(\gamma\) satisfying the displayed lower bounds and
\eqref{eq:Z-lower-order}, the interval of admissible \(\sigma\)'s in
\eqref{eq:sigma-smallness} is nonempty. This shows that the quadratic
estimate and the high-moment estimates used later in the proof can be imposed
simultaneously.
\end{remark}

\begin{proposition}[It\^o differential inequality for the centered Lyapunov functionals]
\label{prop:ito-centered-lyapunov-differential}
Let \(p\ge2\), and assume the conditions of \cref{assump:centered-decay}.
Let
\[
    L_p(t):=L_p(\mathcal X_t^J,\mathcal V_t^J)
\]
denote the pathwise centered Lyapunov functional defined in
\eqref{eq:random-Lp-def} for \(p>2\), and in \eqref{eq:random-L2-def} for \(p=2\).

Define the local martingale \(M_{p,t}:=J^{-1}\sum_{j=1}^J M_{p,t}^j\) by
\begin{equation}\label{eq:martingale-Mpjt}
    \d M_{p,t}^j
    :=
    \begin{cases}
        \displaystyle
        2\left\langle \hat Z_t^j,\d M_t^{j,\mathrm{center}}\right\rangle
        +
        \frac5\gamma
        \left\langle Y_t^j,\d M_t^{j,\mathrm{center}}\right\rangle,
        & p=2,\\[1.2em]
        \displaystyle
        p|\hat Z_t^j|^{p-2}
        \left\langle \hat Z_t^j,\d M_t^{j,\mathrm{center}}\right\rangle
        +
        \frac m\gamma |Y_t^j|^{p-2}
        \left\langle Y_t^j,\d M_t^{j,\mathrm{center}}\right\rangle,
        & p>2.
    \end{cases}
\end{equation}
Then there exists a local martingale \(M_{p,t}\) such that, in the sense of semimartingale differentials,
\begin{equation}\label{eq:pathwise-Lp-differential}
    \d L_p(t)
    \le
    -\lambda_p L_p(t)\,\d t+\d M_{p,t},
\end{equation}
where
\begin{equation}\label{eq:lambda-p-cases-prop}
    \lambda_p
    :=
    \begin{cases}
        \displaystyle
        \frac{\min\{v_1,v_3\}}{c_2^{(2)}},
        & p=2,\\[1.2em]
        \displaystyle
        \frac{p\mu_{\mathrm{gap},p}}{4\gamma},
        & p>2.
    \end{cases}
\end{equation}
Here, for \(p=2\),
\[
    v_1:=\frac1{m\gamma}+\frac3{\gamma^3}-K_\sigma,
    \qquad
    v_3:=\frac{2\gamma}{m}-\frac3\gamma,
\]
and \(c_2^{(2)}\) is the upper norm-equivalence constant associated with \(\psi_2\).

Moreover, when \(p>2\), the stronger componentwise estimate holds:
\begin{equation}\label{eq:componentwise-Lp-differential}
    \d L_p(t)
    \le
    -\widehat\lambda_Y
    \left(\frac1J\sum_{j=1}^J |Y_t^j|^p\right)\d t
    -\widehat\lambda_Z
    \left(\frac1J\sum_{j=1}^J |\hat Z_t^j|^p\right)\d t
    +
    \d M_{p,t},
\end{equation}
where
\begin{align}
    \widehat\lambda_Y
    &:=
    \frac1\gamma
    \left[
        C_{\mathrm{pos}}
        -
        \gamma\left(\frac2m+\frac1{\gamma^2}\right)
        \left(\frac4\gamma\right)^{p-1}
        +
        \frac{m(p-1)}{\gamma^2}
    \right],
    \label{eq:lambdaYhat-prop}\\
    \widehat\lambda_Z
    &:=
    p\left(\frac\gamma m+\frac1\gamma\right)
    -
    \left(
        \frac{p-1}{2m}\gamma+\frac{p-1}{4\gamma}
    \right)
    -
    \frac{2m(p-1)}{p\gamma}
    -
    \Lambda_p\sigma^{\frac{2p}{p-2}}\gamma^{\frac2{p-2}}\,,
    \label{eq:lambdaZhat-prop}
\end{align}
where $C_{\mathrm{pos}} = 1-\frac{3\mu_{\mathrm{gap},p}}{16} - \frac{m(p-1)(p-2)}p$ as defined from \eqref{eq:Cpos-def}. Under \cref{assump:centered-decay}, these coefficients satisfy
\begin{equation}\label{eq:Y-component-sufficient-prop}
    \widehat\lambda_Y
    \ge
    \lambda_p
    \left(
        a_p+\frac{m(p-1)}{p\gamma}
    \right),
\end{equation}
and
\begin{equation}\label{eq:Z-component-sufficient-prop}
    \widehat\lambda_Z
    \ge
    \lambda_p
    \left(
        1+\frac{m}{p\gamma}
    \right).
\end{equation}
\end{proposition}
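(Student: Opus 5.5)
I would treat the cases \(p=2\) and \(p>2\) separately. In both cases the starting point is an exact Itô expansion of \(L_p\) in the centered--shifted variables \((Y,\hat Z)\). The drift is then bounded termwise by pure powers \(|Y|^p\) and \(|\hat Z|^p\), up to the martingale \(M_{p,t}\). Finally, the resulting componentwise dissipation is converted into \(-\lambda_p L_p\) using the upper norm equivalence of \cref{lem:hypercoercivity}.

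\textbf{Quadratic case \(p=2\).} I would apply Itô's formula directly to \(\psi_2(Y^j,\hat Z^j)\), using the internal system \(\d Y=(\hat Z+\gamma^{-1}Y)\d t\) and \(\d\hat Z=(-K_Z\hat Z-K_YY)\d t+\d M^{j,\rm center}\). The drift is a quadratic form in \((Y,\hat Z)\) plus the noise trace.

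The weight \(a_2=\frac9{2m}+\frac1{\gamma^2}\) and the cross coefficient \(5/\gamma\) are tuned so that the mixed coefficient cancels exactly. One should check this with \(K_Z=\gamma/m+1/\gamma\) and \(K_Y=2/m+1/\gamma^2\): the \(Y\)-coefficient of the drift should then be \(-(\frac1{m\gamma}+\frac3{\gamma^3})\), and the \(\hat Z\)-coefficient should be \(-(\frac{2\gamma}m-\frac3\gamma)=-v_3\).

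For the noise I would use \cref{lem:qvar-centered-noise} to bound the trace by
\[
\frac{\sigma^2\tau(S)}{m^2}\Bigl(|Y^j+\Delta_\alpha|^2+\frac1J\sum_k|Y^k+\Delta_\alpha|^2\Bigr).
\]
After averaging over \(j\), I expand \(|Y+\Delta_\alpha|^2\) orthogonally, which is possible since \(\sum_jY^j=0\). Then \cref{lem:m-malpha} with \(q=2\) gives \(|\Delta_\alpha|^2\le C_{\rm lem}\frac1J\sum|Y^k|^2\). This bounds the total noise by \(K_\sigma\frac1J\sum|Y^j|^2\).

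The result is \(\d L_2\le-(v_1\frac1J\sum|Y|^2+v_3\frac1J\sum|\hat Z|^2)\d t+\d M_{2,t}\). Using \(\psi_2\le c_2^{(2)}(|y|^2+|z|^2)\), this yields the rate \(\min\{v_1,v_3\}/c_2^{(2)}\).

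\textbf{High-moment case \(p>2\).} I would start from \cref{lem:ito-expansion-Lp} with \(a=a_p\). Four groups of terms must be handled.

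First, the two mixed terms \(|Y|^{p-2}\langle Y,\hat Z\rangle\) and \(|\hat Z|^{p-2}\langle\hat Z,Y\rangle\). The choice of \(a_p\) makes most of the coefficient \(a_pp-\frac m\gamma K_Z+\frac{m(p-1)}{\gamma^2}\) cancel. The remainders are split by weighted Young inequalities into \(|Y|^p\) and \(|\hat Z|^p\) pieces. The splitting weights (such as \((4/\gamma)^{p-1}\)) are chosen so that the \(Y\)-loss is \(\gamma K_Y(4/\gamma)^{p-1}\) and the \(\hat Z\)-loss is \(\frac{p-1}{2m}\gamma+\frac{p-1}{4\gamma}\), matching \eqref{eq:lambdaYhat-prop}--\eqref{eq:lambdaZhat-prop}.

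Second, the positive terms \(\frac m\gamma|Y|^{p-2}|\hat Z|^2\) and \(\frac{m(p-2)}\gamma|Y|^{p-4}\langle Y,\hat Z\rangle^2\). These are bounded by \(\frac{m(p-1)}\gamma|Y|^{p-2}|\hat Z|^2\) and then split by Young. This produces the \(\frac{m(p-1)(p-2)}p\) loss in \(C_{\rm pos}\) and the \(\frac{2m(p-1)}{p\gamma}\) loss in the \(\hat Z\)-channel.

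Third, the noise trace \eqref{eq:trace_bound}. I would bound \(|Y^j+\Delta_\alpha|^2\le 2|Y^j|^2+2|\Delta_\alpha|^2\). Then Young's inequality with exponents \(\frac p{p-2},\frac p2\) gives
\[
|\hat Z|^{p-2}|Y|^2\le \epsilon|Y|^p+C_\epsilon|\hat Z|^p,
\]
with \(\epsilon\) proportional to \(\mu_{\mathrm{gap},p}m^2/(64\chi_S)\); this produces \(\Lambda_p\sigma^{2p/(p-2)}\gamma^{2/(p-2)}\). The \(\Delta_\alpha\) part is controlled by \cref{lem:m-malpha} at order \(p\), using Jensen on the empirical average. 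The factor \((p-1)^{-(p-2)/p}e^{\frac{p-1}p\alpha(\overline f-\underline f)}\) entering \(\mu_{\mathrm{gap},p}\) arises here: it comes from optimizing the Young split of the cross contribution \(\frac1J\sum|\hat Z^j|^{p-2}\cdot|\Delta_\alpha|^2\). The sign condition \(\mu_{\mathrm{gap},p}>0\) is what leaves a residual \(\frac{\mu_{\mathrm{gap},p}}{4}\)-type spatial dissipation.

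Fourth, the leading spatial dissipation. This is \(\frac{a_pp}\gamma-\frac m\gamma K_Y=\frac1\gamma(1-\frac{m(p-2)}{\gamma^2})-\frac2\gamma-\frac m{\gamma^3}\), combined with the signs above; one should verify that it assembles to the bracket in \eqref{eq:lambdaYhat-prop}.

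Collecting these four groups gives \eqref{eq:componentwise-Lp-differential}. The sufficiency bounds \eqref{eq:Y-component-sufficient-prop} and \eqref{eq:Z-component-sufficient-prop} then follow by direct comparison. For \(Y\), the condition \(\gamma\ge\Gamma_{Y,p}\) makes \(2(2/m+1)4^{p-1}\gamma^{2-p}\le\eta_p\) and controls the \(\lambda_p\frac{m(p-1)}{p\gamma}\) term. For \(\hat Z\), this is exactly \eqref{eq:Z-closure}, after noting that \(pK_Z-\frac{p-1}{2m}\gamma\ge\frac{p+1}{2m}\gamma\).

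Finally, Young gives \(\phi_{a_p,p}\le(a_p+\frac{m(p-1)}{p\gamma})|y|^p+(1+\frac m{p\gamma})|z|^p\). Hence the componentwise dissipation dominates \(\lambda_pL_p\), which is \eqref{eq:pathwise-Lp-differential}.

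\textbf{Main obstacle.} The hard part is the third group in the \(p>2\) case: the noise trace with the nonlinear shift \(\Delta_\alpha\). The constants there must reproduce exactly \(\mu_{\mathrm{gap},p}\) and \(\Lambda_p\), and the \(\Delta_\alpha\) contribution couples particle \(j\) to the whole ensemble. My first attempt would be to symmetrize via Jensen, \(|\Delta_\alpha|^p\le C_{\rm lem}\frac1J\sum|Y^k|^p\), and then apply a single Hölder step, \(\frac1J\sum_j|\hat Z^j|^{p-2}\le(\frac1J\sum|\hat Z^j|^p)^{(p-2)/p}\), before the Young split. If this fails to reproduce the stated constants, the fallback is to carry them as generic constants and absorb them into the same admissibility structure.
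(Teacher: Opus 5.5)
Your proposal is correct and follows the paper's proof essentially step for step. For \(p=2\) this means the exact cancellation of the cross coefficient, the \(K_\sigma\) bound on the averaged trace, and the upper constant \(c_2^{(2)}\). For \(p>2\) it means the exact cancellation of the first mixed drift by \(a_p\), Young with weight \(\gamma/4\) on the \(pK_Y\) term, the \(\frac{m(p-1)}{\gamma}|Y|^{p-2}|\hat Z|^2\) bound, Jensen plus Young on the noise trace, and comparison with the componentwise upper bound for \(\phi_{a_p,p}\).

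One correction to a side remark. The explicit formula for \(\mu_{\mathrm{gap},p}\) is not produced by optimizing any Young split in this proof. It enters only as a free positive parameter in the weights \(\epsilon_3=\frac{\mu_{\mathrm{gap},p}m^2}{64\gamma\sigma^2\chi_S}\) and \(\epsilon_4=\frac{\mu_{\mathrm{gap},p}m^2}{32\gamma\sigma^2\chi_S C_{\mathrm{lem}}}\), which give the spatial penalty \(\frac{3\mu_{\mathrm{gap},p}}{16\gamma}\) inside \(C_{\mathrm{pos}}\). The spatial margin itself comes from \(\eta_p>0\) and \(\gamma\ge\Gamma_{Y,p}\). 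So your fallback of treating \(\mu_{\mathrm{gap},p}\) as a generic constant is exactly what the paper does.
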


\begin{proof}
We prove the estimate for the interacting particle system. The same centered-shifted
calculation applies to the synchronously coupled mean-field samples, with the corresponding
mean-field shift in the noise coefficient.

\proofstep{1. The high-moment case \(p>2\)}

Recall that
\[
    L_p(t)
    =
    \frac1J\sum_{j=1}^J
    \phi_{a_p,p}(Y_t^j,\hat Z_t^j),
    \qquad
    a_p:=\frac1p\left(1-\frac{m(p-2)}{\gamma^2}\right).
\]
By \cref{lem:ito-expansion-Lp}, with \(a=a_p\),
\begin{align}
    \d L_p(t)
    &=
    \frac1J\sum_{j=1}^J
    \Bigg\{
    \left[
        \left(\frac{a_pp}{\gamma}-\frac m\gamma K_Y\right)|Y_t^j|^p
        -
        pK_Z|\hat Z_t^j|^p
    \right]\d t
    \notag\\
    &\quad
    +
    \left[
        \left(
            a_pp-\frac m\gamma K_Z+\frac{m(p-1)}{\gamma^2}
        \right)
        |Y_t^j|^{p-2}\langle Y_t^j,\hat Z_t^j\rangle
        -
        pK_Y|\hat Z_t^j|^{p-2}\langle \hat Z_t^j,Y_t^j\rangle
    \right]\d t
    \notag\\
    &\quad
    +
    \left[
        \frac m\gamma |Y_t^j|^{p-2}|\hat Z_t^j|^2
        +
        \frac{m(p-2)}{\gamma}
        |Y_t^j|^{p-4}\langle Y_t^j,\hat Z_t^j\rangle^2
    \right]\d t
    \notag\\
    &\quad
    +
    \mathcal T_{\mathrm{noise}}^j(t)\,\d t
    +
    \d M_{p,t}^j
    \Bigg\}.
    \label{eq:Lp-full-before-bounds}
\end{align}

\textbf{Cancellation of the first mixed drift.}
The coefficient of
\[
    |Y_t^j|^{p-2}\langle Y_t^j,\hat Z_t^j\rangle
\]
is
\[
    a_pp-\frac m\gamma K_Z+\frac{m(p-1)}{\gamma^2}.
\]
Since
\[
    K_Z=\frac\gamma m+\frac1\gamma,
\]
our choice of \(a_p\) gives
\[
    a_pp
    =
    1-\frac{m(p-2)}{\gamma^2}
    =
    \frac m\gamma K_Z-\frac{m(p-1)}{\gamma^2}.
\]
Therefore
\begin{equation}\label{eq:mixed-drift-cancellation-prop}
    a_pp-\frac m\gamma K_Z+\frac{m(p-1)}{\gamma^2}=0.
\end{equation}

\textbf{The remaining \(pK_Y\) mixed term.}
Using Young's inequality with conjugates
\[
    r=\frac p{p-1},
    \qquad
    s=p,
\]
and the choice \(\epsilon_1=\gamma/4\), we obtain
\begin{align}
    pK_Y|\hat Z|^{p-1}|Y|
    &\le
    pK_Y
    \left[
        \frac{p-1}{p}\frac\gamma4|\hat Z|^p
        +
        \frac1p\left(\frac\gamma4\right)^{-(p-1)}|Y|^p
    \right].
    \label{eq:cross-B-young-prop}
\end{align}
Thus this term subtracts
\[
    K_Y\left(\frac\gamma4\right)^{-(p-1)}
\]
from the \(Y\)-coefficient and
\[
    (p-1)K_Y\frac\gamma4
    =
    \frac{p-1}{2m}\gamma+\frac{p-1}{4\gamma}
\]
from the \(\hat Z\)-coefficient.

\textbf{The quadratic terms.}
The quadratic mixed terms satisfy
\begin{align}
    \frac m\gamma |Y|^{p-2}|\hat Z|^2
    +
    \frac{m(p-2)}{\gamma}
    |Y|^{p-4}\langle Y,\hat Z\rangle^2
    &\le
    \frac{m(p-1)}{\gamma}|Y|^{p-2}|\hat Z|^2 .
    \label{eq:corrected-bound-prop}
\end{align}
Applying Young's inequality with conjugates
\[
    r=\frac p{p-2},
    \qquad
    s=\frac p2,
\]
and \(\epsilon_2=1\), gives
\begin{equation}\label{eq:young-prop}
    \frac{m(p-1)}{\gamma}|Y|^{p-2}|\hat Z|^2
    \le
    \frac{m(p-1)}{\gamma}
    \left[
        \frac{p-2}{p}|Y|^p+\frac2p|\hat Z|^p
    \right].
\end{equation}
Thus this term subtracts
\[
    \frac{m(p-1)(p-2)}{p\gamma}
\]
from the \(Y\)-coefficient and
\[
    \frac{2m(p-1)}{p\gamma}
\]
from the \(\hat Z\)-coefficient.

\textbf{The noise trace.}
By \eqref{eq:trace_bound} and the elementary inequality
\[
    |Y_t^j+\Delta_\alpha|^2
    \le
    2|Y_t^j|^2+2|\Delta_\alpha|^2,
\]
the averaged noise trace is bounded by
\begin{align}
    \frac1J\sum_{j=1}^J \mathcal T_{\mathrm{noise}}^j(t)
    &\le
    \frac{p\sigma^2\chi_S}{m^2}
    \left[
        \frac1J\sum_{j=1}^J |\hat Z_t^j|^{p-2}|Y_t^j|^2
        +
        \left(\frac1J\sum_{j=1}^J |\hat Z_t^j|^{p-2}\right)
        \left(\frac1J\sum_{k=1}^J |Y_t^k|^2\right)
        +
        2|\Delta_\alpha|^2
        \frac1J\sum_{j=1}^J |\hat Z_t^j|^{p-2}
    \right].
    \label{eq:noise-trace-expanded-prop}
\end{align}
We estimate the three terms separately.

For the self-interaction term, Young's inequality with conjugates
\[
    r=\frac p{p-2},
    \qquad
    s=\frac p2
\]
gives, for any \(\epsilon_3>0\),
\begin{align}
    \frac1J\sum_{j=1}^J |\hat Z_t^j|^{p-2}|Y_t^j|^2
    &\le
    \frac{p-2}{p}\epsilon_3^{-\frac2{p-2}}
    \frac1J\sum_{j=1}^J |\hat Z_t^j|^p
    +
    \frac2p\epsilon_3
    \frac1J\sum_{j=1}^J |Y_t^j|^p.
    \label{eq:noise-self-young-prop}
\end{align}
For the cross-particle product, Jensen's inequality gives
\[
    \left(
        \frac1J\sum_{j=1}^J |\hat Z_t^j|^{p-2}
    \right)^{\frac p{p-2}}
    \le
    \frac1J\sum_{j=1}^J |\hat Z_t^j|^p,
\]
and
\[
    \left(
        \frac1J\sum_{j=1}^J |Y_t^j|^2
    \right)^{\frac p2}
    \le
    \frac1J\sum_{j=1}^J |Y_t^j|^p.
\]
Applying Young's inequality with the same \(\epsilon_3\), we obtain
\begin{align}
    &
    \left(\frac1J\sum_{j=1}^J |\hat Z_t^j|^{p-2}\right)
    \left(\frac1J\sum_{k=1}^J |Y_t^k|^2\right) \le
    \frac{p-2}{p}\epsilon_3^{-\frac2{p-2}}
    \frac1J\sum_{j=1}^J |\hat Z_t^j|^p
    +
    \frac2p\epsilon_3
    \frac1J\sum_{j=1}^J |Y_t^j|^p.
    \label{eq:noise-cross-young-prop}
\end{align}
Choose
\[
    \epsilon_3
    :=
    \frac{\mu_{\mathrm{gap},p}m^2}
    {64\gamma\sigma^2\chi_S}.
\]
Combining \eqref{eq:noise-self-young-prop} and
\eqref{eq:noise-cross-young-prop}, and multiplying by the prefactor
\(p\sigma^2\chi_S/m^2\), the combined spatial contribution gives the \(Y\)-penalty
\[
    \frac{\mu_{\mathrm{gap},p}}{16\gamma}
    \frac1J\sum_{j=1}^J |Y_t^j|^p,
\]
and the \(\hat Z\)-penalty
\begin{equation}\label{eq:noise-spatial-Z-penalty-prop}
    \frac{2(p-2)\sigma^2\chi_S}{m^2}
    \left(
        \frac{\mu_{\mathrm{gap},p}m^2}
        {64\gamma\sigma^2\chi_S}
    \right)^{-\frac2{p-2}}
    \frac1J\sum_{j=1}^J|\hat Z_t^j|^p.
\end{equation}

For the \(|\Delta_\alpha|^2\) contribution, Jensen's inequality gives
\[
    \frac1J\sum_{j=1}^J |\hat Z_t^j|^{p-2}
    \le
    \left(
        \frac1J\sum_{j=1}^J |\hat Z_t^j|^p
    \right)^{\frac{p-2}{p}}.
\]
Applying Young's inequality with
\[
    \epsilon_4
    :=
    \frac{\mu_{\mathrm{gap},p}m^2}
    {32\gamma\sigma^2\chi_S C_{\mathrm{lem}}},
\]
we obtain
\begin{align}
    &
    \frac{2p\sigma^2\chi_S}{m^2}
    |\Delta_\alpha|^2
    \frac1J\sum_{j=1}^J |\hat Z_t^j|^{p-2}
    \notag\\
    &\qquad\le
    \frac{2p\sigma^2\chi_S}{m^2}
    \frac{p-2}{p}
    \epsilon_4^{-\frac2{p-2}}
    \frac1J\sum_{j=1}^J|\hat Z_t^j|^p
    +
    \frac{2p\sigma^2\chi_S}{m^2}
    \frac2p\epsilon_4
    |\Delta_\alpha|^p
    \notag\\
    &\qquad=
    \frac{2(p-2)\sigma^2\chi_S}{m^2}
    \left(
        \frac{\mu_{\mathrm{gap},p}m^2}
        {32\gamma\sigma^2\chi_S C_{\mathrm{lem}}}
    \right)^{-\frac2{p-2}}
    \frac1J\sum_{j=1}^J|\hat Z_t^j|^p
    +
    \frac{\mu_{\mathrm{gap},p}}{8\gamma C_{\mathrm{lem}}}
    |\Delta_\alpha|^p .
    \label{eq:noise-Delta-young-prop}
\end{align}
By \(\cref{lem:m-malpha}\),
\[
    |\Delta_\alpha|^p
    \le
    C_{\mathrm{lem}}
    \frac1J\sum_{j=1}^J |Y_t^j|^p.
\]
Hence the last term in \eqref{eq:noise-Delta-young-prop} contributes
\[
    \frac{\mu_{\mathrm{gap},p}}{8\gamma}
    \frac1J\sum_{j=1}^J |Y_t^j|^p
\]
to the \(Y\)-penalty.

The two \(\hat Z\)-penalties in
\eqref{eq:noise-spatial-Z-penalty-prop} and \eqref{eq:noise-Delta-young-prop}
are bounded by
\begin{equation}\label{eq:noise-penalty-Lambda-p-prop}
    \Lambda_p
    \sigma^{\frac{2p}{p-2}}
    \gamma^{\frac2{p-2}}
    \frac1J\sum_{j=1}^J|\hat Z_t^j|^p,
\end{equation}
where \(\Lambda_p\) is the constant defined in \cref{assump:centered-decay}.

Combining all estimates gives the componentwise differential inequality
\eqref{eq:componentwise-Lp-differential}, with \(\widehat\lambda_Y\) and
\(\widehat\lambda_Z\) given by \eqref{eq:lambdaYhat-prop} and
\eqref{eq:lambdaZhat-prop}. By the spatial closure condition \eqref{eq:gamma-Y-condition},
\[
    \widehat\lambda_Y
    \ge
    \frac{\mu_{\mathrm{gap},p}}{4\gamma}
    +
    \frac{\mu_{\mathrm{gap},p}m(p-1)}{4\gamma^2}.
\]
Since
\[
    \lambda_p
    \left(
        a_p+\frac{m(p-1)}{p\gamma}
    \right)
    =
    \frac{\mu_{\mathrm{gap},p}}{4\gamma}
    \left(
        1-\frac{m(p-2)}{\gamma^2}
    \right)
    +
    \frac{\mu_{\mathrm{gap},p}m(p-1)}{4\gamma^2},
\]
we obtain \eqref{eq:Y-component-sufficient-prop}. Similarly, by
\eqref{eq:Z-closure},
\[
    \widehat\lambda_Z
    \ge
    \frac{p\mu_{\mathrm{gap},p}}{4\gamma}
    \left(
        1+\frac{m}{p\gamma}
    \right)
    =
    \lambda_p
    \left(
        1+\frac{m}{p\gamma}
    \right),
\]
which proves \eqref{eq:Z-component-sufficient-prop}.

Finally, Young's inequality gives
\[
    |Y|^{p-2}\langle Y,\hat Z\rangle
    \le
    \frac{p-1}{p}|Y|^p+\frac1p|\hat Z|^p.
\]
Thus
\begin{align}
    L_p(t)
    &\le
    \left(
        a_p+\frac{m(p-1)}{p\gamma}
    \right)
    \frac1J\sum_{j=1}^J |Y_t^j|^p
    +
    \left(
        1+\frac{m}{p\gamma}
    \right)
    \frac1J\sum_{j=1}^J |\hat Z_t^j|^p.
    \label{eq:Lp-upper-by-components-prop}
\end{align}
Combining \eqref{eq:componentwise-Lp-differential},
\eqref{eq:Y-component-sufficient-prop},
\eqref{eq:Z-component-sufficient-prop}, and
\eqref{eq:Lp-upper-by-components-prop}, we obtain
\[
    \d L_p(t)
    \le
    -\lambda_p L_p(t)\,\d t+\d M_{p,t}.
\]
This proves \eqref{eq:pathwise-Lp-differential}.

\proofstep{2. The quadratic case \(p=2\)}

For \(p=2\), write
\[
    L_2(t)
    =
    \left(\frac9{2m}+\frac1{\gamma^2}\right)Y_2(t)
    +
    \frac5\gamma C_2(t)
    +
    Z_2(t),
\]
where
\[
    Y_2(t):=\frac1J\sum_{j=1}^J |Y_t^j|^2,
    \qquad
    C_2(t):=\frac1J\sum_{j=1}^J \langle Y_t^j,\hat Z_t^j\rangle,
    \qquad
    Z_2(t):=\frac1J\sum_{j=1}^J |\hat Z_t^j|^2.
\]

First,
\[
    \d Y_2(t)
    =
    \left(
        2C_2(t)+\frac2\gamma Y_2(t)
    \right)\d t.
\]
Second, since \(Y_t^j\) has finite variation,
\begin{align}
    \d C_2(t)
    &=
    \left[
        Z_2(t)
        -
        \frac\gamma m C_2(t)
        -
        K_YY_2(t)
    \right]\d t
    +
    \frac1J\sum_{j=1}^J
    \left\langle Y_t^j,\d M_t^{j,\mathrm{center}}\right\rangle .
    \label{eq:C2-pathwise-diff}
\end{align}
Third, It\^o's formula and \(\cref{lem:qvar-centered-noise}\) give
\begin{align}
    \d Z_2(t)
    &\le
    \left[
        -2K_ZZ_2(t)-2K_YC_2(t)+K_\sigma Y_2(t)
    \right]\d t
    +
    \frac2J\sum_{j=1}^J
    \left\langle \hat Z_t^j,\d M_t^{j,\mathrm{center}}\right\rangle .
    \label{eq:Z2-pathwise-diff}
\end{align}
Here we used
\[
    \frac1J\sum_{j=1}^J |Y_t^j+\Delta_\alpha|^2
    =
    \frac1J\sum_{j=1}^J |Y_t^j|^2+|\Delta_\alpha|^2
    \le
    (1+C_{\mathrm{lem}})Y_2(t),
\]
so that
\[
    K_\sigma
    :=
    \frac{2\sigma^2\tau(S)(1+C_{\mathrm{lem}})}{m^2}.
\]

Combining the three differential identities,
\begin{align}
    \d L_2(t)
    &\le
    -
    \bigl(
        v_1Y_2(t)+v_2C_2(t)+v_3Z_2(t)
    \bigr)\d t
    +
    \d M_{2,t},
    \label{eq:L2-pathwise-before-v2}
\end{align}
where
\[
    \d M_{2,t}
    =
    \frac1J\sum_{j=1}^J
    \left[
        2\left\langle \hat Z_t^j,\d M_t^{j,\mathrm{center}}\right\rangle
        +
        \frac5\gamma
        \left\langle Y_t^j,\d M_t^{j,\mathrm{center}}\right\rangle
    \right],
\]
and
\begin{align}
    v_1
    &=
    \frac5\gamma K_Y
    -
    \frac2\gamma
    \left(
        \frac9{2m}+\frac1{\gamma^2}
    \right)
    -
    K_\sigma
    =
    \frac1{m\gamma}+\frac3{\gamma^3}-K_\sigma,
    \notag\\
    v_2
    &=
    -2\left(
        \frac9{2m}+\frac1{\gamma^2}
    \right)
    +
    \frac5m
    +
    2K_Y,
    \notag\\
    v_3
    &=
    2K_Z-\frac5\gamma
    =
    \frac{2\gamma}{m}-\frac3\gamma .
    \label{eq:p2-v123-prop}
\end{align}
Since \(K_Y=2/m+1/\gamma^2\), we have
\[
    v_2
    =
    -2\left(\frac9{2m}+\frac1{\gamma^2}\right)
    +
    \frac5m
    +
    2\left(\frac2m+\frac1{\gamma^2}\right)
    =
    0.
\]
Moreover, \(\gamma^2>3m/2\) gives \(v_3>0\), and
\(K_\sigma<1/(m\gamma)\) gives \(v_1>0\). Therefore
\[
    \d L_2(t)
    \le
    -
    v_1Y_2(t)\,\d t
    -
    v_3Z_2(t)\,\d t
    +
    \d M_{2,t}.
\]
By the upper norm-equivalence estimate for \(\psi_2\),
\[
    L_2(t)
    \le
    c_2^{(2)}
    \bigl(Y_2(t)+Z_2(t)\bigr).
\]
Hence
\[
    v_1Y_2(t)+v_3Z_2(t)
    \ge
    \frac{\min\{v_1,v_3\}}{c_2^{(2)}}L_2(t).
\]
Thus
\[
    \d L_2(t)
    \le
    -
    \frac{\min\{v_1,v_3\}}{c_2^{(2)}}L_2(t)\,\d t
    +
    \d M_{2,t}.
\]
This proves \eqref{eq:pathwise-Lp-differential}, and completes the proof.
\end{proof}

\begin{theorem}[Global exponential decay of centered moments]
\label{thm:exp-decay-centered-moments}
Let \(p\ge2\), and assume \cref{assump:centered-decay} for this value of \(p\).
Assume also that
\[
    \E\left[
        \momentp{p}{\emp{0}}
        +
        \momentp{p}{\empv{0}}
    \right]
    <\infty.
\]
Define
\begin{equation}\label{eq:lambda-p-cases}
    \edecay{p}
    :=
    \begin{cases}
        \displaystyle
        \frac{\min\{v_1,v_3\}}{c_2^{(2)}},
        & p=2,\\[1.2em]
        \displaystyle
        \frac{p\mu_{\mathrm{gap},p}}{4\gamma},
        & p>2.
    \end{cases}
\end{equation}
Then the expected centered Lyapunov functional decays exponentially:
\begin{equation}\label{eq:Lp-decay-main}
    \frac{\d}{\d t}\mathcal L_p(t)
    \le
    -\edecay{p}\mathcal L_p(t).
\end{equation}
Consequently,
\begin{equation}\label{eq:Lp-integrated-decay}
    \mathcal L_p(t)
    \le
    e^{-\edecay{p}t}\mathcal L_p(0).
\end{equation}
Moreover, there exists a finite constant \(C_p>0\), depending only on the parameters
and on the norm-equivalence constants, such that
\begin{equation}\label{eq:centered-moment-decay-conclusion}
    \E\left[
        \mathfrak M_p(\mu_{X_t^J})
        +
        \mathfrak M_p(\mu_{V_t^J})
    \right]
    \le
    C_p e^{-\edecay{p}t}
    \E\left[
        \mathfrak M_p(\mu_{X_0^J})
        +
        \mathfrak M_p(\mu_{V_0^J})
    \right].
\end{equation}
\end{theorem}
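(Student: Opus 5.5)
The theorem follows from \cref{prop:ito-centered-lyapunov-differential} once we take expectations and pass from the Lyapunov functional to moments. The pathwise inequality
\[
    \d L_p(t)\le -\edecay{p}L_p(t)\,\d t+\d M_{p,t}
\]
holds with exactly the rate \(\edecay{p}=\lambda_p\) from \eqref{eq:lambda-p-cases-prop}. The only issue is that \(M_{p,t}\) is merely a local martingale, so we cannot take expectations directly.

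\textbf{Step 1: localization.} Introduce the stopping times
\[
    \tau_R:=\inf\Big\{t\ge0:\ \tfrac1J\sum_{j}\bigl(|Y_t^j|^p+|\hat Z_t^j|^p+|\Delta_\alpha(t)|^p\bigr)\ge R\Big\}.
\]
On \([0,\tau_R]\), the integrands in \eqref{eq:martingale-Mpjt} are bounded. Indeed, \(S\) grows linearly, \(|Y+\Delta_\alpha|\) is bounded, and \(|\hat Z|^{p-1}\), \(|Y|^{p-1}\) are bounded. So the stopped process \(M_{p,t\wedge\tau_R}\) is a true martingale.

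\textbf{Step 2: Gr\"onwall on the stopped process.} Apply the product rule to \(e^{\edecay{p}t}L_p(t)\):
\[
    \d\bigl(e^{\edecay{p}t}L_p(t)\bigr)\le e^{\edecay{p}t}\,\d M_{p,t}.
\]
Integrating up to \(t\wedge\tau_R\) and taking expectations gives
\[
    \E\bigl[e^{\edecay{p}(t\wedge\tau_R)}L_p(t\wedge\tau_R)\bigr]\le\mathcal L_p(0).
\]
By \cref{lem:hypercoercivity} (with \(c_1>0\) under \cref{assump:admissibility}), \(L_p\ge0\). Moreover, \(\tau_R\to\infty\) almost surely, because the strong solutions of \eqref{eq: k-cbo} do not explode. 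Fatou's lemma then yields \eqref{eq:Lp-integrated-decay}.

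The differential form \eqref{eq:Lp-decay-main} should be read in integrated sense. The same stopping argument applied on an interval \([s,t]\) gives
\[
    \mathcal L_p(t)\le e^{-\edecay{p}(t-s)}\mathcal L_p(s),
\]
which is equivalent to \eqref{eq:Lp-decay-main} in the sense of upper Dini derivatives. If one wants genuine differentiability, one can first show that \(\sup_{s\le t}\E L_p(s)<\infty\) via the same localization. That makes \(M_p\) a true martingale by a BDG and moment bound, and then \(\mathcal L_p\) is absolutely continuous with derivative bounded by the expected drift.

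\textbf{Step 3: conversion to centered moments.} Apply \cref{cor:centered-lyapunov-moment-equivalence} at times \(0\) and \(t\):
\[
    C_{p,1}\,\E\bigl[\momentp{p}{\emp t}+\momentp{p}{\empv t}\bigr]\le\mathcal L_p(t)\le e^{-\edecay{p}t}\mathcal L_p(0)\le C_{p,2}\,e^{-\edecay{p}t}\,\E\bigl[\momentp{p}{\emp0}+\momentp{p}{\empv0}\bigr].
\]
This gives \eqref{eq:centered-moment-decay-conclusion} with \(C_p:=C_{p,2}/C_{p,1}\). The assumed finiteness of the initial centered moments guarantees \(\mathcal L_p(0)<\infty\).

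\textbf{Main obstacle.} The only delicate point is Step 1, specifically justifying that the local martingale contributes nonpositively in expectation and that \(\tau_R\to\infty\). This is where I would be careful to cite well-posedness of the particle system, which has locally Lipschitz coefficients with linear growth under \cref{assumption:bounded,assumption:lip}. Everything else is a direct combination of earlier results.
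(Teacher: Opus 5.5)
Your proposal is correct and follows essentially the same route as the paper: the pathwise inequality of \cref{prop:ito-centered-lyapunov-differential}, localization by stopping times so that the stopped martingale has zero mean, Fatou's lemma as $R\to\infty$, and then \cref{cor:centered-lyapunov-moment-equivalence} at times $0$ and $t$, which gives $C_p=C_{p,2}/C_{p,1}$. The one difference is that you localize $e^{\edecay{p}t}L_p(t)$ rather than $L_p(t)$, and this works in your favor: the paper's intermediate bound $\mathcal L_p(t)+\edecay{p}\int_0^t\mathcal L_p(s)\,\d s\le\mathcal L_p(0)$ is anchored only at time $0$ and does not by itself imply exponential decay, whereas your integrating-factor version yields \eqref{eq:Lp-integrated-decay} directly (and the same argument, with the localization restarted at time $s$, gives the differential form in the Dini sense).
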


\begin{proof}
Let
\[
    L_p(t):=L_p(\mathcal X_t^J,\mathcal V_t^J),
    \qquad
    \mathcal L_p(t):=\E[L_p(t)].
\]
By \cref{prop:ito-centered-lyapunov-differential}, there exists a local martingale
\(M_{p,t}\) such that, in the sense of semimartingale differentials,
\[
    \d L_p(t)
    \le
    -\lambda_p L_p(t)\,\d t+\d M_{p,t},
\]
where
\[
    \lambda_p
    =
    \begin{cases}
        \displaystyle
        \frac{\min\{v_1,v_3\}}{c_2^{(2)}},
        & p=2,\\[1.2em]
        \displaystyle
        \frac{p\mu_{\mathrm{gap},p}}{4\gamma},
        & p>2.
    \end{cases}
\]
This is the decay rate \(\edecay{p}\) in \eqref{eq:lambda-p-cases}.

We first justify taking expectations. For \(R>0\), define the stopping time
\[
    \tau_R:=\inf\{t\ge0:L_p(t)\ge R\}.
\]
Applying the preceding differential inequality to the stopped process
\(L_p(t\wedge \tau_R)\), integrating from \(0\) to \(t\), and taking expectations, we get
\[
    \E[L_p(t\wedge\tau_R)]
    \le
    \E[L_p(0)]
    -
    \lambda_p
    \int_0^t
    \E\left[
        L_p(s)\mathbf 1_{\{s\le \tau_R\}}
    \right]\d s.
\]
Here the stopped martingale has zero expectation. Equivalently,
\[
    \E[L_p(t\wedge\tau_R)]
    +
    \lambda_p
    \int_0^t
    \E\left[
        L_p(s)\mathbf 1_{\{s\le \tau_R\}}
    \right]\d s
    \le
    \E[L_p(0)].
\]
Letting \(R\to\infty\), and using Fatou's lemma together with monotone convergence, yields
\[
    \mathcal L_p(t)
    +
    \lambda_p
    \int_0^t
    \mathcal L_p(s)\,\d s
    \le
    \mathcal L_p(0).
\]
Therefore,
\[
    \mathcal L_p(t)
    \le
    \mathcal L_p(0)
    -
    \lambda_p
    \int_0^t
    \mathcal L_p(s)\,\d s,
\]
or, equivalently in differential form,
\begin{equation}\label{eq:Lp-decay-main-proof}
    \frac{\d}{\d t}\mathcal L_p(t)
    \le
    -\lambda_p\mathcal L_p(t).
\end{equation}
This proves \eqref{eq:Lp-decay-main}. By Gr\"onwall's inequality,
\begin{equation}\label{eq:Lp-integrated-decay-proof}
    \mathcal L_p(t)
    \le
    e^{-\edecay{p}t}\mathcal L_p(0).
\end{equation}

It remains to translate the Lyapunov decay into decay of centered moments. By
\cref{cor:centered-lyapunov-moment-equivalence}, there exist constants
\(C_{p,1},C_{p,2}>0\) such that
\[
    C_{p,1}
    \E\left[
        \mathfrak M_p(\mu_{X_t^J})
        +
        \mathfrak M_p(\mu_{V_t^J})
    \right]
    \le
    \mathcal L_p(t)
    \le
    C_{p,2}
    \E\left[
        \mathfrak M_p(\mu_{X_t^J})
        +
        \mathfrak M_p(\mu_{V_t^J})
    \right].
\]
Combining this equivalence with \eqref{eq:Lp-integrated-decay-proof}, we obtain
\begin{align}
    \E\left[
        \mathfrak M_p(\mu_{X_t^J})
        +
        \mathfrak M_p(\mu_{V_t^J})
    \right]
    &\le
    C_{p,1}^{-1}\mathcal L_p(t) \le
    C_{p,1}^{-1}
    e^{-\edecay{p}t}
    \mathcal L_p(0)
    \notag\\
    &\le
    \frac{C_{p,2}}{C_{p,1}}
    e^{-\edecay{p}t}
    \E\left[
        \mathfrak M_p(\mu_{X_0^J})
        +
        \mathfrak M_p(\mu_{V_0^J})
    \right].
\end{align}
The conclusion follows with
\[
    C_p:=\frac{C_{p,2}}{C_{p,1}}.
\]
\end{proof}

\subsubsection{Decay of Centered Moments: Mean-Field Process}
\label{sec: dec-centered-moments-mf}

Here we prove the counterpart of \cref{thm:exp-decay-centered-moments} for the
mean-field process. Let \((\overline X_t,\overline V_t)\) solve
\begin{align}
    \d \overline X_t &= \overline V_t\,\d t, \\
    m\,\d \overline V_t
    &=
    -\gamma \overline V_t\,\d t
    -
    \bigl(\overline X_t-\mathcal M_\alpha(\overline\rho_t^X)\bigr)\d t
    +
    \sigma S\bigl(\overline X_t-\mathcal M_\alpha(\overline\rho_t^X)\bigr)\d W_t,
\end{align}
where \(\overline\rho_t=\Law(\overline X_t,\overline V_t)\) and
\(\overline\rho_t^X\) denotes its position marginal.

Define the centered-shifted variables
\[
    \overline Y_t
    :=
    \overline X_t-m_{\overline X}(\overline\rho_t),
    \qquad
    \widehat{\overline Z}_t
    :=
    \bigl(\overline V_t-m_{\overline V}(\overline\rho_t)\bigr)
    -
    \frac1\gamma \overline Y_t.
\]
For \(p>2\), set
\[
    a_p:=\frac1p\left(1-\frac{m(p-2)}{\gamma^2}\right),
\]
and define the pathwise mean-field Lyapunov functional
\begin{equation}\label{eq:mfl-random-Lp-def}
    \overline L_p(t)
    :=
    \phi_{a_p,p}\bigl(\overline Y_t,\widehat{\overline Z}_t\bigr),
    \qquad p>2.
\end{equation}
For \(p=2\), define
\begin{equation}\label{eq:mfl-random-L2-def}
    \overline L_2(t)
    :=
    \psi_2\bigl(\overline Y_t,\widehat{\overline Z}_t\bigr),
\end{equation}
where \(\psi_2\) is the dedicated quadratic form from \eqref{eq:psi2-centered}.
We denote the expected functionals by
\begin{equation}\label{eq:mfl-expected-Lp-def}
    \overline{\mathcal L}_p(t):=\E[\overline L_p(t)].
\end{equation}

For later use in the concentration estimate, if
\((\overline X_t^j,\overline V_t^j)_{j=1}^J\) are independent mean-field samples,
we write
\[
    \overline Y_t^j
    :=
    \overline X_t^j-m_{\overline X}(\overline\rho_t),
    \qquad
    \widehat{\overline Z}_t^j
    :=
    \bigl(\overline V_t^j-m_{\overline V}(\overline\rho_t)\bigr)
    -
    \frac1\gamma \overline Y_t^j.
\]
In the quadratic case, define
\begin{equation}\label{eq:mfl-sample-L2J-def}
    \overline L_2^j(t)
    :=
    \psi_2(\overline Y_t^j,\widehat{\overline Z}_t^j),
    \qquad
    \overline L_2^J(t)
    :=
    \frac1J\sum_{j=1}^J\overline L_2^j(t).
\end{equation}

\begin{proposition}[Mean-field It\^o inequalities for centered Lyapunov functionals]
\label{prop:ito-mfl-centered-lyapunov-differential}
Let \(p\ge2\), and assume \cref{assump:admissibility,assump:centered-decay} for this
value of \(p\). Then the expected mean-field centered Lyapunov functional satisfies
\begin{equation}\label{eq:mfl-expected-Lp-differential}
    \frac{\d}{\d t}\overline{\mathcal L}_p(t)
    \le
    -\edecay{p}\,\overline{\mathcal L}_p(t).
\end{equation}

Moreover, for the quadratic case \(p=2\), there exists a local martingale
\(\overline M_{2,t}^J\) such that the averaged pathwise functional
\(\overline L_2^J(t)\) satisfies
\begin{equation}\label{eq:mfl-averaged-L2-trace-remainder}
    \d \overline L_2^J(t)
    \le
    -\edecay{2}\,\overline L_2^J(t)\,\d t
    +
    K_{\mathrm{tr}}
    \left(
        \frac1J\sum_{j=1}^J
        \left|
            \overline X_t^j-\mathcal M_\alpha(\overline\rho_t^X)
        \right|^2
    \right)\d t
    +
    \d \overline M_{2,t}^J,
\end{equation}
where
\begin{equation}\label{eq:mfl-Ktr-def}
    K_{\mathrm{tr}}
    :=
    \frac{\sigma^2\tau(S)}{m^2}.
\end{equation}
\end{proposition}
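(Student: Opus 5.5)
The plan is to repeat the computation of \cref{prop:ito-centered-lyapunov-differential} on the mean-field process. The one structural change is that centering is now with respect to the law \(\overline\rho_t\) rather than the empirical measure. Since \(\frac{\d}{\d t}m_{\overline X}(\overline\rho_t)=m_{\overline V}(\overline\rho_t)\), we get
\[
\d\overline Y_t=\bigl(\widehat{\overline Z}_t+\tfrac1\gamma\overline Y_t\bigr)\d t .
\]
The mean velocity satisfies
\[
\frac{\d}{\d t}m_{\overline V}=-\frac\gamma m m_{\overline V}-\frac1m\Delta_\alpha(\overline\rho^X_t),
\]
with no martingale part, because the stochastic integral has zero mean. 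Subtracting this from the velocity equation, exactly as in \cref{sec:setting-transformation}, yields
\[
\d\widehat{\overline Z}_t=(-K_Z\widehat{\overline Z}_t-K_Y\overline Y_t)\,\d t+\tfrac\sigma m S(\overline Y_t+\Delta_\alpha(\overline\rho_t^X))\,\d W_t .
\]
The drift is the same closed linear system as before. The noise is now a single Brownian term: there is no averaged-noise correction, so the quadratic variation is bounded by only the first term in \cref{lem:qvar-centered-noise}.

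For the expected inequality \eqref{eq:mfl-expected-Lp-differential}, apply It\^o's formula to \(\phi_{a_p,p}(\overline Y_t,\widehat{\overline Z}_t)\) (respectively \(\psi_2\)). This reproduces \cref{lem:ito-expansion-Lp} with \(J^{-1}\sum_j\) replaced by \(\E\). The drift cancellations carry over verbatim: the mixed coefficient vanishes by the choice of \(a_p\), and \(v_2=0\) for \(p=2\). The noise trace is now bounded by
\[
\frac{p\sigma^2\chi_S}{2m^2}\,\E\bigl[|\widehat{\overline Z}|^{p-2}\,|\overline Y+\Delta_\alpha|^2\bigr].
\]
This is the same as the particle bound without the cross-particle term, so it is no larger. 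Every Young/Jensen step, and \cref{lem:m-malpha} applied to \(\overline\rho_t^X\) (which gives \(|\Delta_\alpha|^p\le C_{\rm lem}\E|\overline Y|^p\), and \(|\Delta_\alpha|^2\le C_{\rm lem}\E|\overline Y|^2\) for \(p=2\)), goes through with the same constants. In particular, for \(p=2\),
\[
\E|\overline Y+\Delta_\alpha|^2=\E|\overline Y|^2+|\Delta_\alpha|^2\le(1+C_{\rm lem})\E|\overline Y|^2,
\]
which produces \(K_\sigma\) (with room to spare). The resulting componentwise inequality, combined with the same closure conditions of \cref{assump:centered-decay} and the upper bound \eqref{eq:Lp-upper-by-components-prop} (or \(c_2^{(2)}\) for \(p=2\)), gives \(\frac{\d}{\d t}\overline{\mathcal L}_p\le-\edecay p\overline{\mathcal L}_p\). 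Expectations of the martingale terms are removed by the localization argument of \cref{thm:exp-decay-centered-moments}, using stopping times and Fatou's lemma. One point needs care here: \(\Delta_\alpha\) is deterministic but is controlled only in expectation, so the estimate must be done at the level of \(\E\), not pathwise. That is why only the expected statement is claimed for general \(p\).

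For the pathwise statement \eqref{eq:mfl-averaged-L2-trace-remainder}, take the \(J\) independent samples, each with its own Brownian motion, and compute \(\d\psi_2(\overline Y^j_t,\widehat{\overline Z}^j_t)\) pathwise. The drift gives
\[
-(v_1^0Y_2+v_3Z_2)\,\d t,\qquad v_1^0:=\frac1{m\gamma}+\frac3{\gamma^3},
\]
where now \(Y_2,Z_2\) are the sample averages and \(v_2=0\). Rather than bounding the Itô trace by \(\overline Y\) (impossible pathwise, since \(\Delta_\alpha\) is not controlled by the empirical sample), leave it as
\[
\frac{\sigma^2}{m^2}\|S(\overline X^j-\mathcal M_\alpha)\|_F^2\le K_{\rm tr}\,|\overline X^j_t-\mathcal M_\alpha(\overline\rho^X_t)|^2 .
\]
Since \(v_1^0\ge v_1\) and \(v_3\) is unchanged, we get \(v_1^0Y_2+v_3Z_2\ge\edecay2\,\overline L_2^J\) via \(c_2^{(2)}\). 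Then define \(\overline M^J_{2,t}\) as the average of \(2\langle\widehat{\overline Z}^j,\d M^j\rangle+\frac5\gamma\langle\overline Y^j,\d M^j\rangle\), which completes the argument.

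The main obstacle is bookkeeping rather than a new idea: verifying that removing the empirical centering (the \(1-1/J\) factors and the cross-particle noise) does not break any cancellation, and that in the expected \(p>2\) estimate the deterministic \(|\Delta_\alpha|^2\) term can be handled via \(\E|\widehat{\overline Z}|^{p-2}\le(\E|\widehat{\overline Z}|^p)^{(p-2)/p}\) together with Young's inequality, with the same \(\epsilon_4\). I would check this step first.
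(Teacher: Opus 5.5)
Your proposal is correct and follows the paper's proof essentially step for step. You use the same closed centered--shifted mean-field system with un-recentered noise. You do the same per-sample pathwise $\psi_2$ computation: the trace is kept as $K_{\mathrm{tr}}|\overline X_t^j-\mathcal M_\alpha(\overline\rho_t^X)|^2$, and the drift $-(v_1^{(0)}|\overline Y_t^j|^2+v_3|\widehat{\overline Z}_t^j|^2)$ is compared to $\lambda_2\psi_2$ via $v_1\le v_1^{(0)}$ and $c_2^{(2)}$. You then obtain the expected estimates by rerunning the particle Young closure, with the deterministic shift $\Delta_\alpha^{\rm MF}(t)$ controlled through \cref{lem:m-malpha}.

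The one point you add beyond the paper, the localization, does not carry over verbatim from \cref{thm:exp-decay-centered-moments}. There the inequality is pathwise, whereas here $|\Delta_\alpha^{\rm MF}(t)|^p\le C_{\rm lem}\E|\overline Y_t|^p$ is an unstopped expectation. It still closes once you invoke local-in-time finiteness of $\E|\overline Y_s|^p$ and apply monotone convergence to the stopped dissipation terms.
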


\begin{proof}
Set
\[
    \Delta_\alpha^{\rm MF}(t)
    :=
    m_{\overline X}(\overline\rho_t)
    -
    \mathcal M_\alpha(\overline\rho_t^X).
\]
Since
\[
    \frac{\d}{\d t}m_{\overline X}(\overline\rho_t)
    =
    m_{\overline V}(\overline\rho_t),
\]
and
\[
    m\frac{\d}{\d t}m_{\overline V}(\overline\rho_t)
    =
    -\gamma m_{\overline V}(\overline\rho_t)
    -
    \bigl(
        m_{\overline X}(\overline\rho_t)
        -
        \mathcal M_\alpha(\overline\rho_t^X)
    \bigr),
\]
subtracting the mean equations from the mean-field SDE gives
\begin{equation}\label{eq:mfl-Y-dynamics}
    \d\overline Y_t
    =
    \left(
        \widehat{\overline Z}_t+\frac1\gamma\overline Y_t
    \right)\d t,
\end{equation}
and
\begin{equation}\label{eq:mfl-Zhat-dynamics}
    \d\widehat{\overline Z}_t
    =
    \left(
        -K_Z\widehat{\overline Z}_t
        -
        K_Y\overline Y_t
    \right)\d t
    +
    \frac\sigma m
    S\bigl(\overline X_t-\mathcal M_\alpha(\overline\rho_t^X)\bigr)\d W_t.
\end{equation}
Equivalently,
\[
    \overline X_t-\mathcal M_\alpha(\overline\rho_t^X)
    =
    \overline Y_t+\Delta_\alpha^{\rm MF}(t).
\]
Thus the deterministic part is the same centered-shifted linear drift as in the
particle case. The only difference is that the mean-field noise is not recentered by
subtracting an empirical average.

We first derive the pathwise quadratic estimate needed later in the concentration
argument. For each mean-field sample \(j\), applying It\^o's formula to
\[
    \overline L_2^j(t)
    =
    \psi_2(\overline Y_t^j,\widehat{\overline Z}_t^j)
\]
gives the same deterministic quadratic drift as in the \(p=2\) centered-moment
calculation. The quadratic variation contributes only through the
\(\widehat{\overline Z}\)-variable, since \(\overline Y\) has finite variation.
Hence
\begin{align}
    \d \overline L_2^j(t)
    &\le
    -
    v_1^{(0)}|\overline Y_t^j|^2\,\d t
    -
    v_3|\widehat{\overline Z}_t^j|^2\,\d t
    +
    \frac{\sigma^2}{m^2}
    \left\|
        S\!\left(
            \overline X_t^j-\mathcal M_\alpha(\overline\rho_t^X)
        \right)
    \right\|_{\mathrm{HS}}^2
    \d t
    +
    \d\overline M_{2,t}^j,
    \label{eq:mfl-single-L2-before-trace-bound}
\end{align}
where
\[
    v_1^{(0)}
    :=
    \frac1{m\gamma}+\frac3{\gamma^3},
    \qquad
    v_3
    :=
    \frac{2\gamma}{m}-\frac3\gamma,
\]
and
\begin{equation}\label{eq:mfl-single-L2-martingale}
    \d\overline M_{2,t}^j
    =
    \left\langle
        2\widehat{\overline Z}_t^j+\frac5\gamma\overline Y_t^j,
        \frac\sigma m
        S\!\left(
            \overline X_t^j-\mathcal M_\alpha(\overline\rho_t^X)
        \right)\d W_t^j
    \right\rangle .
\end{equation}
Using the trace bound
\[
    \|S(x)\|_{\mathrm{HS}}^2\le \tau(S)|x|^2,
\]
we get
\[
    \frac{\sigma^2}{m^2}
    \left\|
        S\!\left(
            \overline X_t^j-\mathcal M_\alpha(\overline\rho_t^X)
        \right)
    \right\|_{\mathrm{HS}}^2
    \le
    K_{\mathrm{tr}}
    \left|
        \overline X_t^j-\mathcal M_\alpha(\overline\rho_t^X)
    \right|^2.
\]
Moreover, with
\[
    v_1
    :=
    \frac1{m\gamma}+\frac3{\gamma^3}-K_\sigma,
    \qquad
    \edecay{2}
    =
    \frac{\min\{v_1,v_3\}}{c_2^{(2)}},
\]
we have \(v_1\le v_1^{(0)}\), and therefore
\[
    v_1^{(0)}|\overline Y_t^j|^2
    +
    v_3|\widehat{\overline Z}_t^j|^2
    \ge
    \min\{v_1,v_3\}
    \left(
        |\overline Y_t^j|^2
        +
        |\widehat{\overline Z}_t^j|^2
    \right)
    \ge
    \edecay{2}\,
    \psi_2(\overline Y_t^j,\widehat{\overline Z}_t^j).
\]
Consequently,
\[
    \d\overline L_2^j(t)
    \le
    -\edecay{2}\,\overline L_2^j(t)\,\d t
    +
    K_{\mathrm{tr}}
    \left|
        \overline X_t^j-\mathcal M_\alpha(\overline\rho_t^X)
    \right|^2\d t
    +
    \d\overline M_{2,t}^j.
\]
Averaging over \(j=1,\dots,J\) gives \eqref{eq:mfl-averaged-L2-trace-remainder}, with
\[
    \overline M_{2,t}^J
    :=
    \frac1J\sum_{j=1}^J\overline M_{2,t}^j.
\]

It remains to prove the expected decay estimate
\eqref{eq:mfl-expected-Lp-differential}. For \(p=2\), take expectations in
\eqref{eq:mfl-single-L2-before-trace-bound}. Since
\[
    \overline X_t-\mathcal M_\alpha(\overline\rho_t^X)
    =
    \overline Y_t+\Delta_\alpha^{\rm MF}(t),
\]
and \(\E[\overline Y_t]=0\), we have
\[
    \E\left|
        \overline X_t-\mathcal M_\alpha(\overline\rho_t^X)
    \right|^2
    =
    \E|\overline Y_t|^2
    +
    |\Delta_\alpha^{\rm MF}(t)|^2.
\]
By \cref{lem:m-malpha},
\[
    |\Delta_\alpha^{\rm MF}(t)|^2
    \le
    C_{\mathrm{lem}}\E|\overline Y_t|^2.
\]
Hence the trace contribution is bounded by
\[
    \frac{\sigma^2\tau(S)}{m^2}
    \E\left|
        \overline X_t-\mathcal M_\alpha(\overline\rho_t^X)
    \right|^2
    \le
    \frac{\sigma^2\tau(S)(1+C_{\mathrm{lem}})}{m^2}
    \E|\overline Y_t|^2
    \le
    K_\sigma\E|\overline Y_t|^2,
\]
where \(K_\sigma=\frac{2\sigma^2\tau(S)(1+C_{\mathrm{lem}})}{m^2}\) is the
quadratic noise constant from \eqref{eq:Ksigma-def}. Therefore the trace is absorbed
by the \(K_\sigma\)-budget in \(v_1\), and
\[
    \frac{\d}{\d t}\overline{\mathcal L}_2(t)
    \le
    -\edecay{2}\,\overline{\mathcal L}_2(t).
\]

For \(p>2\), applying It\^o's formula to
\(\phi_{a_p,p}(\overline Y_t,\widehat{\overline Z}_t)\) gives the same deterministic
drift expansion as in the particle proof. The martingale part is
\[
    \d\overline M_{p,t}
    =
    p|\widehat{\overline Z}_t|^{p-2}
    \left\langle
        \widehat{\overline Z}_t,
        \frac\sigma m
        S\bigl(\overline X_t-\mathcal M_\alpha(\overline\rho_t^X)\bigr)\d W_t
    \right\rangle
    +
    \frac m\gamma |\overline Y_t|^{p-2}
    \left\langle
        \overline Y_t,
        \frac\sigma m
        S\bigl(\overline X_t-\mathcal M_\alpha(\overline\rho_t^X)\bigr)\d W_t
    \right\rangle.
\]
The choice
\[
    a_p=\frac1p\left(1-\frac{m(p-2)}{\gamma^2}\right)
\]
cancels the mixed drift coefficient
\[
    a_pp-\frac m\gamma K_Z+\frac{m(p-1)}{\gamma^2}=0.
\]
The remaining mixed drift and the quadratic terms are estimated as in
\cref{prop:ito-centered-lyapunov-differential}. The mean-field noise trace satisfies
\[
    \overline{\mathcal T}_{\rm noise}(t)
    \le
    \frac{p\sigma^2\chi_S}{2m^2}
    |\widehat{\overline Z}_t|^{p-2}
    |\overline X_t-\mathcal M_\alpha(\overline\rho_t^X)|^2.
\]
Using
\[
    \overline X_t-\mathcal M_\alpha(\overline\rho_t^X)
    =
    \overline Y_t+\Delta_\alpha^{\rm MF}(t),
\]
and
\[
    |\overline Y_t+\Delta_\alpha^{\rm MF}(t)|^2
    \le
    2|\overline Y_t|^2+2|\Delta_\alpha^{\rm MF}(t)|^2,
\]
we obtain, after taking expectations, the same type of Young estimates as in the
particle centered-moment proof. The only mean-field-specific term is
\(\Delta_\alpha^{\rm MF}(t)\), which is deterministic at time \(t\) and satisfies
\[
    |\Delta_\alpha^{\rm MF}(t)|^p
    \le
    C_{\mathrm{lem}}\E|\overline Y_t|^p
\]
by \cref{lem:m-malpha}. Hence the same closure assumptions as in
\cref{prop:ito-centered-lyapunov-differential} yield
\[
    \frac{\d}{\d t}\overline{\mathcal L}_p(t)
    \le
    -\edecay{p}\,\overline{\mathcal L}_p(t),
    \qquad p>2.
\]
This proves \eqref{eq:mfl-expected-Lp-differential}.
\end{proof}

\begin{theorem}[Exponential decay of mean-field centered moments]
\label{thm:mfl-decay-centered-moment}
Let \(p\ge2\). Suppose that \(f\) satisfies \cref{assumption:bounded} and that
\(\overline\rho_0\in\mathcal P_p(\R^{2d})\). Assume
\cref{assump:admissibility,assump:centered-decay} for this value of \(p\). Then there
exists a finite constant \(C_p>0\), independent of \(t\), such that
\begin{equation}\label{eq:mfl-centered-moment-decay}
    \E\left[
        |\overline X_t-m_{\overline X}(\overline\rho_t)|^p
        +
        |\overline V_t-m_{\overline V}(\overline\rho_t)|^p
    \right]
    \le
    C_p e^{-\edecay{p}t}
    \E\left[
        |\overline X_0-m_{\overline X}(\overline\rho_0)|^p
        +
        |\overline V_0-m_{\overline V}(\overline\rho_0)|^p
    \right].
\end{equation}
\end{theorem}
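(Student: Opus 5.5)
The plan mirrors the proof of \cref{thm:exp-decay-centered-moments}, with \cref{prop:ito-mfl-centered-lyapunov-differential} in place of \cref{prop:ito-centered-lyapunov-differential}. That proposition already gives the differential inequality $\frac{\d}{\d t}\overline{\mathcal L}_p(t)\le-\edecay{p}\overline{\mathcal L}_p(t)$ for the expected mean-field Lyapunov functional $\overline{\mathcal L}_p(t)=\E[\overline L_p(t)]$, built from $\phi_{a_p,p}$ for $p>2$ and from $\psi_2$ for $p=2$, evaluated at $(\overline Y_t,\widehat{\overline Z}_t)$. What remains is:
\begin{enumerate}[label=(\alph*)]
\item justify this inequality rigorously, including taking expectations of the martingale part;
\item integrate it;
\item translate the result back to the physical centered moments.
\end{enumerate}

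\textbf{Step 1 (localization).} The martingale $\overline M_{p,t}$ is only local, so I would localize with $\tau_R:=\inf\{t:|\overline Y_t|+|\widehat{\overline Z}_t|\ge R\}$ and write the integrated It\^o identity for $\overline L_p(t\wedge\tau_R)$. The drift bounds in the proof of \cref{prop:ito-mfl-centered-lyapunov-differential} are pathwise except for the term involving the deterministic shift $\Delta_\alpha^{\rm MF}(t)$. That term is controlled through $|\Delta_\alpha^{\rm MF}(t)|^p\le C_{\rm lem}\E|\overline Y_t|^p$ from \cref{lem:m-malpha}, which is an expectation over the full (unstopped) law.

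I would handle this in one of two ways. The first option is to keep $|\Delta_\alpha^{\rm MF}(t)|^p$ as an explicit deterministic forcing term in the stopped inequality, let $R\to\infty$ by Fatou and monotone convergence, and only then substitute the \cref{lem:m-malpha} bound. The second option, if integrability is needed first, is to use a finite-time a priori estimate: Lipschitz-type growth of the coefficients together with $\overline\rho_0\in\mathcal P_p$ gives $\sup_{s\le t}\E[|\overline X_s|^p+|\overline V_s|^p]<\infty$, so the martingale is a true martingale on $[0,t]$. Either route yields
\[
\overline{\mathcal L}_p(t)+\edecay{p}\int_0^t\overline{\mathcal L}_p(s)\,\d s\le\overline{\mathcal L}_p(0),
\]
and Gr\"onwall then gives $\overline{\mathcal L}_p(t)\le e^{-\edecay{p}t}\overline{\mathcal L}_p(0)$.

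\textbf{Step 2 (norm equivalence).} By \cref{lem:hypercoercivity}, and since $c_1(a_p,p)>0$ and $c_1^{(2)}>0$ under \cref{assump:admissibility}, $\overline L_p$ is comparable to $|\overline Y_t|^p+|\widehat{\overline Z}_t|^p$. The change of variables $\overline V_t-m_{\overline V}(\overline\rho_t)=\widehat{\overline Z}_t+\gamma^{-1}\overline Y_t$ then gives two-sided bounds against $|\overline X_t-m_{\overline X}|^p+|\overline V_t-m_{\overline V}|^p$ with the factor $K_{p,\gamma}$, exactly as in \cref{cor:centered-lyapunov-moment-equivalence}, just applied to a single sample rather than an empirical average. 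Taking expectations, chaining the lower bound at time $t$, the decay estimate, and the upper bound at time $0$ gives the claim with $C_p=C_{p,2}/C_{p,1}$.

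\textbf{Main obstacle.} I expect the only delicate point to be Step 1: the nonlocal term $\Delta_\alpha^{\rm MF}$ couples the pathwise inequality to the unstopped expectation. The fix is to close the estimate at the level of expectations, using a finite-horizon moment bound to make the stochastic integral a true martingale, rather than trying to close it pathwise.
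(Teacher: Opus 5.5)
Your proposal is correct and follows the paper's proof essentially step for step. The paper also takes the differential inequality for $\overline{\mathcal L}_p$ from \cref{prop:ito-mfl-centered-lyapunov-differential}, applies Gr\"onwall, and transfers the decay to the physical centered moments through the single-sample version of \cref{cor:centered-lyapunov-moment-equivalence}, obtaining $C_p=C_{p,2}/C_{p,1}$. Your localization step, which keeps the deterministic forcing $|\Delta_\alpha^{\rm MF}(t)|^p$ explicit until after $R\to\infty$, is extra rigor the paper omits.

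Two small caveats. To reach $e^{-\edecay{p}t}$ you should run that argument on every interval $[s,t]$, because the integrated inequality on $[0,t]$ alone does not imply exponential decay. Also, making the stochastic integral a true martingale with only $p$-th moments requires $\E\sup_{s\le t}$ bounds rather than $\sup_{s\le t}\E$ bounds.
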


\begin{proof}
By \cref{prop:ito-mfl-centered-lyapunov-differential},
\[
    \frac{\d}{\d t}\overline{\mathcal L}_p(t)
    \le
    -\edecay{p}\,\overline{\mathcal L}_p(t).
\]
Hence, Gr\"onwall's inequality gives
\begin{equation}\label{eq:mfl-Lp-integrated-decay-proof}
    \overline{\mathcal L}_p(t)
    \le
    e^{-\edecay{p}t}\overline{\mathcal L}_p(0).
\end{equation}

The norm-equivalence estimate from
\cref{cor:centered-lyapunov-moment-equivalence} applies identically to the
mean-field centered-shifted variables. Therefore there exist constants
\(C_{p,1},C_{p,2}>0\), depending only on the parameters, such that
\[
    C_{p,1}
    \E\left[
        |\overline X_t-m_{\overline X}(\overline\rho_t)|^p
        +
        |\overline V_t-m_{\overline V}(\overline\rho_t)|^p
    \right]
    \le
    \overline{\mathcal L}_p(t),
\]
and
\[
    \overline{\mathcal L}_p(0)
    \le
    C_{p,2}
    \E\left[
        |\overline X_0-m_{\overline X}(\overline\rho_0)|^p
        +
        |\overline V_0-m_{\overline V}(\overline\rho_0)|^p
    \right].
\]
Combining these two estimates with
\eqref{eq:mfl-Lp-integrated-decay-proof}, we obtain
\begin{align}
    \E\left[
        |\overline X_t-m_{\overline X}(\overline\rho_t)|^p
        +
        |\overline V_t-m_{\overline V}(\overline\rho_t)|^p
    \right] &\le
    C_{p,1}^{-1}\overline{\mathcal L}_p(t)
    \le
    C_{p,1}^{-1}e^{-\edecay{p}t}\overline{\mathcal L}_p(0)
    \notag\\
    &\le
    \frac{C_{p,2}}{C_{p,1}}
    e^{-\edecay{p}t}
    \E\left[
        |\overline X_0-m_{\overline X}(\overline\rho_0)|^p
        +
        |\overline V_0-m_{\overline V}(\overline\rho_0)|^p
    \right].
\end{align}
The conclusion follows with \(C_p:=C_{p,2}/C_{p,1}\).
\end{proof}

\begin{remark}[Single-sample form]
\label{rem:mfl-single-sample-centered-decay}
Since each mean-field sample \((\overline X_t^j,\overline V_t^j)\) has law
\(\overline\rho_t\), \cref{thm:mfl-decay-centered-moment} also gives, for every
\(j\in[J]\),
\[
    \E\left[
        |\overline X_t^j-m_{\overline X}(\overline\rho_t)|^p
        +
        |\overline V_t^j-m_{\overline V}(\overline\rho_t)|^p
    \right]
    \le
    C_p e^{-\edecay{p}t}\mathfrak M_p(\overline\rho_0).
\]
In particular,
\[
    \E\left[
        \frac1J\sum_{j=1}^J
        |\overline X_t^j-m_{\overline X}(\overline\rho_t)|^p
    \right]
    \le
    C_p e^{-\edecay{p}t}\mathfrak M_p(\overline\rho_0).
\]
\end{remark}


\subsection{Uniform-in-Time Boundedness of Raw Moments}\label{sec: raw-moments-bound}
While \cref{thm:exp-decay-centered-moments} establishes the exponential decay of the centered moments (and consequently the collapse of the particle cloud toward the mean), it does not a priori guarantee that the cloud itself does not drift to infinity. In this section, we prove that the raw moments $\E|X_t|^p$ and $\E|V_t|^p$ remain bounded uniformly in time.

To bound the position, we must first establish the exponential decay of the raw velocity. We achieve this by observing that the force driving the raw velocity depends entirely on the internal centered variables, which we already know decay exponentially.
\begin{proposition}[Exponential Decay of Raw Velocity]\label{prop:raw-velocity-decay}
    Under the assumptions of \cref{thm:exp-decay-centered-moments} for \(p\geq 2\), 
    assume in addition that \(\E|V_0^j|^p<\infty\). 
    Set $\eta_p := \frac{p\gamma}{2m}.$
    Then there exists a constant \(C_V>0\) such that, for all \(t\geq 0\),
    \[
        \E|V_t^j|^p
        \le
        \begin{cases}
            C_V \e^{-\edecay{p} t},
            & \text{if } \edecay{p} < \eta_p, \\[0.4em]
            C_V(1+t)\e^{-\eta_p t},
            & \text{if } \edecay{p} = \eta_p, \\[0.4em]
            C_V \e^{-\eta_p t},
            & \text{if } \edecay{p} > \eta_p.
        \end{cases}
    \]
    Equivalently, away from the critical case \(\edecay{p}=\eta_p\), the raw velocity decays at rate
    \[
        \edecay{p}^*:= \min\left\{\edecay{p},\eta_p\right\}
        =
        \min\left\{\edecay{p},\frac{p\gamma}{2m}\right\}.
    \]
\end{proposition}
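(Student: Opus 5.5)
Apply It\^o's formula to $|V_t^j|^p$ and treat it as a linearly damped scalar quantity driven by a forcing that decays at rate $\edecay{p}$. Write the velocity equation as
\[
    \d V_t^j=-\frac\gamma m V_t^j\,\d t-\frac1m\bigl(Y_t^j+\Delta_\alpha(\emp{t})\bigr)\d t+\frac\sigma m S\bigl(Y_t^j+\Delta_\alpha(\emp{t})\bigr)\d W_t^j .
\]
This uses $X_t^j-\mathcal M_\alpha(\emp{t})=Y_t^j+\Delta_\alpha(\emp{t})$. Set $U_t^j:=Y_t^j+\Delta_\alpha(\emp{t})$. It\^o's formula gives
\[
    \d|V_t^j|^p\le\Bigl[-\frac{p\gamma}{m}|V_t^j|^p+\frac pm|V_t^j|^{p-1}|U_t^j|+\frac{p\sigma^2\chi_S}{2m^2}|V_t^j|^{p-2}|U_t^j|^2\Bigr]\d t+\d N_t^j .
\]
Here $N^j$ is a local martingale, and the trace is bounded as in \eqref{eq:trace_bound}. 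Localize with stopping times exactly as in the proof of \cref{thm:exp-decay-centered-moments}, so that the martingale disappears after taking expectations.

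Next, apply Young's inequality to the two mixed terms. Use $|V|^{p-1}|U|\le\frac{p-1}{p}\epsilon|V|^p+\frac1p\epsilon^{-(p-1)}|U|^p$ and $|V|^{p-2}|U|^2\le\frac{p-2}{p}\epsilon'|V|^p+\frac2p\epsilon'^{-(p-2)/2}|U|^p$. Choose $\epsilon,\epsilon'$ so that together they absorb exactly half of the damping $\frac{p\gamma}{m}|V|^p$. This yields
\[
    \frac{\d}{\d t}\E|V_t^j|^p\le-\eta_p\E|V_t^j|^p+C\,\E|U_t^j|^p,\qquad \eta_p=\frac{p\gamma}{2m}.
\]
(If the velocity is bounded only in expectation, use the stopped version and Fatou, as before.)

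The forcing is controlled by the centered-moment decay. We have $|U_t^j|^p\le2^{p-1}(|Y_t^j|^p+|\Delta_\alpha|^p)$, and by \cref{lem:m-malpha}, $|\Delta_\alpha(\emp{t})|^p\le C_{\rm lem}\momentp{p}{\emp{t}}$.

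The main subtlety is the single-particle term $\E|Y_t^j|^p$, because \cref{thm:exp-decay-centered-moments} controls only the empirical average. The initial data are i.i.d. and the dynamics are exchangeable, so the particles $(X_t^j,V_t^j)$ are exchangeable for all $t$. Hence $\E|Y_t^j|^p=\E\,\momentp{p}{\emp{t}}$. This gives
\[
    \E|U_t^j|^p\le 2^{p-1}(1+C_{\rm lem})\,C_p\,e^{-\edecay{p}t}\,\E\bigl[\momentp{p}{\emp{0}}+\momentp{p}{\empv{0}}\bigr]=:C_Fe^{-\edecay{p}t}.
\]
The right-hand side is finite by the hypotheses of \cref{thm:exp-decay-centered-moments}.

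Finally, apply Duhamel's formula to $u(t)=\E|V_t^j|^p$:
\[
    u(t)\le e^{-\eta_pt}u(0)+CC_F\int_0^te^{-\eta_p(t-s)}e^{-\edecay{p}s}\,\d s .
\]
The integral equals $\frac{e^{-\edecay{p}t}-e^{-\eta_pt}}{\eta_p-\edecay{p}}$ when $\edecay{p}\ne\eta_p$, and $te^{-\eta_pt}$ when $\edecay{p}=\eta_p$. This produces the three cases in the statement, with $C_V$ collecting $\E|V_0^j|^p$, $CC_F$, and $|\eta_p-\edecay{p}|^{-1}$. The rate $\eta_p$ is independent of the noise level once half the damping is reserved. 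The only delicate bookkeeping is that the Young constants $\epsilon,\epsilon'$ depend on $\gamma,m,\sigma,p$; this affects $C_V$ but not the rates.
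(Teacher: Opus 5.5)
Your proposal is correct and follows essentially the same route as the paper: It\^o's formula for $|V_t^j|^p$ with $X_t^j-\mathcal M_\alpha=Y_t^j+\Delta_\alpha$, then Young's inequality so that the mixed terms absorb half of the friction and leave the rate $\eta_p=\frac{p\gamma}{2m}$. The forcing is then bounded using exchangeability together with $|\Delta_\alpha|^p\le C_{\rm lem}\,\momentp{p}{\emp{t}}$ and the centered-moment decay, and Duhamel's formula gives the three cases. Your explicit stopping-time localization (most cleanly done by applying It\^o's formula to $e^{\eta_p t}|V_t^j|^p$ before taking expectations and using Fatou) makes rigorous the step the paper passes over by simply taking expectations to remove the martingale.
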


\begin{proof}
    The dynamics of the raw velocity are given by:
    \begin{equation}
        m \d V_t^j = -\gamma V_t^j \d t - (X_t^j - \mathcal{M}_\alpha(\mu_{X_t^J})) \d t + \sigma S(X_t^j - \mathcal{M}_\alpha(\mu_{X_t^J})) \d W_t^j.
    \end{equation}
    We rewrite the driving force using the centered spatial variable $Y_t^j = X_t^j - m_X(\emp{t})$ and the drift error $\Delta_\alpha(t) = m_X(\emp{t}) - \mathcal{M}_\alpha(\mu_{X_t^J})$:
    \begin{equation}
        X_t^j - \mathcal{M}_\alpha(\mu_{X_t^J}) = Y_t^j + \Delta_\alpha(t) =: F_t^j.
    \end{equation}
    Applying It\^o's formula to $|V_t^j|^p$, we obtain:
    \begin{equation}
        \d |V_t^j|^p = p|V_t^j|^{p-2}\ip{V_t^j}{-\frac{\gamma}{m}V_t^j - \frac{1}{m}F_t^j} \d t + \mathcal{T}_{V}^j \d t + \d M_t^V,
    \end{equation}
    where the martingale term is $\d M_t^V = \frac{p\sigma}{m}|V_t^j|^{p-2}\langle V_t^j, S(F_t^j)\d W_t^j \rangle$. Using the trace property of $S$, the It\^o correction is bounded by $\mathcal{T}_V^j \le \frac{p \sigma^2 \chi_S}{2m^2} |V_t^j|^{p-2} |F_t^j|^2$. 
    
    Taking expectations to eliminate the martingale, we arrive at the differential inequality:
    \begin{equation} \label{eq:raw_vel_diff}
        \frac{\d}{\d t} \E|V_t^j|^p \le -\frac{p\gamma}{m} \E|V_t^j|^p + \frac{p}{m} \E\left[ |V_t^j|^{p-1}|F_t^j| \right] + \frac{p \sigma^2 \chi_S}{2m^2} \E\left[ |V_t^j|^{p-2}|F_t^j|^2 \right].
    \end{equation}
    We decouple the mixed terms using Young's inequality. For the drift term (using $r=\frac{p}{p-1}$, $s=p$) and the trace term (using $r=\frac{p}{p-2}$, $s=\frac{p}{2}$), we choose weights such that each penalty on $|V_t^j|^p$ absorbs $\frac{p\gamma}{4m}$ of the friction:
    \begin{align}
        \frac{p}{m} |V_t^j|^{p-1}|F_t^j| &\le \frac{p\gamma}{4m} |V_t^j|^p + K_1 |F_t^j|^p \,, \qquad  
        \frac{p \sigma^2 \chi_S}{2m^2} |V_t^j|^{p-2}|F_t^j|^2 \leq \frac{p\gamma}{4m} |V_t^j|^p + K_2 |F_t^j|^p,
    \end{align}
    where $K_1 := \frac{1}{m}\left(\frac{4(p-1)}{p\gamma}\right)^{p-1}$ and $K_2 := \frac{\sigma^2\chi_S}{m^2} \left(\frac{2(p-2)\sigma^2\chi_S}{p m\gamma}\right)^{\frac{p-2}{2}}$.
    Substituting these into \eqref{eq:raw_vel_diff} yields a net friction of $\eta := \frac{p\gamma}{2m}$:
    \begin{equation}
        \frac{\d}{\d t} \E|V_t^j|^p \le -\eta \E|V_t^j|^p + (K_1 + K_2) \E|F_t^j|^p.
    \end{equation}

    Because $F_t^j = Y_t^j + \Delta_\alpha$, we can bound its moments by the centered spatial moments. Using $|a+b|^p \le 2^{p-1}(|a|^p+|b|^p)$ and $\E|\Delta_\alpha (t)|^p \le C_{\mathrm{lem}}\E Y_p(t)$ (\cref{lem:m-malpha}), and $\E|Y_t^j|^p = \E [Y_p(t)]$ by exchangeability:
    \begin{equation}
        \E|F_t^j|^p \le 2^{p-1} \left( \E|Y_t^j|^p + \E|\Delta_\alpha|^p \right) \le 2^{p-1}(1 + C_{\mathrm{lem}}) \E[Y_p(t)].
    \end{equation}
    By \cref{thm:exp-decay-centered-moments} and \cref{cor:centered-lyapunov-moment-equivalence}, $\E[Y_p(t)] \le C_{p,1}^{-1}\mathcal L_p(0) \e^{-\edecay{p} t}$. Defining $D_F := (K_1 + K_2) 2^{p-1}(1 + C_{\mathrm{lem}}) C_{p,1}^{-1}\mathcal L_p(0)$, the inequality simplifies to:
    \begin{equation}
        \frac{\d}{\d t} \E|V_t^j|^p \le -\eta \E|V_t^j|^p + D_F \e^{-\edecay{p} t}.
    \end{equation}
    Applying Gr\"onwall's inequality gives
\[
    \E|V_t^j|^p
    \le 
    \E|V_0^j|^p \e^{-\eta_p t}
    +
    D_F \int_0^t \e^{-\eta_p(t-s)}\e^{-\edecay{p} s}\,ds .
\]
If \(\edecay{p}\neq \eta_p\), this becomes
\[
    \E|V_t^j|^p
    \le 
    \E|V_0^j|^p \e^{-\eta_p t}
    +
    D_F\frac{\e^{-\edecay{p} t}-\e^{-\eta_p t}}{\eta_p-\edecay{p}}.
\]
Therefore, if \(\edecay{p}<\eta_p\),
\[
    \E|V_t^j|^p
    \le 
    \left(
        \E|V_0^j|^p+\frac{D_F}{\eta_p-\edecay{p}}
    \right)\e^{-\edecay{p} t},
\]
whereas if \(\edecay{p}>\eta_p\),
\[
    \E|V_t^j|^p
    \le 
    \left(
        \E|V_0^j|^p+\frac{D_F}{\edecay{p}-\eta_p}
    \right)\e^{-\eta_p t}.
\]
In the critical case \(\edecay{p}=\eta_p\), we instead obtain
\[
    \E|V_t^j|^p
    \le 
    \left(\E|V_0^j|^p+D_Ft\right)\e^{-\eta_p t}
    \le C_V(1+t)\e^{-\eta_p t}.
\]
This proves the claimed estimate.
\end{proof}

\begin{proposition}[Exponential Decay of Mean-Field Raw Velocity]\label{prop:raw-velocity-decay-mf}
    Under the assumptions of \cref{thm:mfl-decay-centered-moment}, the raw velocity of the mean-field process decays exponentially. Specifically, using the same constants $C_V$ and $\edecay{p}$ from \cref{prop:raw-velocity-decay}, it holds that:
    \[
        \E|\overline{V}_t|^p \le C_V \e^{-\edecay{p}^* t}\,, \qquad \text{ where } \edecay{p}^*= \min\{ \edecay{p}, \eta_p\}\,.
    \]
\end{proposition}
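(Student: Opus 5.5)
I will follow the proof of \cref{prop:raw-velocity-decay} line by line, replacing the empirical quantities by their mean-field counterparts. Write the mean-field driving force as
\[
    \overline F_t:=\overline X_t-\mathcal M_\alpha(\overline\rho_t^X)=\overline Y_t+\Delta_\alpha^{\rm MF}(t),
    \qquad
    \Delta_\alpha^{\rm MF}(t)=m_{\overline X}(\overline\rho_t)-\mathcal M_\alpha(\overline\rho_t^X).
\]
Note that $\Delta_\alpha^{\rm MF}(t)$ is deterministic at each time. Apply It\^o's formula to $|\overline V_t|^p$ using the mean-field velocity equation. The martingale part is $\frac{p\sigma}{m}|\overline V_t|^{p-2}\langle\overline V_t,S(\overline F_t)\d W_t\rangle$. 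The It\^o correction is bounded by $\frac{p\sigma^2\chi_S}{2m^2}|\overline V_t|^{p-2}|\overline F_t|^2$, using the same trace inequalities for $S$ as in \cref{lem:ito-expansion-Lp}.

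To take expectations rigorously, I will localize with stopping times $\tau_R=\inf\{t:|\overline V_t|\ge R\}$, as in the proof of \cref{thm:exp-decay-centered-moments}. Passing $R\to\infty$ by Fatou and monotone convergence then gives the same differential inequality as \eqref{eq:raw_vel_diff}, with $(V_t^j,F_t^j)$ replaced by $(\overline V_t,\overline F_t)$.

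Next I split the mixed terms with exactly the same Young weights. Each penalty absorbs $\frac{p\gamma}{4m}$ of the friction, with the same constants $K_1,K_2$. This yields
\[
    \frac{\d}{\d t}\E|\overline V_t|^p\le-\eta_p\E|\overline V_t|^p+(K_1+K_2)\E|\overline F_t|^p .
\]
I then control the forcing term:
\[
    \E|\overline F_t|^p\le 2^{p-1}\bigl(\E|\overline Y_t|^p+|\Delta_\alpha^{\rm MF}(t)|^p\bigr).
\]
By \cref{lem:m-malpha} applied to $\overline\rho_t^X$ (whose centered $p$-th moment is $\E|\overline Y_t|^p$), we have $|\Delta_\alpha^{\rm MF}(t)|^p\le C_{\rm lem}\E|\overline Y_t|^p$. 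By \cref{thm:mfl-decay-centered-moment}, $\E|\overline Y_t|^p\le C_p e^{-\edecay{p}t}\mathfrak M_p(\overline\rho_0)$. This gives a forcing $\overline D_F e^{-\edecay{p}t}$, with $\overline D_F$ built from the same structural constants; up to replacing $\mathcal L_p(0)$ by $\overline{\mathcal L}_p(0)$, it has the form of $D_F$.

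Finally, Gr\"onwall's inequality and the explicit convolution $\int_0^t e^{-\eta_p(t-s)}e^{-\edecay{p}s}\,\d s$ give decay at rate $\edecay{p}^*=\min\{\edecay{p},\eta_p\}$. Away from the critical case the bound is $C_V e^{-\edecay{p}^*t}$, with the same case analysis as before.

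The main point needing care is the statement's claim of the \emph{same} constant $C_V$. The mean-field initial data $\overline\rho_0$ coincide in law with a single particle's initial data, so $\E|\overline V_0|^p=\E|V_0^j|^p$. The remaining issue is that $\overline{\mathcal L}_p(0)$ and $\mathcal L_p(0)$ differ: the empirical centering versus the true-mean centering. Both are bounded by $C_{p,2}$ times the centered $p$-th moment of $\overline\rho_0$, up to a factor at most $2^{p-1}$ from recentering. I therefore plan to interpret $C_V$ as the maximum of the two resulting constants, which is still independent of $t$.

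The critical case $\edecay{p}=\eta_p$ yields a $(1+t)$ factor. This is harmless for all later integrability uses, and is the reason the statement writes $\edecay{p}^*$ only away from criticality.
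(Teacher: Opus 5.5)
Your proposal is correct and follows the paper's proof essentially line by line. Both arguments apply It\^o's formula to $|\overline V_t|^p$, use the same Young weights giving $K_1,K_2$ and the net rate $\eta_p=\frac{p\gamma}{2m}$, apply \cref{lem:m-malpha} to $\overline\rho_t^X$, invoke the mean-field centered decay of \cref{thm:mfl-decay-centered-moment}, and close with Gr\"onwall. Your two extra remarks are more careful than the paper, which asserts $\overline{\mathcal L}_p(0)=\mathcal L_p(0)$ (not true in general, since $\mathcal L_p(0)$ uses empirical rather than true centering) and writes only the Gr\"onwall bound valid when $\edecay{p}<\eta_p$; enlarging $C_V$ to the maximum of the two constants and keeping the full case analysis, including the $(1+t)$ factor in the critical case $\edecay{p}=\eta_p$, is the right way to make the statement precise.
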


\begin{proof}
    The dynamics of the mean-field raw velocity are governed by the SDE:
    \begin{equation}
        m \d \overline{V}_t = -\gamma \overline{V}_t \d t - (\overline{X}_t - \mathcal{M}_\alpha(\overline{\rho}_t)) \d t + \sigma S(\overline{X}_t - \mathcal{M}_\alpha(\overline{\rho}_t)) \d W_t.
    \end{equation}
    We define the mean-field driving force using the centered spatial variable $\overline{Y}_t = \overline{X}_t - m(\overline{\rho}_t)$ and the mean-field drift error $\overline{\Delta}_\alpha(t) = m(\overline{\rho}_t) - \mathcal{M}_\alpha(\overline{\rho}_t)$:
    \begin{equation}
        \overline{X}_t - \mathcal{M}_\alpha(\overline{\rho}_t) = \overline{Y}_t + \overline{\Delta}_\alpha(t) =: \overline{F}_t.
    \end{equation}
    Applying It\^o's formula to $|\overline{V}_t|^p$, we obtain the same structural differential as in the particle case:
    \begin{equation}
        \d |\overline{V}_t|^p = p|\overline{V}_t|^{p-2}\ip{\overline{V}_t}{-\frac{\gamma}{m}\overline{V}_t - \frac{1}{m}\overline{F}_t} \d t + \overline{\mathcal{T}}_{V} \d t + \d \overline{M}_t^V,
    \end{equation}
    where $\overline{\mathcal{T}}_V \le \frac{p \sigma^2 \chi_S}{2m^2} |\overline{V}_t|^{p-2} |\overline{F}_t|^2$. 
    
    Taking the expectation and applying the same Young's inequality decoupling steps (with identical weights $K_1(\gamma)$ and $K_2(\gamma, \sigma)$) as in \cref{prop:raw-velocity-decay}, we obtain the differential inequality:
    \begin{equation}
        \frac{\d}{\d t} \E|\overline{V}_t|^p \le -\eta_p \E|\overline{V}_t|^p + (K_1 + K_2) \E|\overline{F}_t|^p,
    \end{equation}
    where $\eta_p := \frac{p\gamma}{2m}$. 
    
    Using \cref{lem:m-malpha} for the mean-field measure $\overline{\rho}_t$, the force is bounded by the centered moments:
    \begin{equation}
        \E|\overline{F}_t|^p \le 2^{p-1} \left( \E|\overline{Y}_t|^p + \E|\overline{\Delta}_\alpha(t)|^p \right) \le 2^{p-1}(1 + C_{\mathrm{lem}}) \E|\overline{Y}_t|^p.
    \end{equation}
    By \cref{thm:mfl-decay-centered-moment} and the same norm equivalence, the mean-field centered moments decay as $\E|\overline{Y}_t|^p \le C_{p,1}^{-1}\overline{\mathcal L}_p(0)\e^{-\edecay{p} t}$. Because the particle system and the mean-field process are initialized from the identical probability measure $\overline{\rho}_0$, we have $\overline{\mathcal L}_p(0)=\mathcal L_p(0)$. 
    
    Consequently, the source term is bounded by the same exponential envelope $D_F \e^{-\edecay{p} t}$. Applying Gr\"onwall's inequality yields:
    \begin{equation}
        \E|\overline{V}_t|^p \le \E|\overline{V}_0|^p \e^{-\eta_p t} + \frac{D_F}{\eta_p - \edecay{p}} \e^{-\edecay{p} t} \le C_V \e^{-\edecay{p} t},
    \end{equation}
    which completes the proof using the identical constant $C_V$.
\end{proof}

To ensure the validity of the stability analysis and the well-posedness of the process on infinite horizons, we use this velocity decay to prove strong uniform-in-time bounds for the raw moments of both the particle system and the mean-field process. 

\begin{lemma}[Uniform-in-Time Moment Bound for Particles] \label{lem:uit-raw-moments-particle}
    Let $p \ge 2$. Under \cref{assumption:bounded}, \cref{assump:admissibility}, and the stability conditions of \cref{thm:exp-decay-centered-moments}, there exists a finite constant $K_{\mathrm{raw}} > 0$ such that:
    \begin{equation}
        \E \left[ \sup_{t \ge 0} |X_t^j|^p \right] \le K_{\mathrm{raw}} \quad \text{and} \quad \sup_{t \ge 0} \E \left[ |V_t^j|^p \right] \le K_{\mathrm{raw}}.
    \end{equation}
\end{lemma}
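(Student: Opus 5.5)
The velocity bound follows at once from \cref{prop:raw-velocity-decay}. In each of the three cases, the right-hand side there is bounded uniformly in $t$: in the critical case we use $\sup_{t\ge0}(1+t)e^{-\eta_p t}<\infty$. This gives $\sup_{t\ge0}\E|V_t^j|^p\le C_V'$. The substantive part is the position bound, which carries the supremum inside the expectation. For it we use the pathwise representation
\[
X_t^j=X_0^j+\int_0^t V_s^j\,\d s,
\qquad
\sup_{t\ge0}|X_t^j|\le |X_0^j|+\int_0^\infty |V_s^j|\,\d s .
\]
It therefore suffices to show that $\E\bigl[(\int_0^\infty|V_s^j|\,\d s)^p\bigr]<\infty$. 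Combined with $\E|X_0^j|^p<\infty$ and $|a+b|^p\le 2^{p-1}(|a|^p+|b|^p)$, this yields the claim.

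To control the $p$-th moment of the time integral, we insert an exponential weight and apply H\"older's inequality in time. Fix $0<\theta<\lambda^*_p/p$, where $\lambda_p^*=\min\{\edecay{p},\eta_p\}$ (in the critical case, take any strictly smaller rate). Write $|V_s^j|=e^{-\theta s}\cdot e^{\theta s}|V_s^j|$. H\"older with exponents $p/(p-1)$ and $p$ gives
\[
\Bigl(\int_0^\infty |V_s^j|\,\d s\Bigr)^p
\le
\Bigl(\int_0^\infty e^{-\frac{p\theta}{p-1}s}\,\d s\Bigr)^{p-1}
\int_0^\infty e^{p\theta s}|V_s^j|^p\,\d s .
\]
Taking expectations and using Tonelli together with \cref{prop:raw-velocity-decay}, we obtain
$\E\int_0^\infty e^{p\theta s}|V_s^j|^p\,\d s\le C_V\int_0^\infty e^{(p\theta-\lambda_p^*)s}(1+s)\,\d s<\infty$. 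This integral is finite because $p\theta<\lambda_p^*$. The prefactor equals $((p-1)/(p\theta))^{p-1}$. Hence
\[
K_{\rm raw}:=2^{p-1}\Bigl(\E|X_0^j|^p+\bigl(\tfrac{p-1}{p\theta}\bigr)^{p-1}\tfrac{C_V}{(\lambda_p^*-p\theta)}\bigl(1+\tfrac1{\lambda_p^*-p\theta}\bigr)\Bigr)
\]
works, enlarged if necessary to dominate the velocity bound.

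The main point to check is that the velocity decay is genuinely integrable after raising to the $p$-th power, i.e.\ that the rate in \cref{prop:raw-velocity-decay} is strictly positive. This is where the centered-moment decay of \cref{thm:exp-decay-centered-moments} enters, through $\edecay{p}>0$, which relies on $\mu_{\mathrm{gap},p}>0$ or $\min\{v_1,v_3\}>0$ under \cref{assump:admissibility}. A second point is the finiteness of $C_V$, which requires $\E|V_0^j|^p<\infty$ and $\mathcal L_p(0)<\infty$. Both follow from finite $p$-th initial moments via \cref{cor:centered-lyapunov-moment-equivalence}, since centered empirical moments are bounded by $2^{p}$ times raw ones. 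Note also that the estimate only uses moments of order $p$ of the velocity. Hence no higher-order velocity moments are needed, and the same argument applies verbatim to the mean-field process via \cref{prop:raw-velocity-decay-mf}.
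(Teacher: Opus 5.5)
Your proposal is correct and follows the paper's route. The velocity bound is read off from \cref{prop:raw-velocity-decay}, and the position bound comes from the pathwise estimate $\sup_t|X_t^j|\le|X_0^j|+\int_0^\infty|V_s^j|\,\d s$ together with the exponential decay of $\E|V_s^j|^p$. The differences are minor: the paper bounds $\|\int_0^T|V_s^j|\,\d s\|_{L^p}$ by Minkowski's integral inequality, $\int_0^T\|V_s^j\|_{L^p}\,\d s\le \frac{p}{\edecay{p}}C_V^{1/p}$, on finite horizons and then uses monotone convergence, which avoids your auxiliary weight $\theta$; and your explicit use of $\lambda_p^*=\min\{\edecay{p},\eta_p\}$ with the $(1+t)$ factor in the critical case is more careful than the paper, which simply writes $\E|V_t^j|^p\le C_Ve^{-\edecay{p}t}$.
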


\begin{proof}
    \textbf{Step 1: Velocity bound.}
    From \cref{prop:raw-velocity-decay}, we have
    \[
        \E |V_t^j|^p \le C_V e^{-\edecay{p}t}
        \qquad \text{for all } t\ge 0.
    \]
    Hence
    \[
        \sup_{t\ge 0}\E |V_t^j|^p \le C_V
        =:K_{\mathrm{raw},V}.
    \]

    \textbf{Step 2: Position bound via finite-horizon localization.}
    Fix \(T>0\). Since
    \[
        X_t^j = X_0^j+\int_0^t V_s^j\,\d s,
        \qquad 0\le t\le T,
    \]
    we have the pathwise finite-horizon bound
    \[
        \sup_{0\le t\le T}|X_t^j|
        \le |X_0^j|+\int_0^T |V_s^j|\,\d s.
    \]
    Taking the \(L^p(\Omega)\)-norm and applying Minkowski's integral
    inequality on the finite interval \([0,T]\), we obtain
    \[
        \left\|\sup_{0\le t\le T}|X_t^j|\right\|_{L^p}
        \le
        \|X_0^j\|_{L^p}
        +
        \int_0^T \|V_s^j\|_{L^p}\,\d s .
    \]
    Using the exponential velocity decay,
    \[
        \|V_s^j\|_{L^p}
        \le C_V^{1/p} e^{-\frac{\edecay{p}}{p}s},
    \]
    we get the uniform-in-\(T\) estimate
    \[
        \left\|\sup_{0\le t\le T}|X_t^j|\right\|_{L^p}
        \le
        \|X_0^j\|_{L^p}
        +
        C_V^{1/p}
        \int_0^T e^{-\frac{\edecay{p}}{p}s}\,\d s
        \le
        \|X_0^j\|_{L^p}
        +
        \frac{p}{\edecay{p}}C_V^{1/p}.
    \]
    Therefore,
    \[
        \E\left[\sup_{0\le t\le T}|X_t^j|^p\right]
        \le
        \left(
            (\E|X_0^j|^p)^{1/p}
            +
            \frac{p}{\edecay{p}}C_V^{1/p}
        \right)^p
        =:K_{\mathrm{raw},X}
    \]
    for every \(T>0\).

    Finally, since
    \[
        \sup_{0\le t\le T}|X_t^j|
        \uparrow
        \sup_{t\ge 0}|X_t^j|
        \qquad \text{as } T\to\infty,
    \]
    the monotone convergence theorem yields
    \[
        \E\left[\sup_{t\ge 0}|X_t^j|^p\right]
        =
        \lim_{T\to\infty}
        \E\left[\sup_{0\le t\le T}|X_t^j|^p\right]
        \le K_{\mathrm{raw},X}.
    \]
    Setting
    \[
        K_{\mathrm{raw}}
        :=
        \max\{K_{\mathrm{raw},X},K_{\mathrm{raw},V}\},
    \]
    completes the proof.
\end{proof}

\begin{lemma}[Uniform-in-time raw moment bounds for the mean-field process]
\label{lem:uit-raw-moments-mf}
Let \(p\ge 2\). Under the same assumptions as in
\cref{lem:uit-raw-moments-particle}, there exists a finite constant
\(K_{\mathrm{raw}}>0\), possibly enlarged from the constant in
\cref{lem:uit-raw-moments-particle}, such that the mean-field process
\((\overline X_t,\overline V_t)\) satisfies
\begin{equation}
    \E\left[\sup_{t\ge 0} |\overline X_t|^p\right]
    \le K_{\mathrm{raw}},
    \qquad
    \sup_{t\ge 0}\E|\overline V_t|^p
    \le K_{\mathrm{raw}}.
\end{equation}
\end{lemma}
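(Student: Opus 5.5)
The plan is to copy the proof of \cref{lem:uit-raw-moments-particle} line by line for the single McKean--Vlasov process. The only dynamical input is the exponential decay of the raw mean-field velocity from \cref{prop:raw-velocity-decay-mf}, which gives
\[
    \E|\overline V_t|^p\le C_V e^{-\edecay{p}^*t},
    \qquad
    \edecay{p}^*=\min\{\edecay{p},\eta_p\},\quad \eta_p=\frac{p\gamma}{2m}.
\]
In the critical case $\edecay{p}=\eta_p$ we instead have the bound $C_V(1+t)e^{-\eta_p t}$. This can always be dominated by $C_V'e^{-\eta_p t/2}$, so in all cases we obtain $\E|\overline V_t|^p\le C_V' e^{-\lambda t}$ for some $\lambda>0$. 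This already gives the velocity half of the claim, $\sup_{t\ge0}\E|\overline V_t|^p\le C_V'$.

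For the position, we use the transport identity $\overline X_t=\overline X_0+\int_0^t\overline V_s\,\d s$. Fixing $T>0$, we get the pathwise bound
\[
    \sup_{0\le t\le T}|\overline X_t|\le |\overline X_0|+\int_0^T|\overline V_s|\,\d s.
\]
Taking $L^p(\Omega)$ norms and applying Minkowski's integral inequality then yields
\[
    \Bigl\|\sup_{0\le t\le T}|\overline X_t|\Bigr\|_{L^p}\le \|\overline X_0\|_{L^p}+(C_V')^{1/p}\int_0^\infty e^{-\lambda s/p}\,\d s
    =\|\overline X_0\|_{L^p}+\frac{p}{\lambda}(C_V')^{1/p}.
\]
The right-hand side does not depend on $T$. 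Here $\E|\overline X_0|^p<\infty$ because $\overline\rho_0\in\mathcal P_p$. Letting $T\to\infty$ and using monotone convergence gives the bound on $\E[\sup_{t\ge0}|\overline X_t|^p]$. Taking $K_{\rm raw}$ to be the maximum of this constant, $C_V'$, and the particle constant gives the statement.

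The main obstacle is justifying the input from \cref{prop:raw-velocity-decay-mf}. The mean-field process is a McKean--Vlasov SDE with multiplicative noise, so the It\^o computation behind it requires $\E|\overline V_t|^p$ to be finite on finite horizons; otherwise taking expectations of the martingale part is not legitimate. I would handle this with the stopping-time localization used in the proof of \cref{thm:exp-decay-centered-moments}. Set $\tau_R=\inf\{t:|\overline V_t|\ge R\}$, apply the differential inequality for $\E|\overline V_{t\wedge\tau_R}|^p$, and pass $R\to\infty$ via Fatou's lemma.

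A related point is that the forcing term is controlled by $\E|\overline Y_t|^p$. This uses the mean-field centered decay in \cref{thm:mfl-decay-centered-moment}, together with \cref{lem:m-malpha} applied to $\overline\rho_t^X$ to bound the deterministic shift $\Delta_\alpha^{\rm MF}(t)$.
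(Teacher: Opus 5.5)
Your proposal is correct and matches the paper's proof step for step: the velocity bound is read off from \cref{prop:raw-velocity-decay-mf}, and the position bound follows from $\overline X_t=\overline X_0+\int_0^t\overline V_s\,\d s$, Minkowski's integral inequality (which gives a $T$-uniform bound on $\bigl\|\sup_{0\le t\le T}|\overline X_t|\bigr\|_{L^p}$), and monotone convergence as $T\to\infty$. Your use of the rate $\edecay{p}^*=\min\{\edecay{p},\eta_p\}$, with the critical case absorbed into $e^{-\eta_p t/2}$, is slightly more careful than the paper's, which writes the decay as $e^{-\edecay{p}t}$; only the existence of some positive rate is actually needed.
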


\begin{proof}
    From \cref{prop:raw-velocity-decay-mf}, there exists a constant
    \(\overline C_V>0\) such that
    \[
        \E|\overline V_t|^p
        \le \overline C_V e^{-\edecay{p}t}
        \qquad \text{for all } t\ge 0.
    \]
    Hence
    \[
        \sup_{t\ge 0}\E|\overline V_t|^p
        \le \overline C_V
        =: K_{\mathrm{raw},V}^{\mathrm{mf}}.
    \]

    We next prove the spatial bound. Fix \(T>0\). By the mean-field
    kinematics,
    \[
        \overline X_t
        =
        \overline X_0+\int_0^t \overline V_s\,\d s,
        \qquad 0\le t\le T.
    \]
    Therefore,
    \[
        \sup_{0\le t\le T}|\overline X_t|
        \le
        |\overline X_0|
        +
        \int_0^T |\overline V_s|\,\d s.
    \]
    Taking the \(L^p(\Omega)\)-norm and applying Minkowski's integral
    inequality on the finite interval \([0,T]\), we obtain
    \[
        \left\|
            \sup_{0\le t\le T}|\overline X_t|
        \right\|_{L^p}
        \le
        \|\overline X_0\|_{L^p}
        +
        \int_0^T \|\overline V_s\|_{L^p}\,\d s.
    \]
    Using the exponential decay estimate for \(\overline V_s\),
    \[
        \|\overline V_s\|_{L^p}
        \le
        \overline C_V^{1/p}
        e^{-\frac{\edecay{p}}{p}s},
    \]
    we get the uniform-in-\(T\) bound
    \[
        \left\|
            \sup_{0\le t\le T}|\overline X_t|
        \right\|_{L^p}
        \le
        \|\overline X_0\|_{L^p}
        +
        \overline C_V^{1/p}
        \int_0^T e^{-\frac{\edecay{p}}{p}s}\,\d s
        \le
        \|\overline X_0\|_{L^p}
        +
        \frac{p}{\edecay{p}}\overline C_V^{1/p}.
    \]
    Raising both sides to the power \(p\), we have for every \(T>0\),
    \[
        \E\left[
            \sup_{0\le t\le T}|\overline X_t|^p
        \right]
        \le
        \left(
            \|\overline X_0\|_{L^p}
            +
            \frac{p}{\edecay{p}}\overline C_V^{1/p}
        \right)^p
        =:K_{\mathrm{raw},X}^{\mathrm{mf}}.
    \]
    Since
    \[
        \sup_{0\le t\le T}|\overline X_t|^p
        \uparrow
        \sup_{t\ge 0}|\overline X_t|^p
        \qquad \text{as } T\to\infty,
    \]
    the monotone convergence theorem yields
    \[
        \E\left[
            \sup_{t\ge 0}|\overline X_t|^p
        \right]
        =
        \lim_{T\to\infty}
        \E\left[
            \sup_{0\le t\le T}|\overline X_t|^p
        \right]
        \le
        K_{\mathrm{raw},X}^{\mathrm{mf}}.
    \]

    Finally, after enlarging \(K_{\mathrm{raw}}\) if necessary, we take
    \[
        K_{\mathrm{raw}}
        :=
        \max\left\{
            K_{\mathrm{raw}}^{\mathrm{particle}},
            K_{\mathrm{raw},X}^{\mathrm{mf}},
            K_{\mathrm{raw},V}^{\mathrm{mf}}
        \right\},
    \]
    where \(K_{\mathrm{raw}}^{\mathrm{particle}}\) denotes the constant from
    \cref{lem:uit-raw-moments-particle}. This gives the claimed bounds.
\end{proof}

\begin{remark}[Sufficiency of Eighth-Order Spatial Bounds and Role of Velocity]
    While the lemmas above establish uniform-in-time bounds for raw moments of any arbitrary order $p \ge 2$, the final propagation of chaos proof  requires these bounds up to the spatial eighth order ($p=8$). One might initially expect that only $4$-th order spatial moments are needed, since the geometric error $\mathfrak{M}_2 \mathcal{W}_2^2$ is only of degree 4 in $X$. However, to isolate the probability of the rare "Bad Set" in Section 3.1, we must apply the Cauchy-Schwarz inequality. This squares the 4th-order geometric error, yielding $\E[(\mathfrak{M}_2^X + \mathfrak{M}_2^{\overline{X}})^2 \mathcal{W}_2^4]$. Bounding this squared term uniformly in time thus  requires $\sup_{t \ge 0}\E|X_t|^8 < \infty$. 
    
    Furthermore, the propagation of chaos theorem does not  depend on the raw velocity bounds. It is important to emphasize that \cref{prop:raw-velocity-decay} does not represent an additional physical constraint on the system; rather, it emerges naturally as a byproduct of the centered moment decay. Because the position process lacks direct dissipation ($\d X_t = V_t \d t$), we must mathematically route our analysis through the velocity. We establish the exponential decay of the raw velocity as a necessary intermediate analytical tool, which then allows us to  bound the spatial excursions via Minkowski's integral inequality.
\end{remark}

\subsection{Concentration Inequalities}\label{sec:concentration}

\subsubsection{Concentration Inequalities for the Interacting Particle Systems}

To close the stability estimate for the weighted mean, we require a high-probability
bound on the particle system staying within a compact region. We leverage the
structure of the pathwise quadratic functional \(L_2(t)\) to prove a concentration
inequality analogous to the first-order case.

\begin{lemma}[Bound on probability of large excursions]
\label{lem:concentration-ineq}
    Let the objective function \(f\) satisfy
    \cref{assumption:bounded,assumption:lip}, and let \(q>2\). Consider the
    second-order CBO dynamics where the initial positions and velocities are sampled
    i.i.d.\ from some \(\overline\rho_0\in\mathcal P_{2q}(\mathbb R^{2d})\).
    Assume that the parameter conditions in
    \cref{prop:ito-centered-lyapunov-differential} hold for \(p=2\) and \(p=2q\).
    Let
    \[
        0<\kappa<\min\left\{\edecay{2},\frac{\edecay{2q}}q\right\}.
    \]
    For brevity, write
    \[
        c_{1,2q}:=c_1(a_{2q},2q).
    \]
    Then there exists a finite constant \(C_{\mathrm{Bad}}>0\) such that, for any
    threshold \(A>0\) and \(J\in\mathbb N_+\), the pathwise empirical Lyapunov
    functional \(L_2(t)\) satisfies
    \begin{equation}\label{eq:particle-concentration-L2}
        \mathbb P\left[
            \sup_{t\ge0}
            \e^{\kappa t} L_2(t)
            \ge
            \mathcal L_2(0)+A
        \right]
        \le
        \frac{C_{\mathrm{Bad}}}{A^q}
        J^{-\frac q2}\,
        \mathcal L_{2q}(0).
    \end{equation}
    Consequently, by norm equivalence result \eqref{eq:psi2-norm-equivalence},
    \[
        c_1^{(2)}\mathfrak M_2(\mu_{X_t^J})\le L_2(t),
    \]
    the empirical spatial moments satisfy
    \begin{equation}\label{eq:particle-concentration-spatial-moment}
        \mathbb P\left[
            \sup_{t\ge0}
            \e^{\kappa t}
            \mathfrak M_2(\mu_{X_t^J})
            \ge
            \frac{\mathcal L_2(0)+A}{c_1^{(2)}}
        \right]
        \le
        \frac{C_{\mathrm{Bad}}}{A^q}
        J^{-\frac q2}\,
        \mathcal L_{2q}(0).
    \end{equation}
    In the above, \(C_{\mathrm{Bad}}:=C_{\mathrm{Bad},q,\kappa}\) is given by
    \begin{align}
        C_{\mathrm{Bad},q,\kappa}
        &:=
        2^q\widetilde C_{\mathrm{MZ},q}
        +
        C_{\mathrm{Mart}},
        \\
        \widetilde C_{\mathrm{MZ},q}
        &:=
        \frac{
            2^{3q-2}C_\phi^q(\cmz{q}+\cmz{2q})
        }{
            c_{1,2q}
        },
        \\
        C_{\mathrm{Mart}}
        &:=
        \frac{
            2^{q+1}C_{\mathrm{BDG},q}C_{\sigma,q}
        }{
            \edecay{2q}-q\kappa
        }
        \left(
            \frac{q-2}{\edecay{2q}-q\kappa}
        \right)^{\frac q2-1}.
    \end{align}
\end{lemma}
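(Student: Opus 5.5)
The plan is to integrate the pathwise inequality of \cref{prop:ito-centered-lyapunov-differential} for \(p=2\) against the weight \(\e^{\kappa t}\), and then split the bad event into an initial-data part and a martingale part. By It\^o's product rule and \(\d L_2\le -\edecay{2}L_2\,\d t+\d M_{2,t}\),
\[
    \e^{\kappa t}L_2(t)
    \le
    L_2(0)
    +
    \int_0^t (\kappa-\edecay{2})\e^{\kappa s}L_2(s)\,\d s
    +
    N_t,
    \qquad
    N_t:=\int_0^t \e^{\kappa s}\,\d M_{2,s}.
\]
Since \(\kappa<\edecay{2}\) and \(L_2\ge0\) (because \(c_1^{(2)}>0\)), the middle term is nonpositive. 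This gives the pathwise bound \(\sup_t \e^{\kappa t}L_2(t)\le L_2(0)+\sup_t N_t\). The event in \eqref{eq:particle-concentration-L2} is therefore contained in \(\{L_2(0)-\mathcal L_2(0)\ge A/2\}\cup\{\sup_t|N_t|\ge A/2\}\). Markov's inequality at order \(q\) reduces each piece to a \(q\)-th moment, which accounts for the factor \(2^q\) in \(C_{\mathrm{Bad}}\).

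\emph{Initial fluctuation.} At \(t=0\) the particles are i.i.d. I would expand \(\psi_2(Y_0^j,\hat Z_0^j)\), which is a quadratic form in \(X_0^j-m_X\) and \(V_0^j-m_V\) with constant \(C_\phi\), around the true means \(m_X(\overline\rho_0^X)\) and \(m_V(\overline\rho_0^V)\). This writes \(L_2(0)\) as an empirical average of i.i.d.\ centered-quadratic terms, minus quadratic terms in the deviations of the empirical means from the true means. I would then apply the Marcinkiewicz--Zygmund inequality twice. Applied to the i.i.d.\ averages of quadratic terms, it produces \(\cmz{q}\) and \(J^{-q/2}\) times \(2q\)-th moments. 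Applied to the empirical mean deviations raised to the power \(2q\), it produces \(\cmz{2q}\) and \(J^{-q}\le J^{-q/2}\). Finally, \cref{lem:hypercoercivity} at order \(2q\) (hence the factor \(c_{1,2q}^{-1}\)) together with \cref{cor:centered-lyapunov-moment-equivalence} bounds these \(2q\)-th moments by \(\mathcal L_{2q}(0)\). The result is \(\mathbb E|L_2(0)-\mathcal L_2(0)|^q\le \widetilde C_{\mathrm{MZ},q}J^{-q/2}\mathcal L_{2q}(0)\).

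\emph{Martingale part.} This is the main step. I would write \(\d M_{2,t}=\frac1J\sum_j\langle u_t^j,\d M_t^{j,\mathrm{center}}\rangle\) with \(u^j=2\hat Z^j+\frac5\gamma Y^j\). Because the common noise cancels after summation, this equals \(\frac1J\sum_j\langle u^j-\bar u,\frac\sigma m S(Y^j+\Delta_\alpha)\,\d W^j\rangle\). The Brownian motions are independent, so the quadratic variation carries a factor \(J^{-2}\):
\[
    \d\langle N\rangle_t
    \le
    \frac{C_{\sigma,q}'}{J}\,
    \e^{2\kappa t}\,
    \frac1J\sum_j |u^j-\bar u|^2\,|Y^j+\Delta_\alpha|^2\,\d t
    \le
    \frac{C}{J}\,\e^{2\kappa t}\,g_t\,\d t,
\]
where \(g_t:=\frac1J\sum_j(|Y_t^j|^4+|\hat Z_t^j|^4)\). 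Here \(|\Delta_\alpha|^4\) is absorbed using \cref{lem:m-malpha} and Jensen. BDG, which applies to the local martingale \(N\), gives \(\mathbb E\sup_t|N_t|^q\le C_{\mathrm{BDG},q}\,\mathbb E\langle N\rangle_\infty^{q/2}\). To handle the time integral I would write \(\e^{2\kappa s}g_s=\e^{-\epsilon s}\cdot \e^{(2\kappa+\epsilon)s}g_s\) and apply H\"older in time with exponents \(\frac{q}{q-2}\) and \(\frac q2\). Jensen then gives \(g_s^{q/2}\lesssim \frac1J\sum_j(|Y_s^j|^{2q}+|\hat Z_s^j|^{2q})\lesssim L_{2q}(s)\). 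By \cref{thm:exp-decay-centered-moments} at order \(2q\), \(\mathbb E L_{2q}(s)\le \e^{-\edecay{2q}s}\mathcal L_{2q}(0)\).

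Choosing \(\epsilon\) as a fixed fraction of \(\edecay{2q}-q\kappa\) makes both time integrals finite exactly when \(q\kappa<\edecay{2q}\). This yields the factor \(\frac{1}{\edecay{2q}-q\kappa}\bigl(\frac{q-2}{\edecay{2q}-q\kappa}\bigr)^{q/2-1}\) in \(C_{\mathrm{Mart}}\). It is the bookkeeping of this H\"older step, and verifying the \(1/J\) gain in \(\langle N\rangle\) despite the centered noise, that I expect to be the delicate point. The final moment bound is \(\mathbb E\sup|N|^q\le C_{\mathrm{Mart}}J^{-q/2}\mathcal L_{2q}(0)\).

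Summing the two Markov bounds gives \eqref{eq:particle-concentration-L2}. Then \eqref{eq:particle-concentration-spatial-moment} follows from \(c_1^{(2)}\mathfrak M_2(\mu_{X_t^J})\le L_2(t)\), which comes from \eqref{eq:psi2-norm-equivalence} applied to the \(Y\)-component.
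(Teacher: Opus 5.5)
Your proposal is correct and follows the paper's proof essentially step for step. It uses the same pathwise bound $\sup_t \e^{\kappa t}L_2(t)\le L_2(0)+\sup_t N_t$, the same $A/2$ split with Markov at order $q$, the same Huygens decomposition around the true means with Marcinkiewicz--Zygmund at orders $q$ and $2q$, and the same BDG-plus-H\"older-in-time treatment of the martingale, relying on $\sum_j(2\hat Z^j+\frac5\gamma Y^j)=0$ and the decay of $\mathcal L_{2q}$.

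The only differences are cosmetic. The paper cites \cite[Lemma~5.8]{uit_cbo} for the BDG/time-H\"older step and separates $|u^j|^q|X^j-\mathcal M_\alpha|^q$ by H\"older $(2,2)$ in expectation. You instead do the time-H\"older by hand and decouple pathwise via Young's inequality and \cref{lem:m-malpha} at order $4$, which gives the same exponent bookkeeping and the same $J^{-q/2}$ rate.
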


\begin{proof}
    We define the exponentially weighted process
    \[
        U_t:=\e^{\kappa t}L_2(t).
    \]
    Recall that \(L_2(t)\) is the pathwise centered quadratic Lyapunov functional
    defined by
    \[
        L_2(t)
        =
        \frac1J\sum_{j=1}^J
        \psi_2(Y_t^j,\hat Z_t^j).
    \]
    By \cref{prop:ito-centered-lyapunov-differential}, there exists a local
    martingale \(M_{2,t}\) such that
    \begin{equation}\label{eq:particle-L2-pathwise-ineq}
        \d L_2(t)
        \le
        -\edecay{2}L_2(t)\,\d t
        +
        \d M_{2,t}.
    \end{equation}
    More explicitly,
    \begin{equation}\label{eq:particle-concentration-martingale}
        \d M_{2,t}
        =
        \frac1J\sum_{j=1}^J
        \left\langle
            2\hat Z_t^j+\frac5\gamma Y_t^j,
            \frac{\sigma}{m}
            S\!\left(
                X_t^j-\mathcal M_\alpha(\mu_{X_t^J})
            \right)\d W_t^j
        \right\rangle.
    \end{equation}
    Here the centered noise subtraction drops out because
    \[
        \sum_{j=1}^J
        \left(
            2\hat Z_t^j+\frac5\gamma Y_t^j
        \right)
        =
        0.
    \]
    Applying It\^o's formula to \(U_t\), we get
    \begin{align}
        \d U_t
        &=
        \kappa\e^{\kappa t}L_2(t)\,\d t
        +
        \e^{\kappa t}\d L_2(t)
        \notag\\
        &\le
        \e^{\kappa t}
        (\kappa-\edecay{2})
        L_2(t)\,\d t
        +
        \e^{\kappa t}\d M_{2,t}.
    \end{align}
    Since \(\kappa<\edecay{2}\), the drift term is non-positive. Integrating gives
    \[
        U_t
        \le
        U_0+\int_0^t\e^{\kappa s}\d M_{2,s}
        =:
        U_0+\mathcal M_t^{\exp}.
    \]
    We bound the supremum of the process in two parts: the initial deviation and the
    martingale fluctuation:
    \begin{equation}\label{eq:proba-split-concent}
        \mathbb P\left[
            \sup_{t\ge0}U_t
            \ge
            \mathcal L_2(0)+A
        \right]
        \le
        \mathbb P\left[
            U_0-\mathcal L_2(0)\ge\frac A2
        \right]
        +
        \mathbb P\left[
            \sup_{t\ge0}\mathcal M_t^{\exp}\ge\frac A2
        \right].
    \end{equation}

    \proofstep{Part 1: Initial data concentration}
    The initial value $U_0=L_2(0)$ is defined using the empirical centered variables
    \[
        Y_0^j=X_0^j-m_X(\mu_{X_0^J}),
        \qquad
        \hat Z_0^j
        =
        \bigl(V_0^j-m_V(\mu_{V_0^J})\bigr)
        -
        \frac1\gamma Y_0^j.
    \]
    Let
    \[
        Z_0^j:=V_0^j-\frac1\gamma X_0^j.
    \]
    Then \(\hat Z_0^j=Z_0^j-m_Z(\mu_{Z_0^J})\). Let
    \(\tilde\phi\) be the quadratic form associated with \(\psi_2\):
    \[
        \tilde\phi(y,z)
        :=
        a_2|y|^2+|z|^2+\frac5\gamma\langle y,z\rangle.
    \]
    Using the generalized Huygens identity, we decompose \(U_0\) relative to the
    true expectations
    \[
        \bar x:=\E[X_0^1],
        \qquad
        \bar z:=\E[Z_0^1].
    \]
    Thus
    \begin{align}
        U_0
        &=
        \frac1J\sum_{j=1}^J
        \tilde\phi\bigl(
            X_0^j-m_X(\mu_{X_0^J}),
            Z_0^j-m_Z(\mu_{Z_0^J})
        \bigr)
        \notag\\
        &=
        \underbrace{
        \frac1J\sum_{j=1}^J
        \tilde\phi(X_0^j-\bar x,Z_0^j-\bar z)
        }_{=:\widetilde U_0}
        -
        \tilde\phi\bigl(
            m_X(\mu_{X_0^J})-\bar x,
            m_Z(\mu_{Z_0^J})-\bar z
        \bigr).
    \end{align}
    We bound the \(L^q\)-norm of the centered variable
    \(U_0-\mathcal L_2(0)\). Since $\mathcal L_2(0)=\E[U_0]\,,$ and
    \[
        \E[U_0]
        =
        \E[\widetilde U_0]
        -
        \E\left[
            \tilde\phi\bigl(
                m_X(\mu_{X_0^J})-\bar x,
                m_Z(\mu_{Z_0^J})-\bar z
            \bigr)
        \right],
    \]
    we have
    \begin{align}
        \E\left[
            |U_0-\mathcal L_2(0)|^q
        \right]
        &\le
        2^{q-1}
        \E\left[
            |\widetilde U_0-\E[\widetilde U_0]|^q
        \right]
        +
        2^{q-1}
        \E\left[
            \left|
                \mathcal R_0-\E[\mathcal R_0]
            \right|^q
        \right],
    \end{align}
    where
    \[
        \mathcal R_0
        :=
        \tilde\phi\bigl(
            m_X(\mu_{X_0^J})-\bar x,
            m_Z(\mu_{Z_0^J})-\bar z
        \bigr).
    \]

    \textbf{Term 1 \((\widetilde U_0)\).}
    Since \(\widetilde U_0\) is an average of i.i.d.\ random variables
    \[
        \xi_j
        :=
        \tilde\phi(X_0^j-\bar x,Z_0^j-\bar z),
    \]
    the Marcinkiewicz--Zygmund inequality gives
    \begin{align}
        \E\left[
            |\widetilde U_0-\E[\widetilde U_0]|^q
        \right]
        &\le
        \cmz{q}J^{-q/2}
        \E\left[
            |\xi_1-\E[\xi_1]|^q
        \right] \le
        2^q\cmz{q}J^{-q/2}
        \E[|\xi_1|^q].
    \end{align}
    Since \(\xi_1\) is a quadratic form,
    \[
        |\xi_1|^q
        \le
        C_\phi^q2^{q-1}
        \left(
            |X_0^1-\bar x|^{2q}
            +
            |Z_0^1-\bar z|^{2q}
        \right).
    \]
    These true-centered moments are controlled by the initial \(2q\)-moment entering
    \(\mathcal L_{2q}(0)\). Hence
    \[
        \E[|\xi_1|^q]
        \le
        \frac{C_\phi^q2^{q-1}}{c_{1,2q}}\,
        \mathcal L_{2q}(0).
    \]

    \textbf{Term 2 (empirical mean error).}
    Using
    \[
        |\tilde\phi(u,v)|
        \le
        C_\phi(|u|^2+|v|^2),
    \]
    we obtain
    \begin{align}
        \E\left[
            |\mathcal R_0-\E[\mathcal R_0]|^q
        \right]
        &\le
        2^q\E[|\mathcal R_0|^q]
        \notag\\
        &\le
        2^qC_\phi^q2^{q-1}
        \left(
            \E\left[
                |m_X(\mu_{X_0^J})-\bar x|^{2q}
            \right]
            +
            \E\left[
                |m_Z(\mu_{Z_0^J})-\bar z|^{2q}
            \right]
        \right).
    \end{align}
    Applying the MZ inequality to the empirical means and using the same coercivity
    control by \(\mathcal L_{2q}(0)\), we get
    \[
        \E\left[
            |\mathcal R_0-\E[\mathcal R_0]|^q
        \right]
        \le
        \frac{
            2^{2q-1}C_\phi^q\cmz{2q}
        }{
            c_{1,2q}
        }
        J^{-q}
        \mathcal L_{2q}(0).
    \]
    Since \(q\ge2\), \(J^{-q}\le J^{-q/2}\). Combining Term 1 and Term 2 gives
    \begin{equation}\label{eq:L2_initial_MZ_final}
        \E\left[
            |U_0-\mathcal L_2(0)|^q
        \right]
        \le
        \widetilde C_{\mathrm{MZ},q}
        J^{-q/2}
        \mathcal L_{2q}(0),
    \end{equation}
    where
    \begin{equation}
        \widetilde C_{\mathrm{MZ},q}
        :=
        \frac{
            2^{3q-2}C_\phi^q(\cmz{q}+\cmz{2q})
        }{
            c_{1,2q}
        }.
    \end{equation}
    By Markov's inequality at threshold \(A/2\),
    \begin{equation}
        \mathbb P\left[
            U_0-\mathcal L_2(0)\ge\frac A2
        \right]
        \le
        \frac{
            2^q\widetilde C_{\mathrm{MZ},q}
        }{
            A^q
        }
        J^{-q/2}\mathcal L_{2q}(0).
    \end{equation}

    \proofstep{Part 2: Martingale concentration via BDG}
    We bound the moments of the supremum of \(\mathcal M_t^{\exp}\) using the
    Burkholder--Davis--Gundy inequality. Define the effective volatility process for
    particle \(j\):
    \[
        \sigma_j(s)
        :=
        \e^{\kappa s}
        \frac{\sigma}{m}
        S\!\left(
            X_s^j-\mathcal M_\alpha(\mu_{X_s^J})
        \right)^\top
        \left(
            2\hat Z_s^j+\frac5\gamma Y_s^j
        \right).
    \]
    Then
    \[
        \mathcal M_t^{\exp}
        =
        \frac1J\sum_{j=1}^J
        \int_0^t
        \langle \sigma_j(s),\d W_s^j\rangle.
    \]
    By \cite[Lemma~5.8]{uit_cbo},
    \begin{equation}\label{eq:bdg_application}
        \E\left[
            \sup_{s\in[0,t]}
            |\mathcal M_s^{\exp}|^q
        \right]
        \le
        \frac{C_{\mathrm{BDG},q}}{J^{q/2}\ell^{\frac q2-1}}
        \frac1J
        \sum_{j=1}^J
        \int_0^t
        \e^{(\frac q2-1)\ell s}
        \E[|\sigma_j(s)|^q]\,\d s.
    \end{equation}
    For both isotropic and anisotropic interactions \(S\), the vector norm satisfies
    \[
        |S(x)^\top v|^2\le |x|^2|v|^2.
    \]
    Hence
    \[
        |\sigma_j(s)|^q
        \le
        \e^{q\kappa s}
        \left(\frac{\sigma}{m}\right)^q
        \left|
            2\hat Z_s^j+\frac5\gamma Y_s^j
        \right|^q
        \left|
            X_s^j-\mathcal M_\alpha(\mu_{X_s^J})
        \right|^q.
    \]
    By H\"older's inequality with exponents \((2,2)\),
    \begin{align}
        \E[|\sigma_j(s)|^q]
        &\le
        \e^{q\kappa s}
        \left(\frac{\sigma}{m}\right)^q
        \left(
            \E\left[
                \left|
                    2\hat Z_s^j+\frac5\gamma Y_s^j
                \right|^{2q}
            \right]
        \right)^{1/2} 
        \left(
            \E\left[
                \left|
                    X_s^j-\mathcal M_\alpha(\mu_{X_s^J})
                \right|^{2q}
            \right]
        \right)^{1/2}.
    \end{align}
    Using
    \[
        |u+v|^{2q}
        \le
        2^{2q-1}(|u|^{2q}+|v|^{2q})
    \]
    and norm equivalence, the first factor is bounded by
    \[
        \E\left[
            \left|
                2\hat Z_s^j+\frac5\gamma Y_s^j
            \right|^{2q}
        \right]
        \le
        \frac{
            2^{2q-1}\max\left\{2^{2q},(5/\gamma)^{2q}\right\}
        }{
            c_{1,2q}
        }
        \mathcal L_{2q}(s).
    \]
    Using \cref{lem:m-malpha}, the second factor is bounded by
    \[
        \E\left[
            \left|
                X_s^j-\mathcal M_\alpha(\mu_{X_s^J})
            \right|^{2q}
        \right]
        \le
        \frac{
            2^{2q-1}
            \left(
                1+\e^{\alpha(\overline f-\underline f)}
            \right)
        }{
            c_{1,2q}
        }
        \mathcal L_{2q}(s).
    \]
    Substituting these bounds into the H\"older estimate and using
    \[
        \mathcal L_{2q}(s)
        \le
        \e^{-\edecay{2q}s}\mathcal L_{2q}(0)
    \]
    from \cref{thm:exp-decay-centered-moments}, we get
    \begin{equation}
        \E[|\sigma_j(s)|^q]
        \le
        C_{\sigma,q}
        \e^{(q\kappa-\edecay{2q})s}
        \mathcal L_{2q}(0),
    \end{equation}
    where
    \begin{equation}\label{eq:C_sigma_def}
        C_{\sigma,q}
        :=
        \left(\frac{\sigma}{m}\right)^q
        \frac{
            2^{2q-1}
            \max\left\{2^q,(5/\gamma)^q\right\}
        }{
            c_{1,2q}
        }
        \left(
            1+\e^{\alpha(\overline f-\underline f)}
        \right)^{1/2}.
    \end{equation}
    Since \(q\kappa<\edecay{2q}\), choose
    \[
        \ell
        =
        \frac{\edecay{2q}-q\kappa}{q-2}
    \]
    so that
    \[
        \left(\frac q2-1\right)\ell
        =
        \frac{\edecay{2q}-q\kappa}{2}.
    \]
    The time integral resolves to
    \begin{align}
        \E\left[
            \sup_{s\in[0,t]}
            |\mathcal M_s^{\exp}|^q
        \right]
        &\le
        \frac{
            C_{\mathrm{BDG},q}
        }{
            J^{q/2}\ell^{\frac q2-1}
        }
        C_{\sigma,q}\mathcal L_{2q}(0)
        \int_0^t
        \e^{\frac{q\kappa-\edecay{2q}}2s}\,\d s
        \notag\\
        &\le
        \frac{
            2C_{\mathrm{BDG},q}C_{\sigma,q}
        }{
            J^{q/2}(\edecay{2q}-q\kappa)
        }
        \left(
            \frac{q-2}{\edecay{2q}-q\kappa}
        \right)^{\frac q2-1}
        \mathcal L_{2q}(0).
    \end{align}
    Applying Markov's inequality to the martingale event at threshold \(A/2\),
    \[
        \mathbb P\left[
            \sup_{t\ge0}
            \mathcal M_t^{\exp}
            \ge
            \frac A2
        \right]
        \le
        \frac{C_{\mathrm{Mart}}}{J^{q/2}A^q}
        \mathcal L_{2q}(0),
    \]
    where
    \[
        C_{\mathrm{Mart}}
        :=
        \frac{
            2^{q+1}C_{\mathrm{BDG},q}C_{\sigma,q}
        }{
            \edecay{2q}-q\kappa
        }
        \left(
            \frac{q-2}{\edecay{2q}-q\kappa}
        \right)^{\frac q2-1}.
    \]

    \proofstep{Conclusion}
    Combining the initial data concentration and the martingale bound, we obtain
    \[
        \mathbb P\left[
            \sup_{t\ge0}U_t
            \ge
            \mathcal L_2(0)+A
        \right]
        \le
        \frac{C_{\mathrm{Bad}}}{J^{q/2}A^q}
        \mathcal L_{2q}(0),
    \]
    where
    \[
        C_{\mathrm{Bad}}
        :=
        2^q\widetilde C_{\mathrm{MZ},q}
        +
        C_{\mathrm{Mart}}.
    \]
\end{proof}
\subsubsection{Concentration for the Synchronously Coupled Mean-Field System}

We also require a concentration inequality for the empirical measure of \(J\)
independent copies of the mean-field process to bound the probability of the ``bad
set'' in the main propagation of chaos proof.

\begin{lemma}[Concentration of independent mean-field particles]
\label{lem:concentration-mf}
    Fix \(q>2\), and assume that the objective function \(f\) satisfies
    \cref{assumption:bounded}. Consider \(J\) independent mean-field processes
    \((\overline X_t^j,\overline V_t^j)\), initialized with i.i.d.\ samples from
    \(\overline\rho_0\in\mathcal P_{2q}(\mathbb R^{2d})\).

    Define
    \[
        \overline Y_t^j
        :=
        \overline X_t^j-m_{\overline X}(\overline\rho_t),
        \qquad
        \widehat{\overline Z}_t^j
        :=
        \bigl(\overline V_t^j-m_{\overline V}(\overline\rho_t)\bigr)
        -
        \frac1\gamma\overline Y_t^j,
    \]
    and let
    \[
        \overline L_2^j(t)
        :=
        \psi_2(\overline Y_t^j,\widehat{\overline Z}_t^j),
        \qquad
        \overline L_2^J(t)
        :=
        \frac1J\sum_{j=1}^J \overline L_2^j(t).
    \]
    Assume the parameter conditions needed for
    \cref{prop:ito-mfl-centered-lyapunov-differential} at \(p=2\) and \(p=2q\).
    Let \(\kappa\) satisfy
    \[
        0<\kappa<\min\left\{\edecay{2},\frac{\edecay{2q}}q\right\},
    \]
    and assume the concentration noise absorption condition
    \begin{equation}\label{eq:mf-small-noise-concentration}
        \sigma^2
        \le
        \frac{m^2 c_1^{(2)}(\edecay{2}-\kappa)}
        {2\tau(S)(1+C_{\mathrm{lem}})}.
    \end{equation}
    Then, for every threshold \(A>0\) and every \(J\in\mathbb N_+\),
    \begin{equation}\label{eq:mf-concentration-L2J}
        \proba\left[
            \sup_{t\ge0}
            \e^{\kappa t}\overline L_2^J(t)
            \ge
            \E[\overline L_2^J(0)]+A
        \right]
        \le
        \frac{C_{\mathrm{Bad}}^{\mathrm{MF}}}{A^q}
        J^{-\frac q2}\,
        \overline{\mathcal L}_{2q}(0).
    \end{equation}
    Consequently, by the coercivity bound for \(\psi_2\),
    \begin{equation}\label{eq:mf-concentration-spatial-moment}
        \proba\left[
            \sup_{t\ge0}
            \e^{\kappa t}
            \mathfrak M_2(\mu_{\overline X_t^J})
            \ge
            \frac{\E[\overline L_2^J(0)]+A}{c_1^{(2)}}
        \right]
        \le
        \frac{C_{\mathrm{Bad}}^{\mathrm{MF}}}{A^q}
        J^{-\frac q2}\,
        \overline{\mathcal L}_{2q}(0).
    \end{equation}
    The constant is
    \[
        C_{\mathrm{Bad}}^{\mathrm{MF}}
        :=
        3^q
        \left(
            \overline C_{\mathrm{MZ}}
            +
            C_{Z,q}
            +
            C_{\mathrm{Mart}}
        \right),
    \]
    where
    \[
        \overline C_{\mathrm{MZ}}
        :=
        \frac{\cmz{q} C_\phi^q 2^{q-1}}{c_{1,2q}},
    \]
    \[
        C_{Z,q}
        :=
        \frac{
            2(2K_{\mathrm{tr}})^q \cwm{2q}
        }{
            c_{1,2q}\ell^{q-1}(\edecay{2q}-q\kappa)
        },
        \qquad
        \ell
        :=
        \frac{\edecay{2q}-q\kappa}{2(q-1)},
    \]
    and
    \[
        C_{\mathrm{Mart}}
        :=
        \frac{
            2^{q+1}C_{\mathrm{BDG},q}C_{\sigma,q}
        }{
            \edecay{2q}-q\kappa
        }
        \left(
            \frac{q-2}{\edecay{2q}-q\kappa}
        \right)^{\frac q2-1}.
    \]
\end{lemma}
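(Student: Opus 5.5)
The argument follows the particle-system proof of \cref{lem:concentration-ineq}, with one new feature. For independent mean-field samples the noise is not recentered by an empirical average, so the pathwise trace term is not absorbed pointwise. We therefore start from the pathwise inequality \eqref{eq:mfl-averaged-L2-trace-remainder} of \cref{prop:ito-mfl-centered-lyapunov-differential}. Set $U_t:=e^{\kappa t}\overline L_2^J(t)$ and decompose the trace remainder. Write $\overline X_t^j-\mathcal M_\alpha(\overline\rho_t^X)=\overline Y_t^j+\Delta_\alpha^{\rm MF}(t)$. Then
\[
\frac1J\sum_j|\overline X_t^j-\mathcal M_\alpha(\overline\rho_t^X)|^2\le 2\Bigl(\frac1J\sum_j|\overline Y_t^j|^2-\E|\overline Y_t|^2\Bigr)+2(1+C_{\mathrm{lem}})\E|\overline Y_t|^2 .
\]
The deterministic part is at most $2(1+C_{\mathrm{lem}})\,\overline{\mathcal L}_2(t)/c_1^{(2)}$. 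We do not absorb it into the pathwise drift. Instead we compare with the expected value, which is where the noise condition \eqref{eq:mf-small-noise-concentration} enters. With this condition, $K_{\rm tr}\cdot 2(1+C_{\rm lem})/c_1^{(2)}\le \edecay{2}-\kappa$. The cleanest route I would try is to keep the extra drift as a term $(\edecay 2-\kappa)e^{\kappa t}\overline{\mathcal L}_2(t)$, which is bounded by $(\edecay2-\kappa)e^{-(\edecay2-\kappa)t}\overline{\mathcal L}_2(0)$ and integrates to at most $\overline{\mathcal L}_2(0)=\E[\overline L_2^J(0)]$. This would double the threshold; if so, I would rework the split so that the deterministic piece is compared against $-(\edecay2-\kappa)\,e^{\kappa t}\overline L_2^J(t)$, with the fluctuation $\overline L_2^J-\overline{\mathcal L}_2$ moved into the fluctuation term. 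The target is the pathwise bound
\[
\sup_t U_t\le U_0+\sup_t\mathcal M^{\exp}_t+\int_0^\infty 2K_{\rm tr}e^{\kappa s}\Bigl|\frac1J\sum_j|\overline Y_s^j|^2-\E|\overline Y_s|^2\Bigr|\,\d s ,
\]
up to the expected-value bookkeeping just described. This explains the three-way split with thresholds $A/3$ and the factor $3^q$ in $C^{\rm MF}_{\rm Bad}$.

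\textbf{Initial term.} $U_0-\E[\overline L_2^J(0)]$ is an average of i.i.d.\ centered variables $\psi_2(\overline Y_0^j,\widehat{\overline Z}_0^j)-\E[\cdot]$. This is simpler than the particle case, since the centering is by true means and there is no empirical-mean correction. Marcinkiewicz--Zygmund plus $|\psi_2|\le C_\phi(|y|^2+|z|^2)$ and norm equivalence at order $2q$ give an $L^q$ bound of order $\overline C_{\rm MZ}J^{-q/2}\overline{\mathcal L}_{2q}(0)$. Markov's inequality at $A/3$ then finishes this term.

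\textbf{Fluctuation integral.} For the $C_{Z,q}$ term, apply H\"older in time with weight $e^{-\ell s}$:
\[
\Bigl(\int_0^\infty e^{\kappa s}|F_s|\,\d s\Bigr)^q\le \ell^{-(q-1)}\int_0^\infty e^{(q-1)\ell s}e^{q\kappa s}|F_s|^q\,\d s ,
\]
where $F_s$ is the centered empirical second moment. By MZ, $\E|F_s|^q\lesssim \cwm{2q}J^{-q/2}\E|\overline Y_s|^{2q}\lesssim J^{-q/2}e^{-\edecay{2q}s}\overline{\mathcal L}_{2q}(0)/c_{1,2q}$, using \cref{thm:mfl-decay-centered-moment}. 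With $\ell=(\edecay{2q}-q\kappa)/(2(q-1))$ the exponent equals $-(\edecay{2q}-q\kappa)/2<0$, so the integral is finite and yields $C_{Z,q}$. Markov's inequality at $A/3$ follows.

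\textbf{Martingale term.} This is handled exactly as in Part 2 of \cref{lem:concentration-ineq}: BDG via \cite[Lemma~5.8]{uit_cbo}, H\"older $(2,2)$, \cref{lem:m-malpha} for $\Delta^{\rm MF}_\alpha$, and decay of $\overline{\mathcal L}_{2q}$. The martingales are now independent across $j$, which only helps. This gives $C_{\rm Mart}J^{-q/2}\overline{\mathcal L}_{2q}(0)/A^q$.

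Summing the three probabilities gives \eqref{eq:mf-concentration-L2J}. The spatial-moment bound \eqref{eq:mf-concentration-spatial-moment} then follows from $c_1^{(2)}\,\frac1J\sum_j|\overline Y^j_t|^2\le\overline L_2^J(t)$ together with $\mathfrak M_2(\mu_{\overline X^J_t})\le\frac1J\sum_j|\overline Y_t^j|^2$, since the empirical variance is minimized at the empirical mean. The main obstacle is the first step: handling the non-recentered trace so that the deterministic part is absorbed exactly without inflating the threshold $\E[\overline L_2^J(0)]$. That is where \eqref{eq:mf-small-noise-concentration} must be used carefully.
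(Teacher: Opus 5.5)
Your argument works, but only in its reworked form, and it decomposes the trace differently from the paper. The paper pivots around the \emph{empirical} weighted mean, $|\overline X_t^j-\mathcal M_\alpha(\overline\rho_t^X)|^2\le 2|\overline X_t^j-\mathcal M_\alpha(\mu_{\overline X_t^J})|^2+2|\mathcal M_\alpha(\mu_{\overline X_t^J})-\mathcal M_\alpha(\overline\rho_t^X)|^2$. It then applies \cref{lem:m-malpha} to the empirical measure and uses variance minimization, which bounds the first part \emph{pathwise} by $(1+C_{\mathrm{lem}})\overline L_2^J(t)/c_1^{(2)}$. Condition \eqref{eq:mf-small-noise-concentration} absorbs this exactly into $(\kappa-\edecay{2})\overline L_2^J(t)$, so no threshold bookkeeping is needed. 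The only remainder is the squared Monte Carlo error of the weighted mean, controlled by \cref{lem:mc_est} at order $2q$; this is where $\cwm{2q}$ in $C_{Z,q}$ comes from, and that term is actually $O(J^{-q})$ in $L^q$. You instead pivot at the true mean with the deterministic shift $\Delta_\alpha^{\rm MF}(t)$, which can only be bounded in expectation, and that is the source of your difficulty. Your first option must be discarded, since it yields the event with threshold $2\E[\overline L_2^J(0)]+A$. Your rework is sound: using \eqref{eq:mf-small-noise-concentration}, the drift of $U_t$ is at most $e^{\kappa t}\bigl[(\edecay{2}-\kappa)(\overline{\mathcal L}_2(t)-\overline L_2^J(t))+2K_{\mathrm{tr}}F_t\bigr]$. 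It does, however, create a second fluctuation integral $\int_0^\infty e^{\kappa s}|\overline L_2^J(s)-\overline{\mathcal L}_2(s)|\,\d s$, which your fluctuation step omits. The same Marcinkiewicz--Zygmund plus H\"older-in-time argument handles it, via $\E|\psi_2(\overline Y_s^1,\widehat{\overline Z}_s^1)|^q\lesssim e^{-\edecay{2q}s}\,\overline{\mathcal L}_{2q}(0)/c_{1,2q}$. What your route buys: because the mean-field shift is deterministic, it needs only MZ for i.i.d.\ scalar averages and no weighted-mean Monte Carlo lemma. What it costs: the remainders are $O(J^{-q/2})$ rather than $O(J^{-q})$, which still suffices. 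The resulting constant is built from $\cmz{q}$ and the $\psi_2$ bounds, not the displayed $C_{Z,q}$; the $\cwm{2q}$ you wrote in the MZ step is a mislabel.
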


\begin{proof}
    We use the pathwise quadratic functional
    \[
        \overline L_2^j(t)
        =
        \psi_2(\overline Y_t^j,\widehat{\overline Z}_t^j),
        \qquad
        \overline L_2^J(t)
        =
        \frac1J\sum_{j=1}^J\overline L_2^j(t).
    \]
    From the same It\^o expansion underlying
    \cref{prop:ito-mfl-centered-lyapunov-differential}, but before absorbing the
    quadratic variation term into the drift, each \(\overline L_2^j(t)\) satisfies
    \begin{equation}\label{eq:mf-single-L2-preabsorb}
        \d \overline L_2^j(t)
        \le
        -\edecay{2}\,\overline L_2^j(t)\,\d t
        +
        K_{\mathrm{tr}}
        \left|
            \overline X_t^j-\mathcal M_\alpha(\overline\rho_t^X)
        \right|^2\d t
        +
        \d \overline M_{2,t}^j,
    \end{equation}
    where
    \[
        K_{\mathrm{tr}}
        :=
        \frac{\sigma^2\tau(S)}{m^2},
    \]
    and
    \[
        \d \overline M_{2,t}^j
        =
        \left\langle
            2\widehat{\overline Z}_t^j
            +
            \frac5\gamma\overline Y_t^j,
            \frac{\sigma}{m}
            S\!\left(
                \overline X_t^j-\mathcal M_\alpha(\overline\rho_t^X)
            \right)\d W_t^j
        \right\rangle.
    \]
    Averaging \eqref{eq:mf-single-L2-preabsorb} over \(j=1,\dots,J\), we obtain
    \begin{align}\label{eq:mf-L2J-preabsorb}
        \d \overline L_2^J(t)
        &\le
        -\edecay{2}\,\overline L_2^J(t)\,\d t
        +
        K_{\mathrm{tr}}
        \left(
            \frac1J\sum_{j=1}^J
            \left|
                \overline X_t^j-\mathcal M_\alpha(\overline\rho_t^X)
            \right|^2
        \right)\d t
        +
        \d \overline M_{2,t}^J.
    \end{align}

    Consider the exponentially weighted process
    \[
        U_t:=\e^{\kappa t}\overline L_2^J(t).
    \]
    By It\^o's formula,
    \begin{align}
        \d U_t
        &=
        \e^{\kappa t}
        \left(
            \kappa \overline L_2^J(t)\,\d t
            +
            \d \overline L_2^J(t)
        \right)
        \notag\\
        &\le
        \e^{\kappa t}
        (\kappa-\edecay{2})
        \overline L_2^J(t)\,\d t
        +
        \e^{\kappa t}
        K_{\mathrm{tr}}
        \left(
            \frac1J\sum_{j=1}^J
            \left|
                \overline X_t^j-\mathcal M_\alpha(\overline\rho_t^X)
            \right|^2
        \right)\d t
        +
        \d\mathcal M_t^{\exp}.
    \end{align}

    \proofstep{1. Splitting the noise trace term}
    We bound the trace contribution by pivoting around the empirical weighted mean
    \(\mathcal M_\alpha(\mu_{\overline X_t^J})\). Using
    \((a+b)^2\le2a^2+2b^2\),
    \begin{align}
        \frac1J\sum_{j=1}^J
        \left|
            \overline X_t^j-\mathcal M_\alpha(\overline\rho_t^X)
        \right|^2
        &\le
        \frac2J\sum_{j=1}^J
        \left|
            \overline X_t^j-\mathcal M_\alpha(\mu_{\overline X_t^J})
        \right|^2
        +
        2
        \left|
            \mathcal M_\alpha(\mu_{\overline X_t^J})
            -
            \mathcal M_\alpha(\overline\rho_t^X)
        \right|^2.
    \end{align}
    To bound the first term, we apply the orthogonal variance decomposition around
    the empirical mean
    \[
        m_{\overline X}(\mu_{\overline X_t^J})
        :=
        \frac1J\sum_{j=1}^J\overline X_t^j.
    \]
    Since
    \[
        \sum_{j=1}^J
        \left(
            \overline X_t^j-m_{\overline X}(\mu_{\overline X_t^J})
        \right)
        =
        0,
    \]
    the cross term vanishes, and
    \begin{align}
        \frac1J\sum_{j=1}^J
        \left|
            \overline X_t^j-\mathcal M_\alpha(\mu_{\overline X_t^J})
        \right|^2
        &=
        \frac1J\sum_{j=1}^J
        \left|
            \overline X_t^j-m_{\overline X}(\mu_{\overline X_t^J})
        \right|^2
        +
        \left|
            m_{\overline X}(\mu_{\overline X_t^J})
            -
            \mathcal M_\alpha(\mu_{\overline X_t^J})
        \right|^2.
    \end{align}
    Applying \cref{lem:m-malpha} for \(p=2\), the second term is bounded by
    \[
        C_{\mathrm{lem}}
        \frac1J\sum_{j=1}^J
        \left|
            \overline X_t^j-m_{\overline X}(\mu_{\overline X_t^J})
        \right|^2.
    \]
    Thus
    \begin{equation}
        \frac1J\sum_{j=1}^J
        \left|
            \overline X_t^j-\mathcal M_\alpha(\mu_{\overline X_t^J})
        \right|^2
        \le
        (1+C_{\mathrm{lem}})
        \frac1J\sum_{j=1}^J
        \left|
            \overline X_t^j-m_{\overline X}(\mu_{\overline X_t^J})
        \right|^2.
    \end{equation}
    Since the empirical mean minimizes the sum of squared distances,
    \[
        \frac1J\sum_{j=1}^J
        \left|
            \overline X_t^j-m_{\overline X}(\mu_{\overline X_t^J})
        \right|^2
        \le
        \frac1J\sum_{j=1}^J
        \left|
            \overline X_t^j-m_{\overline X}(\overline\rho_t)
        \right|^2
        =
        \frac1J\sum_{j=1}^J|\overline Y_t^j|^2.
    \]
    By the coercivity of \(\psi_2\),
    \[
        |\overline Y_t^j|^2
        \le
        \frac1{c_1^{(2)}}\overline L_2^j(t).
    \]
    Substituting this bound into the inequality for \(U_t\), we obtain
    \begin{align}
        \d U_t
        &\le
        \e^{\kappa t}
        \left[
            \kappa-\edecay{2}
            +
            2K_{\mathrm{tr}}
            \frac{1+C_{\mathrm{lem}}}{c_1^{(2)}}
        \right]
        \overline L_2^J(t)\,\d t
        +
        2\e^{\kappa t}K_{\mathrm{tr}}
        \left|
            \mathcal M_\alpha(\mu_{\overline X_t^J})
            -
            \mathcal M_\alpha(\overline\rho_t^X)
        \right|^2\d t
        +
        \d\mathcal M_t^{\exp}.
    \end{align}
    By \eqref{eq:mf-small-noise-concentration}, the coefficient multiplying
    \(\overline L_2^J(t)\) is non-positive. We therefore drop it and integrate:
    \begin{equation}\label{eq:proof-mfl-concentration-Y-bound}
        U_t
        \le
        U_0+Z_t+\mathcal M_t^{\exp},
    \end{equation}
    where
    \begin{equation}
        Z_t
        :=
        2K_{\mathrm{tr}}
        \int_0^t
        \e^{\kappa s}
        \left|
            \mathcal M_\alpha(\mu_{\overline X_s^J})
            -
            \mathcal M_\alpha(\overline\rho_s^X)
        \right|^2\d s.
    \end{equation}

    By a union bound, we separate the probability of large deviations into three
    components evaluated at threshold \(A/3\):
    \begin{align}
        \proba\left[
            \sup_{t\ge0}U_t
            \ge
            \E[U_0]+A
        \right]
        &\le
        \proba\left[
            U_0-\E[U_0]\ge\frac A3
        \right]
        +
        \proba\left[
            \sup_{t\ge0}Z_t\ge\frac A3
        \right]
        +
        \proba\left[
            \sup_{t\ge0}\mathcal M_t^{\exp}\ge\frac A3
        \right].
    \end{align}

    \proofstep{2. Bounding the initial data \(U_0\)}
    Since
    \[
        U_0
        =
        \frac1J\sum_{j=1}^J\overline L_2^j(0)
    \]
    is an empirical average of i.i.d.\ random variables with mean \(\E[U_0]\), the
    Marcinkiewicz--Zygmund inequality gives
    \[
        \E\left[
            |U_0-\E[U_0]|^q
        \right]
        \le
        \cmz{q}J^{-q/2}
        \E\left[
            |\overline L_2^1(0)|^q
        \right].
    \]
    As in the particle concentration estimate,
    \[
        \E\left[
            |\overline L_2^1(0)|^q
        \right]
        \le
        \frac{C_\phi^q2^{q-1}}{c_{1,2q}}\,
        \overline{\mathcal L}_{2q}(0).
    \]
    Therefore,
    \[
        \E\left[
            |U_0-\E[U_0]|^q
        \right]
        \le
        \frac{\cmz{q}C_\phi^q2^{q-1}}{c_{1,2q}}
        J^{-q/2}\overline{\mathcal L}_{2q}(0).
    \]
    By Markov's inequality,
    \begin{equation}
        \proba\left[
            U_0-\E[U_0]\ge\frac A3
        \right]
        \le
        \frac{3^q\overline C_{\mathrm{MZ}}}{A^qJ^{q/2}}
        \overline{\mathcal L}_{2q}(0),
        \qquad
        \overline C_{\mathrm{MZ}}
        :=
        \frac{\cmz{q}C_\phi^q2^{q-1}}{c_{1,2q}}.
    \end{equation}

    \proofstep{3. Bounding the drift error \(Z_t\)}
    Let
    \[
        E_s
        :=
        \left|
            \mathcal M_\alpha(\mu_{\overline X_s^J})
            -
            \mathcal M_\alpha(\overline\rho_s^X)
        \right|.
    \]
    Applying H\"older's inequality to the time integral with
    \[
        \ell
        =
        \frac{\edecay{2q}-q\kappa}{2(q-1)}
    \]
    gives
    \begin{align}
        \E\left[
            \sup_{t\ge0}Z_t^q
        \right]
        &\le
        (2K_{\mathrm{tr}})^q
        \frac1{\ell^{q-1}}
        \int_0^\infty
        \e^{(q-1)\ell s+q\kappa s}
        \E[E_s^{2q}]\,\d s.
    \end{align}
    By the \(L^{2q}\) law of large numbers for the empirical weighted mean
    \(\cref{lem:mc_est}\),
    \[
        \E[E_s^{2q}]
        \le
        \frac{\cwm{2q}}{J^q}
        \E\left[
            \left|
                \overline X_s^1-m_{\overline X}(\overline\rho_s)
            \right|^{2q}
        \right].
    \]
    By \cref{rem:mfl-single-sample-centered-decay},
    \[
        \E\left[
            \left|
                \overline X_s^1-m_{\overline X}(\overline\rho_s)
            \right|^{2q}
        \right]
        \le
        C\,\e^{-\edecay{2q}s}
        \mathfrak M_{2q}(\overline\rho_0)
        \le
        \frac{C}{c_{1,2q}}
        \e^{-\edecay{2q}s}
        \overline{\mathcal L}_{2q}(0).
    \]
    Absorbing the harmless constant \(C\) into \(\cwm{2q}\), we get
    \[
        \E[E_s^{2q}]
        \le
        \frac{\cwm{2q}}{c_{1,2q}J^q}
        \e^{-\edecay{2q}s}
        \overline{\mathcal L}_{2q}(0).
    \]
    Since $(q-1)\ell+q\kappa-\edecay{2q}
        =
        -\frac{\edecay{2q}-q\kappa}{2}\,,$
    the time integral gives $\frac{2}{\edecay{2q}-q\kappa}\,.$
    Therefore
    \begin{align}
        \E\left[
            \sup_{t\ge0}Z_t^q
        \right]
        &\le
        \frac{
            (2K_{\mathrm{tr}})^q\cwm{2q}
        }{
            c_{1,2q}\ell^{q-1}J^q
        }
        \left(
            \frac{2}{\edecay{2q}-q\kappa}
        \right)
        \overline{\mathcal L}_{2q}(0).
    \end{align}
    Since \(q\ge2\), \(J^{-q}\le J^{-q/2}\). Applying Markov's inequality at
    threshold \(A/3\),
    \begin{equation}
        \proba\left[
            \sup_{t\ge0}Z_t\ge\frac A3
        \right]
        \le
        \frac{3^qC_{Z,q}}{A^qJ^{q/2}}
        \overline{\mathcal L}_{2q}(0),
    \end{equation}
    where
    \[
        C_{Z,q}
        :=
        \frac{
            2(2K_{\mathrm{tr}})^q\cwm{2q}
        }{
            c_{1,2q}\ell^{q-1}(\edecay{2q}-q\kappa)
        }.
    \]

    \proofstep{4. Bounding the martingale \(\mathcal M_t^{\exp}\)}
    The martingale term is bounded identically to Part 2 of
    \cref{lem:concentration-ineq}. The independent Brownian motions and uniformly
    bounded effective volatility through the true weighted mean
    \(\mathcal M_\alpha(\overline\rho_t^X)\) yield the BDG bound
    \begin{equation}
        \proba\left[
            \sup_{t\ge0}\mathcal M_t^{\exp}\ge\frac A3
        \right]
        \le
        \frac{3^qC_{\mathrm{Mart}}}{A^qJ^{q/2}}
        \overline{\mathcal L}_{2q}(0),
    \end{equation}
    where
    \[
        C_{\mathrm{Mart}}
        =
        \frac{
            2^{q+1}C_{\mathrm{BDG},q}C_{\sigma,q}
        }{
            \edecay{2q}-q\kappa
        }
        \left(
            \frac{q-2}{\edecay{2q}-q\kappa}
        \right)^{\frac q2-1}.
    \]

    \proofstep{5. Conclusion}
    Summing the three probability bounds gives
    \[
        \proba\left[
            \sup_{t\ge0}U_t
            \ge
            \E[U_0]+A
        \right]
        \le
        \frac{C_{\mathrm{Bad}}^{\mathrm{MF}}}{A^qJ^{q/2}}
        \overline{\mathcal L}_{2q}(0),
    \]
    where
    \[
        C_{\mathrm{Bad}}^{\mathrm{MF}}
        :=
        3^q
        \left(
            \overline C_{\mathrm{MZ}}
            +
            C_{Z,q}
            +
            C_{\mathrm{Mart}}
        \right).
    \]
\end{proof}


\subsection{Weighted-mean estimates imported from \cite{uit_cbo}}
\label{sec:weighted_mean_estimates}

The following two auxiliary estimates are taken directly from \cite[Lemmas~5.13 and~5.14]{uit_cbo}.
We restate them here in the present notation for the reader's convenience and refer to \cite{uit_cbo} for the proofs.

\begin{lemma}[Local Lipschitz continuity of $\mu \mapsto \mathcal{M}_\alpha(\mu) - m_X(\mu)$]
    \label{lem:stability_est}
    Suppose that \cref{assumption:bounded,assumption:lip} are satisfied.
    Then, for all $\mu, \nu \in \mathcal P_2(\mathbb R^d)$,
    \[
        \left\lvert
        \mathcal{M}_\alpha(\mu) - m_X(\mu)
        - \mathcal{M}_\alpha(\nu) + m_X(\nu)
        \right\rvert
        \le
        C_{\mathcal M}
        \left(
            \sqrt{\momentp{2}{\mu}} + \sqrt{\momentp{2}{\nu}}
        \right)
        \wasserstein_2(\mu,\nu),
    \]
    where
    \[
        C_{\mathcal M}
        := 2 \alpha L_f \e^{2\alpha(\overline f - \underline f)}.
    \]
\end{lemma}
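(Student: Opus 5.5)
The idea is to work with a single optimal coupling and to write $\mathcal M_\alpha(\mu)-m_X(\mu)$ as a weighted covariance, so that every term acquires a factor $\sqrt{\momentp{2}{\cdot}}$ through centering.

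\textbf{Setup.} Let $\pi$ be an optimal $\mathcal W_2$-coupling of $\mu,\nu$, and let $(X,Y)\sim\pi$. Set
\[
  \omega(x):=\e^{-\alpha(f(x)-\underline f)} .
\]
By \cref{assumption:bounded}, $\omega$ takes values in $[\e^{-\alpha(\overline f-\underline f)},1]$. By \cref{assumption:lip} and the mean value theorem for $s\mapsto \e^{-\alpha s}$ on $[\underline f,\overline f]$, $\omega$ is $\alpha L_f$-Lipschitz. Since the normalising constant $\e^{\alpha\underline f}$ cancels in the ratio,
\[
  \mathcal M_\alpha(\mu)-m_X(\mu)=\frac{N_\mu}{\E\omega(X)},\qquad N_\mu:=\E\bigl[(X-\E X)\omega(X)\bigr]=\E\bigl[(X-\E X)(\omega(X)-\E\omega(X))\bigr],
\]
and similarly for $\nu$ with $Y$. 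The key structural point is the second expression for $N_\mu$: it is a covariance. Cauchy--Schwarz combined with the Lipschitz bound $\mathrm{Var}(\omega(X))\le \E|\omega(X)-\omega(\E X)|^2\le \alpha^2L_f^2\momentp{2}{\mu}$ therefore makes it small in terms of centered moments only.

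\textbf{Splitting.} I would decompose
\[
  \frac{N_\mu}{\E\omega(X)}-\frac{N_\nu}{\E\omega(Y)}=\frac{N_\mu-N_\nu}{\E\omega(X)}+\frac{N_\nu}{\E\omega(Y)}\cdot\frac{\E\omega(Y)-\E\omega(X)}{\E\omega(X)} .
\]
The numerator difference is then written as
\[
  N_\mu-N_\nu=\E\bigl[\bigl((X-\E X)-(Y-\E Y)\bigr)(\omega(X)-\E\omega(X))\bigr]+\E\bigl[(Y-\E Y)(\omega(X)-\omega(Y))\bigr],
\]
where recentering $\omega(X)$ in the first term is allowed because $(X-\E X)-(Y-\E Y)$ has mean zero. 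The three resulting pieces are bounded as follows.

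\begin{enumerate}[label=(\alph*)]
  \item For the first term of $N_\mu-N_\nu$, Cauchy--Schwarz gives the bound $\|(X-\E X)-(Y-\E Y)\|_{L^2}\cdot\alpha L_f\sqrt{\momentp{2}{\mu}}$. The first factor is at most $\|X-Y\|_{L^2}=\wasserstein_2(\mu,\nu)$, since subtracting the mean does not increase the $L^2$ norm.
  \item For the second term of $N_\mu-N_\nu$, Cauchy--Schwarz and the Lipschitz bound on $\omega$ give at most $\sqrt{\momentp{2}{\nu}}\,\alpha L_f\,\wasserstein_2(\mu,\nu)$.
  \item For the last term, note that $N_\nu/\E\omega(Y)=\mathcal M_\alpha(\nu)-m_X(\nu)$. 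By \cref{lem:m-malpha} with $q=2$ its norm is at most $\sqrt{C_{\rm lem}\momentp{2}{\nu}}$. Moreover $|\E\omega(Y)-\E\omega(X)|\le\alpha L_f\wasserstein_2(\mu,\nu)$.
\end{enumerate}

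\textbf{Collecting constants.} Finally I would use $1/\E\omega(X)\le \e^{\alpha(\overline f-\underline f)}$ throughout. Pieces (a) and (b) then carry the factor $\alpha L_f\e^{\alpha(\overline f-\underline f)}$. Piece (c) carries $\alpha L_f\e^{\alpha(\overline f-\underline f)}C_{\rm lem}^{1/2}\le\alpha L_f\e^{2\alpha(\overline f-\underline f)}$. Summing, $\sqrt{\momentp{2}{\nu}}$ appears with total coefficient at most $2\alpha L_f\e^{2\alpha(\overline f-\underline f)}$, and $\sqrt{\momentp{2}{\mu}}$ with a smaller one. This gives the claim with $C_{\mathcal M}=2\alpha L_f\e^{2\alpha(\overline f-\underline f)}$.

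\textbf{Main obstacle.} The delicate step is avoiding a bound that involves raw moments, or products of moments, rather than the linear factor $\sqrt{\momentp{2}{\mu}}+\sqrt{\momentp{2}{\nu}}$. This is exactly why I recenter $\omega$ in the covariance form of the numerator, and why I bound the normalisation-difference term through $|\mathcal M_\alpha(\nu)-m_X(\nu)|$ via \cref{lem:m-malpha} rather than estimating $N_\nu$ directly. Bounding $N_\nu$ directly would produce $\momentp{2}{\nu}$ itself instead of its square root.
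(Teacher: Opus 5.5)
Your argument is correct. The paper gives no proof of this lemma: it restates it in the present notation and refers to \cite[Lemma~5.13]{uit_cbo}, since the estimate concerns only the weighted mean of probability measures on $\mathbf R^d$ and does not involve the kinetic dynamics.

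Your proposal therefore supplies a self-contained derivation. Its ingredients are:
\begin{enumerate}[label=(\roman*)]
  \item an optimal $\wasserstein_2$-coupling;
  \item the covariance form $N_\mu=\E[(X-\E X)(\omega(X)-\E\omega(X))]$ of the numerator;
  \item the split of the ratio into a numerator difference and a normalisation difference;
  \item \cref{lem:m-malpha}, itself a one-line Jensen argument with respect to the normalised weighted measure.
\end{enumerate}
Each step checks out, including the recentering in (a) and the bound $\|(X-Y)-\E(X-Y)\|_{L^2}\le\|X-Y\|_{L^2}$.

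Beyond the citation, your route makes the constant transparent. You in fact obtain a sharper, asymmetric bound: the coefficient is $\alpha L_f\e^{\alpha(\overline f-\underline f)}$ on $\sqrt{\momentp{2}{\mu}}$ and $\alpha L_f\bigl(\e^{\alpha(\overline f-\underline f)}+\e^{\frac32\alpha(\overline f-\underline f)}\bigr)$ on $\sqrt{\momentp{2}{\nu}}$. Both are dominated by $C_{\mathcal M}$.

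One correction to your closing remark: the detour through \cref{lem:m-malpha} is convenient but not forced. Cauchy--Schwarz gives $|N_\nu|\le\sqrt{\momentp{2}{\nu}}\,\sqrt{\mathrm{Var}\,\omega(Y)}$, and $\mathrm{Var}\,\omega(Y)\le\frac14$ because $\omega$ takes values in $[0,1]$. So bounding $N_\nu$ directly produces $\momentp{2}{\nu}$ only if you use the Lipschitz bound on the variance. Using boundedness of $\omega$ instead already yields a square root, with the factor $\e^{2\alpha(\overline f-\underline f)}$ coming from the two normalisations, and this still fits within $C_{\mathcal M}$.
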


\begin{proof}
    One can refer to \cite[Lemma~5.13]{uit_cbo}.
\end{proof}

\begin{lemma}[Convergence of the weighted mean for i.i.d.\ samples]
    \label{lem:mc_est}
    Fix~$p \ge 2$.
    Suppose that $f$ satisfies \cref{assumption:bounded},
    and that $\overline{\rho} \in \mathcal P_p(\real^d)$ has finite moments up to order~$p$.
    Then there exists a constant $\cwm{p}\bigl(\alpha,\overline f,\underline f\bigr)$ such that
    \begin{align}
        \expect \left|
            \mathcal{M}_\alpha(\mu_{\overline{X}^J})
            - \mathcal{M}_\alpha(\overline{\rho})
        \right|_p^p
        \le
        \cwm{p}\,
        \expect \Bigl|
            \overline{X}^1 - m_{\overline X}(\overline{\rho})
        \Bigr|_p^p
        J^{-p/2},
        \qquad
        \mu_{\overline{X}^J}
        := \frac{1}{J}\sum_{j=1}^{J}\delta_{\overline X^j},
        \qquad
        \{\overline X^j\}_{j\in\mathbb N}
        \stackrel{\rm i.i.d.}{\sim}
        \overline \rho,
    \end{align}
    where
    \[
        \cwm{p}\bigl(\alpha,\underline f,\overline f\bigr)
        :=
        \cmz{p}\,
        \e^{p\alpha(\overline f-\underline f)}
        \Bigl(1+\e^{\frac{\alpha}{p}(\overline f-\underline f)}\Bigr)^p.
    \]
\end{lemma}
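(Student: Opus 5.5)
The estimate is a law of large numbers for a ratio of empirical averages. Write $w(x):=e^{-\alpha f(x)}$ and $\bar x:=m_{\overline X}(\overline\rho)$. Because $\mathcal M_\alpha$ is translation covariant in the centered variable, I will first write
\[
    \mathcal M_\alpha(\mu_{\overline X^J})-\mathcal M_\alpha(\overline\rho)
    =
    \frac{\frac1J\sum_j (\overline X^j-\bar x)w(\overline X^j)}{\frac1J\sum_j w(\overline X^j)}
    -
    \frac{\int (x-\bar x)w\,\d\overline\rho}{\int w\,\d\overline\rho}.
\]
Denote the empirical and true numerators by $N_J,N$ and the denominators by $D_J,D$. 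The standard splitting is
\[
    \frac{N_J}{D_J}-\frac{N}{D}
    =
    \frac{N_J-N}{D_J}+\frac{N}{D}\,\frac{D-D_J}{D_J}.
\]
I will work coordinatewise, using the $|\cdot|_p$ norm, so that every quantity is a scalar i.i.d.\ average.

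The denominators are bounded below: \cref{assumption:bounded} gives $D_J,D\ge e^{-\alpha\overline f}$, and also $w\le e^{-\alpha\underline f}$. For the first term, the coordinates of $N_J-N$ are centered i.i.d.\ averages of $\xi_j:=(\overline X^j-\bar x)_i w(\overline X^j)$. The Marcinkiewicz--Zygmund inequality then yields
\[
    \E|N_J-N|_p^p\le \cmz{p}J^{-p/2}\,2^p e^{-p\alpha\underline f}\,\E|\overline X^1-\bar x|_p^p .
\]
Multiplying by $D_J^{-p}\le e^{p\alpha\overline f}$ produces the factor $e^{p\alpha(\overline f-\underline f)}$. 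The constant $2^p$ from centering can be absorbed, or removed by a symmetrization version of MZ.

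The second term is the only genuinely ratio-type part. The factor $N/D=\mathcal M_\alpha(\overline\rho)-\bar x$ is deterministic, and \cref{lem:m-malpha} bounds it by $|N/D|_p^p\le e^{\alpha(\overline f-\underline f)}\E|\overline X^1-\bar x|_p^p$. The difference $D-D_J$ is a centered i.i.d.\ average of bounded scalars, but bounding it by its oscillation $e^{-\alpha\underline f}-e^{-\alpha\overline f}$ would not give the variance factor $\E|\overline X^1-\bar x|^p$. So I will instead use the Lipschitz bound $|w(x)-w(\bar x)|\le \alpha L_f e^{-\alpha\underline f}|x-\bar x|$ and write $D-D_J=\frac1J\sum_j\bigl(\E w-w(\overline X^j)\bigr)$. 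Applying MZ to this scalar, together with $\bigl|w(X)-\E w\bigr|\le$ a centered Lipschitz quantity, controls its $p$-th moment by $J^{-p/2}\E|\overline X^1-\bar x|^p$.

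One alternative is to avoid $L_f$, which the stated constant $\cwm{p}$ does not contain. For that I would use the identity $\frac{N_J}{D_J}-\frac ND=\frac{1}{D_J}\cdot\frac1J\sum_j w(\overline X^j)\bigl(\overline X^j-\mathcal M_\alpha(\overline\rho)\bigr)$. Here the summand $w(\overline X^j)(\overline X^j-\mathcal M_\alpha(\overline\rho))$ has mean zero exactly. A single MZ application then gives the bound, with $\E|\overline X^1-\mathcal M_\alpha(\overline\rho)|_p^p\le \bigl(1+e^{\frac\alpha p(\overline f-\underline f)}\bigr)^p\E|\overline X^1-\bar x|_p^p$ by Minkowski and \cref{lem:m-malpha}. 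This reproduces exactly the shape $\cmz{p}e^{p\alpha(\overline f-\underline f)}(1+e^{\frac\alpha p(\overline f-\underline f)})^p$, so I would adopt this route.

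The main obstacle is recognising that exact centering, which makes the single-sum MZ step work. Once it is in place the remaining steps are bookkeeping of the weight bounds. Since the paper imports this result from \cite{uit_cbo}, I would ultimately cite \cite[Lemma~5.14]{uit_cbo} for the details.
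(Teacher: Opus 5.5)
Your adopted route is correct and reproduces $\cwm{p}$ exactly: write the difference as the single self-normalized sum $\frac{1}{D_J}\cdot\frac1J\sum_j w(\overline X^j)\bigl(\overline X^j-\mathcal M_\alpha(\overline\rho)\bigr)$, which is exactly centered. Then bound $D_J^{-1}\le e^{\alpha\overline f}$ and $w\le e^{-\alpha\underline f}$, apply Marcinkiewicz--Zygmund once coordinatewise, and finish with Minkowski plus \cref{lem:m-malpha}. The paper gives no proof of its own, only a citation to \cite[Lemma~5.14]{uit_cbo}, and your argument is the one whose constant matches that lemma; the earlier $L_f$-based numerator/denominator splitting is unnecessary and can simply be dropped.
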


\begin{proof}
    One can refer to \cite[Lemma~5.14]{uit_cbo}.
\end{proof}

\newpage

\appendix
\appendix
\section{Structural Analysis of the Centered Hypocoercive Formulation}

This appendix explains the analytical necessity of the coordinate transformation introduced in \cref{sec:setting-transformation}. The main point is that the functional based on the internal variables $(Y,\hat Z)$ yields a closed dissipation estimate, whereas the more na\"ive centered functional based on $(X^j-m_X(\emp{t}),V^j)$ does not provide enough coercivity to obtain a uniform-in-time bound in the presence of the weighted-mean error
\[
    \Delta_\alpha(t):=m_X(\emp{t})-\mathcal M_\alpha(\mu_{X_t^J}).
\]

\subsection{Limitations of the Basic Centered Functional}

Consider the centered energy functional acting on the physical variables $(Y_t^j,V_t^j)$, where
\[
    Y_t^j := X_t^j - m_X(\emp{t}).
\]
Define
\[
    {L}_{\mathrm{std}, p}(t)
    :=
    \frac{1}{J}\sum_{j=1}^J
    \Big(
        a |Y_t^j|^p
        + b |V_t^j|^p
        + c |Y_t^j|^{p-2}\langle Y_t^j, V_t^j \rangle
    \Big).
\]
Here, we refer to such a functional as the na\"ive or the basic centered functional, in contrast to the ``shifted functional" that we used in the main proof of the centered moments decay in \cref{thm:exp-decay-centered-moments}, and \cref{thm:mfl-decay-centered-moment}. The evolution of the centered position is given by
\[
    \d Y_t^j = (V_t^j - m_V(\empv{t})) \d t.
\]
Consequently,
\[
    \d |Y_t^j|^p
    =
    p |Y_t^j|^{p-2} \langle Y_t^j, V_t^j - m_V(\empv{t}) \rangle \d t.
\]
While the term involving $V_t^j$ contributes to the cross term cancellation, the term involving $m_V(\empv{t})$ persists. Although $\sum_{j=1}^J Y_t^j=0$ implies that linear terms vanish upon summation, the nonlinear coupling in the $p$-norm produces terms of the form
\[
    |Y_t^j|^{p-2}\langle Y_t^j,m_V(\empv{t})\rangle,
\]
which do not cancel for $p>2$. Estimating these terms requires Young's inequality, thereby splitting the dissipation budget between the internal mode and the bulk quantity $|m_V(\empv{t})|^p$.

The basic issue is that the pair $(Y,V)$ is not aligned with the intrinsic linear subspace of the internal dynamics: the position fluctuation $Y$ is driven by $V-m_V$, not by $V$ itself. Thus the naive centered functional still mixes internal dissipation with bulk transport.

\subsection{Decoupling via the Auxiliary Variable \texorpdfstring{$\hat{Z}$}{Z hat}}

The transformation to the centered auxiliary variable $\hat Z_t^j$ resolves this coupling issue algebraically. Recall that
\[
    Z_t^j := V_t^j - \frac{1}{\gamma}Y_t^j,
    \qquad
    \hat Z_t^j := Z_t^j - m_Z(t)
    =
    \bigl(V_t^j-m_V(\empv{t})\bigr)-\frac{1}{\gamma}Y_t^j,
\]
where we used that $m_Z(t)=m_V(\empv{t})$ since $\frac1J\sum_{j=1}^J Y_t^j=0$.

The internal dynamics derived above form a closed linear system:
\begin{align}\label{eq:internal_linear_structure}
    \d Y_t^j &= \left(\hat Z_t^j + \frac{1}{\gamma}Y_t^j\right)\d t, \\
    \d \hat Z_t^j &= \left(-K_Z \hat Z_t^j - K_Y Y_t^j\right)\d t + \d M_t^{j,\mathrm{center}}.
\end{align}
This formulation has two decisive advantages.

\paragraph{1. Elimination of the Bulk Transport}
The mean velocity $m_V(\empv{t})$ no longer appears in the drift of the internal system \eqref{eq:internal_linear_structure}. Hence the evolution of the shape of the swarm and its internal energy is algebraically decoupled from the bulk motion. In particular, the cross-term cancellation in the Lyapunov derivative is encoded in the coefficients of the nonlinear polynomial form, rather than recovered through Young-type estimates involving $m_V(\empv{t})$.

\paragraph{2. Isolation of the Weighted-Mean Error}
The quantity
\[
    \Delta_\alpha(t)=m_X(\emp{t})-\mathcal M_\alpha(\mu_{X_t^J})
\]
is common to all particles. After centering, it disappears from the linear drift of the internal variables and reappears only through the centered noise term and the separate equation for the bulk velocity. This moves the main difficulty from a drift-level obstruction to a forcing term that can be treated perturbatively.

Centering alone, however, is not enough. The additional shift by $-\gamma^{-1}Y$ is what strengthens the restoring force in the internal dynamics and creates the margin needed to absorb the weighted-mean error.

\subsection{Hypocoercive Design of the Variable $Z$}

The definition
\[
    Z = V - \gamma^{-1}Y
\]
is motivated by the overdamped structure, but the sign choice is also crucial for the coercivity mechanism.

\paragraph{Basic Functional vs. Shifted Functional}
Consider the deterministic linearized internal dynamics
\[
    \dot Y = V - m_V(\empv{t}),
    \qquad
    m\dot V = -\gamma V - Y - \Delta_\alpha.
\]

\begin{enumerate}
    \item \textbf{Basic functional.}
    If one works with the pair $(Y,V)$, then the restoring contribution in the spatial channel comes only from the original force term $-\frac1m Y$. Thus the leading spatial coercivity is of order $1/m$. The weighted-mean error enters through the same force term, and therefore also appears at order $1/m$. At this level, the dissipation and the error act on the same scale.

    \item \textbf{Shifted functional.}
    If instead one sets
    \[
        Z = V - \frac1\gamma Y,
        \qquad\text{so that}\qquad
        V = Z + \frac1\gamma Y,
    \]
    then substituting into the friction term gives
    \[
        -\frac{\gamma}{m}V
        =
        -\frac{\gamma}{m}\left(Z+\frac1\gamma Y\right)
        =
        -\frac{\gamma}{m}Z - \frac1m Y.
    \]
    This contributes an additional restoring term $-\frac1m Y$ on top of the original potential force. Accordingly, the effective coefficient in the spatial channel becomes
    \[
        K_Y = \frac{2}{m} + \frac{1}{\gamma^2},
    \]
    while the weighted-mean error remains at order $1/m$.
\end{enumerate}

\paragraph{The gain from the shifted variable}
The shift improves the balance between dissipation and error from the level of $1:1$ to $2:1$ at the level of the leading coefficients. This is the key margin used in the main decay estimate.

\paragraph{Reason for not using \texorpdfstring{$W=V+\gamma^{-1}Y$}{W = V + gamma\string^(-1)Y}}
For notational simplicity, we suppress the bulk-centering in the following heuristic calculation and write $V$ for a single internal velocity mode. One might consider a ``physically natural'' coordinate that measures the deviation from the overdamped manifold, namely
\[
    W := V + \gamma^{-1}Y.
\]
However, evaluating the drift of $W$ reveals a near-complete cancellation between the friction and the potential forces. Using
\[
    \dot{V} = -\frac{\gamma}{m}V - \frac{1}{m}Y
\]
and substituting
\[
    V = W - \gamma^{-1}Y,
\]
we obtain
\begin{align}
    \dot{W}
    &= \dot{V} + \frac{1}{\gamma}\dot{Y} \nonumber\\
    &= \left( -\frac{\gamma}{m}V - \frac{1}{m}Y \right) + \frac{1}{\gamma}V \nonumber\\
    &= -\left( \frac{\gamma}{m} - \frac{1}{\gamma} \right)\left( W - \frac{1}{\gamma}Y \right) - \frac{1}{m}Y \nonumber\\
    &= -\left( \frac{\gamma}{m} - \frac{1}{\gamma} \right) W - \frac{1}{\gamma^2}Y .
\end{align}
The main spatial restoring term of size $1/m$ disappears, leaving only the much weaker contribution $-\gamma^{-2}Y$. In particular, this choice destroys the cross-term mechanism needed to dissipate the position energy.

Thus, the variable
\[
    Z = V - \gamma^{-1}Y
\]
is not an arbitrary reparametrization. The negative sign aligns the restoring contributions so that they add rather than cancel, which enlarges the coercive margin in the spatial channel.

\section{Analysis of the Standard Energy Functional}

For completeness, we record the calculation for the na\"ive centered functional without the variable shift. This makes precise why the estimate does not close for the parameter range relevant to the main theorem.

For readability, write
\[
    \mathcal M_\alpha(t):=\mathcal M_\alpha(\mu_{X_t^J}),
    \qquad
    \Delta_\alpha(t):=m_X(\emp{t})-\mathcal M_\alpha(t).
\]
We define
\begin{align}
    {L}_{\mathrm{std},p}(t)
    :=
    \frac{1}{J}\sum_{j=1}^J
    \Big(
        a |X_t^j-m_X(\emp{t})|^p
        + b |V_t^j|^p
        + c |X_t^j-m_X(\emp{t})|^{p-2}\scp{X_t^j-m_X(\emp{t})}{V_t^j}
    \Big).
\end{align}

By It\^o's formula,
\begin{align}
    & \d \left \langle |X_t^j - m_X(\emp{t})|^{p-2}( X_t^j - m_X(\emp{t}) ), V_t^j \right \rangle
    \\
    & = (p-2) |X_t^j - m_X(\emp{t})|^{p-4}
    \left \langle X_t^j - m_X(\emp{t}), V_t^j - m_V(\empv{t}) \right \rangle
    \left \langle X_t^j - m_X(\emp{t}), V_t^j \right \rangle \d t \nonumber\\
    & \quad
    + |X_t^j - m_X(\emp{t})|^{p-2}
    \Bigg(
        \left \langle V_t^j - m_V(\empv{t}), V_t^j\right \rangle \d t
        \nonumber\\
    & \qquad\qquad
        + \left  \langle X_t^j - m_X(\emp{t}),
            -\frac{\gamma}{m}V_t^j \d t
            - \frac{ X_t^j - \mathcal M_\alpha(t)} {m} \d t
            + \frac{\sigma S(X_t^j - \mathcal M_\alpha(t))} {m} \d W_t^j
        \right \rangle
    \Bigg) .
    \nonumber
\end{align}
Hence
\begin{align}
    \d {L}_{\mathrm{std},p}(t)
    &=
    \frac{a}{J} \sum_{j =1}^J
    p |X_t^j - m_X(\emp{t})|^{p-2}
    \langle X_t^j - m_X(\emp{t}), V_t^j - m_V(\empv{t})\rangle \d t
    \\
    &\quad
    +
    \frac{b}{J}
    \sum_{j = 1}^J
    \Bigg\{
        p |V_t^j|^{p-2}
        \left(
            - \frac{\gamma}{m}|V_t^j|^2 \d t
            - \frac{(X_t^j - \mathcal M_\alpha(t)) \cdot V_t^j}{m} \d t
            - \frac{\sigma S(X_t^j - \mathcal M_\alpha(t)) \cdot V_t^j}{m} \d W_t^j
        \right)
        \nonumber\\
    &\qquad\qquad
        + \frac{p(p-2)\sigma^2}{2m^2} |V_t^j|^{p-4}
        \bigl\langle V_t^j, S(X_t^j - \mathcal M_\alpha(t)) \bigr\rangle^2 \d t
        + \frac{p\sigma^2}{2m^2}|V_t^j|^{p-2} |S(X_t^j - \mathcal M_\alpha(t))|^2 \d t
    \Bigg\}
    \nonumber\\
    &\quad
    + \frac{c}{J}\sum_{j = 1}^J
    \Bigg\{
        (p-2)|X_t^j - m_X(\emp{t})|^{p-4}
        \langle X_t^j - m_X(\emp{t}), V_t^j - m_V(\empv{t}) \rangle
        \langle X_t^j - m_X(\emp{t}), V_t^j \rangle \d t
        \nonumber\\
    &\qquad\qquad
        + |X_t^j - m_X(\emp{t})|^{p-2}
        \Bigg(
            \langle V_t^j - m_V(\empv{t}), V_t^j\rangle \d t
            \nonumber\\
    &\qquad\qquad\qquad
            + \left\langle X_t^j - m_X(\emp{t}),
                - \frac{\gamma}{m} V_t^j \d t
                - \frac{X_t^j - \mathcal M_\alpha(t)}{m} \d t
                - \frac{\sigma S(X_t^j - \mathcal M_\alpha(t))}{m} \d W_t^j
            \right\rangle
        \Bigg)
    \Bigg\} .
    \nonumber
\end{align}

Taking expectation and setting
\[
    a=\frac{\gamma}{pm}c,
\]
we obtain
\begin{align}\label{eq:Mp-decay-std}
    \frac{\d}{\d t}\E[{L}_{\mathrm{std},p}(t)]
    &=
    -\frac{ap}{J} \E \sum_{j =1}^J
    |X_t^j - m_X(\emp{t})|^{p-2}
    \langle X_t^j - m_X(\emp{t}), m_V(\empv{t}) \rangle
    \\
    &\quad
    - \frac{bp}{m J}\E \sum_{j =1}^J
    \left(
        \gamma |V_t^j|^p
        + |V_t^j|^{p-2}\langle X_t^j - \mathcal M_\alpha(t), V_t^j \rangle
    \right)
    \nonumber\\
    &\quad
    + \frac{bp \sigma^2 }{2m^2 J} \E\sum_{j = 1}^J
    \left(
        (p-2) |V_t^j|^{p-4} \langle V_t^j, S(X_t^j - \mathcal M_\alpha(t)) \rangle^2
        + |V_t^j|^{p-2}|S(X_t^j - \mathcal M_\alpha(t))|^2
    \right)
    \nonumber\\
    &\quad
    + \frac{c }{J}\E \sum_{j =1}^J
    \Bigg(
        (p-2) |X_t^j - m_X(\emp{t})|^{p-4}
        \langle X_t^j - m_X(\emp{t}), V_t^j - m_V(\empv{t})\rangle
        \langle X_t^j - m_X(\emp{t}), V_t^j \rangle
        \nonumber\\
    &\qquad\qquad
        + |X_t^j - m_X(\emp{t})|^{p-2}
        \left(
            \langle V_t^j - m_V(\empv{t}), V_t^j \rangle
            - \left\langle X_t^j - m_X(\emp{t}), \frac{X_t^j - \mathcal M_\alpha(t)}{m}\right\rangle
        \right)
    \Bigg)
    \nonumber\\
    &\leq
    - \frac{pb\gamma}{m} \E[V_p(t)]
    - \frac{c}{m} \E[X_p(t)]
    + \frac{\gamma c}{mJ} \E\sum_{j = 1}^J |X_t^j - m_X(\emp{t})|^{p-1} |m_V(\empv{t})|
    \nonumber\\
    &\quad
    + \frac{pb}{mJ}\E \sum_{j = 1}^J |V_t^j|^{p-1} |X_t^j- \mathcal M_\alpha(t)|
    + \frac{bp\sigma^2(p-2+ \tau(S))}{2m^2} \E \Big[\frac1J \sum_{j=1}^J |V_t^j|^{p-2}|X_t^j - \mathcal M_\alpha(t)|^2\Big]
    \nonumber\\
    &\quad
    + \frac{(p-1) c}{J}\E \sum_{j=1}^J |X_t^j - m_X(\emp{t})|^{p-2}|V_t^j||V_t^j - m_V(\empv{t})|
    \nonumber\\
    &\quad
    + \frac{c}{mJ}\E \sum_{j=1}^J |X_t^j - m_X(\emp{t})|^{p-1}|m_X(\emp{t}) - \mathcal M_\alpha(t)|.
    \nonumber
\end{align}
Here
\[
    X_p(t):=\frac1J\sum_{j=1}^J |X_t^j-m_X(\emp{t})|^p,
    \qquad
    V_p(t):=\frac1J\sum_{j=1}^J |V_t^j|^p.
\]

\paragraph{$(p=2)$ case.}
Since
\[
    \sum_{j=1}^J \langle X_t^j-m_X(\emp{t}),m_V(\empv{t})\rangle = 0,
\]
we obtain
\begin{align}
    \frac{\d}{\d t}\E[{L}_{\mathrm{std},2}(t)]
    &\leq \frac{2a}{J}\sum_{j =1}^J \E \langle X_t^j - m_X(\emp{t}), V_t^j \rangle
    \\
    &\qquad
    + \frac{b}{J} \sum_{j=1}^J \E\left[
        - \frac{2\gamma}{m}|V_t^j|^2
        + \frac{2|X_t^j - \mathcal M_\alpha(t)||V_t^j|}{m}
        + \tau(S) \sigma^2 m^{-2} |X_t^j - \mathcal M_\alpha(t)|^2
    \right]
    \nonumber\\
    &\qquad
    + \frac{c}{J} \sum_{j = 1}^J \E\Big[
        \langle V_t^j - m_V(\empv{t}), V_t^j \rangle
        - \frac{\gamma}{m} \langle X_t^j - m_X(\emp{t}), V_t^j \rangle
        - \frac{1}{m}\langle X_t^j - m_X(\emp{t}), X_t^j - \mathcal M_\alpha(t) \rangle
    \Big]
    \nonumber\\
    &\leq
    \frac{b}{J}\sum_{j =1}^J \E\left[
        - \frac{2\gamma}{m}|V_t^j|^2
        + \left( \frac{|V_t^j|^2}{\varepsilon m} + \frac{\varepsilon |X_t^j - \mathcal M_\alpha(t)|^2}{m}\right)
        + \tau(S) \sigma^2 m^{-2} |X_t^j - \mathcal M_\alpha(t)|^2
    \right]
    \nonumber\\
    &\qquad
    + \frac{c}{J} \sum_{j=1}^J \E|V_t^j|^2
    - c\,\E|m_V(\empv{t})|^2
    - \frac{c}{mJ} \sum_{j =1}^J \E |X_t^j - m_X(\emp{t})|^2
    \nonumber\\
    &\le
    -\Big(\frac{2b\gamma}{m}-\frac{b}{m\varepsilon}-c\Big)\,
    \E\Big[\frac1J\sum_{j=1}^J |V_t^j|^2\Big]
    - c\,\E|m_V(\empv{t})|^2
    - \frac{c}{m}\,\E\Big[\frac1J\sum_{j=1}^J |X_t^j-m_X(\emp{t})|^2\Big]
    \nonumber\\
    &\qquad
    + b\Big(\frac{\varepsilon}{m}+\frac{\tau(S)\sigma^2}{m^2}\Big)
    \E\Big[\frac1J\sum_{j=1}^J |X_t^j-\mathcal M_\alpha(t)|^2\Big]
    \nonumber\\
    &\leq
    -\Big(\frac{2b\gamma}{m}-\frac{b}{m\varepsilon}-c\Big)\,
    \E\Big[\frac1J\sum_{j=1}^J |V_t^j|^2\Big]
    \nonumber\\
    &\qquad
    + \left\{
        b\Big(\frac{\varepsilon}{m}+\frac{\tau(S)\sigma^2}{m^2}\Big)
        \left( 1 + e^{\frac{\alpha}{2}(\overline f - \underline f)} \right)^2
        - \frac{c}{m}
    \right\}
    \E\Big[\frac1J\sum_{j=1}^J |X_t^j-m_X(\emp{t})|^2\Big].
    \nonumber
\end{align}
Thus, with a suitable choice such as $b=1$ and $c=\frac{\gamma}{2m}$, one can still obtain exponential decay at the second-moment level under a restrictive smallness condition on $\sigma$.

\paragraph{$(p>2)$ case: Young-type closure of \eqref{eq:Mp-decay-std}}
Let $p\ge 2$ and recall \eqref{eq:Mp-decay-std}. We estimate each remainder term using Young's inequality with distinct parameters $\eps_i>0$.

\smallskip
\noindent
\emph{(i) The $m_V(\empv{t})$-term.}
For any $\eps_0>0$,
\begin{align}\label{eq:young-meanV}
    \frac{\gamma c}{m}\,\E\Big[\frac{1}{J}\sum_{j=1}^J |X_t^j-m_X(\emp{t})|^{p-1}|m_V(\empv{t})|\Big]
    &\le
    \frac{\gamma c}{m}\,\E\Big[
        \frac{p-1}{p}\eps_0\,X_p(t)
        + \frac{1}{p}\eps_0^{-(p-1)}\,|m_V(\empv{t})|^p
    \Big]
    \\
    &\le
    \frac{\gamma c}{m}\,\E\Big[
        \frac{p-1}{p}\eps_0\,X_p(t)
        + \frac{1}{p}\eps_0^{-(p-1)}\,V_p(t)
    \Big].
    \nonumber
\end{align}

\smallskip
\noindent
\emph{(ii) The $|V|^{p-1}|X-\mathcal M_\alpha|$ term.}
Using
\[
    X_t^j-\mathcal M_\alpha(t)
    =
    \bigl(X_t^j-m_X(\emp{t})\bigr)
    +
    \bigl(m_X(\emp{t})-\mathcal M_\alpha(t)\bigr),
\]
we obtain, for any $\eps_1,\eps_2>0$,
\begin{align}\label{eq:young-VX1}
    \frac{pb}{m}\E\Big[\frac{1}{J}\sum_{j=1}^J |V_t^j|^{p-1}|X_t^j-m_X(\emp{t})|\Big]
    &\le
    \frac{b}{m}\E\Big[
        (p-1)\eps_1^{-\frac{p}{p-1}}\,V_p(t)
        + \eps_1^p\,X_p(t)
    \Big],
\\
\label{eq:young-VX2}
    \frac{pb}{m}\E\Big[\frac{1}{J}\sum_{j=1}^J |V_t^j|^{p-1}|m_X(\emp{t})-\mathcal M_\alpha(t)|\Big]
    &\le
    \frac{b}{m}\E\Big[
        (p-1)\eps_2^{-\frac{p}{p-1}}\,V_p(t)
        + \eps_2^p\,|m_X(\emp{t})-\mathcal M_\alpha(t)|^p
    \Big].
\end{align}

\smallskip
\noindent
\emph{(iii) The mixed term $|X-m_X(\emp{t})|^{p-2}|V||V-m_V(\empv{t})|$.}
Using
\[
    |V_t^j-m_V(\empv{t})|\le |V_t^j|+|m_V(\empv{t})|
\]
and the inequality
\[
    |x|^{p-2}|v|^2
    \le
    \frac{p-2}{p}\eps |x|^p
    + \frac{2}{p}\eps^{-\frac{2}{p-2}}|v|^p,
\]
we obtain, for any $\eps_3,\eps_4>0$,
\begin{align}\label{eq:young-cross-VVbar}
    &(p-1)c\E\Big[\frac{1}{J}\sum_{j=1}^J |X_t^j-m_X(\emp{t})|^{p-2}|V_t^j||V_t^j-m_V(\empv{t})|\Big]
    \\
    &\qquad\le
    (p-1)c\,\E\Big[\frac{1}{J}\sum_{j=1}^J |X_t^j-m_X(\emp{t})|^{p-2}|V_t^j|^2\Big]
    +(p-1)c\,\E\Big[\frac{1}{J}\sum_{j=1}^J |X_t^j-m_X(\emp{t})|^{p-2}|V_t^j|\,|m_V(\empv{t})|\Big]
    \nonumber\\
    &\qquad\le
    (p-1)c\,\E\Big[
        \frac{p-2}{p}\eps_3\,X_p(t)
        + \frac{2}{p}\eps_3^{-\frac{2}{p-2}}\,V_p(t)
    \Big]
    \nonumber\\
    &\qquad\quad
    +(p-1)c\,\E\Big[
        \frac{p-2}{p}\eps_4\,X_p(t)
        + \frac{2}{p}\eps_4^{-\frac{2}{p-2}}\,V_p(t)
    \Big].
    \nonumber
\end{align}

\smallskip
\noindent
\emph{(iv) The $|X-m_X(\emp{t})|^{p-1}|m_X(\emp{t})-\mathcal M_\alpha(t)|$ term.}
For any $\eps_5>0$,
\begin{align}\label{eq:young-Xdelta}
    \frac{c}{m}\E\Big[\frac{1}{J}\sum_{j=1}^J |X_t^j-m_X(\emp{t})|^{p-1}|m_X(\emp{t})-\mathcal M_\alpha(t)|\Big]
    \le
    \frac{c}{m}\E\Big[
        \frac{p-1}{p}\eps_5^{-\frac{p}{p-1}}\,X_p(t)
        + \frac{1}{p}\eps_5^p\,|m_X(\emp{t})-\mathcal M_\alpha(t)|^p
    \Big].
\end{align}

\smallskip
\noindent
\emph{(v) The It\^o correction term.}
Fix $\chi_S$ such that, for all $x,v\in\mathbb R^d$,
\begin{align}\label{eq:chiS-def}
    (p-2)|v|^{p-4}\scp{v}{S(x)}^2 + |v|^{p-2}|S(x)|^2
    \le
    \chi_S\,|v|^{p-2}|x|^2 .
\end{align}
Then
\begin{align}\label{eq:ito-corr-chiS}
    \frac{bp\sigma^2}{2m^2}\,
    \E\Big[\frac{1}{J}\sum_{j=1}^J
    \Big(
        (p-2)|V_t^j|^{p-4}\scp{V_t^j}{S(X_t^j-\mathcal M_\alpha(t))}^2
        + |V_t^j|^{p-2}|S(X_t^j-\mathcal M_\alpha(t))|^2
    \Big)\Big]
    \\
    \le
    \frac{bp\sigma^2}{2m^2}\,\chi_S\,
    \E\Big[\frac{1}{J}\sum_{j=1}^J |V_t^j|^{p-2}|X_t^j-\mathcal M_\alpha(t)|^2\Big].
    \nonumber
\end{align}
Applying Young's inequality with $\eps_6>0$,
\begin{align}\label{eq:young-ito-close}
    |V|^{p-2}|X|^2
    \le
    \frac{p-2}{p}\eps_6^{-\frac{p-2}{2}}|V|^p
    + \frac{2}{p}\eps_6|X|^p.
\end{align}
Using
\[
    |X_t^j-\mathcal M_\alpha(t)|^p
    \le
    2^{p-1}\Big(|X_t^j-m_X(\emp{t})|^p+|m_X(\emp{t})-\mathcal M_\alpha(t)|^p\Big),
\]
we get
\begin{align}\label{eq:ito-corr-closed}
    \frac{bp\sigma^2}{2m^2}\chi_S
    \E\Big[\frac{1}{J}\sum_{j=1}^J |V_t^j|^{p-2}|X_t^j-\mathcal M_\alpha(t)|^2\Big]
    &\le
    \frac{b\sigma^2}{m^2}\chi_S\,\frac{p-2}{2}\,\eps_6^{-\frac{p-2}{2}}\,\E[V_p(t)]
    \\
    &\quad
    + \frac{b\sigma^2}{m^2}\chi_S\,2^{p-1}\,\eps_6\,
    \E\Big[X_p(t)+|m_X(\emp{t})-\mathcal M_\alpha(t)|^p\Big].
    \nonumber
\end{align}

\smallskip
\noindent
\emph{(vi) Final Young-closed differential inequality.}
Collecting \eqref{eq:young-meanV}--\eqref{eq:ito-corr-closed} into \eqref{eq:Mp-decay-std} yields
\begin{align}\label{eq:Mp-option1-young-closed}
    \frac{\d}{\d t}\,\E[{L}_{\mathrm{std},p}(t)]
    \le
    -\lambda_{X,p}^{\mathrm{std}}\,\E[X_p(t)]
    - \text{\rm (velocity dissipation terms)}
    + \mathcal R_p(t),
\end{align}
where the candidate spatial dissipation rate is
\begin{align}\label{eq:lambdaXp-def}
    \lambda_{X,p}^{\mathrm{std}}
    &:=
    \frac{c}{m}
    - \frac{\gamma c}{m}\frac{p-1}{p}\eps_0
    - \frac{b}{m}\eps_1^p
    - \frac{c}{m}\frac{p-1}{p}\eps_5^{-\frac{p}{p-1}}
    \\
    &\quad
    - (p-1)c\frac{p-2}{p}(\eps_3 + \eps_4)
    - \frac{b\sigma^2}{m^2}\chi_S 2^{p-1} \eps_6,
    \nonumber
\end{align}
and
\begin{align}\label{eq:Rp-def}
    \mathcal R_p(t)
    &:=
    \Bigg[
        \frac{b}{m}\eps_2^p
        + \frac{c}{m}\frac{1}{p}\eps_5^p
        + \frac{b\sigma^2}{m^2}\chi_S 2^{p-1}\eps_6
    \Bigg]
    \E\big[|m_X(\emp{t})-\mathcal M_\alpha(t)|^p\big].
\end{align}

\begin{remark}[The bottleneck of the standard formulation]
The preceding calculation shows that the na\"ive centered functional does not provide a robust decay mechanism for the full kinetic CBO system. The reason is structural. In the unshifted formulation, the only leading restoring contribution in the spatial channel comes from the original force term
\[
    -\frac1m\bigl(X-\mathcal M_\alpha\bigr),
\]
so the spatial dissipation is generated at scale \(1/m\). However, the same term also generates the weighted-mean error contribution involving \(m_X(\emp{t})-\mathcal M_\alpha(t)\), and after invoking the weighted-mean estimate from the main text,
\[
    \E|m_X(\emp{t})-\mathcal M_\alpha(t)|^p
    \le
    C_{\mathrm{lem}}\,\E[X_p(t)],
\]
this produces a penalty in the very same spatial channel. Thus both the available coercivity and the error term enter at the same basic scale \(1/m\). At the level of leading coefficients, this leaves no robust margin once \(C_{\mathrm{lem}}\) is of order one or larger.

By contrast, the shifted variable
\[
    \hat Z=(V-m_V)-\gamma^{-1}Y
\]
extracts an additional restoring contribution from the friction term, so that the effective spatial coefficient becomes
\[
    K_Y=\frac{2}{m}+\frac{1}{\gamma^2},
\]
while the weighted-mean error remains at order \(1/m\). This is the extra margin that allows the uniform-in-time estimate to close.
\end{remark}

\begin{remark}[Why the case \(p=2\) is not enough]
The computation above shows that a second-moment estimate may still be closed under a restrictive smallness condition. However, the full uniform-in-time propagation-of-chaos argument requires higher-moment control in order to derive the concentration bounds and to handle the weighted-mean error uniformly in time. Thus the case \(p=2\) alone is not sufficient for the main theorem.
\end{remark}

\pagebreak

\printbibliography
\end{document}